\documentclass[letterpaper,12pt,leqno]{article}

\usepackage{fullpage}

\usepackage{hyperref}

\usepackage{amsmath}
\usepackage{amsfonts}
\usepackage{amssymb}
\usepackage{tabularx}
\usepackage{color}
\usepackage{fancyhdr}
\usepackage{tikz-cd}
\usepackage{enumitem}
\usetikzlibrary{calc}
\usetikzlibrary{patterns}
\usetikzlibrary{decorations.pathmorphing}

\def\theoremparent{section}
\usepackage{notes}

\numberwithin{equation}{theorem}

\def\cA{\mathcal{A}}

\def\cW{\mathcal{W}}

\def\d{\partial}

\def\Epi{\mathrm{Epi}}

\def\Fib{\mathrm{Fib}}

\def\Int{\mathrm{Int}}

\def\K{\mathcal{K}}

\def\Net{\textrm{-}\mathrm{Net}}

\def\Ord{\textrm{-}\mathrm{Ord}}
\def\ord{\mathrm{ord}}

\def\PreNet{\textrm{-}\mathrm{PreNet}}

\def\proof{\noindent{\em Proof:}\ }

\def\PsNet{\textrm{-}\mathrm{PsNet}}

\def\qed{\hfill\lower 1em\hbox{$\square$}\vskip 1em}

\def\simo{\,\mathop{\sim}\limits^\circ\,}

\def\slc{\mathrm{slc}}

\def\Tree{\textrm{-}\mathrm{Tree}}
\def\toto{\text{\ \raise0.2em\hbox to 0pt {$\to$}\lower0.2em\hbox{$\to$}\ }}
\def\ulc{\mathrm{ulc}}

\def\lesub{\mathop{<}\limits}
\def\pless{\lesub^\circ}

\begin{document}

\title{Towards Koszulity for categorical structures:\\ category of $2$-nets}
\author{Slava Pimenov, Angel Toledo\def\thefootnote{\arabic{footnote}}\footnotemark[1]}
\date{\today}
\titlepage
\maketitle

\tableofcontents
\footnotetext[1]{Both authors are supported by The Beijing Natural Science Foundation grant BJNSF-IS25027, ``Deformation of algebras over higher symmetric operads''.\\
Slava Pimenov: \texttt{slava.pimenov@gmail.com} \\
Angel Toledo: \texttt{angel.toledo@gmail.com}
}
\vfill\eject

\ifx

Abstract:
Recently Batanin and Markl developed a theory of Koszul duality for operads over a general operadic category in the spirit of Ginzburg-Kapranov. However, their approach doesn't
apply directly to the operadic category of n-trees, which are used to describe categorical structures. In this paper we present a category of (locally constant) 2-nets which addresses
the main difficulty of extending their theory to the case of 2-trees. Specifically, our category of 2-nets is an operadic category containing 2-trees
as a full subcategory, and every map between 2-nets admits a factorization into a chain of elementary maps.

\section{Nets with tassels}
We begin by introducing some terminology and notations.
Let $(S, <)$ be a totally ordered finite set. An interval in $S$ is a subset $I \subset S$ such that whenever $x < y < z$ with $x, z \in I$ we have $y \in I$.
We will denote by $\Int(S)$ the set of all intervals in $S$.

Now, consider an arbitrary subset $C \subset S$, the restriction of the total order $<$ in $S$ to $C$ can be also thought of as a partial order in $S$
that we will denote by $\lesub_C$. Specifically, we say that $x \lesub_C y$ whenever both $x, y \in C$ and $x < y$, and if either $x$ or $y$ is not in $C$
then they are incomparable. Furthermore, if we have a collection of subsets $C_i \subset S$ we can construct the minimal partial order $\lesub_{\{C_i\}}$ generated
by all $\lesub_{C_i}$.

\begin{definition}
A prenet of height $n$ (or an $n$-prenet) is a sequence of totally ordered finite sets $\{S_0, S_1, \ldots, S_n\}$ and maps $p_i \from S_i \to \Int(S_{i-1})$,
satisfying the following conditions.
\begin{enumerate}[label=\alph*)]
\item Denote by $p_i^{-1}(t) = \{x \in S_i \mid t \in p_i(x)\}$, and consider the partial order $\pless$ on $S_i$ generated by partial orders
$\lesub_{p_i^{-1}(t)}$ for all $t \in S_{i-1}$. Then for any $x, y \in S_i$, such that $x \pless y$, there exists $t \in S_{i-1}$, such that $t \in p(x) \cap p(y)$.

\item If $x, y \in S_i$ are incomparable with respect to partial order $\pless$ and $x < y$ then for any $s \in p(x)$ and $t \in p(y)$ we have $s < t$ in $S_{i-1}$.

\item For any $x \in S_i$ and $t \in p(x)$ at least one of the following two conditions hold:
$$
\begin{tikzcd}[row sep=tiny]
\forall y \in p_i^{-1}(t), y < x \quad\Rightarrow\quad p(x) \subset p(y)\\
\forall y \in p_i^{-1}(t), x < y \quad\Rightarrow\quad p(x) \subset p(y)\\
\end{tikzcd}
$$
\end{enumerate}
\end{definition}

We say that a net is {\em connected} if $S_0$ consists of a single element. Furthermore, we say that an $n$-prenet is {\em pruned} if for every $t \in S_i$ with $i < n$
there exists $x \in S_{i+1}$, such that $t \in p(x)$. In this paper we will be primarily interested in pruned connected prenets.

\begin{definition}
A map $f$ between two prenets $(S_\bullet, p_\bullet)$ and $(S'_\bullet, p'_\bullet)$ is a collection of maps $\{f_i\from S_i \to S'_i\}$, such that
\begin{enumerate}[label=\alph*)]
\item each $f_i$ preserves that partial orders $\pless$ on $S_i$ and $S'_i$,
\item for each $i \ge 1$ and $x \in S_i$ we have $f^{-1}(p(x)) = p(f^{-1}(x))$.
\end{enumerate}
\end{definition}

Notice that if $f$ is a map between two connected prenets, then all components $f_i$ are surjective maps.

\begin{definition}
A prenet with tassels is a prenet $(S_\bullet, p_\bullet)$ together with a collection of subsets $T_i \subset S_i$ called tassels such that 
\begin{enumerate}[label=\alph*)]
\item if $x \in S_i - T_i$ then $p(x) \subset S_{i-1} - T_{i-1}$,
\item if $x \in T_i$ then either $p(x) \subset T_{i-1}$ or $p(x) \subset S_{i-1} - T_{i-1}$.
\end{enumerate}
\end{definition}

\begin{lemma}
If $(S_\bullet, T_\bullet, p_\bullet)$ is a prenet with tassels, then the collection of subsets $\{S_i - T_i\}$ with induced total orders and the corresponding
restrictions of $p_i$ form a prenet that will be denoted by $S - T$.
\end{lemma}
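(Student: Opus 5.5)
The plan is to set $S'_i = S_i - T_i$ with the induced total order, and $p'_i = p_i|_{S'_i}$. The first step is to check that $p'_i$ takes values in $\Int(S'_{i-1})$. By tassel condition a), for $x \in S'_i$ we have $p(x) \subset S_{i-1} - T_{i-1} = S'_{i-1}$. An interval of $S_{i-1}$ that is contained in $S'_{i-1}$ is still an interval for the induced order on $S'_{i-1}$. So $p'_i \from S'_i \to \Int(S'_{i-1})$ is well defined. I will also record that for $t \in S'_{i-1}$ we have $p'^{-1}_i(t) = p_i^{-1}(t) \cap S'_i$.

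The main step, and the only place needing an argument, is to compare the partial order $\pless'$ on $S'_i$ with the restriction of $\pless$ on $S_i$. I claim they coincide.

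\emph{One inclusion.} Each generating relation $\lesub_{p'^{-1}(t)}$ is contained in $\lesub_{p^{-1}(t)}$. Hence $x \pless' y$ implies $x \pless y$.

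\emph{Other inclusion.} Let $x, y \in S'_i$ with $x \pless y$. Condition a) for $S$ gives some $t \in p(x) \cap p(y)$, and $x < y$ holds since $\pless$ refines the total order. Since $p(x) \subset S'_{i-1}$, we get $t \in S'_{i-1}$. Then $x, y \in p^{-1}(t) \cap S'_i = p'^{-1}(t)$, so $x \pless' y$ directly by a single generating relation.

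The subtlety here is that a priori a chain witnessing $x \pless y$ might pass through tassels. Condition a) for $S$ lets us bypass this entirely.

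With this identification the axioms transfer as follows.
\begin{enumerate}[label=\alph*)]
\item If $x \pless' y$, then $x \pless y$, so some $t \in p(x) \cap p(y)$ exists. This $t$ automatically lies in $S'_{i-1}$.
\item If $x, y \in S'_i$ are $\pless'$-incomparable, they are $\pless$-incomparable. Condition b) for $S$ then gives $s < t$ for all $s \in p(x)$, $t \in p(y)$, and this order is the induced one on $S'_{i-1}$.
\item For $x \in S'_i$ and $t \in p(x)$, one of the two implications holds for all $y \in p^{-1}(t)$. It therefore holds for all $y$ in the smaller set $p'^{-1}(t)$, with the same $p$.
\end{enumerate}
This shows that $S - T$ is a prenet. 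I expect the order comparison in the middle step to be the only non-formal point.
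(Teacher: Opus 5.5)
Your proof is correct. The paper gives no argument for this lemma, and in the main text the corresponding fact for the core $c(S)$ is simply called clear, so your write-up supplies exactly the omitted verification. You also isolate its only non-formal point correctly: condition a) for $S$ makes the restriction of $\pless$ to $S-T$ coincide with the order generated inside $S-T$, and after that a), b), c) transfer formally.

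That step genuinely needs a). Take $S_1=\{1<2\}$, $S_2=\{x<\tau<y\}$, $p(x)=\{2\}$, $p(\tau)=\{1,2\}$, $p(y)=\{1\}$, with $\tau$ the only tassel. Condition b) holds vacuously in $S$ because $x\pless\tau\pless y$. Yet after deleting $\tau$, the vertices $x<y$ are incomparable with $p(x)>p(y)$ in the induced order. This is exactly what a) forbids, since $x\pless y$ holds without a common parent.
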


\begin{definition}
A map $f$ between two prenets with tassels $(S_\bullet, T_\bullet, p_\bullet)$ and $(S'_\bullet, T'_\bullet, p'_\bullet)$ is a map of prenets
$f\from S \to S'$, such that for all $i \ge 0$ we have $f(S_i - T_i) \subset S'_i - T'_i$.
\end{definition}

We will denote by $n\PreNet$ the category of connected pruned prenets with tassels of height $n$ and maps between them.

\begin{definition}
A map $f\from S \to S'$ of prenets with tassels is called a {\em tassel collapse} if it induces an isomorphism of prenets $S - T$ and $S' - T'$, and
for every $i$ and $x \in S_i$ such that $p(x) \subset S - T$ we have $f(p(x)) = p'(f(x))$.
\end{definition}

\begin{definition}
A map $f\from S \to S'$ of prenets with tassels is called a {\em vertex collapse} if
\begin{enumerate}[label=\alph*)]
\item all $f_i$ except one are bijections;
\item for $f_j$ that is not a bijection, there exists $x \in S'_j - T'_j$ such that $f_j$ is a bijection outside of $x$;
\item the preimage $f^{-1}(x)$ consists of vertices identical to $x$, i.e. for every $y \in f^{-1}(x)$ we have $t \in p(y)$ if and only if $f(t) \in p'(x)$ and
$s \in p^{-1}(y)$ if and only if $f(x) \in p'^{-1}(x)$.
\end{enumerate}
\end{definition}

Consider a subcategory $\cW$ of $n\PreNet$ generated by tassel collapses and vertex collapses. We will refer to maps in $\cW$ as {\em weak equivalences}.
For a prenet $S \in n\PreNet$ let us write $\cW(S)$ for the connected component of $\cW$ containing $S$.

\begin{lemma}[Alternative definition]
A map of $n$-prenets with tassels $f\from S \to S'$ is in $\cW$ if and only if the following two conditions hold
\begin{enumerate}[label=\alph*)]
\item the map induced by $f_i$ between $S_i - T_i$ and $S'_i - T'_i$, is a bijection for $i = n$ and surjection for $i < n$,
\item for all $S_i$ and $x \in S_i$ such that $p(x) \subset (S_i - T_i)$, we have $p(x) = f^{-1} p' f(x) \cap (S_i - T_i)$.
\end{enumerate}
\end{lemma}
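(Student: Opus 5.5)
We prove the two implications separately. Write $\cW'$ for the class of maps satisfying conditions a) and b) of the statement.

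For $\cW \subset \cW'$, note that $\cW'$ contains identities. It then suffices to check two things: $\cW'$ is closed under composition, and both generating classes (tassel collapses and vertex collapses) lie in $\cW'$.

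\emph{Generators.} A tassel collapse induces an isomorphism $S - T \cong S' - T'$, so a) holds at every level, with bijections in particular. For b), take $x$ with $p(x) \subset S - T$. The definition gives $f(p(x)) = p'(f(x))$. Since $f$ restricted to $S - T$ is injective onto $S' - T'$, and maps of prenets with tassels send $S - T$ into $S' - T'$, we get $f^{-1}p'f(x) \cap (S - T) = p(x)$.

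For a vertex collapse, only $f_j$ fails to be bijective. It is surjective, and it is bijective at level $n$ unless $j = n$. Here we must use that the collapsed vertex $x$ has preimages "identical to $x$". If $j = n$, this forces $f^{-1}(x) \cap (S_n - T_n)$ to be a single element, using the pruned and tassel conditions. If that fails, the statement should be read with $S-T$ taken up to the collapse, and I expect a small fix is needed here. Condition b) follows from clause c) of the vertex collapse definition, which says $t \in p(y) \iff f(t) \in p'(x)$.

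\emph{Composition.} Surjectivity and bijectivity compose, so a) is preserved. For b), take $g \circ f$ and $x$ with $p(x) \subset S - T$. Condition b) for $f$ gives $p(x) = f^{-1}p'f(x) \cap (S - T)$. To apply b) for $g$ at $f(x)$, we need $p'(f(x)) \subset S' - T'$. This follows from $f^{-1}(p'(fx)) = p(f^{-1}(fx))$ (map axiom b) together with the tassel axioms: a non-tassel vertex has non-tassel parents, and a tassel has parents entirely in $T$ or entirely in $S - T$. Then intersecting preimages gives $f^{-1}g^{-1}p''gf(x) \cap (S-T) = p(x)$.

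For $\cW' \subset \cW$, given $f \in \cW'$ we build a factorization. First, factor $f$ as a composite of vertex collapses. This identifies, level by level, the non-tassel vertices of $S$ that $f$ identifies. Condition b) ensures these vertices have identical parent sets relative to $S - T$, which makes each identification a legitimate vertex collapse; to be safe we proceed from level $n$ downward. The remaining map induces a bijection on non-tassel parts by a), and by b) this bijection is an isomorphism of prenets $S - T \cong S' - T'$, compatible with $p$ on vertices whose parents are non-tassel. That is precisely a tassel collapse.

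The main obstacle is this last factorization. We must verify that each intermediate quotient is again a connected pruned prenet with tassels, meaning the order conditions a) through c) survive identifying identical vertices. We must also check that identifying vertices only through their parents in $S - T$ agrees with the "identical" requirement, which also involves children $p^{-1}(y)$. I would handle the children by induction from the top level $n$, where bijectivity removes the issue.
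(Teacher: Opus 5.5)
Your architecture matches how the paper handles the $n=2$ case. There, conditions a) and b) become Definition~\ref{def_weak_equiv}, and the collapses are checked right after Definitions~\ref{def_tc} and~\ref{def_vc}. Closure under composition is Lemma~\ref{lemma_w_23}, and the converse is the factorization of Lemma~\ref{lemma_w_factor_vt}, in the same order you use. Your composition and tassel-collapse checks are fine.

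\textbf{The forward direction breaks at vertex collapses.} Your claim that prunedness and the tassel axioms force $f^{-1}(x)\cap(S_n-T_n)$ to be a single vertex is false. Take $S_0=\{r\}$, $S_1=\{a\}$, $S_2=\{y_1<y_2\}$ with $p(y_1)=p(y_2)=\{a\}$ and no tassels, and let $f\colon S\to I$ send $y_1,y_2$ to the core vertex of $I$. This is a map of prenets with tassels. It is bijective on levels $0$ and $1$, bijective away from one core vertex on level $2$, and both preimages are identical to the image, so it is a vertex collapse as defined. Yet the induced map on top-level cores is not injective.

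The bijective levels cause a second failure you did not address. A bijection $f_i$ may send a tassel onto a core vertex, since only $f(S-T)\subset S'-T'$ is required. So ``$f_n$ is bijective'' does not give a) even for $j<n$. For example, take $S_1=\{a_1<a_2\}$, a single tassel with parents $\{a_1,a_2\}$, and $f\colon S\to I$ collapsing $a_1,a_2$.

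So with the definitions as given, the implication from $\cW$ to a) is false. Your ``small fix'' has to be a change of definition, not a reading. A vertex collapse must be bijective on the top level and must preserve the core/tassel decomposition wherever it is bijective; this is exactly how the paper phrases Definition~\ref{def_vc}. With that definition, a) is immediate.

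\textbf{The converse is not proved.} What you call the ``main obstacle'' is the whole content of this direction. You need three things:
\begin{itemize}
\item the core vertices with a common image form an interval, since otherwise the order does not descend and the collapse is not order preserving;
\item each quotient satisfies the prenet axioms;
\item the residual map is a map of prenets and a tassel collapse.
\end{itemize}
Along your route this can be done, but none of it is in the proposal. Two core vertices with the same image have the same parents and children by b). By prunedness they share a child, whose parent interval contains everything between them and is core by the tassel axioms. Order preservation of $f$, together with the prenet axioms above level $1$, then puts that whole interval in one fibre.

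The paper avoids iterated collapsing. It builds a single intermediate prenet $P$ with $P_2=S_2$, $P_1=S'_1$ and $p_P=p'\circ f_2$. The key input making $P$ a prenet and both factors order preserving is Lemma~\ref{lemma_p_order}: for a weak equivalence, the partial order $\pless$ on $S_2$ coincides with the one generated through $p'\circ f$. The two factors are then verified with Lemma~\ref{lemma_easy_w}.
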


\begin{lemma}
Category of weak equivalences satisfies 2-out-of-3 condition.
\end{lemma}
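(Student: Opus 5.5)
We work entirely with the characterization of $\cW$ given in the Alternative definition lemma. That lemma lets us test membership in $\cW$ through two conditions, (a) and (b), instead of factoring maps into tassel and vertex collapses. So we take composable maps $f\from S\to S'$ and $g\from S'\to S''$ in $n\PreNet$ and check the three cases: if two of $f$, $g$, $gf$ satisfy (a) and (b), then so does the third.

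Write $\bar f_i\from S_i-T_i\to S'_i-T'_i$ for the induced map; it exists because maps of prenets with tassels preserve non-tassels. Condition (a) says $\bar f_n$ is a bijection and $\bar f_i$ is a surjection for $i<n$. Condition (b) says that for $x$ with $p(x)$ consisting of non-tassels,
$$
p(x) = f^{-1}p'f(x)\cap(S_{i-1}-T_{i-1}).
$$

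\textbf{Condition (a).} We check the three cases in turn.
\begin{itemize}[label=$\cdot$]
\item If $f$ and $g$ satisfy (a), then so does $gf$, since compositions of bijections and of surjections are again such.
\item If $gf$ and $g$ satisfy (a), then $\bar f_n$ is injective because $\bar g_n\bar f_n$ is, and surjective because $\bar g_n$ is injective. For $i<n$ we need $\bar f_i$ surjective. Here we use connectedness and prunedness: every non-tassel $t'\in S'_i-T'_i$ lies in $p'(x')$ for some $x'\in S'_{i+1}$. By tassel condition (a) we may descend along non-tassels from level $n$. Lifting $x'$ through $\bar f_{i+1}$ (by induction downward from $n$), axiom (b) of maps, $f^{-1}(p'(x'))=p(f^{-1}(x'))$, gives a non-tassel preimage of $t'$.
\item If $gf$ and $f$ satisfy (a), then $\bar g_i$ is surjective since $\bar g_i\bar f_i$ is. At level $n$, $\bar g_n=(\bar g_n\bar f_n)\bar f_n^{-1}$ is a bijection.
\end{itemize}

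\textbf{Condition (b).} The key observation is that, given (a) for the relevant maps, the map axiom $f^{-1}(p'(y))=p(f^{-1}(y))$ together with surjectivity turns (b) into the statement that $f$ restricted to non-tassels, together with the restricted $p$'s, behaves like a surjective map of the prenets $S-T\to S'-T'$ that is ``fully faithful'' on the incidence relation. Concretely, (b) says $t\in p(x)$ if and only if $f(t)\in p'(f(x))$ for non-tassels $t$ and for $x$ with non-tassel support. In this form, composition follows by chaining the two equivalences. Cancellation in the other two cases follows by the surjectivity from (a), choosing preimages. For example, if $gf$ and $f$ satisfy (b), then for $x'=f(x)$ and $t'=f(t)$ we get
$$
t'\in p'(x') \iff t\in p(x) \iff gf(t)\in p''(gf(x)).
$$
We also need to check that $p'(x')\subset S'-T'$ whenever $p(x)\subset S-T$, using tassel condition (b) and the map axiom. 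Conversely, when $g$ and $gf$ satisfy (b), the chain gives (b) for $f$ directly.

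\textbf{Main obstacle.} The delicate point is that (b) is only imposed on vertices whose $p$ lies entirely in non-tassels. When cancelling, we must show that the relevant vertices in $S'$ are of this type, and that every non-tassel in $S'$ with non-tassel support is the image of such a vertex in $S$. This is where prunedness and tassel axiom (b) (the dichotomy $p(x)\subset T$ or $p(x)\subset S-T$) will be needed. We expect this case analysis, especially the surjectivity of $\bar f_i$ in the case where $g$ and $gf$ are known, to be the hardest step.
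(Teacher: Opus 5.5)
The gap is the step you flag as hardest: that $g,gf\in\cW$ forces $\bar f_i$ to be surjective for $i<n$. Your sketch does not establish it. Prunedness only gives \emph{some} child $x'$ of $t'$, and that child may be a tassel. Even if $x'$ is a non-tassel with a non-tassel lift $x$, the map axiom $f^{-1}(p'(x'))=p(f^{-1}(x'))$ only places a preimage of $t'$ among the parents of \emph{some} element of $f^{-1}(x')$. That element may be a tassel whose parents are all tassels.

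In the $n$-level framework with tassels on all levels, the step is in fact false. Take the following data:
\begin{itemize}
\item $S''=I$;
\item $S'$ with $S'_1=\{u<v\}$, $S'_2=\{x'\}$, $p'(x')=\{u,v\}$, and no tassels;
\item $S$ with $S_1=\{u_0<v_0\}$, $S_2=\{x_1<x_2\}$, $p(x_1)=\{u_0\}$, $p(x_2)=\{v_0\}$, $T_1=\{v_0\}$, $T_2=\{x_2\}$;
\item $f$ sending $u_0\mapsto u$, $v_0\mapsto v$, $x_1,x_2\mapsto x'$, and $g$ the map to $I$.
\end{itemize}
Then $f$ is a map, since $f^{-1}(p'(x'))=\{u_0,v_0\}=p(x_1)\cup p(x_2)$. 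The map $g$ is a vertex collapse. The map $gf$ is a tassel collapse: it induces $S-T\cong I$, and $gf(p(x))=p''(gf(x))$ whenever $p(x)\subset S-T$. Hence $g,gf\in\cW$. Yet $\bar f_1\colon\{u_0\}\to\{u,v\}$ misses $v$, whose only preimage is the tassel $v_0$. So in the draft's generality the Alternative definition and the 2-out-of-3 claim cannot both hold, and no further case analysis will close your argument there.

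The paper proves the statement as Lemma~\ref{lemma_w_23}, where tassels live only at level $2$. There $\cW$ is given by Definition~\ref{def_weak_equiv}: a bijection on $c(S_2)$, and $p_S(x)=f^{-1}p_Tf(x)$ for \emph{all} $x\in S_2$. In that setting your condition (a) at level $1$ is automatic, because $f_1$ is surjective by connectedness (Lemma~\ref{lemma_prenet_map}(a')). Your restriction to vertices with non-tassel support is also vacuous. What remains of your proposal is precisely the paper's proof:
\begin{itemize}
\item chaining $t\in p(x)\iff f(t)\in p'(f(x))$ for the composite, and again for deducing $f$ from $g,gf$;
\item choosing preimages under the surjective $f_1,f_2$ for deducing $g$ from $f,gf$.
\end{itemize}
Your other worry, lifting a tassel $x'$ with non-tassel support, is harmless even in the draft setting. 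Pick a non-tassel $t$ over a non-tassel $t'\in p'(x')$. Then $t\in p(\tilde x)$ for some $\tilde x\in f^{-1}(x')$, and the tassel dichotomy forces $p(\tilde x)\subset S-T$.
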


\begin{definition}
A prenet $S$ is called {\em minimal} if the only weak equivalence map $f\from S \to S'$ is the identity.
\end{definition}

\begin{lemma}
Every span $\begin{tikzcd}[cramped,sep=small]T & S \ar[l] \ar[r] & P\end{tikzcd}$ and cospan $\begin{tikzcd}[cramped,sep=small]T \ar[r] & Q & P \ar[l] \end{tikzcd}$ of weak equivalences
can be completed to a commutative square, such that all maps are weak equivalences.
$$
\begin{tikzcd}
& S \ar[ld] \ar[rd] & \\
T \ar[rd] & & P \ar[ld] \\
& Q &
\end{tikzcd}
$$
\end{lemma}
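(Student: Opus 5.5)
We prove both completions using the Alternative definition lemma together with the 2-out-of-3 lemma. The guiding observation is that a weak equivalence $f\from S \to S'$ is determined, up to the data it forgets, by the prenet $S - T$. Condition a) of the Alternative definition says $f$ is a bijection on $S_n - T_n$ and a surjection on lower levels. Condition b) says the incidence of untasseled vertices is pulled back from $S'$. Hence every object of the connected component $\cW(S)$ carries a canonical ``core''. We will build the missing corner of each square as a core-level construction decorated with tassels.

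For the cospan $T \to Q \leftarrow P$ we take the fiber product. Set $S_i = T_i \times_{Q_i} P_i$ levelwise. Order it lexicographically, or more carefully by choosing any total order compatible with both projections; this uses that the maps preserve $\pless$. Define the incidence by $p(x,y) = p(x) \times_{Q} p(y)$, restricted to pairs that are actually intervals. The tassels are the pairs with at least one tasseled component.

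We then verify in turn:
\begin{enumerate}[label=(\roman*)]
\item the prenet axioms a)--c), by reducing each to the corresponding axiom in $T$ or $P$;
\item the tassel axioms;
\item connectedness and prunedness;
\item that both projections satisfy conditions a) and b) of the Alternative definition.
\end{enumerate}
For (iv), observe that on untasseled parts all the maps $T - T_T \to Q - T_Q$ and $P - T_P \to Q - T_Q$ are bijective at level $n$. Condition b) then forces the fiber product over the untasseled part to look like $T - T_T$ with some lower vertices duplicated. Commutativity of the square is automatic. 2-out-of-3 then also gives that the composite $S \to Q$ is in $\cW$.

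For the span $T \leftarrow S \to P$ we take a pushout-type construction. Set $Q_i = T_i \sqcup_{S_i} P_i$, the quotient identifying $f(s)$ with $g(s)$. The incidence is generated by the images of the incidences in $T$ and $P$, and a vertex is a tassel iff it is the image of tassels only. Surjectivity of components, which holds because the prenets are connected, makes the maps $T \to Q$ and $P \to Q$ surjective. Conditions a) and b) are checked as before, since identifications only happen along fibers of weak equivalences.

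An alternative for the span case, which I would try first if the pushout misbehaves, reduces to generators. Because $\cW$ is generated by tassel collapses and vertex collapses, it suffices to complete spans in which one leg is elementary. For instance, two vertex collapses either merge disjoint sets of identical vertices, or merge overlapping sets, in which case one merges the union. Tassel collapses commute with everything up to pushing forward the collapsed tassels. A standard induction on the lengths of the two zigzag factorizations then pastes the elementary squares together.

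The main obstacle I expect is verifying prenet axiom c) and the interval condition for the constructed incidence maps, especially in the cospan case. There, $p(x) \times_Q p(y)$ need not obviously be an interval in the chosen total order on $S_{i-1}$. I would handle this by first proving that over the untasseled part a weak equivalence is a ``vertex duplication''. On tasseled parts, tassel axiom b) separates the incidence cleanly, so intervals can be checked componentwise.
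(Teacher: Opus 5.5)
\emph{Cospan.} Your levelwise fiber product cannot be totally ordered so that both projections preserve $\pless$. The construction therefore fails at the choice of order, before any interval check, and the projections are not maps of prenets.

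Take $T=P$ with $T_1=\{u_1<u_2\}$ and a single core vertex $x$ with $p(x)=\{u_1,u_2\}$, and let both legs be the vertex collapse onto $Q=I$. Then $S_1=\{u_1,u_2\}\times\{u_1,u_2\}$. Since $S$ is connected, $\pless$ on $S_1$ is the total order. Whichever of $(u_1,u_2)$ and $(u_2,u_1)$ you put first, one of the two projections reverses it.

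The same obstruction appears at level $2$ whenever both fibers over some $y\in Q_2$ have at least two elements. All pairs in $f^{-1}(y)\times g^{-1}(y)$ get the same parents, so they are pairwise $\pless$-comparable.

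The missing idea is to keep only a monotone ``staircase'' inside $\bigcup_y f^{-1}(y)\times g^{-1}(y)$ that passes through the pairs of core vertices. These are the merges $\mu(y,v_y)$ of remark \ref{rem_lift_w}. This is how the paper argues: it lifts one leg along the other by proposition \ref{prop_lift} and concludes with 2-out-of-3 (corollary \ref{cor_w_lift}).

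\emph{Span.} This half cannot be repaired, because it is false as stated. You claim identifications in $T_i\sqcup_{S_i}P_i$ ``only happen along fibers of weak equivalences''. That does not stop a chain of such identifications from gluing two distinct core vertices.

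Let $S_1=\{s\}$ and $S_2=\{a<t<b\}$, all with parent $s$, where $a,b$ are core and $t$ is a tassel. Let $f\colon S\to T$ collapse $t$ onto $a$, and let $g\colon S\to P$ collapse $t$ onto $b$; both are simple tassel collapses. Suppose $T\to Q\leftarrow P$ are weak equivalences with induced core bijections $\sigma,\tau$. Commutativity at $a$ and $b$ forces $\sigma=\tau$. Commutativity at $t$ then forces $\sigma(a)=\tau(b)=\sigma(b)$, which is a contradiction. Your pushout indeed has a single level-$2$ vertex, onto which both core vertices of $T$ are sent. The same example also defeats your generator-based fallback (``tassel collapses commute with everything'').

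What the paper actually proves is lemma \ref{lemma_ww_pushout}. There, the levelwise pushout, formed after declaring every level-$2$ vertex a tassel, completes the span to a commutative square. The new maps $f',g'$ are weak equivalences only under an extra hypothesis: each core vertex of $Q_2$ has exactly one core vertex in its $h$-preimage. This holds, for example, when one leg is a vertex collapse (corollary \ref{cor_vcw_pushout}) or inside $\cW'(S)$ (lemma \ref{lemma_term_prenet}). This failure is exactly why the weak equivalence $S\to m(S)$ of proposition \ref{prop_min_prenet} is not unique, and why $m(S)$ is not terminal in $\cW(S)$.
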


\begin{lemma}
Every connected component of $\cW$ contains a unique minimal prenet.
\end{lemma}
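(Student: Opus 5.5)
Existence comes from a finiteness argument, and uniqueness from the cospan half of the preceding lemma together with 2-out-of-3. Fix a connected component $\cW(S)$.

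For existence, we attach a size to each prenet, namely $|S| = \sum_i |S_i|$. Every weak equivalence has surjective components, since all prenets in $n\PreNet$ are connected (the remark after the definition of maps). So $|S'| \le |S|$ whenever there is a weak equivalence $f\from S \to S'$.

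Suppose $f\from S \to S'$ is a weak equivalence with $|S'| = |S|$. Then every $f_i$ is a bijection, and we want $f$ to be an isomorphism. I would check this using the alternative definition of weak equivalences. Condition (b) together with the map axiom $f^{-1}(p(x)) = p(f^{-1}(x))$ should show that $f^{-1}$ preserves the $p_i$. The partial orders $\pless$ are generated by the $p_i^{-1}(t)$ and the total orders, so $f^{-1}$ preserves them as well. This step needs some care, because bijective maps of prenets need not be isomorphisms in general. I expect the constraint that $f$ respects tassels and induces an isomorphism of $S - T$ to force it.

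Granting this, start from $S$ and repeatedly apply non-identity weak equivalences out of the current prenet, choosing non-isomorphisms. The size strictly drops each time, so the process terminates at some $M$. Every weak equivalence out of $M$ is then an isomorphism. To get literally the identity, I would work up to isomorphism, reading "minimal" as "every weak equivalence out of it is an isomorphism", or equivalently count isomorphism classes. This gives a minimal prenet $M$ receiving a weak equivalence from $S$.

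For uniqueness, let $M$ and $M'$ be minimal in the same component. They are joined by a zigzag of weak equivalences. Using the span completion of the previous lemma, pushing spans into cospans, the zigzag reduces to a single cospan $M \to Q \leftarrow M'$. This is the main obstacle: one must induct on the length of the zigzag, replacing each span $\leftarrow\cdot\rightarrow$ by a cospan, and then composing consecutive arrows pointing the same way; since $\cW$ is a subcategory, such composites are again weak equivalences. The zigzag shortens until it is a single cospan, or a single map, which is a degenerate cospan.

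Now $M \to Q$ is a weak equivalence out of a minimal prenet, so it is an isomorphism, and likewise $M' \to Q$. Hence $M \cong M'$, and the minimal prenet is unique up to isomorphism. It is unique up to a unique isomorphism provided minimal prenets have no nontrivial weak-equivalence automorphisms; minimality, read with the identity, gives exactly that.
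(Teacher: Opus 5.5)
Your existence argument is fine and matches the paper's. A weak equivalence that is bijective on every level is an isomorphism (parents correspond by condition (b), cores by condition (a)), so a non-identity weak equivalence strictly lowers $\sum_i |S_i|$. This reads ``identity'' up to the unique order-preserving isomorphism, as you do.

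The uniqueness argument has a genuine gap: a span of weak equivalences cannot in general be completed to a commutative square of weak equivalences. Let $S$ have $S_1=\{s\}$ and $S_2=\{a<t<b\}$, all three children of $s$, with $a,b$ core and $t$ a tassel. Let $T$ have $T_1=\{s\}$ and two core vertices over $s$. The tassel collapse $f\colon S\to T$ merging $t$ into $a$ and the tassel collapse $g\colon S\to T$ merging $t$ into $b$ are both weak equivalences. Suppose $k,k'\colon T\to Q$ were weak equivalences with $kf=k'g=:h$. Then $h(a)=h(t)=h(b)$, but $h$ is a weak equivalence (Lemma \ref{lemma_w_23}) and hence injective on core vertices.

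This is why Lemma \ref{lemma_ww_pushout} only yields weak equivalences when each core vertex of the pushout has a single core vertex in its preimage. The unconditional span completion you invoke is therefore not available. The cospan half (Corollary \ref{cor_w_lift}) does not rescue the argument either. It only reduces the zig-zag to one span $M\leftarrow X\to M'$, and minimality of $M$ and $M'$ says nothing about two weak equivalences \emph{out of} $X$.

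The missing idea is to shorten the zig-zag from the minimal end, using minimality at every step and giving up commutativity. This is what the proof of Proposition \ref{prop_min_prenet} does. Take $M\xleftarrow{f}P_1\xrightarrow{g}P_2\leftarrow\cdots$ with $M$ minimal, and factor $g=g_t g_v$ into a vertex collapse followed by a tassel collapse (Lemma \ref{lemma_w_factor_vt}). Pushing out along the vertex collapse always gives weak equivalences (Corollary \ref{cor_vcw_pushout}). Minimality forces the new corner to be $M$ itself, producing a weak equivalence $P'_1\to M$.

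One then pushes out along $g_t$. When the result fails to be a weak equivalence, minimality of $M$ still helps: the relevant preimages in $M_2$ consist only of core vertices. This lets one write down directly a tassel collapse $P_2\to M$ that is bijective on cores but need not commute with the square. Either way $M$ is connected to $M'$ by a shorter zig-zag, and induction on its length gives $M\cong M'$.
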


\begin{definition}
The category of $n$-nets, denoted by $n\Net$, has the same objects as $n\PreNet$ and maps defined by
$$
\Hom_{n\Net}(S, T) \ =\ \colim_{\wtilde S \in \cW(S)} \Hom_{n\PreNet}(\wtilde S, T).
$$
\end{definition}

\begin{lemma}
If $f\from T \to T'$ is a weak equivalence in $n\PreNet$, then it induces an isomorphism
$$
\begin{tikzcd}
\Hom_{n\Net}(S, T) \ar["\isom", r] & \Hom_{n\Net}(S, T').
\end{tikzcd}
$$
\end{lemma}

\begin{proposition}
There is a fully faithful embedding $i \from n\Tree \into n\Net$.
\end{proposition}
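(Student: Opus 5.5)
We construct $i$ by regarding an $n$-tree as a special prenet with no tassels, and then show that the colimit defining $\Hom_{n\Net}$ collapses to ordinary tree maps because trees are minimal.

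\emph{Step 1: the functor on objects and maps.} Recall that an $n$-tree (in the sense of Batanin) is a chain of maps of finite ordinals $S_n \to S_{n-1} \to \cdots \to S_0 = \{*\}$. To such a tree we assign the prenet with the same sets $S_i$, with
$$p_i(x) = \{\text{image of } x\} \in \Int(S_{i-1}),$$
and with all tassels empty, $T_i = \emptyset$. We then check that this is a connected pruned prenet with tassels, i.e.\ that conditions a)--c) hold:
\begin{itemize}
\item Each $p_i^{-1}(t)$ is the fibre over $t$, and distinct fibres are disjoint. Hence $\pless$ is exactly the fibrewise order: $x \pless y$ iff $x$ and $y$ lie in the same fibre and $x<y$. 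This gives a).
\item Monotonicity of $S_i \to S_{i-1}$ gives b).
\item For c), every $p(x)$ is a singleton, so $p(x) \subset p(y)$ for all $y \in p_i^{-1}(t)$, and both implications hold trivially.
\end{itemize}
Batanin's pruned trees have surjective structure maps, which gives prunedness. If the paper allows unpruned trees, the relevant $n\Tree$ should be read as its pruned version, which I will assume.

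A morphism of $n$-trees is a family $f_i$ commuting with the projections and preserving the order in each fibre. Condition b) of a prenet map, $f^{-1}(p(x)) = p(f^{-1}(x))$, reduces for singleton $p$ to compatibility of $f$ with the projections. Condition a) is preservation of the fibrewise orders. So tree maps are precisely prenet maps between the associated prenets. Since $T=\emptyset$, the tassel condition is vacuous. This gives a fully faithful embedding $n\Tree \into n\PreNet$. Composing with the canonical functor $n\PreNet \to n\Net$ (a prenet map $S \to T$ gives an element of the colimit at $\wtilde S = S$) defines $i$.

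\emph{Step 2: trees are minimal.} The key claim is that the prenet $i(S)$ of a tree is minimal, so that $\cW(i(S))$ is as small as possible. Suppose $f\from S \to S'$ is a weak equivalence with $T_\bullet = \emptyset$. By the alternative-definition lemma, the following hold:
\begin{itemize}
\item $f_n$ is a bijection onto $S'_n - T'_n$. Since $f_n$ is surjective (connectedness), $T'_n = \emptyset$ and $f_n$ is a bijection.
\item Tassel condition a) then forces $T'_i = \emptyset$ for all $i$, using prunedness of $S'$ to propagate downward.
\item Each $f_i$ is surjective, and $p(x) = f^{-1}p'f(x)$.
\end{itemize}
Now induct downward on $i$. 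Each $t \in S_{i-1}$ lies in the image of $p$ of some $x$. Since $p(x)=\{t\}$ equals $f^{-1}$ of $p'f(x)$, $t$ is the only element over $f(t)$. Hence $f_{i-1}$ is injective, and $f$ is an isomorphism. Minimality in the paper's sense means the isomorphism must be the identity; I take this up to the evident identification of isomorphic prenets.

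\emph{Step 3: full faithfulness.} By the lemma that each component of $\cW$ has a unique minimal object, together with the span/cospan completion lemma, the indexing category $\cW(S)$ is cofiltered. The minimal prenet should be terminal-like in $\cW(S)$: every $\wtilde S \in \cW(S)$ maps by a weak equivalence to it, and any two such maps agree after precomposition. Hence
$$\colim_{\wtilde S \in \cW(S)} \Hom_{n\PreNet}(\wtilde S, T) \isom \Hom_{n\PreNet}(S,T)$$
when $S$ is minimal. I expect this identification to be the main obstacle. It requires showing that a map $\wtilde S \to T$ with $T$ a tree factors uniquely through $\wtilde S \to S$.

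I would prove this using the alternative definition of weak equivalences. Tassels of $\wtilde S$ must map into $T$, which has no tassels, so the prenet-map condition b) forces tassels to be sent along the tassel collapse. Duplicated vertices $y \in f^{-1}(x)$ have identical $p$ and $p^{-1}$, so they must land on a common vertex, because $T$ has singleton fibres and distinct elements there have distinct preimages. So the map factors through the vertex collapse, and uniqueness follows from the surjectivity of the components of weak equivalences. Combined with Step 1, this gives
$$\Hom_{n\Net}(iS, iT) = \Hom_{n\Tree}(S,T).$$
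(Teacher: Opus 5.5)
Your proposal has a genuine gap: Step 1 is false, and Step 3 cannot repair it.

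\textbf{Step 1 fails.} For singleton parent sets, condition (b) of definition \ref{def_prenet_map}, $f^{-1}(p(x)) = p(f^{-1}(x))$, is not mere compatibility with the projections. Compatibility only gives $p(f^{-1}(x)) \subseteq f^{-1}(p(x))$. The reverse inclusion requires that for every $t$ with $f(t)$ equal to the parent of $x$, some child of $t$ maps to $x$. In other words, $f$ must map the children of each vertex \emph{onto} the children of its image, and tree maps typically violate this.

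Take $S$ with two level-$1$ vertices each carrying one leaf, and $T$ with one level-$1$ vertex carrying two leaves (the map from the introduction; see also example \ref{exa_J}). By lemma \ref{lemma_prenet_map} (a'), every prenet map between connected prenets is surjective on each level, so $f_2$ would have to be a bijection. Condition (b) then fails at either leaf of $T$. Hence $\Hom_{2\PreNet}(iS, iT) = \emptyset$, while $\Hom_{2\Tree}(S,T) \neq \emptyset$. More generally, surjectivity excludes every tree map that misses a vertex. So your $i$ is not defined on morphisms, and there is no embedding $n\Tree \into n\PreNet$ to compose with the localization.

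\textbf{Step 3 is false.} If the colimit collapsed to $\Hom_{n\PreNet}(S,T)$ for minimal $S$, the hom-set of nets in the example above would be empty, and fullness would fail. Your factorization argument breaks because a map of prenets with tassels only has to send core vertices to core vertices. A tassel of a resolution $\wtilde S$ may be sent to a core vertex of $T$ different from the image of the vertex it collapses onto. In addition, the collapse $\wtilde S \to S$ is not unique, and the minimal prenet is not terminal in $\cW(S)$.

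\textbf{What the paper does instead.} It exploits exactly this freedom of tassels. For a tree map $f$ it builds $\wtilde S(f)$ (paragraph \ref{par_J_map}) by adjoining to the children of each $x \in S_1$ tassel copies of those children of $f(x)$ not in $f(p^{-1}(x))$. Then $\wtilde S(f) \to J(S)$ is a tassel collapse, $J(f)\from \wtilde S(f) \to J(T)$ satisfies (b), and the net map is the span $J(S) \leftarrow \wtilde S(f) \to J(T)$. In example \ref{exa_J} the tassel $b'$ collapses onto $a$ but is sent to a different core vertex than $a$, so $J(f)$ visibly does not factor through $J(S)$.

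The three properties are then checked as follows.
\begin{itemize}
\item Functoriality: the lift of proposition \ref{prop_lift} is compared with $\wtilde S(gf)$ via a tassel collapse.
\item Faithfulness: this uses theorem \ref{thm_net_homs} and corollary \ref{cor_net_homs}. $J(T)$ is minimal; your Step 2 is correct on this point. Every map in $\cW(J(S))_1$ is a tassel collapse, so the core vertices of any $1$-minimal resolution are canonically $S_2$, and restriction to core vertices is invariant under $\sim$.
\item Fullness: it comes from a tassel collapse $\wtilde S \to \wtilde S(J^{-1}f)$ determined by where $f$ sends the tassels.
\end{itemize}

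Finally, the paper proves the statement only for $n=2$ and for $(2\Tree,\Epi_1)$. Maps that are not surjective at level $1$ would need level-$1$ tassels. Non-pruned trees are handled by giving each childless level-$1$ vertex a tassel child, so your assumption of pruned trees is unnecessary.
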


\begin{definition}
A map $f\from S \to S'$ of prenets with tassels is a {\em fibration} if for every $x \in S_i$ it induces a surjection $p^{-1}(x) \cap T_{i+1} \epi p^{-1}(f(x))$.
\end{definition}

\begin{lemma}
Every map $f \from S \to S'$ in the category of $n$-nets can be represented by a span of $n$-prenets with tassels
$$
\begin{tikzcd}
S & \wtilde S \ar["(w)"', l] \ar["(f)", r] & S',
\end{tikzcd}
$$
where the first map is a weak equivalence and the second is a fibration.
\end{lemma}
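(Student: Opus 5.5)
By the definition of $\Hom_{n\Net}(S,S')$ as a colimit over $\cW(S)$, every map $f$ is represented by some span $S \xleftarrow{w_0} S^0 \xrightarrow{f^0} S'$ in which $w_0$ is a weak equivalence and $f^0$ is an arbitrary map of prenets with tassels. The plan is to modify $S^0$ inside $\cW(S)$, without changing the class of the span in the colimit, until the second leg becomes a fibration. The only freedom we need is adjoining tassels to $S^0$.

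\textbf{Construction.} Set $\wtilde S$ to be $S^0$ with extra tassel vertices added, level by level from $i=0$ up to $n-1$. For each $x \in S^0_i$ and each $y \in p'^{-1}(f^0(x))$ not already hit by $p^{-1}(x)\cap T_{i+1}$, adjoin a new tassel vertex $\tilde y \in T_{i+1}$. We put $p(\tilde y)=\{x\}$, place $\tilde y$ in the total order of $S^0_{i+1}$ adjacent to the vertices over $x$ so that conditions a)--c) of a prenet stay valid, and set $\tilde f(\tilde y)=y$. If $y$ itself has preimages at higher levels, we continue adjoining tassels above $\tilde y$ by the same recipe, so the construction terminates after at most $n$ stages. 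Adjoined tassels attach only over single vertices, so the tassel conditions a), b) are satisfied. The map $\wtilde f \colon \wtilde S \to S'$ extends $f^0$, and it is a fibration by construction.

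\textbf{Verification.} Three checks remain, and I would do them in this order.
\begin{enumerate}[label=(\roman*)]
\item The inclusion-type map $\wtilde S \to S^0$ that collapses the new tassels is a tassel collapse, or more precisely a composite of tassel collapses. This follows from the Alternative definition lemma, because $\wtilde S - \wtilde T = S^0 - T^0$ and the $p$'s of non-tassel vertices are unchanged.
\item Consequently $w = w_0 \circ (\wtilde S \to S^0)$ lies in $\cW$.
\item The span $(w,\wtilde f)$ represents the same element of the colimit as $(w_0,f^0)$. This holds provided $\wtilde f$ factors as $f^0$ composed with the collapse $\wtilde S\to S^0$. If it does not factor literally, I would instead use the square-completion lemma for spans of weak equivalences to compare the two representatives.
\end{enumerate}

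\textbf{Main obstacle.} Two points need care. The first is that $\wtilde f$ must be a genuine map of prenets. Condition b), $f^{-1}(p'(z)) = p(f^{-1}(z))$, forces consistency: every preimage of $f^0(x)$ must get tassels over the same fibres. So the adjoining must be done uniformly over each fibre of $f^0$, and the $\pless$-orders must be preserved. The second is that the direction of the collapse is wrong: collapsing tassels maps $S^0 \to \wtilde S$ rather than the reverse. I would therefore expect to realize the tassel collapse as a map $\wtilde S \to S^0$ that sends new tassels onto existing vertices or tassels over $x$. Where $p^{-1}(x)$ contains no tassel to receive them, I would first replace $S^0$ by a larger object of $\cW(S)$ containing at least one tassel over each vertex. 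I expect this bookkeeping, namely choosing targets for the new tassels so that the collapse map satisfies the prenet-map axioms, to be the hardest step.
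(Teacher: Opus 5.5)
The paper gives no proof of this lemma: it appears only in the suppressed draft preamble, and the final text replaces the fibration formulation with fibre prenets of locally constant maps and the assembly proposition. So your argument has to stand on its own, and it has a genuine gap exactly at the step you call bookkeeping.

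\textbf{The collapse map does not exist in general.} A weak equivalence $w$ cannot change the parent set of a vertex $\tau$ whose parents are core: $p(\tau)$ always equals the set of core vertices in $w^{-1}(p(w(\tau)))$. For a tassel collapse this is $w(p(\tau))=p(w(\tau))$, and vertex collapses only merge identical vertices. This is condition (b) of the alternative definition, and it applies to tassels too, so your justification of step (i) misreads it. Consequently a new tassel $\tilde y$ with $p(\tilde y)=\{x\}$, $x$ core, can only be sent to a vertex whose parent set is the single vertex $w(x)$.

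Take $S=S^0=S'$ with $S_1=\{a<b\}$, one core vertex $u\in S_2$ with $p(u)=\{a,b\}$, and $f^0=\mathrm{id}$. Your recipe adds $\tilde y$ over $a$ with $\tilde f(\tilde y)=u$, and then there is no weak equivalence $\wtilde S\to S^0$ at all. Your fallback fails for the same reason: an object with a weak equivalence to $S$ can carry a single-parent tassel over a lift of $a$ only if $S$ already has a child of $a$ with parent set $\{a\}$.

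\textbf{The strategy $\tilde f=f^0\circ w$ fails in general.} The choice $p(\tilde y)=\{x\}$ is not the only problem. Let $S_1=\{a<b<d\}$, $S_2=\{v<v'\}$ core with $p(v)=\{a,b\}$ and $p(v')=\{b,d\}$, and start from the representative $(S,\mathrm{id})$ of $\mathrm{id}_S$.
\begin{itemize}
\item For any weak equivalence $w\colon\wtilde S\to S$, the parent sets of the core lifts of $v,v'$ are exactly the core preimages of $\{a,b\}$ and of $\{b,d\}$. These are intervals consisting only of core vertices, and they overlap over $b$.
\item Hence no level-$1$ tassel lies between two core level-$1$ vertices. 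Since $w_1$ is monotone, every level-$1$ tassel is sent to $a$ or $d$.
\item So the fibration condition at the root, which needs tassel children of the root surjecting onto $S_1$, fails for $w=\mathrm{id}\circ w$.
\end{itemize}
Concretely, the tassel $\tilde b$ your construction needs must lie strictly between $a$ and $d$ by monotonicity of $\tilde f_1$. That placement breaks the interval $p(v)$ or $p(v')$, and core vertices may not acquire tassel parents.

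So a proof would have to change $\tilde f$ on core vertices as well. The real content then becomes showing that the modified span still represents $f$ in the colimit. Your step (iii) only gestures at this, and square completion for spans of weak equivalences says nothing about the second leg.

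A smaller point: producing $w_0$ at all already needs the cospan-completion lemma. Representatives in the colimit over $\cW(S)$ need not come with a map to $S$.
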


Let $f\from S \to S'$ be a map of $n$-nets, and $x \in S'_n$. Take a span $(w, g)$ representing map $f$ and define the {\em fiber} of $f$ over $x$ as the component-wise
fiber product
$$
f^{-1}(x) := \wtilde S \times_{S'} (x).
$$

\begin{lemma}
The notion of the fiber doesn't depend on the choice of the span.
\end{lemma}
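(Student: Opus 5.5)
We want to show that if $(w,g)$ and $(w',g')$ are two spans representing the same map $f\from S\to S'$ in $n\Net$, with $w,w'$ weak equivalences and $g,g'$ fibrations, then the fibers $\wtilde S\times_{S'}(x)$ and $\wtilde S'\times_{S'}(x)$ agree, at least up to canonical isomorphism in $n\Net$. By the definition of $\Hom_{n\Net}$ as a colimit over $\cW(S)$, two such spans represent the same element exactly when they become equal after pulling back along weak equivalences. So it is enough to compare spans related by a single weak equivalence and then use the span-completion lemma to connect any two representatives.

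\emph{Step 1: reduction to a roof.} Given $(w,g)$ and $(w',g')$ with the same class, I apply the span-completion lemma (completing spans of weak equivalences to commutative squares) together with the colimit description of $\Hom_{n\Net}(S,S')$. This produces a prenet $\widehat S$ with weak equivalences $u\from\widehat S\to\wtilde S$ and $u'\from\widehat S\to\wtilde S'$ such that $wu=w'u'$ and $gu=g'u'$ as maps of prenets. If the composite $gu$ is not itself a fibration, I first replace $\widehat S$ using the span/fibration representation lemma applied to $gu$. The intended outcome is that we may assume $gu$ is a fibration, possibly after a further weak equivalence over $\widehat S$. It therefore suffices to prove the following: if $u\from\widehat S\to\wtilde S$ is a weak equivalence and both $g$ and $gu$ are fibrations, then $u$ induces a weak equivalence (in fact, I expect, a tassel/vertex-collapse chain) $\widehat S\times_{S'}(x)\to\wtilde S\times_{S'}(x)$. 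In that case the two fibers coincide as objects of $n\Net$.

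\emph{Step 2: fibers of weak equivalences.} Since $\cW$ is generated by tassel collapses and vertex collapses, and weak equivalences satisfy 2-out-of-3 by the earlier lemma, I reduce to $u$ being a single elementary collapse. I then check directly that the restriction of $u$ to the component-wise fiber products is again a tassel collapse or vertex collapse (or an isomorphism).

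For a vertex collapse, the identified vertices $y\in u^{-1}(z)$ all have identical $p$ and $p^{-1}$ data. Since $gu=g\circ u$, they all lie over the same elements of $S'$, so they lie in the fiber simultaneously or not at all. Hence the restriction is a vertex collapse on the fiber. For a tassel collapse, I use the alternative characterization of $\cW$: condition a) (bijection at level $n$, surjection on non-tassels below) and condition b) (the $p$-compatibility) are both inherited by the sub-prenets cut out over $x$.

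\emph{Main obstacle.} The fiber product over $(x)$ must first be shown to be a genuine connected pruned prenet with tassels, and this is where the fibration hypothesis enters. Surjectivity of $p^{-1}(y)\cap T_{i+1}\to p^{-1}(g(y))$ guarantees that every element in the fiber at level $i<n$ has a child in the fiber, which gives prunedness. It also ensures that the restriction of $u$ stays surjective on non-tassels in lower levels. I expect the delicate point to be verifying condition a) of the alternative definition of $\cW$ after restriction, namely surjectivity on $S_i-T_i$ for $i<n$. I would first try to prove this by lifting along the tassel surjection provided by the fibration property of $gu$.
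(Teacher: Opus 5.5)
Your Step~1 is where the argument fails. You assume that two representatives define the same morphism exactly when they agree after precomposition with weak equivalences, i.e.\ that there is a roof $\widehat S$ with $gu=g'u'$. That accounts only for identifications coming from the source. In the localization, morphisms are also identified through weak equivalences on the target.

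Parallel tassel collapses $u\neq v$ become equal (Lemma~\ref{lemma_vc_tc_pair}), so $uh$ and $vh$ become equal as well, and such pairs need not be related by any roof. Example~\ref{exa_net_not_operadic} is exactly this situation, with a minimal target. There $f=uh$ and $g=vh$ agree in $2\Net$. Suppose $e_1,e_2\colon\widehat S\to S$ were weak equivalences with $fe_1=ge_2$. A vertex $z$ with $e_1(z)=b$ would need $e_2(z)=a$, giving $e_1^{-1}(p(b))=p(z)=e_2^{-1}(p(a))=\widehat S_1$, which contradicts surjectivity of $e_1$ at level~$1$.

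Worse, the fibers over $x$ are $\{a,b,c\}$, a minimal prenet different from $I$, and $\{a\}$, which is weakly equivalent to $I$. So for general maps the fiber of a representative is not an invariant of the morphism at all. This is why the paper does not define fibers on $2\Net$.

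Reading $\Hom$ literally as a colimit of prenet Hom-sets does not rescue Step~1. In that colimit $[u]=[v]$ already: insert tassels $t_1<t_2$ between $a$ and $d$ and send them to $(bc,d)$, resp.\ $(a,bc)$. Meanwhile $[uh]\neq[vh]$, so that colimit carries no well-defined composition.

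Your fibration hypothesis is used only to make the fiber product pruned, and it does nothing to exclude this phenomenon. There are also secondary problems. Span completion does not by itself produce $gu=g'u'$. Re-applying the representation lemma to $gu$ yields some fibration in the same class, not necessarily one of the form $gue$.

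The paper proves the statement only for locally constant nets, and its argument has two halves. The first is your Step~2: precomposing with a weak equivalence $w$ changes the fiber by the weak equivalence $w_x$ of Lemma~\ref{lemma_fib_map}. This is immediate by restriction, with no need to decompose $w$ into elementary collapses.

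The second half, which your proposal lacks, handles the target-side identifications. With ULC representatives and minimal target, Theorem~\ref{thm_ulcnet_homs} shows that representatives differ, beyond precomposition with weak equivalences, only by $\simo$. The key input is local constancy. If $hf=hg$ for a tassel collapse $h$, the retractions $w_{h(x)\to x}$ of Definition~\ref{def_loc_const} give weak equivalences from the common fiber $(hf)^{-1}(h(x))$ to both $f^{-1}(x)$ and $g^{-1}(x)$, so $f\simo g$.

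The relation $\simo$ then preserves fibers up to weak equivalence, for two reasons: the vertex collapses from Lemma~\ref{lemma_map_factor} are the identity on level~$2$, and $f_w,g_w$ have pairwise weakly equivalent fibers by definition. Without some hypothesis playing the role of local constancy, there is no way to complete your argument.
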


\vfill\eject
\fi

\setcounter{section}{-1}
\section{Introduction}
The notion of an operad has been an extremely fruitful concept, that over the last half century found application in numerous areas of mathematics. Although it has
certain limitations as to what kind of structures it can express, it still encompasses a wide range of commonly used notions in set theory, linear algebra, topology
and other settings, while at the same time providing manageable and explicit tools to work with them, that other more general approaches do not afford.

In the linear algebraic setting, where operads are typically referred to as algebraic operads, one of the key
aspects of the theory are the so called Bar and Cobar constructions that provide a systematic way to obtain resolutions of algebraic structures, their deformation complexes
along with their homotopy versions. Moreover, the theory has a notion of Koszulity which tremendously simplifies all of these constructions.

\begin{nparagraph}[Classical operads.]
We briefly recall the main concepts of the classical theory in order to highlight the key aspects that will be of interest to us in this paper. An algebraic operad $\cP$
is a collection of vector spaces $\cP_n$ indexed by non-negative integers $n \in \Z_{\ge 0}$, which are sometimes also equipped with an action
of symmetric groups $S_n$, and a collection of operadic composition maps
$$
\gamma_{k_1,\ldots,k_n}\from \cP_n \tensor \cP_{k_1} \tensor \cdots \tensor \cP_{k_n} \to \cP_{k_1 + \cdots + k_n},
$$
satisfying certain natural compatibility relations. The spaces $\cP_n$ should be thought of as spaces of operations of arity $n$ acting on some vector space,
and maps $\gamma$ provide a way of composing those operations. So for $f \in \cP_n$ and collection $g_i \in \cP_{k_i}$ for $1 \le i \le n$ and denoting by $x_{ij}$, for $1 \le j \le k_i$
the inputs of operation $g_i$ we have
$$
\gamma(f, g_1, \ldots, g_n)(x_{11}, \ldots, x_{nk_n}) = f(g_1(x_{11}, \ldots, x_{1k_1}), \ldots, g_n(x_{n1}, \ldots, x_{nk_n})).
$$
Visually this composition can be represented by a tree
$$
\begin{tikzpicture}[inner sep=0pt,baseline=(a11.base)]
\def\u{2em}
\node (a21) at (0, 0) {$\bullet$} node at ($(a21) + (0, 0.7em)$) {$x_{11}$};
\node (a22) at (1*\u, 0) {$\cdots$};
\node (a23) at (2*\u, 0) {$\bullet$} node at ($(a23) + (0, 0.7em)$) {$x_{1k_1}$};
\node (a24) at (4*\u, 0) {$\cdots$};
\node (a25) at (6*\u, 0) {$\bullet$} node at ($(a25) + (0, 0.7em)$) {$x_{n1}$};
\node (a26) at (7*\u, 0) {$\cdots$};
\node (a27) at (8*\u, 0) {$\bullet$} node at ($(a27) + (0, 0.7em)$) {$x_{nk_n}$};

\node (a11) at (1*\u, -1*\u) {$\bullet$} node at ($(a11) + (-0.7em, -0.7em)$) {$g_1$};
\node (a12) at (4*\u, -1*\u) {$\cdots$};
\node (a13) at (7*\u, -1*\u) {$\bullet$} node at ($(a13) + (+0.7em, -0.7em)$) {$g_n$};
\node (a01) at (4*\u, -2*\u) {$\bullet$} node at ($(a01) + (0, -0.9em)$) {$f$};

\draw (a21) -- (a11) -- (a01);
\draw (a22) -- (a11) -- (a23);
\draw (a25) -- (a13) -- (a01);
\draw (a26) -- (a13) -- (a27);
\draw (a12) -- (a01);
\end{tikzpicture}
$$

More conceptually, we can think that spaces $\cP_n$ indexed not by natural numbers, but by finite sets $\ul{n}$, and then operadic compositions $\gamma_p$ are indexed by surjective maps
$p\from \ul{m} \to \ul{n}$. For example, the map $\gamma_{k_1, \ldots, k_n}$ in the previous notation corresponds to $\gamma_p$, with $m = k_1 + \cdots + k_n$ and such that $|p^{-1}(i)| = k_i$.
If we think of $\ul{n}$ as totally ordered finite sets, and require that maps $p \from \ul{m} \to \ul{n}$ preserve the total order, then we arrive to the non-symmetric version of classical operads.
And if we think of $\ul{n}$ as a finite set without any additional structure, then the automorphism group $\Aut(\ul{n}) = S_n$ gives rise to the action of symmetric groups on spaces $\cP_n$.

\end{nparagraph}

\begin{nparagraph}
Algebraic operads describe many classical algebraic structures, such as operad of associative algebras, commutative algebras, Poisson algebras and many others. However, if one asks a rather
natural question whether there is an operad of operads then one arrives to an example of an algebraic structure that looks very operad-like but formally is not described by the classical operads.
As was indicated by E. Getzler and M. Kapranov in \cite{Getz-Kap} in order to handle such a structure, one should consider operations that accept instead of a sequence of arguments, a collection
or arguments arranged in a shape of a tree. If one generalizes further and considers operations with inputs arranged in the shape of certain kinds of graphs, then one arrives to various notions
of PROPs.

This point of view has been adopted and further developed by M. Batanin and M. Markl in \cite{Bat-Markl} where they propose a general theory of operad-like structures, where operations accept
inputs of various shapes, which are encoded by a certain type of category that they call an operadic category.

Alternative approach to general operad-like structures has been developed by Kaufmann under the name of Feynman categories (\cite{Kaufmann}), which leads to a theory very similar to that of
\cite{Bat-Markl}.

\end{nparagraph}

\begin{nparagraph}[Categorical structures.]
Another direction that necessitates consideration of operations with inputs of shapes other than a sequence of arguments comes from an attempt to describe various categorical structures.
The most simple example of this kind would be the structure of a category itself. Perhaps one of the first and most successful implementations of this idea, albeit without any attempt
at developing a general theory of operads, was done by Boardmann and Vogt in \cite{Board-Vogt} where they proposed a notion of a restricted Kan complex, which is commonly known today under
the name of $(\infty, 1)$-category.

Roughly speaking, the main idea here is that an (infinity-)category is a certain kind of simplicial set, so the operations expressing the structure of a category should be indexed
not by finite sets (or more precisely maps $\ul{n} \to \ul{1}$), as we had in the case of classical operads describing set-theoretic structures, but by simplicial sets. First, as an analog
of the terminal object $\ul{1}$ one could try to take a point, that is the terminal simplicial set, however such operations will only encode output objects, and thus are insufficient to capture
the structure of a category. So instead they proposed to index all (higher) compositions in the infinity-category by maps
$$
p\from \Lambda^i_n \to \Delta(n),\quad 0 < i < n,
$$
where $\Lambda^i_n$ is a simplicial horn with apex vertex $i$, and $\Delta(n)$ is the standard $n$-dimensional simplex. For instance, the operation of the shape $\Lambda^1_2 \to \Delta(2)$
gives a usual composition of maps (up to homotopy).

\end{nparagraph}

\begin{nparagraph}
While this works surprisingly well for infinity-categories, it pretty much fails at the next natural step~--- description of a monoidal category. To highlight the issue arising here,
let us attempt to encode a tensor product $f \tensor g$ of maps $f\from X \to X' $ and $g\from Y \to Y'$, in the same fashion. We can represent objects by $1$-dimensional simplices and
describe their tensor product in the same way as the composition before. Then map $f$ will be represented by a $2$-simplex spanning between edges $X$ and $X'$, and similar for $g$,
providing the following overall picture.

$$
\begin{tikzpicture}
\def\u{15em}
\node (a) at (0, 0) {};
\node (b) at (0.5*\u, 0.7*\u) {};
\node (c) at (1*\u, 0) {};
\node (d) at (0.5*\u, 0.25*\u) {};
\draw [thick] (a.center) -- node [midway,left] {$X$} node [midway] (ab) {} (b.center) -- node [midway,right] {$Y$} node [midway] (bc) {} (c.center);
\draw [thick, dashed] (a.center) --node [midway, above] {$X'$} node [near end] (ad) {} (d.center) -- node [midway,above] {$Y'$} node [near start] (dc) {} (c.center);
\draw [double distance=0.1em, arrows={->[width=0.9em]}] (ab) to [bend left=15] node [midway,above right] {$f$} (ad);
\draw [double distance=0.1em, arrows={->[width=0.9em]}] (bc) to [bend right=15] node [midway, above left] {$g$} (dc);
\draw [thick, dashed] (b.center) -- (d.center);
\draw [thick] (a.center) to [bend left=15] node [midway,above] {$X \tensor Y$} node (acu) [midway] {} (c.center);
\draw [thick] (a.center) to [bend right=15] node [midway,below] {$X' \tensor Y'$} node (acd) [midway] {} (c.center);
\draw [double distance=0.1em, arrows={->[width=0.8em]}] (acu) -- (acd) node [midway,right] {$f \tensor g$};
\end{tikzpicture}
$$

The tensor product $X \tensor Y$ is realized by the front facing triangle, the product $X' \tensor Y'$ is realized by the bottom triangle and the product of maps $f \tensor g$
is represented by the face obtained by slicing off the front edge of the simplex.

The issue here is that the output of this operation is not a standard simplex. The usual way of dealing with such situations is to consider some subdivision of the output simplex
and then attempt to embed the simplicial set describing the input data, which in this case is the pair of maps $(f, g)$ into this subdivision. However, this construction depends on the choice
of both the subdivision and the subsequent embedding, which have nothing to do with the original structure that we are trying to describe. An attempt to manage this ambiguity has been
done in \cite{Lurie} by introducing a second type of $2$-dimensional simplices that they call thin simplices.

\end{nparagraph}

\begin{nparagraph}[Globular approach.]
An alternative approach to categorical structures has been proposed by M. Batanin in \cite{Batanin}. Instead if simplicial sets he proposed to work with globular sets.
Since $n$-dimensional globular cells have only two sides, the beginning and the end, as opposed to $n$-dimensional simplices which have $(n+1)$ faces,
the globular sets provide a coarser information, thus limiting the kinds of structures we can describe. Nevertheless it turns out to be enough to capture various interesting
categorical structures including the structure of monoidal category. In fact Batanin developed the whole theory of $n$-operads, which allowed him to define
globular monoidal $n$-categories.

The shapes of inputs (in other words the operadic category) for this theory of $n$-operads are encoded by certain rather simple kind of globular sets, an example of which
in the case of $n = 2$ is depicted below. The advantage of restricting to this kind of globular sets is that they can be easily described combinatorially by pruned leveled trees with $n$ levels.
Equivalently they can be described by $n$-ordinals, that naturally generalize the notion of the totally ordered finite sets, which were the shapes of inputs for the classical operads.
$$
\begin{tikzcd}[sep=5em,cells={inner xsep=2em}]
\begin{tikzpicture}[inner sep=0pt,baseline=(a11.base)]
\def\u{2em}
\node (a21) at (0, 0) {$\bullet$};
\node (a22) at (1*\u, 0) {$\bullet$};
\node (a23) at (2*\u, 0) {$\bullet$};
\node (a24) at (3*\u, 0) {$\bullet$};
\node (a25) at (4*\u, 0) {$\bullet$};
\node (a26) at (5*\u, 0) {$\bullet$};
\node (a11) at (1*\u, -1*\u) {$\bullet$};
\node (a12) at (3*\u, -1*\u) {$\bullet$};
\node (a13) at (4.5*\u, -1*\u) {$\bullet$};
\node (a01) at (2.5*\u, -2*\u) {$\bullet$};

\draw (a21) -- (a11) -- (a01);
\draw (a22) -- (a11) -- (a23);
\draw (a24) -- (a12) -- (a01);
\draw (a25) -- (a13) -- (a01);
\draw (a26) -- (a13);
\end{tikzpicture} \ar[<->, r] &
\begin{tikzpicture}[baseline=-0.2em]
\def\u{4em}

\draw (0, 0) .. controls (0, 0.9*\u) and (1*\u, 0.9*\u) .. node (m11) [midway] {} (1*\u, 0);
\draw (0, 0) .. controls (0, 0.3*\u) and (1*\u, 0.3*\u) .. node (m12) [midway] {} (1*\u, 0);
\draw (0, 0) .. controls (0, -0.3*\u) and (1*\u, -0.3*\u) .. node (m13) [midway] {} (1*\u, 0);
\draw (0, 0) .. controls (0, -0.9*\u) and (1*\u, -0.9*\u) .. node (m14) [midway] {} (1*\u, 0);
\draw (1*\u, 0) .. controls (1*\u, 0.3*\u) and (2*\u, 0.3*\u) .. node (m21) [midway] {} (2*\u, 0);
\draw (1*\u, 0) .. controls (1*\u, -0.3*\u) and (2*\u, -0.3*\u) .. node (m22) [midway] {} (2*\u, 0);
\draw (2*\u, 0) .. controls (2*\u, 0.6*\u) and (3*\u, 0.6*\u) .. node (m31) [midway] {} (3*\u, 0);
\draw (2*\u, 0) -- node (m32) [midway] {} (3*\u, 0);
\draw (2*\u, 0) .. controls (2*\u, -0.6*\u) and (3*\u, -0.6*\u) .. node (m33) [midway] {} (3*\u, 0);

\draw [double distance=0.1em, arrows={->[width=0.9em]}] (m11) -- (m12);
\draw [double distance=0.1em, arrows={->[width=0.9em]}] (m12) -- (m13);
\draw [double distance=0.1em, arrows={->[width=0.9em]}] (m13) -- (m14);
\draw [double distance=0.1em, arrows={->[width=0.9em]}] (m21) -- (m22);
\draw [double distance=0.1em, arrows={->[width=0.9em]}] (m31) -- (m32);
\draw [double distance=0.1em, arrows={->[width=0.9em]}] (m32) -- (m33);

\fill (0, 0) circle [radius=0.2em];
\fill (1*\u, 0) circle [radius=0.2em];
\fill (2*\u, 0) circle [radius=0.2em];
\fill (3*\u, 0) circle [radius=0.2em];
\end{tikzpicture}
\end{tikzcd}
$$

For example, in the description of the monoidal category structure the composition of two maps $gf$ and their tensor product $f \tensor g$ are operations with the inputs of the following respective shapes.

$$
\begin{tikzpicture}[inner sep=0pt,baseline=(a11.base)]
\def\u{2em}
\node (a21) at (0, 0) {$\bullet$} node at ($(a21) + (0, 0.9em)$) {$f$};
\node (a22) at (1*\u, 0) {$\bullet$}  node at ($(a22) + (0, 0.9em)$) {$g$};
\node (a11) at (0.5*\u, -1*\u) {$\bullet$};
\node (a01) at (0.5*\u, -2*\u) {$\bullet$};

\draw (a21) -- (a11) -- (a01);
\draw (a22) -- (a11);
\end{tikzpicture}
\hskip 3em\text{and}\hskip 3em
\begin{tikzpicture}[inner sep=0pt,baseline=(a11.base)]
\def\u{2em}
\node (a21) at (0, 0) {$\bullet$} node at ($(a21) + (0, 0.9em)$) {$f$};
\node (a22) at (1*\u, 0) {$\bullet$}  node at ($(a22) + (0, 0.9em)$) {$g$};
\node (a11) at (0*\u, -1*\u) {$\bullet$};
\node (a12) at (1*\u, -1*\u) {$\bullet$};
\node (a01) at (0.5*\u, -2*\u) {$\bullet$};

\draw (a21) -- (a11) -- (a01);
\draw (a22) -- (a12) -- (a01);
\end{tikzpicture}
$$

The problem with this approach to $n$-operads is that it doesn't admit a notion of Koszul duality. In order to clarify the issue here we will first recall the relevant aspects of the classical picture.

\end{nparagraph}

\begin{nparagraph}[Koszulity.]
Let us briefly outline the key idea of the Bar construction for a classical algebraic operad following V. Ginzburg and M. Kapranov (\cite{Ginzburg-Kapranov}). Starting with an operad $\cP$
we first construct a cofree co-operad $\C$ generated by operations of $\cP$. Explicitly, we put
$$
\C_n \ =\ \bigoplus_{\text{trees }T \atop \text{with $n$ leaves}} \left(\det(T) \tensor \bigotimes_{v \in T} \cP_{|v|} \right),
$$
where the sum is taken over all rooted trees with $n$ leaves, the tensor product is taken over all vertices $v$ of a given tree and $|v|$ denotes the number of inputs of vertex $v$.
In other words, spaces $\C_n$ are spanned by trees with vertices decorated by operations of operad $\cP$.

Each of the spaces $\C_n$ can be graded by the number of edges in the tree $T$. Moreover, one can define a differential on $\C_n$, making it into a complex. This differential is constructed
by combining two pieces of information. The first piece is the composition in the operad $\cP$, and the second piece is the differential in the tree complex, which roughly speaking sends a tree
$T$ to a linear combination of all possible trees obtain by contracting an edge in $T$.

Operad $\cP$ is said to be Koszul if the resulting Bar complex has no higher cohomologies.
\end{nparagraph}

\begin{nparagraph}
The main hurdle in extending this construction to other operadic categories, is the existence of an analog of the tree complex, or more specifically an analog of the edge contraction operation.
Let us rephrase it slightly in order to better adapt it to the theory of operadic categories. Any tree $T$ can be turned into a pruned leveled tree by assigning to every vertex a level, such that
parent of any vertex $v$ lives one level below $v$. Such a leveled tree can be encoded by a sequence of surjective maps of finite sets
$$
\begin{tikzcd}
S_l \ar[r, "p_l"] & S_{l-1} \ar[r, "p_{l-1}"] & \ldots \ar[r] & S_1 \ar[r, "p_1"] & S_0,
\end{tikzcd}
$$
where $S_i$ is the set of all vertices at level $i$, $S_0$ consists of a single vertex~--- the root of the tree, and maps $p_i$ send a vertex to its parent.
Moreover, any tree can be leveled in a such a way the every map $p_i$ in this sequence is {\em elementary}, in the sense that there is at most one element $v \in S_{i-1}$ with
non-trivial preimage $p_i^{-1}(v)$. Then it is clear that the levels of this leveled tree correspond to the edges in the original tree $T$ and contraction of an edge corresponds to
elimination of a level.

While this procedure is self-evident for trees (of finite sets) it fails when we try to extend it to trees of leveled $n$-trees. For example, the following
map in the category of $2$-trees has two non-trivial fibers, but does not admit a factorization into elementary maps.
$$
\begin{tikzcd}[sep=5em,cells={inner xsep=2em}]
\begin{tikzpicture}[inner sep=0pt,baseline=(a11.base)]
\def\u{2em}
\node (a21) at (0, 0) {$\bullet$} node at ($(a21) + (0, 0.9em)$) {$1$};
\node (a22) at (1*\u, 0) {$\bullet$}  node at ($(a22) + (0, 0.9em)$) {$2$};
\node (a11) at (0*\u, -1*\u) {$\bullet$};
\node (a12) at (1*\u, -1*\u) {$\bullet$};
\node (a01) at (0.5*\u, -2*\u) {$\bullet$};

\draw (a21) -- (a11) -- (a01);
\draw (a22) -- (a12) -- (a01);
\end{tikzpicture}
\ar[r, "f"] &
\begin{tikzpicture}[inner sep=0pt,baseline=(a11.base)]
\def\u{2em}
\node (a21) at (0, 0) {$\bullet$} node at ($(a21) + (0, 0.9em)$) {$1$};
\node (a22) at (1*\u, 0) {$\bullet$}  node at ($(a22) + (0, 0.9em)$) {$2$};
\node (a11) at (0.5*\u, -1*\u) {$\bullet$};
\node (a01) at (0.5*\u, -2*\u) {$\bullet$};

\draw (a21) -- (a11) -- (a01);
\draw (a22) -- (a11);
\end{tikzpicture}
\end{tikzcd}
\hskip 2em\text{with fibers:}\hskip 2em
\begin{tikzpicture}[inner sep=0pt,baseline=(a11.base)]
\def\u{2em}
\node (a21) at (0, 0) {$\bullet$} node at ($(a21) + (0, 0.9em)$) {$1$};
\node (a11) at (0*\u, -1*\u) {$\bullet$};
\node (a12) at (1*\u, -1*\u) {$\bullet$};
\node (a01) at (0.5*\u, -2*\u) {$\bullet$};

\draw (a21) -- (a11) -- (a01);
\draw (a12) -- (a01);
\end{tikzpicture}\quad,\quad
\begin{tikzpicture}[inner sep=0pt,baseline=(a11.base)]
\def\u{2em}
\node (a22) at (1*\u, 0) {$\bullet$}  node at ($(a22) + (0, 0.9em)$) {$2$};
\node (a11) at (0*\u, -1*\u) {$\bullet$};
\node (a12) at (1*\u, -1*\u) {$\bullet$};
\node (a01) at (0.5*\u, -2*\u) {$\bullet$};

\draw (a11) -- (a01);
\draw (a22) -- (a12) -- (a01);
\end{tikzpicture}
$$

\end{nparagraph}

\begin{nparagraph}

Let us describe the key idea behind our proposed solution to this problem. Consider the globular sets associated to the two leveled trees in the example above,
then one could attempt to do a factorization of the following kind.
$$
\begin{tikzcd}[sep=4em,cells={inner xsep=1em}]
\begin{tikzpicture}[baseline=-0.2em]
\def\u{4em}

\draw (0, 0) .. controls (0, 0.3*\u) and (1*\u, 0.3*\u) .. node (m11) [midway] {} (1*\u, 0);
\draw (0, 0) .. controls (0, -0.3*\u) and (1*\u, -0.3*\u) .. node (m12) [midway] {} (1*\u, 0);
\draw (1*\u, 0) .. controls (1*\u, 0.3*\u) and (2*\u, 0.3*\u) .. node (m21) [midway] {} (2*\u, 0);
\draw (1*\u, 0) .. controls (1*\u, -0.3*\u) and (2*\u, -0.3*\u) .. node (m22) [midway] {} (2*\u, 0);

\draw [double distance=0.1em, arrows={->[width=0.9em]}] (m11) -- (m12);
\draw [double distance=0.1em, arrows={->[width=0.9em]}] (m21) -- (m22);

\fill (0, 0) circle [radius=0.2em];
\fill (1*\u, 0) circle [radius=0.2em];
\fill (2*\u, 0) circle [radius=0.2em];
\end{tikzpicture}
\ar[r] &
\begin{tikzpicture}[baseline=-0.2em]
\def\u{4em}

\draw (0, 0) .. controls (0, 0.6*\u) and (2*\u, 0.6*\u) .. node (m11) [midway] {} (2*\u, 0);
\draw (0, 0) -- node (m12) [midway] {} (2*\u, 0);
\draw (1*\u, 0) -- node (m21) [midway] {} (2*\u, 0);
\draw (1*\u, 0) .. controls (1*\u, -0.6*\u) and (2*\u, -0.6*\u) .. node (m22) [midway] {} (2*\u, 0);

\draw [double distance=0.1em, arrows={->[width=0.9em]}] (m11) -- (m12);
\draw [double distance=0.1em, arrows={->[width=0.9em]}] (m21) -- (m22);

\fill (0, 0) circle [radius=0.2em];
\fill (1*\u, 0) circle [radius=0.2em];
\fill (2*\u, 0) circle [radius=0.2em];
\end{tikzpicture}
\ar[r] &
\begin{tikzpicture}[baseline=-0.2em]
\def\u{4em}

\draw (0, 0) .. controls (0, 0.6*\u) and (2*\u, 0.6*\u) .. node (m11) [midway] {} (2*\u, 0);
\draw (0, 0) -- node (m12) [midway] {} (2*\u, 0);
\draw (0, 0) .. controls (0, -0.6*\u) and (2*\u, -0.6*\u) .. node (m13) [midway] {} (2*\u, 0);

\draw [double distance=0.1em, arrows={->[width=0.9em]}] (m11) -- (m12);
\draw [double distance=0.1em, arrows={->[width=0.9em]}] (m12) -- (m13);

\fill (0, 0) circle [radius=0.2em];
\fill (1*\u, 0) circle [radius=0.2em];
\fill (2*\u, 0) circle [radius=0.2em];
\end{tikzpicture}
\end{tikzcd}
$$

The object in the middle is no longer a globular set, however it still admits a combinatorial description in the spirit of leveled $n$-trees, only now we have to 
extend the category of trees to the category of {\em nets}, in the sense that a vertex is now allowed to have more than one parent. For instance, the factorization above
can be described by the following nets.
$$
\begin{tikzcd}[sep=5em,cells={inner xsep=2em}]
\begin{tikzpicture}[inner sep=0pt,baseline=(a11.base)]
\def\u{2em}
\node (a21) at (0, 0) {$\bullet$} node at ($(a21) + (0, 0.9em)$) {$1$};
\node (a22) at (1*\u, 0) {$\bullet$}  node at ($(a22) + (0, 0.9em)$) {$2$};
\node (a11) at (0*\u, -1*\u) {$\bullet$};
\node (a12) at (1*\u, -1*\u) {$\bullet$};
\node (a01) at (0.5*\u, -2*\u) {$\bullet$};

\draw (a21) -- (a11) -- (a01);
\draw (a22) -- (a12) -- (a01);
\end{tikzpicture}
\ar[r] &
\begin{tikzpicture}[inner sep=0pt,baseline=(a11.base)]
\def\u{2em}
\node (a21) at (0, 0) {$\bullet$} node at ($(a21) + (0, 0.9em)$) {$1$};
\node (a22) at (1*\u, 0) {$\bullet$}  node at ($(a22) + (0, 0.9em)$) {$2$};
\node (a11) at (0*\u, -1*\u) {$\bullet$};
\node (a12) at (1*\u, -1*\u) {$\bullet$};
\node (a01) at (0.5*\u, -2*\u) {$\bullet$};

\draw (a21) -- (a11) -- (a01);
\draw (a21) -- (a12);
\draw (a22) -- (a12) -- (a01);
\end{tikzpicture}
\ar[r] &
\begin{tikzpicture}[inner sep=0pt,baseline=(a11.base)]
\def\u{2em}
\node (a21) at (0, 0) {$\bullet$} node at ($(a21) + (0, 0.9em)$) {$1$};
\node (a22) at (1*\u, 0) {$\bullet$}  node at ($(a22) + (0, 0.9em)$) {$2$};
\node (a11) at (0*\u, -1*\u) {$\bullet$};
\node (a12) at (1*\u, -1*\u) {$\bullet$};
\node (a01) at (0.5*\u, -2*\u) {$\bullet$};

\draw (a21) -- (a11) -- (a01);
\draw (a21) -- (a12);
\draw (a22) -- (a11);
\draw (a22) -- (a12) -- (a01);
\end{tikzpicture}
\end{tikzcd}
$$

While the precise description of the factorization in our category will be slightly different, this picture conveys the essence of the construction and serves as
the starting point for the development of our theory. Although the pictures here are a certain type of graphs, we use the word nets instead, as the category of graphs
is already used extensively in relation to study of operads. And since the maps in the category of graphs are very different from the maps that we study
in this paper, we prefer to use the word nets to avoid confusion.

Let us take a closer look at what we need to do in order to get the factorization above. First we take the left $2$-cell and stretch it over the top edge of the right cell,
which can also be thought of as composing with an ``identity cell'', which originally wasn't in the picture. Moreover, the last object in the decomposition is not
the target globular set of map $f$ that we are factorizing, but a ``stretched'' version of it. Therefore, we want to make the following two procedures isomorphisms in
our operadic category.
\begin{enumerate}[label=\alph*)]
\item Adding ``identity cells'' (or {\em tassels} as we call them) along the edges of existing cells.
\item Stretching existing cells.
\end{enumerate}

\end{nparagraph}

\begin{nparagraph}[Main results.]
Following the ideas outline above, we define the category of $2$-prenets with tassels, and a class of weak equivalences containing (and essentially generated by) the
two types of procedures (a) and (b) mentioned before. We also define a subcategory of $2$-prenets consisting of universally locally constant maps.

Then we define the category of (locally constant) $2$-nets as the localization of the category of $2$-prenets (respectively $2$-prenets and universally locally constant maps)
with respect to the class of weak equivalences, and give a description of maps in these localized categories that allows us to work with them effectively.

Finally, we show that the category of locally constant $2$-nets has the following properties, thus providing a solution to the original problem.
\begin{itemize}
\item The category of locally constant $2$-nets is an operadic category.
\item It contains the category of leveled $2$-trees as a full subcategory.
\item It admits factorization of maps into a chain of elementary maps.
\end{itemize}

\end{nparagraph}

\begin{nparagraph}[Shortcomings.]
In this paper we only work with nets and prenets with $n = 2$ levels. In order to generalize this to an arbitrary $n$, one first has to allow tassels on all levels of the
prenet, and then tweak the definitions of maps and weak equivalences to ensure that the lifting proposition \ref{prop_lift} still holds, since it serves as the crucial step
towards an effective description of the localized category.

There is also an issue with the factorization of a map into a chain of elementary maps in the category of locally constant $2$-nets. Namely, the set of non-trivial fibers of such factorization
in general depends on the choice of the factorization. However, this discrepancy is of a rather tame nature, as the fibers only differ by merging of some tassels into another tassel.
Moreover, by analyzing examples, it appears that this discrepancy is closely related to the failure of the category of (non locally constant) $2$-nets to be an operadic category.
Which leads us to conjecture that one can define a larger class of weak equivalences that both resolves this discrepancy between fibers of different factorizations, as well as makes
the category of further localized $2$-nets into an operadic category.

\end{nparagraph}

\begin{nparagraph}[Computational networks.]
Unrelated to the problem of Koszulity the categories constructed in this paper (and especially their generalizations for an arbitrary height $n$)
may be of interest in relation to study of computational networks. For example, we can consider a prenet as a kind of network, where we assign inputs to the vertices at the highest level,
and to every vertex at level $i$ we assign some operation that takes inputs from its children at level $i + 1$ and sends the outputs to its parents at level $i - 1$. With this interpretation
two prenets are weakly equivalent to each other if the pattern of exchange of information between vertices in one network is similar to that of the other. In particular a weak
equivalence map tells us which vertices can be combined together without the need to add new connections in the network.

\end{nparagraph}

\begin{nparagraph}[Outline of the paper.]
In section \ref{sec_recollect} we introduce some terminology and notations that are used throughout the paper. We also recall definitions of operadic category, $n$-trees and $n$-ordinals,
as well as the notion of localization of a category.

Section \ref{sec_prenets} introduces the notion of $2$-prenets and maps between them. Since the exposition is very formal the reader is encouraged to refer to the discussion of the
geometric realization of prenets at the end of section \ref{sec_prenets} and the list of examples in appendix \ref{sec_examples} for motivation behind and illustration of definitions
and statements.

Section \ref{sec_weak_equiv} is dedicated entirely to the definition of weak equivalences and studying their properties. The mains results here are the lifting construction of
proposition \ref{prop_lift} as well as discussion of minimal representatives of equivalence classes at the end of the section.

In section \ref{sec_nets} we define the category of $2$-nets as the localization of $2$-prenets with respect to the class of weak equivalences. Then in theorem \ref{thm_net_homs}
we give a description of maps in the localized category, combining ideas of Quillen and Ore localizations. We conclude the section with the construction of a fully faithful functor
from the category of $2$-trees with maps surjective at level $1$ into the category of $2$-nets.

Section \ref{sec_lc_nets} is devoted to the definition and study of the class of locally constant maps between $2$-prenets. We define the category of locally constant $2$-nets as
the localization of the subcategory of $2$-prenets and universally locally constant maps between them with respect to weak equivalences. The resulting category is a refinement
of the category of $2$-nets, in the sense that the natural functor from locally constant $2$-nets to $2$-nets is surjective on the sets of maps. And we conclude the section
by showing that the category of locally constant $2$-nets is an operadic category admitting factorization of maps into chains elementary maps.

We conclude the paper with two appendices. In appendix \ref{sec_q_tassels} we discuss the issue of non-functoriality of the lifting construction in proposition \ref{prop_lift},
and outline an enhancement of the category of $2$-prenets that admits a functorial lift. Finally, in appendix \ref{sec_examples} we provide a list of examples illustrating various
notions and statements in this paper.

\end{nparagraph}

\vskip 1em

The authors would like to thank Michael Batanin for an insightful discussion on the subject.

\vskip 5em

\section{Notations and recollections}
\label{sec_recollect}
In this section we recall some terminology and introduce notations that will be used throughout this paper.

\begin{nparagraph}[Partially ordered sets.]
Let $(S, <)$ be a totally ordered finite set. An interval in $S$ is a subset $I \subset S$ such that whenever $x < y < z$ with $x, z \in I$ we have $y \in I$.
We will denote by $\Int(S)$ the set of all intervals in $S$. If $U$ and $V$ are two subsets of $S$ (not necessarily intervals) such that for any $u \in U$
and $v \in V$ we have $u < v$, then we will write $U < V$.

Consider an arbitrary subset $C \subset S$, the restriction of the total order $<$ in $S$ to $C$ can be also thought of as a partial order in $S$
that we will denote by $\lesub_C$. Specifically, we say that $x \lesub_C y$ whenever both $x, y \in C$ and $x < y$, and if either $x$ or $y$ is not in $C$
then they are incomparable. Furthermore, if we have a collection of subsets $C_i \subset S$ we can construct the minimal partial order $\lesub_{\{C_i\}}$ generated
by all $\lesub_{C_i}$.
\end{nparagraph}

\begin{nparagraph}[Localization of a category.]
Let $\C$ be a category and consider a subcategory $\cW$ of $\C$. We recall the notion of localization of $\C$ with respect to $\cW$, that we will denote by $\C[\cW^{-1}]$
following \cite{Hovey}.

First, we construct the free category $F(\C, \cW^{-1})$ who's objects are the same as in the category $\C$ and morphisms are generated by maps in $\C$ and formal inverses of
maps in $\cW$. In other words a map from $S$ to $T$ in $F(\C, \cW^{-1})$ is an alternating zig-zag consisting of maps from $\C$ and $\cW$
$$
\begin{tikzcd}
S \ar[r] & S_1 & S_2 \ar[dashed, l] \ar[r] & \cdots & S_n \ar[dashed, l] \ar[r] & T,
\end{tikzcd}
$$
where dashed arrows represent maps in $\cW$. The composition is defined as the concatenation of zig-zags and the identity map is the zig-zag of length zero.

We define the localization $\C[\cW^{-1}]$ as the quotient of this category by the following relations.
\begin{enumerate}[label=\alph*)]
\item For any object $S \in \C$ the zig-zag of length $1$ consisting of $\id_S$, considered either as a map in $\C$ or in $\cW$, is identified with the identity map in $F(\C, \cW)$.
\item For any two composable maps $f$ and $g$ in $\C$ the zig-zag of length $1$ consisting of the composition $gf$ is identified with the zig-zag of the form
$$
\begin{tikzcd}
\phi\from S \ar[r, "f"] & P & P \ar[l, "="'] \ar[r, "g"] & T.
\end{tikzcd}
$$
\item For any two composable maps $u$ and $v$ in $\cW$ the zig-zag consisting of their composition $vu$ is identified with the zig-zag of the form
$$
\begin{tikzcd}
\psi\from S & P \ar[l, "v"'] \ar[r, "="] & P & T \ar[l, "u"'].
\end{tikzcd}
$$
\item For any map $w\from S \to T$ in $\cW$ the zig-zag of the form
$$
\begin{tikzcd}
S \ar[r, "w"] & T & S \ar[l, "w"']
\end{tikzcd}
$$
is identified with $\id_S$ and the zig-zag of the form
$$
\begin{tikzcd}
T & S \ar[l, "w"'] \ar[r, "w"] & T 
\end{tikzcd}
$$
is identified with $\id_T$.
\end{enumerate}

\end{nparagraph}

\begin{nparagraph}[Operadic category.]
We recall the notion of an operadic category introduced by M. Batanin and M. Markl (\cite{Bat-Markl-1}, \cite{Bat-Markl}).

Denote by $\FSets$ the category of finite sets and maps between them. An operadic category is a category $\O$ such that every connected component $\mathcal{K}$ of $\O$ has a chosen local terminal object, that will be denoted by $U_\K$.
Moreover $\O$ is equipped with a {\em cardinality} functor
$$
\begin{tikzcd}
\O \ar[r, "|-|"] & \FSets,
\end{tikzcd}
$$
such that for any component $\K$ the cardinality $|U_\K| = \ul{1}$.

Additionally for every object $T \in \O$ and $x \in |T|$ we have a {\em fiber functor} $\Fib_x \from \O / T \to \O$, satisfying the following conditions.
\begin{enumerate}[label=\alph*)]
\item Let $f\from S \to T$ be an object of $\O / T$, then $|\Fib_x(f)| = |f|^{-1}(x)$.
\item For any map $h\from S \to P$ in $\O / T$
$$
\begin{tikzcd}
S \ar[dr, "f"'] \ar[rr, "h"] && P \ar[dl, "g"] \\
& T &
\end{tikzcd}
$$
and every $x \in |T|$, the map $|\Fib_x(h)|$ coincides with the map induced by $|h|$ between preimages $|f|^{-1}(x)$ and $|g|^{-1}(x)$.
$$
\begin{tikzcd}
{|S|} \ar[dr, "|f|"'] \ar[rr, "|h|"] && {|P|} \ar[dl, "|g|"] \\
& {|T|} &
\end{tikzcd}
$$
\item For the identity map $\id_T$ considered as an object of $\O / T$ and every $x \in |T|$ the fiber $\Fib_x(\id_T)$ is the chosen local terminal object of some component $\K \subset \O$.
\item Let $T$ be the chosen terminal object of a connected component $\K$, and denote by $z$ the only element of $|T|$. Then the fiber functor $\Fib_z \from \O / T \to \O$ coincides with the
inclusion of the connected component $\K$ into $\O$.

\item Consider the category $\mathcal{A}_2(P, T)$, consisting of pairs of composable arrows
$$
\begin{tikzcd}
S \ar[r, "h"] & P \ar[r, "g"] & T,
\end{tikzcd}
$$
and let $x \in |P|$. We also consider two functors, the forgetful functor $F\from \cA_2(P, T) \to \O / P$, which forgets $T$, and the fiber functor $\Fib_{|g|(x)}\from \cA_2(P, T) \to \O / \Fib_{|g|(x)}(g)$.
In virtue of (a) the element $x$ can also be considered as an element of $|\Fib_{|g|(x)}(g)|$, and we require that the following diagram of functors commutes.
$$
\begin{tikzcd}[sep=3em]
\cA_2(P, T) \ar[d, "\Fib_{|g|(x)}"'] \ar[r, "F"] & \O / P \ar[d, "\Fib_x"] \\
\O / \Fib_{|g|(x)}(g) \ar[r, "\Fib_x"'] & \O.
\end{tikzcd}
$$
\end{enumerate}
Notice, that due to functoriality the last condition (e) implies both conditions (\romannumeral4) and (\romannumeral5) of section 1 in \cite{Bat-Markl}.

A special case of this notion was considered by C. Barwick in \cite{Barwick} under the name of operator category, where the cardinality functor is given by $|T| = \Hom_{\O}(Z, T)$, where $Z$ is the terminal object
of $\O$, and the fiber functor $\Fib_x$ is given by the Cartesian product
$$
\begin{tikzcd}[sep=3em]
\Fib_x(f) \ar[d] \ar[r] & S \ar[d, "f"] \\
Z \ar[r, "x"'] & T,
\end{tikzcd}
$$
who's existence is assumed. We would like to point out here, that the operadic category of locally constant $2$-nets constructed in this paper doesn't not have this kind of fibered products.
\end{nparagraph}

\begin{nparagraph}[Weak blow-up axiom.]
This definition covers a wide range of operadic categories, however it may be useful to impose some additional properties. For instance, in \cite{Bat-Markl-3} Batanin and Markl consider a list of properties
on the category $\O$ that allow them to develop a generalization of the Koszul duality theory for $\O$-operads. Of a particular importance for us here will be the so called weak blow-up axiom.

First we equip the finite sets in the category $\FSets$ with a total order, then we can consider a subcategory $\Delta \subset \FSets$ consisting of order preserving maps. We denote by $\O_\ord$ the subcategory
of $\O$ consisting of maps $f$, such that $|f|$ belongs to $\Delta$, and we refer to such maps as order preserving.

Let $f\from S \to T$ be an order preserving map, and consider a collection of maps $l_x\from \Fib_x (f) \to P_x$ for some $P_x \in \O$, indexed by $x \in |T|$.
We say that an operadic category $\O$ satisfies the weak blow-up axiom if there exists a unique factorization of $f$
$$
\begin{tikzcd}
S \ar[dr, "f"'] \ar[rr, "h"] && P, \ar[dl, "g"] \\
& T &
\end{tikzcd}
$$
such that $g \in \O_\ord$ and the map between fibers $h_x = \Fib_x(h)\from \Fib_x(f) \to \Fib_x(g)$ coincide with $l_x$.

\end{nparagraph}

\begin{nparagraph}[$n$-trees and $n$-ordinals.]
We begin by recalling the notion of an $n$-ordinal, for details we refer to \cite{Batanin-sym} and \cite{Bat-n-ordinal}. Let $T$ be a finite set equipped with $n$ partial orders $<_i$ for $0 \le i \le n-1$.
We say that $T$ is an $n$-ordinal if they satisfy the following conditions.
\begin{enumerate}[label=\alph*)]
\item For any two distinct elements $x, y \in T$ there exists unique $i$, such that $x$ and $y$ are comparable with respect to $<_i$.
\item If $x <_i y$ and $y <_j z$ then $x <_{\min\{i, j\}} z$.
\end{enumerate}

A map between two $n$-ordinals $f\from (S, <^S_\bullet) \to (T, <^T_\bullet)$ is a map $f\from S \to T$ of the underlying sets, such that whenever $x <^S_i y$ one of the following three possibilities holds:
$f(x) <^T_j f(y)$ for some $j \ge i$, $f(x) = f(y)$ or $f(y) <^T_j f(x)$ for some $j > i$.

We denote the category of $n$-ordinals and maps between them by $n\Ord$.
\end{nparagraph}

\begin{nparagraph}
A leveled $n$-tree $S$ is a collection of totally ordered finite sets $\{S_0, S_1, \ldots, S_n\}$ and maps
$$
\begin{tikzcd}
S_n \ar[r, "p_n"] & S_{n-1} \ar[r, "p_{n-1}"] & \cdots \ar[r] & S_1 \ar[r, "p_1"] & S_0,
\end{tikzcd}
$$
called parent maps, such that $S_0$ consists of a single element, referred to as the root of $S$. A tree is called {\em pruned} if all of the maps $p_i$, $1 \le i \le n$ are surjective.

For each level $l$ of the tree we have a structure of an $l$-ordinal on $S_l$ defined by putting for two distinct elements $x$ and $y$ in $S_l$
$$
x <^{S_l}_i y
$$
where $i$ is the maximal level of the tree, such that
$$
p_l \ldots p_{i+1}(x) = p_l \ldots p_{i+1}(y).
$$

A map between two leveled $n$-trees $f\from S \to T$ is a collection of maps $f_i\from S_i \to T_i$ for $0 \le i \le n$, such that all the squares in the diagram below commute.
$$
\begin{tikzcd}[sep=3em]
S_n \ar[d, "f_n"'] \ar[r, "(p_S)_n"] & S_{n-1} \ar[d, "f_{n-1}"] \ar[r, "(p_S)_{n-1}"] & \cdots \ar[r] & S_1 \ar[d, "f_1"] \ar[r, "(p_S)_1"] & S_0 \ar[d, "f_0"] \\
T_n \ar[r, "(p_T)_n"'] & T_{n-1} \ar[r, "(p_T)_{n-1}"'] & \cdots \ar[r] & T_1 \ar[r, "(p_T)_1"'] & T_0.
\end{tikzcd}
$$
Moreover, we require that on each level $l$ the map $f_l$ is a map of $l$-ordinals.

We denote the category of $n$-trees and maps between them by $n\Tree$. As was shown by M. Batanin (\cite{Bat-n-ordinal}) there is a natural functor from the category of $n$-ordinals to the category of $n$-trees,
which is fully faithful, and its image consists of the pruned $n$-tress.

Let us fix $l \le n$ and consider a subcategory of $n\Tree$ consisting of maps $f \from S \to T$, such that $f_i$ are surjective for all $0 \le i \le l$. We will denote this category by $(n\Tree, \Epi_l)$.

\end{nparagraph}

\vskip 5em


\section{2-PreNets with tassels}
\label{sec_prenets}
In this section we define the category of $2$-prenets with tassels and describe their geometric realization as two-dimensional cellular complexes.

\begin{definition}
\label{def_prenet}
A prenet of height $2$ (or a $2$-prenet) is a triple of totally ordered finite sets $\{S_0, S_1, S_2\}$ and maps $p_i \from S_i \to \Int(S_{i-1})$ for $i = 1, 2$,
with the property that for any $x, y \in S_i$, $i \ge 1$ such that $x < y$ and which are incomparable with respect to the partial order generated by $\lesub_{p_i^{-1}(t)}$ for all $t \in S_{i-1}$, we have $p(x) < p(y)$.
\end{definition}

We will refer to elements of $S_i$ as vertices of prenet of level $i$, to elements of $p(x)$ as {\em parents} of $x$.
For any $t \in S_{i-1}$ we will denote $p_i^{-1}(t) = \{x \in S_i \mid t \in p_i(x)\}$, and refer to elements of $p^{-1}(t)$ as {\em children} of $t$.
For every $S_i$ we will write $\pless$ for the partial order generated by partial orders $\lesub_{p_i^{-1}(t)}$ for all $t \in S_{i-1}$.

We say that a net is {\em connected} if $S_0$ consists of a single element. Furthermore, we say that an $2$-prenet is {\em pruned} if for every $t \in S_i$ with $i < 2$
there exists $x \in S_{i+1}$, such that $t \in p(x)$. It is clear from the definition that for a connected $2$-prenet $S$ the partial order $\pless$ on $S_1$ coincides with the total order.
In this paper we will be primarily interested in pruned connected prenets.

\begin{remark}
\label{rem_total_order}
Let us point out that for a connected prenet $S$ the total order for each level $S_i$, $i \ge 1$ is determined by the collection of partial orders $\pless$ on levels $S_j$ for $j \le i$.
As we already mentioned on $S_1$ the partial order $\pless$ coincides with the total order, and there is nothing to check. Consider two vertices $x$ and $y$ in $S_2$, then either they
are comparable with respect to $\pless$ or if they are not then the sets of their parents are disjoint and we put $x < y$ whenever $p(x) < p(y)$. It remains to check that this is a
well defined order.

Let us assume that $x \pless y$, and that $y$ and $z$ are incomparable and $p(y) < p(z)$. We need to show that one of the following two cases takes place, either $x \pless z$ or $x$ and $z$ are
incomparable, and $p(x) < p(z)$. First consider the case when $x$ and $z$ are comparable, then we can not have $z \pless x$, as that would imply that $z \pless y$, but by assumption
$y$ and $z$ are incomparable. Therefore we must have $x \pless z$.

Next, assume that if $x$ and $z$ are incomparable then $z < x$ would imply that $p(y) < p(z) < p(x)$. Since $x \pless y$ there exists a chain of vertices $\{x_0, x_1, \ldots, x_n\}$, such that
$x = x_0$, $y = x_n$ and every pair $(x_i, x_{i+1})$ has a common parent, and we take a minimal such chain. We will arrive to a contradiction using inductive argument on its length. For $n = 1$ vertices $x$ and $y$
share a parent, which immediately contradicts $p(y) < p(x)$. If $n > 1$ then consider the pair over vertices $x_1$ and $z$. If they are comparable, then by the previous argument we have $x_1 \pless z$,
but since $x \pless x_1$ we also have $x \pless z$ which contradicts the assumption that they are incomparable. Therefore, $x_1$ and $z$ must be incomparable, and since $x_1$ and $y$ are connected by a shorter
chain of vertices, using the inductive assumption we arrive to a contradiction. Hence we must have $p(x) < p(z)$.
\end{remark}

\begin{definition}
\label{def_sp_prenet}
A 2-prenet $S$ is called {\em special} if it satisfies the following two conditions.
\begin{enumerate}[label=\alph*)]
\item For any $x, y \in S_i$, such that $x \pless y$, there exists $t \in S_{i-1}$, such that $t \in p(x) \cap p(y)$.

\item For any $x \in S_i$, $i \ge 1$ and $t \in p(x)$ at least one of the following two conditions holds:
$$
\begin{tikzcd}[row sep=tiny]
\forall y \in p_i^{-1}(t), \ \text{\em such that}\  y < x \quad\Rightarrow\quad p(x) \subset p(y),\\
\forall y \in p_i^{-1}(t), \ \text{\em such that}\  x < y \quad\Rightarrow\quad p(x) \subset p(y).\\
\end{tikzcd}
$$
\end{enumerate}
\end{definition}

\begin{nparagraph}
\label{par_AB_notation}
Before defining maps between prenets we need to introduce some notations. Let $S$ be a prenet and consider a subset $U \subset S_i$. For any $z \in p(U)$ we form
the intersection $p^{-1}(z) \cap U$ and denote by $a(z)$ its minimal element and by $b(z)$ its maximal element. Denote by $A(U)$ the subset of $U$ formed by elements
$\{a(z) \mid z \in p(U)\}$ and similarly we denote $B(U) = \{b(z) \mid z \in p(U)\} \subset U$.

For any two subsets $V \in S_i$ and $W \in S_{i-1}$ we say that $V$ is a {\em disjoint cover} of $W$ if subsets $p(x)$ are pairwise disjoint for all $x \in V$ and $W = \bigcup_{x \in V} p(x)$.
\end{nparagraph}

\begin{definition}
\label{def_prenet_map}
A map $f$ between two prenets $S = (S_\bullet, p_S)$ and $T = (T_\bullet, p_T)$ is a collection of maps $\{f_i\from S_i \to T_i\}$, such that
\begin{enumerate}[label=\alph*)]
\item each $f_i$ preserves that partial orders $\pless$ on $S_i$ and $T_i$,
\item for each $i \ge 1$ and $x \in T_i$ we have $f^{-1}(p_T(x)) = p_S(f^{-1}(x))$,
\item for each $i \ge 1$ and $x \in T_i$ subsets $A(f^{-1}(x))$ and $B(f^{-1}(x))$ are disjoint covers of $f^{-1}(p_T(x)) = p_S(f^{-1}(x))$.
\end{enumerate}
\end{definition}

For the motivation and an illustration of condition (c) we refer to example \ref{exa_map_c}.

Let us point out several simple facts that immediately follow from the definition.
\begin{lemma} Let $f\from S \to T$ be a map of prenets.
\label{lemma_prenet_map}
\begin{enumerate}[label=\alph*')]
\item If $S$ and $T$ are two connected prenets, then all components $f_i$ are surjective maps, and properties \ref{def_prenet_map} (b) and (c) are trivially satisfied at level $1$.
\item The condition (b) of definition \ref{def_prenet_map} is equivalent to
$$
f(p^{-1}(x)) = p^{-1}(f(x))
$$
for any $x \in S_i$, $i < 2$.
\item Map $f$ preserves relation of parenthood, i.e. if $x, y \in S$ such that $x \in p(y)$ then $f(x) \in p(f(y))$.
\item If $x, y$ are two vertices of $T$ such that $x \in p(y)$, then for any $x' \in f^{-1}(x)$ there exists at least one $y' \in f^{-1}(y)$ such that $x' \in p(y')$.
\end{enumerate}
\end{lemma}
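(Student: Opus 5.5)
Each of the four claims reduces to unwinding Definition \ref{def_prenet_map}; the main thing to keep straight is the interplay of images and preimages in condition (b). I treat the items in the order (b'), (c'), (d'), (a'), since the surjectivity in (a') uses (d').

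For (b'), fix $x \in S_{i}$ with $i<2$ and apply condition (b) at the vertices $y \in T_{i+1}$. Take $z \in f(p^{-1}(x))$, say $z = f(y')$ with $x \in p(y')$. Then $y' \in f^{-1}(z)$, so by (b) $x \in p(f^{-1}(z)) = f^{-1}(p(z))$, i.e. $f(x) \in p(z)$ and $z \in p^{-1}(f(x))$. Conversely, if $f(x) \in p(z)$ then $x \in f^{-1}(p(z)) = p(f^{-1}(z))$, so there is $y' \in f^{-1}(z)$ with $x \in p(y')$, and $z = f(y') \in f(p^{-1}(x))$. For the reverse implication, assume the displayed identity for all $x$ and unwind both inclusions of $f^{-1}(p(z)) = p(f^{-1}(z))$: the condition $x \in f^{-1}(p(z))$ means $z \in p^{-1}(f(x)) = f(p^{-1}(x))$, which is exactly $x \in p(f^{-1}(z))$. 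So (b) and (b') are the same pointwise statement read in two directions.

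Items (c') and (d') are then immediate. For (c'), if $x \in p(y)$ then $x \in p(f^{-1}(f(y))) = f^{-1}(p(f(y)))$ by (b), so $f(x) \in p(f(y))$. For (d'), if $x \in p(y)$ in $T$ and $x' \in f^{-1}(x)$, then $x' \in f^{-1}(p(y)) = p(f^{-1}(y))$ by (b), which provides the required $y' \in f^{-1}(y)$ with $x' \in p(y')$.

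For (a'), $f_0$ is a map between one-point sets, hence a bijection. Surjectivity at the higher levels follows by induction using (d') and prunedness of $T$ (standing assumption). Given $y \in T_i$ with $i \ge 1$, pick $x \in p(y) \subset T_{i-1}$; this set is nonempty since (b) forces $f^{-1}(p(y)) = p(f^{-1}(y))$, and for connected pruned nets parents are nonempty intervals. By induction $f_{i-1}$ is surjective, so some $x' \in f^{-1}(x)$ exists, and (d') gives $y' \in f^{-1}(y)$. At level $1$ the target $S_0 = \{*\}$, so $p(x) = S_0$ for every $x \in S_1$, assuming $p$ takes nonempty values, which is the intended convention. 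Then $f^{-1}(p(y)) = S_0 = p(f^{-1}(y))$ holds as soon as $f^{-1}(y) \neq \emptyset$, and condition (c) holds because each of $A$ and $B$ is a single vertex whose parent set is $S_0$, hence a disjoint cover. The only subtle point, and the one I would check carefully, is the nonemptiness of $p(y)$, which makes the level-one statements non-circular.
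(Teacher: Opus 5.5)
Your proof is correct and is essentially the paper's: (c'), (d') and the surjectivity induction all come straight out of the pointwise content of (b), and your direct two-way unwinding of (b) $\Leftrightarrow$ (b') is a compressed form of the paper's route through (c') and (d'). The one imprecision is your justification that $p(y)\neq\emptyset$: this follows neither from (b) nor from prunedness (which your induction never uses); it is the implicit nonempty-parents convention that the paper's own proof of (a') also relies on, as you correctly flag at the end.
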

\proof
To show (a') first observe that under the assumptions of $S$ and $T$ map $f_0$ is necessarily surjective. Now, assume that we know that $f_i$ is surjective, and pick an element
$x \in T_{i+1}$. By surjectivity of $f_i$ we have $f_i^{-1}(p(x))$ is non-empty, and using condition (b) of definition \ref{def_prenet_map} we see that $f_{i+1}^{-1}(x)$ also
have to be non-empty, hence $f_{i+1}$ is surjective.

Next, to establish (c'), let us apply condition (b) to vertex $f(y)$. Clearly, $x$ belongs to $p(f^{-1}(f(y)))$, hence $x \in f^{-1}(p(f(y)))$, and therefore $f(x) \in p(f(y))$.

Verification of (d') is equally straightforward and we leave it to the reader.

Let us show that (b) implies (b'). First, it is clear that (c') implies the inclusion $f(p^{-1}(x)) \subset p^{-1}(f(x))$. To show the other inclusion, we apply (d') to a pair of vertices
$(f(x), y)$, where $y$ is a child of $f(x)$. Taking $x' = x$ in the conclusion of (d') we find a child $y'$ of $x$ mapping to $y$, hence establishing the other inclusion.

The other implication uses a similar argument. First we observe that condition (b') implies both (c') and (d') and then check that (c') implies the inclusion
$f^{-1}(p(x)) \supset p(f^{-1}(x))$, while (d') implies the opposite inclusion.

In particular we have established that the condition (b) of definition \ref{def_prenet_map} is equivalent to the combination of (c') and (d').
\qed

\begin{definition}
\label{def_prenet_tassels}
A $2$-prenet with tassels is a prenet $S = (S_\bullet, p_\bullet)$ together with a subset $t(S) \subset S_2$, who's elements are called tassels.
\end{definition}

We will denote by $c(S)$ the triple $(S_0, S_1, S_2 - t(S))$ with the parent maps being restrictions of $p_i$. Clearly $c(S)$ forms a $2$-prenet, which will be called
the {\em core} of $S$, and the vertices of $c(S)$ will be referred to as core vertices.

\begin{definition}
\label{def_prenet_map_tassels}
A map $f$ between two prenets with tassels $S$ and $T$ is a map of prenets $f\from S \to T$, such that $f(c(S)) \subset c(T)$.
\end{definition}

\begin{lemma}
\label{lemma_map_composition}
Let $f\from S \to T$ and $g\from T \to P$ be two maps of prenets with tassels. Then the levels-wise composites $h_i = g_i f_i$ form a map of prenets.
\end{lemma}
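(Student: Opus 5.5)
We check the three conditions of definition \ref{def_prenet_map} for $h = gf$ one at a time; the tassel condition is immediate, since $h(c(S)) \subset g(c(T)) \subset c(P)$. Condition (a) is also immediate: if $x \pless y$ in $S_i$ then $f(x) \pless f(y)$ in $T_i$, hence $g f(x) \pless g f(y)$ in $P_i$.

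For condition (b) we use the equivalent formulation from lemma \ref{lemma_prenet_map} (b'), namely $f(p^{-1}(x)) = p^{-1}(f(x))$. It composes directly:
$$
h(p^{-1}(x)) = g(f(p^{-1}(x))) = g(p^{-1}(f(x))) = p^{-1}(gf(x)).
$$
Alternatively, for $z \in P_i$ we have $h^{-1}(p(z)) = f^{-1}(g^{-1}(p(z))) = f^{-1}(p(g^{-1}(z)))$. Writing $g^{-1}(z)$ as a union of points $w$ and using $f^{-1}(p(w)) = p(f^{-1}(w))$ for each $w$, this equals $p(f^{-1}g^{-1}(z))$, since both preimages and $p$ of a set commute with unions.

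The main work is condition (c). Fix $z \in P_i$ and put $W = g^{-1}(z) \subset T_i$ and $U = h^{-1}(z) = f^{-1}(W) = \bigsqcup_{w \in W} f^{-1}(w)$. We must show that $A(U)$ is a disjoint cover of $p(U) = f^{-1}(p(W))$, and likewise for $B(U)$ by the symmetric argument. Take $s \in p(U)$. Its image $f(s) \in p(W)$ has a minimal child $a(f(s)) = w_0$ in $W$. By the formula (b') for $f$, the children of $s$ in $U$ lie over children of $f(s)$ in $W$, and some child of $s$ lies over $w_0$.

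We claim that the minimal child $a_U(s)$ of $s$ in $U$ lies in $f^{-1}(w_0)$, and is in fact $a_{f^{-1}(w_0)}(s)$. The point is that children of $s$ are all comparable in $\pless$, and $f$ preserves $\pless$. So if $y \in U$ is a child of $s$ with $f(y) \ne w_0$, then $f(y)$ is a child of $f(s)$ different from $w_0$, so $w_0 \pless f(y)$, which forces $y \not\pless y'$ reversed; concretely, $y$ cannot precede the children of $s$ lying over $w_0$. This gives $A(U) = \bigcup_{w \in A(W)} A(f^{-1}(w))$, with the union taken appropriately: for $w \in A(W)$ only the elements of $A(f^{-1}(w))$ whose parents map into the set of $z'$ with $a(z') = w$ are retained.

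Given this formula, the disjoint-cover property follows from two facts: $A(W)$ disjointly covers $p(W)$ by condition (c) for $g$, and each $A(f^{-1}(w))$ disjointly covers $f^{-1}(p(w))$ by condition (c) for $f$. For each $x \in A(U)$ with $f(x) = w$, the parent set $p(x) \subset f^{-1}(p(w))$, and the sets $p(x)$ over a fixed $w$ are pairwise disjoint with union $f^{-1}(p(w))$. The remaining point, which I expect to be the main obstacle, is to verify that every $x \in A(f^{-1}(w))$ actually belongs to $A(U)$, i.e. is minimal among all children in $U$ of each of its parents. Since $p(w)$ is exactly the set of $t$ with $a(t) = w$, for $s \in p(x)$ we get $f(s) \in p(w)$, so $a_W(f(s)) = w$, and the minimality claim above then shows $x = a_U(s)$. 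Hence
$$
A(U) = \bigsqcup_{w \in A(W)} A(f^{-1}(w)),
$$
which covers $\bigsqcup_{w} f^{-1}(p(w)) = f^{-1}(p(W))$ disjointly.
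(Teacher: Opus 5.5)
Your proof is correct and takes essentially the same route as the paper: both use that $f$ preserves $\pless$ to show that the minimal child of $s$ in $h^{-1}(z)$ lies over the minimal child $a(f(s))$ of $f(s)$ in $g^{-1}(z)$. This gives $A(h^{-1}(z))=\bigcup_{w\in A(g^{-1}(z))}A(f^{-1}(w))$, and both then combine the disjoint covers supplied by condition (c) for $g$ and for $f$.

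One small point. To get $x=a_U(s)$ for \emph{every} parent $s$ of $x$, you also need disjointness of the cover $A(f^{-1}(w))$, which is what gives $x=a_{f^{-1}(w)}(s)$. For the set equality, though, it is enough to use the single parent $s$ that witnesses $x\in A(f^{-1}(w))$.
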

\proof
The property (a) of definition \ref{def_prenet_map} is clearly satisfied by $h$. Take a vertex $x \in P_i$, then we have
$$
h^{-1} p_P(x) = f^{-1} g^{-1} p_P (x) = f^{-1} p_T g^{-1}(x) = p_S f^{-1} g^{-1}(x) = p_S h^{-1}(x).
$$

It remains to check that subsets $A(h^{-1}(x))$ and $B(h^{-1}(x))$ are disjoint covers of $h^{-1} p(x)$. Since the two cases are similar we will only
work out the case of $A$. First, observe that
\begin{equation}
\label{equ_Ah_AfAg}
A(h^{-1}(x)) = A(f^{-1} A(g^{-1}(x))).
\end{equation}
Indeed, pick a vertex $t \in h^{-1} p(x)$ and consider intersections
$$
p^{-1}(t) \cap f^{-1} A(g^{-1}(x)) \ \ \subset\ \  p^{-1}(t) \cap h^{-1}(x).
$$
Let $y$ be the minimal vertex of the intersection on the right side, then since $f$ preserves the partial order $\pless$ it must send $y$ to the minimal vertex
in the intersection $p^{-1}(f(t)) \cap g^{-1}(x)$. Then by definition of $A$ we have $f(y) \in A(g^{-1}(x))$, which establishes equality \ref{equ_Ah_AfAg}.
We know that $A(g^{-1}(x))$ is a disjoint cover of $g^{-1} p(x)$, and for any $z \in A(g^{-1}(x))$ the subset $A(f^{-1}(z))$ is a disjoint cover of $f^{-1}p_T(z)$.
Because of disjointedness of the cover $A(g^{-1}(x))$ it is clear that $A(f^{-1}(z))$ are pairwise disjoint subsets of $S_i$ for various $z \in A(g^{-1}(x))$.
Therefore, we conclude that
$$
A(f^{-1} A(g^{-1}(x))) = \bigcup_{z \in A(g^{-1}(x))} A(f^{-1}(z)) 
$$
forms a disjoint cover of $f^{-1}g^{-1}(x) = h^{-1}(x)$.
\qed

We will denote by $2\PreNet$ the category of connected pruned prenets with tassels of height $2$ and maps between them. And denote by $2\PreNet^s$ the full
subcategory of special $2$-prenets.

\begin{nparagraph}
Starting with any prenet $S$ in $2\PreNet$ we can construct another prenet $S^\circ$, by putting level-wise $S^\circ_i = S_i$, for all levels $i$ and $p_{S^\circ} = p_S$, with the
only difference being that we put $t(S^\circ) = S^\circ_2$.

We will also denote by $I$ the prenet that has a single vertex at each level, with the parent maps determined uniquely by this condition and the empty set of tassels. Visually,
the prenets $I$ and $I^\circ$ can be depicted as follows.
$$
I \ =\  
\begin{tikzpicture}[inner sep=0pt,baseline=(a11.base)]
\def\u{2em}
\node (a21) at (0, 0) {$\bullet$};
\node (a11) at (0, -1*\u) {$\bullet$};
\node (a01) at (0, -2*\u) {$\bullet$};

\draw (a21) -- (a11) -- (a01);
\end{tikzpicture}
\hskip 7em
I^\circ \ =\  
\begin{tikzpicture}[inner sep=0pt,baseline=(a11.base)]
\def\u{2em}
\node (a21) at (0, 0) {$\cross$};
\node (a11) at (0, -1*\u) {$\bullet$};
\node (a01) at (0, -2*\u) {$\bullet$};

\draw (a21) -- (a11) -- (a01);
\end{tikzpicture}
$$

Let us point out the following factorization property for maps of prenets.
\end{nparagraph}

\begin{lemma}
\label{lemma_map_factor}
Any map $f\from S \to T$ in $2\PreNet$ can be factorized via a prenet $P$
$$
\begin{tikzcd}
S \ar[rr, "f"] \ar[dr, "g"'] && T \\
& P \ar[ur, "h"'] &
\end{tikzcd}
$$
where $P_1 = S_1$, $P_2 = T_2$, $t(P) = t(T)$ and parents $p_P(x) = p_S(f^{-1}(x))$. And where the maps are defined by $g_1 = \id_{S_1}$, $g_2 = f_2$, $h_1 = f_1$ and $h_2 = \id_{T_2}$.
\end{lemma}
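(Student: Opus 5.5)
We must check four things: that $P$ is a connected pruned $2$-prenet with tassels, that $g$ and $h$ are maps of prenets with tassels, and that $hg = f$. The last is immediate level-wise: $h_1g_1 = f_1$, $h_2g_2 = f_2$, and $h_0g_0 = f_0$ with $P_0 = S_0 = T_0$ a point, since both prenets are connected.

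\emph{$P$ is a prenet.} For $x \in T_2 = P_2$ the set $p_P(x) = p_S(f^{-1}(x))$ equals $f_1^{-1}(p_T(x))$ by condition (b) of definition \ref{def_prenet_map}. First I will show this set is an interval in $S_1$. Since $S$ is connected, $f_1$ preserves the total order on $S_1$, so the preimage of the interval $p_T(x)$ is an interval. Connectedness is clear. Prunedness follows because $f_1^{-1}$ of a cover of $T_1$ is a cover of $S_1$, using prunedness of $T$ and surjectivity of $f$ from lemma \ref{lemma_prenet_map}(a').

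For the condition of definition \ref{def_prenet}, note that $p_P^{-1}(s) = p_T^{-1}(f_1(s))$ for $s \in S_1$. Hence the partial order $\pless$ on $P_2$ is generated by the same sets as the one on $T_2$, and the two coincide. Now take $x < y$ in $P_2$ incomparable. Then they are incomparable in $T_2$, so $p_T(x) < p_T(y)$, and order-preservation of $f_1$ gives $f_1^{-1}p_T(x) < f_1^{-1}p_T(y)$.

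\emph{$g$ and $h$ are maps.} For $h$: $h_1 = f_1$ preserves order. We have $h^{-1}(p_T(x)) = f_1^{-1}p_T(x) = p_P(x) = p_P(h^{-1}(x))$. Condition (c) is trivial because $h_2$ is a bijection, so $A$ and $B$ of a singleton give the single set $p_P(x)$. Tassels are preserved since $t(P) = t(T)$.

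For $g$: $g_2 = f_2$ preserves $\pless$ because the orders on $P_2$ and $T_2$ coincide. Condition (b) reads $g^{-1}(p_P(x)) = p_P(x) = p_S(f^{-1}(x)) = p_S(g^{-1}(x))$. Tassels are preserved because $f(c(S)) \subset c(T) = c(P)$.

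The main point is condition (c) for $g$. Take $x \in P_2$, so that $g^{-1}(x) = f^{-1}(x)$. The sets $A$ and $B$ are computed using the parent map of $S$ and the total order of $S_2$, both of which are the same as for $f$. So $A(g^{-1}(x)) = A(f^{-1}(x))$ and $B(g^{-1}(x)) = B(f^{-1}(x))$. By (c) for $f$, these are disjoint covers of $p_S(f^{-1}(x)) = g^{-1}(p_P(x))$, which is the required condition.

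The only subtle step is the identification $p_P^{-1}(s) = p_T^{-1}(f_1(s))$, which makes $\pless$ on $P_2$ equal to that on $T_2$ and makes $P$ a genuine prenet. I expect this to go through as described.
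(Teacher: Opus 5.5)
Your proof is correct and follows essentially the same route as the paper. Both arguments get the interval property from order-preservation of $f_1$, get the incomparability axiom from $p_T(x) < p_T(y)$, and read off conditions (b), (c) for $g$ and $h$ from the corresponding conditions for $f$. The one streamlining is that you obtain $\pless_P = \pless_T$ in one stroke from $p_P^{-1}(s) = p_T^{-1}(f_1(s))$. That step also needs surjectivity of $f_1$ (lemma \ref{lemma_prenet_map} (a')) to make the two generating families literally equal, which you cite only in the prunedness paragraph. The paper instead proves the two inclusions separately: one while verifying the prenet axiom for $P$, the other while checking that $g$ and $h$ preserve $\pless$.
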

\proof
Let us check that $P$ is a prenet. First, by property \ref{def_prenet_map} (b) for map $f$ we also have $p_P(x) = f^{-1}(p_T(x))$. Since $f_1$ preserves the total order at level $1$
and $p_T(x)$ is an interval in $T_1$ we conclude that $p_P(x)$ is also an interval in $P_1 = S_1$. Now, take $x$ and $y$ in $P_2$, which are incomparable with respect to $\pless_P$ and such that $x < y$.
This implies that $x$ and $y$ considered as vertices of $T_2$ are also incomparable with respect to $\pless_T$. Indeed, since the partial order $\pless_T$ is generated by pairs of vertices $u$ and $v$ having
a common parent $t$ and such that $u < v$, it is enough to show that $u \pless_P v$. But this is immediate, since any vertex in the preimage $h_1^{-1}(t)$ is a common parent of $u$ and $v$ considered as vertices of $P$.
Therefore, since $T$ is a prenet we obtain $p_T(x) < p_T(y)$. Again, since $f_1$ preserves the
total order at level $1$ we conclude that
$$
p_P(x) = f^{-1}(p_T(x)) < p_P(y) = f^{-1}(p_T(y)).
$$

It remains to show that both $g$ and $h$ are maps of prenets. Let us first verify property (a) of \ref{def_prenet_map}. At level $1$ it immediately follows from the fact that $f$ preserves the total orders
on $S_1$ and $T_1$, so we focus on level $2$. By definition the partial order $\pless_P$ is generated
by relations $x < y$ for all pairs $x, y \in P_2$, that have a common parent, say $t' \in p_P(x) \cap p_P(y)$. But by definition of $p_P$ we immediately see that $f(t')$ is
the common parent of $x$ and $y$ in $T$, hence $x \pless_T y$. And so $x \pless_P y$ implies $x \pless_T y$. Similarly, let $u < v$ in $S_2$ and let $s$ be their common parent in $S$. Then, since
$g_2 = f_2$ we have $g_2(x) < g_2(y)$, and moreover by definition of $p_P$ we see that $s$ is a common parent of $g_2(x)$ and $g_2(y)$ in $P$. This implies that $g$ also preserves the partial order.

The property \ref{def_prenet_map} (b) for both maps $g$ and $h$ follows immediately from the definition of $p_P$.

Finally, let us verify property (c). Let $x \in T_2$, then $h^{-1}(x) = \{x\}$, hence both $A(h^{-1}(x)) = B(h^{-1}(x)) = \{x\}$ form a disjoint cover of $p_P(h^{-1}(x))$.
Let $y \in P_2 = T_2$, then $g^{-1}(y) = f^{-1}(y)$, but since $f$ is a map of prenets we conclude that both $A(g^{-1}(y))$ and $B(g^{-1}(y))$ are disjoint covers.
\qed

The second map in the factorization constructed in this lemma belongs to the class of maps that will be called {\em vertex collapses}, which are a special type of weak equivalences that will be studied in section \ref{sec_weak_equiv}.

\begin{nparagraph}[Cellular complex.]
To visualize the notions developed in this paper it may be helpful to consider a geometric realization of prenets as a two-dimension cellular complex of a particular type.
Recall that to a pruned $2$-tree $T$ one can associate a globular set $G(T)$, such that the set of $2$-dimensional globules is identified with the set of vertices $T_2$, and in turn,
the globular set $G(T)$ can be geometrically realized as a cellular complex (\cite{Batanin}). On the other hand any such $2$-tree can also be thought of as a pruned $2$-prenet, and the geometric
realization of prenets described here is compatible with the realization for trees. We would like to point out however, that realization for prenets no longer factors through
the category of globular sets.

Let $S$ be a connected pruned $2$-prenet, and let us label vertices of $S_1$ in the increasing order by natural numbers $\{1, 2, \ldots, N\}$, where $N = |S_1|$.
Consider a vertex $x \in S_2$ and let $p(x) = [a, b] \subset S_1$. We associate to it a two-dimensional cell $C_x$, equipped with the parametrization of its boundary by two maps
$$
s\from [a-1, b] \to \d C_x,\quad\text{and}\quad t\from [a-1, b] \to \d C_x,
$$
where $[a - 1, b]$ is understood as a segment of the real line $[a-1, b] \subset \R$. These two maps parametrize the entire boundary, in other words the union $\Im(s) \cup \Im(t) = \d C_x$,
and their intersection consists of two points $\Im(s) \cap \Im(t) = \{L_x, R_x\}$. Visually, $C_x$ can be depicted as follows.

$$
\begin{tikzpicture}
\def\u{5em}
\fill (0, 0) circle [radius=0.2em] node [left] {$L_x$};
\fill (1*\u, 0) circle [radius=0.2em] node [right] {$R_x$};
\draw [-, bend left=80] (0, 0) to node [above] {$s$} (1*\u, 0);
\draw [-, bend right=80] (0, 0) to  node [below] {$t$} (1*\u, 0);
\end{tikzpicture}
$$

Now we will describe how individual cells $C_x$ are glued into a cellular complex. Consider two vertices $x, y \in S_2$ with parents $p(x) = [a_x, b_x]$ and $p(y) = [a_y, b_y]$. We distinguish four cases.
\begin{enumerate}
\item If $a_y > b_x + 1$, then the two cells $C_x$ and $C_y$ are disjoint.
\item If $p(x)$ and $p(y)$ intersect, we denote by $J = [c, d] \in S_1$ their intersection interval, and assume that $x < y$ in $S_2$. Then the cells $C_x$ and $C_y$ are glued along their boundaries
by identifying points $t_x(z) \sim s_y(z)$ for all $z \in [c - 1, d] \subset \R$.
\item If $a_y = b_x + 1$ and vertices $x$ and $y$ are comparable with respect to the partial order $\pless$, then the cells $C_x$ and $C_y$ are disjoint.
\item If $a_y = b_x + 1$ and vertices $x$ and $y$ are incomparable with respect to the partial order $\pless$, then the cells $C_x$ and $C_y$ are glued by identifying points $R_x \sim L_y$.
\end{enumerate}

We illustrate these four cases in the example below.
$$
\begin{tikzcd}[sep=5em,cells={inner xsep=2em}]
\begin{tikzpicture}[inner sep=0pt,baseline=(a11.base)]
\def\u{2em}
\node (a21) at (0, 0) {$\bullet$} node at ($(a21) + (0, 0.5em)$) {$x$};
\node (a22) at (1*\u, 0) {$\bullet$} node at ($(a22) + (0, 0.5em)$) {$y$};
\node (a23) at (2*\u, 0) {$\bullet$} node at ($(a23) + (0, 0.5em)$) {$z$};
\node (a24) at (3*\u, 0) {$\bullet$} node at ($(a24) + (0, 0.5em)$) {$w$};
\node (a11) at (0.5*\u, -1*\u) {$\bullet$};
\node (a12) at (1.5*\u, -1*\u) {$\bullet$};
\node (a13) at (2.5*\u, -1*\u) {$\bullet$};
\node (a01) at (1.5*\u, -2*\u) {$\bullet$};

\draw (a21) -- (a11) -- (a01);
\draw (a21) -- (a12) -- (a01);
\draw (a22) -- (a11);
\draw (a23) -- (a12);
\draw (a23) -- (a13) -- (a01);
\draw (a24) -- (a13);
\end{tikzpicture} \ar[<->, r] &
\begin{tikzpicture}[inner sep=0pt,baseline=-0.2em]
\def\u{4em}
\node (m) at (2*\u, 0.5*\u) {};
\draw (0, 0) -- (1*\u, 0) -- (2*\u, 0) -- (3*\u, 0);
\draw (0, 0) .. controls (0, -0.7*\u) and (1*\u, -0.7*\u) .. (1*\u, 0);
\draw [bend left=30] (1*\u, 0) to (m.center) [bend left=30] to (3*\u, 0);
\draw (2*\u, 0) .. controls (2*\u, -0.7*\u) and (3*\u, -0.7*\u) .. (3*\u, 0);
\draw (0, 0) edge [bend left=60] (m.center);

\fill (0, 0) circle [radius=0.2em];
\fill (1*\u, 0) circle [radius=0.2em];
\fill (2*\u, 0) circle [radius=0.2em];
\fill (3*\u, 0) circle [radius=0.2em];
\fill (m) circle [radius=0.2em];

\node at (0.75*\u, 0.3*\u) {$C_x$};
\node at (0.5*\u, -0.3*\u) {$C_y$};
\node at (2*\u, 0.2*\u) {$C_z$};
\node at (2.5*\u, -0.3*\u) {$C_w$};
\node at (1*\u, 0) [below=0.9em, right] {$R_y$};
\node at (2*\u, 0) [below=0.9em, left] {$L_w$};
\node at (3*\u, 0) [below=0.9em, right] {$R_w$};
\end{tikzpicture}
\end{tikzcd}
$$

In this example the pair of vertices $(y, w)$ falls into the first case, the pair $(x, w)$ into the third case, the pair $(y, z)$ into the fourth case, and all the other pairs into the second case.

Cells corresponding to tassels should be thought of as ``thin'' cells, which in a sense provide identification between their top and bottom sides. Of a particular interest may be the special
case of $2$-prenets such that every cell in their geometric realization has at least one side (either top or bottom) which is not subdivided by the gluing process (for instance we refer to
the notion of a multicategory from \cite{Leinster} chapter 5) . Unfortunately, such prenets
are not stable under the class of maps of prenets that we call weak equivalences, which will be studied in section \ref{sec_weak_equiv}. Instead we defined special prenets as a replacement
notion, since their realizations still satisfy this property, but are also stable with respect to weak equivalences (lemma \ref{lemma_w_special}).

\end{nparagraph}

\begin{nparagraph}
\label{par_cat_info}
Let us also give an interpretation of the geometric realization of a more categorical flavor. We think of the points $L_x$ and $R_x$ for various cells $C_x$ as vertices or objects.
The gluing process described above subdivides each side of a two cell $C_x$ into a sequence of intervals by the points $L_y$ and $R_y$ of adjoining cells $C_y$. We think of each such segment
as a $1$-arrow. Then a cell $C_x$ is thought of as a $2$-arrow connecting the chain of composable $1$-arrows lying on the side $s_x$ to the chain of composable arrows on the side $t_x$.

Consider what information is encoded by such a picture. Every cell $C_x$ determines its top and bottom sides, in other words the ``composition'' of $1$-arrows along those sides,
but not each individual arrow. The gluing of two cells $C_x$ and $C_y$ provides identification of the corresponding $1$-arrows, along which they are glued. For instance, in the example above,
cell $C_z$ determines its bottom side $t_z$ consisting of two segments $[R_x, L_w]$ and the top side of cell $C_w$. While the entire side $t_z$ is determined by $z$ and $s_w$ is determined
by $w$, the segment $[R_x, L_w]$ is not determined by this picture. In other words we think of it as ``$t_z$ factors through $s_w$''.

Informally speaking, if $f\from S \to T$ is a map in the category of prenets, then prenet $T$ contains a subset of information encoded by $S$. The class of weak equivalences studied
in section \ref{sec_weak_equiv} consists of those maps that do not forget any information. We also refer to example \ref{exa_map_c} for an illustration of this property of maps.

\end{nparagraph}

\vskip 5em

\section{Weak equivalences}
\label{sec_weak_equiv}

Next we define and study properties of a class of maps that we will call {\em weak equivalences}. Even though we borrowed the terminology
from the Quillen's model category theory, we do not claim here that there is a model structure on the category of prenets. Instead we use it merely to communicate
intuition behind these definitions.

\begin{definition}
\label{def_weak_equiv}
A map of $2$-prenets with tassels $f\from S \to T$ is a weak equivalence if and only if the following two conditions hold
\begin{enumerate}[label=\alph*)]
\item the map induced by $f_2$ between $c(S_2)$ and $c(T_2)$ is a bijection,
\item for all $x \in S_2$ we have $p_S(x) = f^{-1} p_T f(x)$.
\end{enumerate}
\end{definition}

We will denote the class of weak equivalences by $\cW$.

\begin{lemma}
\label{lemma_w_23}
Class of weak equivalences satisfies 2-out-of-3 condition. In other words let $f$ and $g$ be two composable maps of $2$-prenets, then whenever any two of the three maps
$\{f, g, gf\}$ are weak equivalences all three are weak equivalences. In particular $\cW$ is a subcategory of $2\PreNet$.
\end{lemma}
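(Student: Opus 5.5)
The plan is to check the two conditions of definition \ref{def_weak_equiv} separately for each of the three cases. Let $f\from S \to T$ and $g\from T \to P$ be composable maps in $2\PreNet$ and put $h = gf$. By lemma \ref{lemma_map_composition}, $h$ is a map of prenets with tassels. Since $f(c(S)) \subset c(T)$ and $g(c(T)) \subset c(P)$, we get honest maps of sets
$$
c(S_2) \to c(T_2) \to c(P_2),
$$
whose composite is the map induced by $h_2$. Condition (a) then reduces to the elementary fact that for composable maps of sets $\alpha, \beta$, if any two of $\alpha$, $\beta$, $\beta\alpha$ are bijections, so is the third. Concretely: $\beta\alpha$ and $\beta$ bijective give $\alpha = \beta^{-1}(\beta\alpha)$; $\beta\alpha$ and $\alpha$ bijective give $\beta = (\beta\alpha)\alpha^{-1}$.

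For condition (b) I would handle the three cases one at a time.

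\emph{Case $f, g \in \cW$.} For $x \in S_2$,
$$
p_S(x) = f^{-1} p_T f(x) = f^{-1} g^{-1} p_P g f(x) = h^{-1} p_P h(x),
$$
where the first equality uses (b) for $f$ and the second uses (b) for $g$ at the vertex $f(x)$.

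\emph{Case $h, g \in \cW$.} Apply (b) for $g$ at $f(x)$ and then (b) for $h$:
$$
f^{-1} p_T f(x) = f^{-1} g^{-1} p_P g f(x) = h^{-1} p_P h(x) = p_S(x).
$$

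\emph{Case $h, f \in \cW$.} This is the step I expect to be the only real obstacle, because we must prove a statement about every $y \in T_2$, while the hypotheses only talk about vertices of $S_2$. Here I would use lemma \ref{lemma_prenet_map} (a'): all objects are connected, so $f_1$ and $f_2$ are surjective. Write $y = f(x)$. Combining (b) for $f$ and for $h$ gives
$$
f^{-1} p_T(y) = p_S(x) = f^{-1}\bigl(g^{-1} p_P g(y)\bigr).
$$
Since $f_1$ is surjective, $f_1(f_1^{-1}(U)) = U$ for every $U \subset T_1$. Applying $f_1$ to both sides therefore yields $p_T(y) = g^{-1} p_P g(y)$, which is (b) for $g$.

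Finally, the identity maps clearly satisfy (a) and (b), and the first case shows that $\cW$ is closed under composition. Hence $\cW$ is a subcategory of $2\PreNet$.
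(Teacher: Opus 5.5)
Your proof is correct and follows essentially the same route as the paper: condition (a) comes from the two-out-of-three property of bijections on core vertices, and condition (b) from the same chains of equalities, with the case $f, gf \in \cW$ handled through the surjectivity of $f_1$ and $f_2$ given by lemma \ref{lemma_prenet_map} (a'). Your explicit appeal to lemma \ref{lemma_map_composition}, to know that $gf$ is a map of prenets, is a small point the paper leaves implicit.
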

\proof
First of all, it is clear from the definition that identity maps are weak equivalences. Now consider two composable maps
$$
\begin{tikzcd}
S \ar[r, "f"] & T \ar[r, "g"] & R
\end{tikzcd}
$$
and denote their composition by $h$.

{\bf a)} Assume that both $f$ and $g$ are weak equivalences. Then according to (a) of definition \ref{def_weak_equiv} they both induce isomorphism between core
vertices at level $2$. Therefore, the same is true for $h$ as well. Using condition (b) for $f$ and $g$ we have for any $x \in S_2$
$$
p_S(x) = f^{-1} p_T f(x) = f^{-1} g^{-1} p_R (gf(x)) = h^{-1} p_R h(x).
$$
Hence $h$ is a weak equivalence.

{\bf b)} Assume that $g$ and $h$ are weak equivalences. Then once again it is clear that $f$ satisfies condition (a). Condition (b) for $f$ follows by replacing
$g^{-1} p_R g(f(x))$ with $p_T(f(x))$ in $p_S(x) = f^{-1} g^{-1} p_R (gf(x))$.

{\bf c)} Finally, assume that $f$ and $h$ are weak equivalences. As before, it is clear that $g$ satisfies condition (a). We need to check that for any $x \in T_2$
we have $p_T(x) = g^{-1} p_R g(x)$. Since map $f_1$ is surjective (according to lemma \ref{lemma_prenet_map} (a')) it is enough to check that
$$
f^{-1} p_T(x) = f^{-1} g^{-1} p_R g(x).
$$
Since $f_2$ is also surjective there exists $x' \in S_2$, such that $f(x') = x$ and therefore we need to check that
$$
f^{-1} p_T f(x') = f^{-1} g^{-1} p_R g(f(x)).
$$
Now the left hand side coincides with $p_S(x')$, since $f$ is a weak equivalence, which in turn coincides with the right hand side, since $h$ is also a weak equivalence.

\qed

The following lemma will be convenient for establishing factorization of weak equivalences.

\begin{lemma}
\label{lemma_p_order}
Let $f\from S \to T$ be a map of prenets. Denote by $\pless_f$ the partial order on $S_2$ generated by the collection of subsets $C'_y = \{x \in S_2 \mid y \in p(f(x))\}$ for all vertices $y \in T_1$.
Then $x \pless_S y$ implies $x \pless_f y$. Moreover, if $f$ is a weak equivalence then the two partial orders $\pless_S$ and $\pless_f$ coincide.
\end{lemma}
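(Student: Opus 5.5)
Both partial orders live on $S_2$ and are generated by families of subsets of $S_2$: the order $\pless_S$ by the sets $p_S^{-1}(t)$ for $t \in S_1$, and $\pless_f$ by the sets $C'_y$ for $y \in T_1$. So in each direction it suffices to check the generating relations. Concretely, if $x < z$ lie in a common generating set for one order, we show that $x$ and $z$ are related in the other order (directly, or by a chain). Transitivity then extends the comparison to the generated orders.

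\textbf{First implication.} Let $x < z$ with a common parent $t \in p_S(x) \cap p_S(z)$, and put $y = f_1(t)$. By Lemma \ref{lemma_prenet_map} (c') parenthood is preserved, so $y \in p_T(f(x))$ and $y \in p_T(f(z))$. Hence both $x$ and $z$ lie in $C'_y$, and since $x < z$ in the total order of $S_2$, we get $x \pless_f z$. This shows that every generator of $\pless_S$ is a relation of $\pless_f$, so $x \pless_S y$ implies $x \pless_f y$.

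\textbf{Converse for weak equivalences.} Assume $f$ is a weak equivalence and let $x < z$ both lie in $C'_y$ for some $y \in T_1$. By condition (b) of Definition \ref{def_weak_equiv},
$$p_S(x) = f^{-1} p_T f(x) \supset f_1^{-1}(y), \qquad p_S(z) \supset f_1^{-1}(y).$$
By Lemma \ref{lemma_prenet_map} (a') the map $f_1$ is surjective, so $f_1^{-1}(y)$ is nonempty. Any $t \in f_1^{-1}(y)$ is then a common parent of $x$ and $z$ in $S$, and therefore $x \pless_S z$. Thus every generator of $\pless_f$ lies in $\pless_S$, and the two orders coincide.

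\textbf{Main obstacle.} The only delicate point is surjectivity of $f_1$, which is what produces an actual common parent in $S$. Here it is supplied by connectedness and prunedness via Lemma \ref{lemma_prenet_map} (a'). We should also confirm that $\pless_f$ is a genuine partial order, i.e.\ acyclic. This holds because each generating set is ordered by the restriction of the total order on $S_2$, so every relation in $\pless_f$ is compatible with $<$.
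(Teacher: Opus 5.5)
Your proof is correct and follows essentially the same route as the paper. The first inclusion uses Lemma \ref{lemma_prenet_map} (c') to get $p_S^{-1}(t) \subset C'_{f(t)}$, and the reverse uses condition (b) of Definition \ref{def_weak_equiv} to place $f_1^{-1}(y)$ inside the common parents. The only difference is cosmetic: you compare generating relations, while the paper lifts a whole chain $y_1,\dots,y_n$ in $T_1$ to $S_1$, and you make explicit the surjectivity of $f_1$ that the paper uses implicitly when it picks $\tilde y_i \in f^{-1}(y_i)$.
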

\proof
For any vertex $t \in S_1$ the set of children $p^{-1}(t)$ is contained in $C'_{f(t)}$ by lemma \ref{lemma_prenet_map} (c'). Since the partial order $\pless_S$ is generated by
subsets $p^{-1}(t)$ for all $t \in S_1$ we immediately conclude that $x \pless_S y$ implies $x \pless_f y$.
%

Now, assume that $f$ is a weak equivalence, and consider two vertices $x$ and $y$ such that $x \pless_f y$. This implies that there exist chains of vertices $\{x_0, x_1, \ldots, x_n\}$ in $S_2$
with $x = x_0$ and $y = x_n$, and $\{y_1, \ldots, y_n\}$ in $T_1$, such that $y_i \in p(f(x_{i-1})) \cap p(f(x_i))$, for all $1 \le i \le n$. Since $f$ is a weak equivalence
the preimage $f^{-1}(y_i) \subset p(x_{i-1}) \cap p(x_i)$, so by picking any element $\tilde y_i \in f^{-1}(y_i)$ we obtain a chain of vertices in $S_1$ connecting $x$ and $y$, therefore
$x \pless_S y$.
\qed

\begin{lemma}
\label{lemma_w_special}
If $f\from S \to T$ is a weak equivalence and either $S$ or $T$ is a special prenet, then both  $S$ and $T$ are special.
\end{lemma}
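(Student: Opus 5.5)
Both conditions of Definition \ref{def_sp_prenet} concern only the parent structure at levels $1$ and $2$. A weak equivalence $f$ identifies parents of $x$ with pulled-back parents of $f(x)$ via condition (b) of Definition \ref{def_weak_equiv}. So the plan is to transport each condition in both directions using three inputs: the identity $p_S(x) = f^{-1}p_T f(x)$, surjectivity of $f_1$ and $f_2$ (Lemma \ref{lemma_prenet_map} (a')), and Lemma \ref{lemma_p_order}, which identifies $\pless_S$ with $\pless_f$. At level $1$ both prenets are connected, so $\pless$ is the total order and every vertex has the root as its only parent. Hence both conditions hold trivially at level $1$, and only level $2$ needs work.

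For condition (a), assume first that $T$ is special and let $x \pless_S y$. Since $f_2$ preserves $\pless$, we get $f(x) \pless_T f(y)$, or $f(x)=f(y)$ (in which case common parents are obvious). So $f(x), f(y)$ share a parent $t \in T_1$, and any $\tilde t \in f^{-1}(t)$ lies in $p_S(x)\cap p_S(y)$ by the weak equivalence identity. Conversely, assume $S$ is special and let $u \pless_T v$. We lift to $x, y$ with $x \pless_S y$ using Lemma \ref{lemma_p_order}: the chain of common parents in $T_1$ witnessing $u\pless_T v$ pulls back to a chain witnessing $x \pless_f y$ for any lifts $x,y$, and hence $x\pless_S y$. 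Specialness of $S$ then gives a common parent $s$, and $f(s)$ is a common parent of $u,v$ by (c').

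For condition (b), the key observation is that for $x \in S_2$ and $y' \in T_2$ the inclusion $p_T(f(x)) \subset p_T(y')$ is equivalent to $p_S(x) \subset f^{-1}p_T(y')$, using $p_S(x) = f^{-1}p_T f(x)$ and surjectivity of $f_1$. Moreover, $f^{-1}p_T(y') = p_S(y)$ for every $y \in f^{-1}(y')$. So inclusions of parent sets correspond exactly under $f$. Children also correspond: $f(p_S^{-1}(s)) = p_T^{-1}(f(s))$ by (b').

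We then transport the two alternatives of (b). Take the case $S$ special, $u\in T_2$ and $t\in p_T(u)$. Choose a lift $x$ of $u$ and some $s\in f^{-1}(t)$, which lies in $p_S(x)$. Suppose the first alternative holds for $(x,s)$. For $v \in p_T^{-1}(t)$ with $v<u$, we need $p_T(u)\subset p_T(v)$. Pick a lift $y\in p_S^{-1}(s)$ of $v$ and compare $y$ with $x$. Both have $s$ as a common parent, so they are $\pless_S$-comparable, and since $f$ preserves $\pless$ and $v<u$, we must have $y<x$ (otherwise $u \le v$). The alternative for $S$ then gives $p_S(x)\subset p_S(y)$, which pushes forward to $p_T(u)\subset p_T(v)$. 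The direction with $T$ special is symmetric: given $(x,s)$, use $(f(x),f(s))$. For $y \in p_S^{-1}(s)$ with $y<x$, the images satisfy $f(y)\le f(x)$. If they are equal, the parent sets coincide. Otherwise $f(y)<f(x)$, we apply the alternative in $T$, and pull back.

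The main obstacle is the order bookkeeping in this last step. We must make sure that $y<x$ in $S_2$ translates to $f(y)\le f(x)$, and conversely that $v<u$ lifts to $y<x$. This should follow from $f_2$ preserving $\pless$ restricted to children of a common parent, where $\pless$ agrees with the total order. I would check this carefully, including the degenerate case where $f$ collapses vertices.
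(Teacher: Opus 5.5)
Your proposal is correct and takes essentially the same route as the paper's proof. In both directions, conditions (a) and (b) are transported using $p_S(x) = f^{-1}p_T f(x)$, surjectivity of $f_1$ and $f_2$, and Lemma \ref{lemma_p_order} to lift $\pless_T$-relations to $\pless_S$-relations. Your explicit treatment of collapsed vertices ($f(x)=f(y)$), and your check that lifted children of a common parent stay on the same side of $x$, is bookkeeping the paper leaves implicit, and you handle it correctly.
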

\proof
{\bf a)} Assume that $T$ is special. Consider two vertices $x, y \in S_2$ such that $x \pless_S y$. Then $f(x) \pless_T p(y)$ and because $T$ is special vertices $f(x)$ and $f(y)$ have
a common parent $t$ in $T_1$. Arguing as in the proof of the previous lemma we see that any vertex in $f^{-1}(t)$ is a common parent of $x$ and $y$ in $S$, which establishes property
(a) of \ref{def_sp_prenet}.

Now consider a vertex $x \in S_2$, $t \in p(x)$ and let $y$ be another child of $t$. Then their images $f(x)$, $f(t)$ and $f(y)$ are connected by the same parenthood relations.
Since by assumption $T$ is special it satisfies property \ref{def_sp_prenet} (b), but because $f$ is a weak equivalence, the inclusion $p_T(f(x)) \subset p_T(f(y))$ implies
that $p_S(x) \subset p_S(y)$, which establishes property (b) for $S$ as well.

{\bf b)} Assume that $S$ is special. Consider two vertices $x, y \in T_2$ such that $x \pless_T y$, and take $\tilde x \in f^{-1}(x)$ and $\tilde y \in f^{-1}(y)$. By definition
of the partial order $\pless_f$ we have $\tilde x \pless_f \tilde y$, since $f$ is a weak equivalence using lemma \ref{lemma_p_order} we see that $\tilde x \pless_S \tilde y$.
Therefore $\tilde x$ and $\tilde y$ have a common parent, say $t \in S_1$, and hence $p(t) \in T_1$ is a common parent of $x$ and $y$, which establishes property \ref{def_sp_prenet} (a).

Now consider a vertex $x \in T_2$, $t \in p(x)$ and let $y$ be another child of $t$. As was shown in the proof of the previous lemma, $f$ being a weak equivalence allows us to lift
them to vertices $\tilde x, \tilde y \in S_2$ and $\tilde t \in S_1$ satisfying the same parenthood relations. Since $S$ is special it satisfies property \ref{def_sp_prenet} (b).
However, in virtue of $f$ being a weak equivalence we have $p_S(\tilde x) = f^{-1}(p_T(x))$ and $p_S(\tilde y) = f^{-1}(p_T(y))$, so whenever $p_S(\tilde x) \subset p_S(\tilde y)$ we also
have $p_T(x) \subset p_T(y)$, which establishes property \ref{def_sp_prenet} (b) for $T$.
\qed

The following lemma simplifies verification that a collection of level-wise maps forms a weak equivalence of prenets.

\begin{lemma}
\label{lemma_easy_w}
Let $S$ and $T$ be two prenets in $2\PreNet$, and $f_i\from S_i \to T_i$ be a collection of maps for all $i$, such that
\begin{enumerate}[label=\alph*)]
\item each $f_i$ preserves that partial orders $\pless$ on $S_i$ and $T_i$,
\item the map induced by $f_2$ between $c(S_2)$ and $c(T_2)$ is a bijection,
\item for all $x \in S_2$ we have $p_S(x) = f^{-1} p_T f(x)$.
\end{enumerate}
Then $f$ is a weak equivalence.
\end{lemma}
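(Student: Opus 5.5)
To prove that $f$ is a weak equivalence, I need to show that $f$ is a map of prenets with tassels. Once that is done, conditions (b) and (c) of the statement are literally conditions (a) and (b) of definition \ref{def_weak_equiv}. Property (a) of definition \ref{def_prenet_map} is assumed. What remains is property (b), property (c), and the requirement $f(c(S)) \subset c(T)$. The last one I would extract from the bijection in (b): I read ``the map induced by $f_2$ between $c(S_2)$ and $c(T_2)$'' as including that $f_2$ sends core vertices to core vertices. Since $S$ and $T$ are connected, I first note, as in lemma \ref{lemma_prenet_map} (a'), that $f_0$ is trivially a bijection, and that properties (b) and (c) at level $1$ hold automatically. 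So the work is at level $2$.

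For property (b) at level $2$, I will use the equivalent form from lemma \ref{lemma_prenet_map}, namely (c') together with (d'). Property (c') follows from (c): if $t \in p_S(x)$ then $t \in f^{-1}p_T f(x)$, so $f(t) \in p_T(f(x))$. For (d'), let $y \in T_2$ with $x \in p_T(y)$, and let $x' \in f_1^{-1}(x)$. I need the surjectivity of $f_2$, and this is the main obstacle. For a vertex $y$ in the core, surjectivity comes from (b). For a tassel $y$ it is not given directly. I would try to derive it from surjectivity of $f_1$, which itself follows from the order preservation and connectedness (the root has all level-$1$ vertices as children; pruning gives the rest). If that fails, I will add to the hypotheses the (implicit, in the intended use) assumption that $f_2$ is surjective. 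Granting some $\tilde y \in f^{-1}(y)$, condition (c) gives $p_S(\tilde y) = f^{-1}p_T(y) \ni x'$, which is (d'). Hence $f^{-1}(p_T(y)) = p_S(f^{-1}(y))$, since every preimage has exactly this parent set by (c).

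For property (c) of definition \ref{def_prenet_map}, fix $y \in T_2$ and put $U = f^{-1}(y)$. Every $u \in U$ has the same parent set $W = f^{-1}p_T(y)$. Hence for each $z \in W$ the intersection $p^{-1}(z) \cap U$ equals all of $U$. By lemma \ref{lemma_p_order} (the first part only needs the map structure we have just established), $U$ lies inside a single block $C'_{y'}$ with $y' \in p_T(y)$, so $\pless_f$ totally orders $U$ by the ambient order. The points $a(z)$ are therefore all the minimal element $\min U$, and $A(U) = \{\min U\}$. Similarly $B(U) = \{\max U\}$. A single vertex with parent set $W$ is trivially a disjoint cover of $W$, which gives (c). Combining the three verifications, $f$ is a map of prenets with tassels, and by (b) and (c) of the statement it is a weak equivalence.
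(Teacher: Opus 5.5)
The gap is surjectivity, and your handling of it fails at level $1$. Lemma~\ref{lemma_prenet_map}~(a') presupposes that $f$ is already a map, so you cannot use it here. For a bare collection of level-wise maps, property (b) of Definition~\ref{def_prenet_map} at a vertex $t\in T_1$ reads $S_0=f_0^{-1}p_T(t)=p_S(f_1^{-1}(t))$. That condition is \emph{equivalent} to surjectivity of $f_1$, so it does not hold automatically.

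Your proposed derivation of that surjectivity from order preservation and connectedness is circular. Using that all of $T_1$ are children of the root requires $f_1(p^{-1}(r))=p^{-1}(f_0(r))$, which is the level-$1$ condition itself. It is also false. Take $S=I$, and $T$ with $T_1=\{t_1<t_2\}$, $T_2=\{y\}$ a core vertex with $p_T(y)=\{t_1,t_2\}$, and set $f_1(s)=t_1$, $f_2(x)=y$. Hypotheses (a)--(c) hold and $f_2$ is even bijective, yet $f_1^{-1}(t_2)=\emptyset$, so $f$ is not a map of prenets. Hence your fallback of assuming only that $f_2$ is surjective does not close the argument.

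Surjectivity of $f_2$ onto tassels is not automatic either. Map $I$ onto the core vertex $y$ of the prenet with $T_1=\{t\}$ and $T_2=\{y<z\}$, where $z$ is a tassel with $p_T(z)=\{t\}$. This satisfies (a)--(c), but property (b) fails at $z$, since $f_1^{-1}p_T(z)\neq\emptyset=p_S(f_2^{-1}(z))$.

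So the statement as written needs the extra hypothesis that all $f_i$ are surjective. This hypothesis holds everywhere the lemma is used: the factorization lemmas, the lift of Proposition~\ref{prop_lift}, the pushouts, and the maps to $I$. The paper's own proof uses it tacitly when it says that (c) ``clearly'' implies property (b).

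Once surjectivity of $f_1$ and $f_2$ is assumed, your argument is complete and coincides with the paper's. Hypothesis (c) makes every vertex of $U=f_2^{-1}(y)$ have the same parent set $f_1^{-1}p_T(y)$. That gives property (b) and makes $A(U)=\{\min U\}$ and $B(U)=\{\max U\}$ single vertices covering all parents. The detour through Lemma~\ref{lemma_p_order} is superfluous: $p^{-1}(z)\cap U=U$ for every parent $z$ already gives $a(z)=\min U$ in the total order of $S_2$.
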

\proof
Clearly, property (c) implies property \ref{def_prenet_map} (b). In order to check \ref{def_prenet_map} (c) let us pick a vertex $x \in T_2$. Then by property (c) of this lemma
we see that all $\tilde x \in f_2^{-1}(x)$ have the same set of parents equal to $f_1^{-1}(p_T(x))$. Therefore, the subsets $A(f_2^{-1}(x))$ and $B(f_2^{-1}(x))$ both
consist of a single vertex, who's parents are again the entire set $f_1^{-1}(p_T(x))$, which establishes property \ref{def_prenet_map} (c).
\qed

We will now define two special types of weak equivalences. In what follows we will see that any weak equivalence can be factorized into a composition of weak equivalences
of these two types, which will be helpful to both clarify the meaning of the notion of weak equivalence of prenets and make some arguments easier
as we will only need to consider one type or the other.

\begin{definition}
\label{def_tc}
A map $f\from S \to T$ of prenets with tassels is called a {\em tassel collapse} if it induces an isomorphism between cores $c(S)$ and $c(T)$, and
for every $x \in S_2$ we have $f(p_S(x)) = p_T(f(x))$.
\end{definition}

We will also say that a tassel collapse map $f\from S \to T$ is {\em simple} if preimages $f_2^{-1}(x)$ consist of a single element for all except possibly one vertex $x \in T_2$.
It is easy to see that any tassel collapse $f\from S \to T$ is a weak equivalence. Indeed, since $f$ is an isomorphism on cores we immediately have condition (a) of definition \ref{def_weak_equiv}.
Furthermore, since $f_1$ is a bijection, the condition $f(p_S(x)) = p_T(f(x))$ can be rewritten as $p_S(x) = f^{-1} p_T f(x)$, which gives condition (b).

\begin{lemma}
\label{lemma_tc_factor}
Any tassel collapse map $f\from S \to T$ can be factorized into a chain of simple tassel collapses.
\end{lemma}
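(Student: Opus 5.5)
The plan is to induct on the number $N(f) = |S_2| - |T_2|$, peeling off one fiber of $f_2$ at a time. First I record the structure of a tassel collapse. It is an isomorphism on cores, so $f_0$ and $f_1$ are bijections and $f_2$ restricts to a bijection $c(S_2)\to c(T_2)$. By Definition \ref{def_prenet_map_tassels}, $f$ maps core vertices to core vertices. Since $f_2$ is surjective (Lemma \ref{lemma_prenet_map} (a')), every non-trivial fiber $f_2^{-1}(x)$ therefore consists of at least one tassel of $S$. It may also contain at most one core vertex, and only when $x$ is core; any other fiber lies over a tassel $x$ and consists entirely of tassels.

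If $N(f)=0$, then $f_2$ is a bijection, and so $f$ is simple. Otherwise, pick a vertex $x\in T_2$ with $|f_2^{-1}(x)|>1$. Construct an intermediate prenet $P$ by collapsing only this fiber:
\[
P_i = S_i \text{ for } i<2,\qquad P_2 = (S_2 - f_2^{-1}(x)) \sqcup \{x\}.
\]
Put $p_P(y) = p_S(y)$ for $y \neq x$ and $p_P(x) = f_1^{-1}(p_T(x))$. The tassels are $t(P) = (t(S) - f_2^{-1}(x)) \cup (\{x\}\cap t(T))$. The total order on $P_2$ is the one transported from $T_2$ through the evident map $h_2\from P_2 \to T_2$, which equals $f_2$ off $x$ and is the identity on $x$. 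Define $g\from S\to P$ to be the identity on levels $0,1$, with $g_2$ collapsing $f_2^{-1}(x)$ to $x$. Define $h\from P\to T$ to be $f$ on levels $0,1$ and $h_2$ as above. Then $f = hg$ level-wise, $g$ is simple by construction, and $N(h) = N(f) - |f_2^{-1}(x)| + 1 < N(f)$. Induction applied to $h$ will then finish the proof.

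The verifications I plan, in order, are as follows. First, $p_P(x)$ is an interval because $f_1$ is an order-preserving bijection. The prenet condition of Definition \ref{def_prenet} for $P$ follows from that of $T$, since $h_1$ identifies $P_1$ with $T_1$ and $p_P(y) = f_1^{-1}p_T(h(y))$ for every $y$; here I use $f(p_S(y)) = p_T(f(y))$ for $y \neq x$. Second, $g$ and $h$ are tassel collapses: they are isomorphisms on cores, since the core vertex of the collapsed fiber (if there is one) goes to $x$, which is then core; and $g(p_S(y)) = p_P(g(y))$ holds because $f(p_S(y)) = p_T(x)$ for $y$ in the fiber. Third, $g$ and $h$ are genuine maps of prenets. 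For this I would like to argue as in Lemma \ref{lemma_easy_w}, since both satisfy $p(y) = g^{-1}p\,g(y)$ (resp.\ for $h$) by the remark that tassel collapses satisfy \ref{def_weak_equiv}(b). Condition (c) of Definition \ref{def_prenet_map} then holds automatically, because all vertices in a fiber share the same parent set.

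The main obstacle is condition (a), preservation of the partial orders $\pless$, together with the fact that the total order chosen on $P_2$ must agree with the one determined by Remark \ref{rem_total_order}. I would handle this with Lemma \ref{lemma_p_order}. In $P$, two vertices share a parent exactly when their images in $T$ share a parent, because $h_1$ is a bijection and $p_P = h_1^{-1}p_T h_2$. Hence $\pless_P$ is the pullback of $\pless_T$ along $h_2$, and $h$ preserves the partial order. For $g$, two vertices of $S$ with a common parent $t$ map to vertices of $P$ with common parent $t$. One must still check that the strict order $y<y'$ is preserved, i.e.\ that $g_2$ does not reverse order. This follows because $f_2$ preserves $\pless$ and the order on $P_2$ was pulled back from $T_2$. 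A degenerate case to note is when $y, y'$ both lie in the collapsed fiber: then they map to the same vertex, which I treat as allowed, exactly as it is implicitly allowed for $f$ itself.
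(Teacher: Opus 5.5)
Your overall plan is the paper's: collapse one non-trivial fiber $f_2^{-1}(x)$ at a time, take $P_2$ to be the quotient of $S_2$ by that fiber, set $p_P = h_1^{-1}p_T h_2$, and check $g$ and $h$ via lemma \ref{lemma_easy_w}. But the point you flag as the main obstacle, the total order on $P_2$, is not actually handled. Transporting the order of $T_2$ along $h_2$ gives a total order only if $h_2$ is injective. That happens only when $f_2^{-1}(x)$ is the sole non-trivial fiber, which is exactly the case where $f$ is already simple. In the genuine inductive step, $h_2$ still identifies the vertices of each other non-trivial fiber $f_2^{-1}(w)$, $w \neq x$, and your order leaves these uncompared. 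This is not a harmless tie. If $u < v$ lie in such a fiber, then $p_S(u) = p_S(v) = f_1^{-1}p_T(w)$, so $u \pless_S v$. On the other hand $g(u) = u$ and $g(v) = v$ are distinct vertices of $P_2$, so $g$ preserves $\pless$ only if $u < v$ in $P_2$. Your argument that $g_2$ does not reverse order treats only the collapsed fiber as degenerate and never sees these pairs. For the same reason, ``$\pless_P$ is the pullback of $\pless_T$'' is not a partial order.

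The repair is to order $P_2$ lexicographically: first by the image under $h_2$ in $T_2$, with ties broken by the order of $S_2$. Ties can only occur inside some $f_2^{-1}(w)$ with $w\neq x$, since $h_2^{-1}(x)=\{x\}$. With this order your checks go through.
\begin{itemize}
\item $g$ preserves $\pless$ because $f$ does; consider separately the cases $f(u)<f(v)$, $f(u)=f(v)=x$, and $f(u)=f(v)\neq x$.
\item Suppose $u<_P v$ are $\pless_P$-incomparable. Then $h(u)\neq h(v)$, since vertices in one fiber have the same parents, so $h(u)<_T h(v)$. Lift a $\pless_T$-chain from $h(u)$ to $h(v)$ along the surjection $h_2$, with endpoints $u$ and $v$. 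Consecutive lifts share a parent and increase lexicographically, so $h(u)\pless_T h(v)$ would force $u\pless_P v$. Hence $h(u)$ and $h(v)$ are $\pless_T$-incomparable, and $p_P(u)<p_P(v)$ follows from the prenet condition for $T$.
\item The order on cores is preserved because $f$ is an isomorphism of cores.
\end{itemize}
Done this way, your route actually avoids the paper's main step. The paper instead descends the order of $S_2$ to the pushout. This forces it to prove that $f_2^{-1}(y)$ is an interval in $S_2$, which is the bulk of its proof, and to use that a tassel collapse preserves the total order at level $2$. A posteriori the two orders on $P_2$ coincide.
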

\proof
It is clear that the surjective map $f_2$ can be factorized into a chain of maps with at most one non-trivial preimage. We need to check that resulting objects are in fact prenets,
and that the induced maps between them are tassel collapses. We proceed by induction on the number of vertices in $T_2$ with non-trivial preimages. If it has no more than one such
vertex there is nothing to prove. Otherwise pick one of such vertices, say $y \in T_2$ and put $P_i = T_i \isom S_i$ for $i \le 1$ and define $P_2$ as the pushout
$$
P_2 = \{y\} \bigsqcup\limits_{f^{-1}(y)} S_2.
$$
The subset of tassels in $P_2$ is inherited from subsets $t(S)$ and $t(T)$.

If we know
that the subset $f^{-1}(y)$ is an interval in $S_2$ then the total order of $S_2$ clearly descends to $P_2$. Let us take $x, x' \in f^{-1}(y)$ and consider vertex $z \in S_2$, such that
$x < z < x'$. Since $f$ is a weak equivalence $p(x) = p(x')$, and we investigate three possibilities. First, if $z$ is comparable to both $x$ and $x'$ then we must have $x \pless z \pless x'$,
but since $f$ preserves the partial order, we conclude that $f(x) = f(z) = f(x') = y$. Now assume that $z$ is comparable to neither $x$ nor $x'$ then by definition \ref{def_prenet} we have
$p(x) < p(z) < p(x') = p(x)$ which is a contradiction. Finally, assume that $z \pless x'$ and incomparable with $x$. Then $p(x) < p(z)$ and we argue by induction on the length
of the chain connecting $z$ and $x'$ similarly to remark \ref{rem_total_order}. If $z$ and $x'$ share a common parent then we immediately arrive to a contradiction. Otherwise,
consider a minimal chain $\{z_0, z_1, \ldots, z_n\}$ with $z = z_0$ and $x' = z_n$. Since $z_{n-1}$ and $z_n$ share a parent we conclude that $x \pless z_{n-1} \pless x'$, therefore
$f(z_{n-1}) = y$ and $p(z_{n-1}) = p(x) = p(x')$. As $z$ and $z_{n-1}$ are connected by a shorter chain we use the inductive process to arrive to a contradiction.
This shows that $f^{-1}(y)$ is an interval in $S_2$.

So far we have the following level-wise factorization
$$
\begin{tikzcd}
f\from S \ar[r, "g"] & P \ar[r, "h"] & T,
\end{tikzcd}
$$
where maps $g$ and $h$ are clear from the construction of $P$. Next we define the parent map $p_P\from P_2 \to \Int(P_1)$ as the composition $p_P(x) = p_T(h(x))$. Let us check that the condition of
definition \ref{def_prenet} is satisfied. Consider two incomparable vertices $x$ and $x'$ in $P_2$ such that $x < x'$. By definition of the total order on $P_2$ this means that for any preimages $\tilde x$ and $\tilde x'$
in $S_2$ of $x$ and $x'$ we have $\tilde x < \tilde x'$. Since, as we just saw,
a tassel collapse preserves total order on the second level, we conclude that $h(x) < h(x')$ in $T_2$ and the condition of definition \ref{def_prenet} for $T$ implies that it holds for $P$ as well.

It remains to check that $g$ and $h$ are tassel collapse maps. First, as we saw both $g_2$ and $h_2$ preserve total orders on the second level, hence they also preserve partial orders
which establishes (a) of lemma \ref{lemma_easy_w}. Combining the fact that $f$ induced isomorphism between cores $c(S)$ and $c(T)$ and construction of $P$ we also see that both
$g$ and $h$ induce isomorphisms of cores. In particular they induce isomorphisms between core vertices at level $2$, which establishes \ref{lemma_easy_w} (b).
We need only to check that $g p_S = p_P g$ and $h p_P = p_T h$ as this would also imply \ref{lemma_easy_w} (c) for $g$ and $h$ respectively, as both $g_1$ and $h_1$ are isomorphisms at level $1$.
The second equality is just the definition of $p_P$, since $h$ at level $1$ is identity. For the first equality we first observe that $g(p_S(x)) = p_T(f(x))$, since $f$ is a tassel collapse.
And on the other hand, $p_P(g(x)) = p_T(hg(x)) = p_T(f(x))$ by definition of $p_P$.
\qed

\begin{definition}
\label{def_vc}
A map $f\from S \to T$ of prenets with tassels is called a {\em vertex collapse} if $f_2$ is a bijection between $S_2$ and $T_2$ preserving decomposition into core and tassels.
\end{definition}

Similarly, we say that a vertex collapse $f\from S \to T$ is {\em simple} if the preimages $f_1^{-1}(x)$ consist of a single element for all except possibly one vertex $x \in T_1$.
It is easy to see that a vertex collapse is a weak equivalence. Indeed, since bijection $f_2$ sends core vertices to core vertices we immediately see that condition (a) of definition
\ref{def_weak_equiv} holds. To check condition (b) we observe that any vertex $x \in S_2$ is the preimage of some vertex $y \in T_2$. Therefore, using (b) of definition \ref{def_prenet_map}
we find
$$
p_S(x) = p_S(f^{-1}(y)) = f^{-1}(p_T(y)) = f^{-1} p_T f(x).
$$

\begin{lemma}
\label{lemma_vc_factor}
Any vertex collapse map $f\from S \to T$ can be factorized into a chain of simple vertex collapses.
\end{lemma}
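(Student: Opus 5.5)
We mirror the proof of lemma \ref{lemma_tc_factor}, now working at level $1$ instead of level $2$, and argue by induction on the number of vertices of $T_1$ with non-trivial preimage under $f_1$. If there is at most one such vertex there is nothing to prove. Otherwise pick one such vertex $y \in T_1$ and construct an intermediate prenet $P$ with $P_0 = S_0 = T_0$ and $P_2 = S_2 \isom T_2$, with the same tassels. At level $1$ take the pushout
$$
P_1 = \{y\} \bigsqcup\limits_{f_1^{-1}(y)} S_1 .
$$
Since $S$ and $T$ are connected, $\pless$ on level $1$ is the total order and $f_1$ preserves it. Hence $f_1^{-1}(y)$ is an interval in $S_1$, and the total order descends to $P_1$. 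This gives a level-wise factorization $f = h g$ with $g\from S \to P$ and $h\from P \to T$, where $g_2, h_2$ are the evident bijections. For the parent map we put $p_P(g(x)) = g_1(p_S(x))$ for $x \in S_2$. The image of an interval under the monotone map $g_1$ is an interval. Moreover, $p_S(x) = f^{-1}p_T f(x)$ (as shown above, vertex collapses are weak equivalences), so $p_S(x)$ is saturated with respect to $f_1$ and hence with respect to $g_1$. It follows that $g_1^{-1}(p_P(g(x))) = p_S(x)$ and $h_1^{-1}(p_T(f(x))) = p_P(g(x))$.

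Next I would check that $P$ satisfies definition \ref{def_prenet}. The key observation is that $\pless_P$ on $P_2$ coincides with $\pless_S$ under the bijection $g_2$. Two vertices of $P_2$ share a parent in $P_1$ if and only if their preimages share a parent in $S_1$, because all parent sets are $g_1$-saturated. Given this, if $x < x'$ are incomparable in $P_2$, they are incomparable in $S_2$, so $p_S(x) < p_S(x')$. These two sets are disjoint saturated intervals, so their images under $g_1$ are still strictly ordered, which gives $p_P(x) < p_P(x')$. Connectedness and prunedness of $P$ are inherited from $S$.

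Finally I would verify that $g$ and $h$ are maps of prenets via lemma \ref{lemma_easy_w}.
\begin{itemize}
\item Condition (a) holds because the partial orders at level $2$ are preserved. For $g$ they are equal by the observation above; for $h$, sharing a parent in $P$ implies sharing a parent in $T$ by lemma \ref{lemma_prenet_map} (c'). At level $1$ the maps are monotone.
\item Condition (b) holds because $g_2$ and $h_2$ are bijections respecting tassels.
\item Condition (c) is exactly the two saturation identities $g_1^{-1}(p_P(g(x))) = p_S(x)$ and $h_1^{-1}(p_T(h(x))) = p_P(x)$ established above.
\end{itemize}
By definition \ref{def_vc} both $g$ and $h$ are then vertex collapses. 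The map $g$ is simple. The map $h$ has one fewer vertex with non-trivial preimage at level $1$, so the induction hypothesis applies to it.

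The main obstacle is the saturation property, that each $p_S(x)$ is a union of fibers of $f_1$. It is what makes $p_P$ well defined, preserves disjointness for the prenet axiom, and gives condition (c) of lemma \ref{lemma_easy_w}. It follows from the fact, shown just before the lemma, that vertex collapses satisfy $p_S(x) = f^{-1}p_T f(x)$.
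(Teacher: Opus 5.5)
Your proposal is correct and follows the paper's proof essentially step for step. Both use the level-$1$ pushout $P_1=\{y\}\sqcup_{f^{-1}(y)}S_1$ and the parent map $p_P=g_1\circ p_S$, use the identification $\pless_P=\pless_S$ to verify the prenet axiom, and apply lemma \ref{lemma_easy_w} with condition (c) coming from $p_S(x)=f^{-1}p_Tf(x)$.

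The one variation is in how you get $\pless_P=\pless_S$. You derive it directly from the $g_1$-saturation of the parent sets, whereas the paper squeezes $\pless_P$ between $\pless_S=\pless_f=\pless_T$ using lemma \ref{lemma_p_order}.

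One small fix: for condition (a) on $h$, argue from $\pless_P=\pless_S$ together with the fact that $f$ preserves $\pless$. Citing lemma \ref{lemma_prenet_map} (c') is not enough, since it only produces a common parent in $T$ and not the direction of the order.
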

\proof
Arguing as in lemma \ref{lemma_tc_factor} we first construct a level-wise factorization
$$
\begin{tikzcd}
f\from S \ar[r, "g"] & P \ar[r, "h"] & T,
\end{tikzcd}
$$
by putting $P_2 = S_2 \isom T_2$ and defining $P_1$ as the pushout
$$
P_1 = \{y\} \bigsqcup_{f^{-1}(y)} S_1.
$$
Since on $S_1$ the partial order $\pless$ is the total order and map $f$ preserves it, we immediately see that $f^{-1}(y)$ is
an interval in $S_1$, and so the total order on $S_1$ induces a well defined total order on $P_1$. We define the parent map in $P$ as the composition $p_P(x) = g_1(p_S(x))$.

Let us check that $P$ is a prenet. Since $f$ is a weak equivalence, we have $\pless_S = \pless_f$, and moreover, since it is a vertex collapse, identifying $S_2$ with $T_2$
the latter partial order $\pless_f = \pless_T$. Since the partial order $\pless_P$ is squeezed between $\pless_S$ and $\pless_T$ we conclude that $\pless_S$ and $\pless_P$ coincide.
Now let $x$ and $x'$ be two incomparable vertices in $S_2 = P_2$ such that $x < x'$. Using the fact that partial orders $\pless_S$ and $\pless_P$ coincide, $x$ and $x'$ are also incomparable as vertices in $S$
and therefore $p_S(x) < p_S(x')$. By definition of $p_P$ this implies that $p_P(x) < p_P(x')$, as $x$ and $x'$ have no common parents in $P$.

It remains to check that $g$ and $h$ are maps of prenets. The condition (a) of lemma \ref{lemma_easy_w} follows immediately from the fact that the partial orders $\pless_S$, $\pless_P$ and $\pless_T$
coincide. Condition (b) is immediately satisfied, since both maps $g$ and $h$ are isomorphisms at level $2$. In order to verify condition (c) for $h$ observe that
$$
p_S(x) = f^{-1}p_T f(x),
$$
since $f$ is a weak equivalence. By definition of $p_P$, identifying $P_2 = S_2$ and because $g_1$ is a surjective map (\ref{lemma_prenet_map} (a')) we have
$$
p_P(x) = g p_S(x) = g f^{-1} p_T f(x) = h^{-1} p_T h(x).
$$
And for $g$ condition (c) follows from
$$
p_S(x) = f^{-1}p_T f(x) = g^{-1} h^{-1} p_T f(x) = g^{-1} p_P(g(x)).
$$
This implies that both maps $g$ and $h$ are weak equivalences by lemma \ref{lemma_easy_w} and therefore they are also vertex collapses.


\qed

For a general weak equivalence we have the following two factorization properties.

\begin{lemma}
\label{lemma_w_factor_tv}
Any weak equivalence can be factorized as a composition of a tassel collapse and a vertex collapse.
\end{lemma}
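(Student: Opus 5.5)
The plan is to reuse the factorization already constructed in Lemma \ref{lemma_map_factor}, and then check that for a weak equivalence its two halves are of the required types. Let $f\from S \to T$ be a weak equivalence. Lemma \ref{lemma_map_factor} gives a factorization $f = hg$ through the prenet $P$ with $P_1 = S_1$, $P_2 = T_2$, $t(P) = t(T)$ and $p_P(x) = p_S(f^{-1}(x)) = f^{-1}(p_T(x))$. Here $g_1 = \id_{S_1}$, $g_2 = f_2$, $h_1 = f_1$ and $h_2 = \id_{T_2}$. That lemma already shows that $P$ is a connected pruned prenet with tassels and that $g$ and $h$ are maps in $2\PreNet$. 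The condition $f(c(S)) \subset c(T)$ gives $g(c(S)) \subset c(P)$, and $h$ trivially preserves cores. So only the types of $g$ and $h$ remain to be checked.

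\emph{$h$ is a vertex collapse.} The map $h_2 = \id_{T_2}$ is a bijection $P_2 \to T_2$. Since $t(P) = t(T)$, it preserves the decomposition into core vertices and tassels, which is exactly Definition \ref{def_vc}.

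\emph{$g$ is a tassel collapse.} First, the condition on parents. For $x \in S_2$ we have $p_P(g(x)) = f^{-1} p_T f(x)$ by the definition of $p_P$. This equals $p_S(x)$ by condition (b) of Definition \ref{def_weak_equiv}. Since $g_1 = \id$, this reads $g(p_S(x)) = p_P(g(x))$.

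Second, the isomorphism of cores. At levels $0$ and $1$ the map $g$ is the identity. At level $2$, $g$ restricted to $c(S_2)$ is the map $c(S_2) \to c(T_2) = c(P_2)$ induced by $f_2$. By condition (a) of Definition \ref{def_weak_equiv} this is a bijection. By the parent identity just established, it intertwines the parent maps of $c(S)$ and $c(P)$. Hence $g$ induces an isomorphism of prenets $c(S) \isom c(P)$; the inverse is automatically order preserving at level $2$, because the partial order there is determined by the parent sets.

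Both $g$ and $h$ are therefore weak equivalences: tassel collapses and vertex collapses were observed to be weak equivalences, and alternatively this follows from Lemma \ref{lemma_w_23}. This gives the desired factorization $f = hg$.

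The only place needing care is the core isomorphism for $g$. A tassel of $S$ may be sent by $f_2$ onto a core vertex of $T$, so $g_2$ need not be injective on all of $S_2$; the point is that the core isomorphism only concerns the restriction of $g$ to $c(S)$. The other potential sticking point would be verifying that $P$ is a prenet (intervals and the order condition of Definition \ref{def_prenet}), but this is already handled in the proof of Lemma \ref{lemma_map_factor}.
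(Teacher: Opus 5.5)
Your proof is correct and follows the same route as the paper. Both use the factorization of Lemma~\ref{lemma_map_factor}, note that $h$ is a vertex collapse, and conclude that $g$ is a tassel collapse because $g_1$ is a bijection. The only difference is that the paper gets that $g$ is a weak equivalence (and hence the core bijection and the parent identity) from the 2-out-of-3 property of Lemma~\ref{lemma_w_23}, whereas you check these conditions for $g$ directly from those of $f$, which is the same computation written out.
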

\proof
Consider factorization of $f$ constructed in lemma \ref{lemma_map_factor}.
$$
\begin{tikzcd}
f\from S \ar[r, "g"] & P \ar[r, "h"] & T.
\end{tikzcd}
$$
By construction map $h$ is a vertex collapse, hence a weak equivalence. Using the 2-out-of-3 property \ref{lemma_w_23} we conclude that $g$ is also a weak equivalence, and therefore induces
a bijection between core vertices $c(S_2)$ and $c(P_2)$. Since by construction of factorization, $g_1$ is also a bijection, we see that $g$ is a tassel collapse.
\qed

\begin{lemma}
\label{lemma_w_factor_vt}
Any weak equivalence can be factorized as a composition of a vertex collapse and a tassel collapse.
\end{lemma}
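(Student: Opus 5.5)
Given a weak equivalence $f\from S \to T$, I would build the dual of the factorization in lemma \ref{lemma_map_factor}. Define an intermediate prenet $Q$ with $Q_0 = S_0$, $Q_1 = T_1$ and $Q_2 = S_2$, keeping the tassels $t(Q) = t(S)$. The parents are pushed forward, $p_Q(x) = f_1(p_S(x))$, so that $p_Q(x) = p_T(f(x))$. The equality of these two expressions follows from condition (b) of definition \ref{def_weak_equiv}: $p_S(x) = f^{-1}p_T f(x)$, and $f_1$ is surjective by lemma \ref{lemma_prenet_map} (a'). The maps are $g_1 = f_1$, $g_2 = \id_{S_2}$, $h_1 = \id_{T_1}$ and $h_2 = f_2$. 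By construction $g$ is bijective on level $2$ and preserves the core/tassel decomposition, hence is a candidate vertex collapse. Likewise $h$ is bijective on level $1$ and satisfies $h(p_Q(x)) = p_T(h(x))$, hence is a candidate tassel collapse.

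\textbf{Checking that $Q$ is a prenet.} First, $p_Q(x) = p_T(f(x))$ is an interval, so $p_Q$ is well defined. Next I compare the partial order $\pless_Q$ on $Q_2 = S_2$ with the others. It is generated by the sets $\{x \mid y \in p_T(f(x))\}$ for $y \in T_1$, which are exactly the subsets $C'_y$ of lemma \ref{lemma_p_order}. Hence $\pless_Q = \pless_f$, and since $f$ is a weak equivalence, lemma \ref{lemma_p_order} gives $\pless_Q = \pless_S$.

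Now take $x < x'$ incomparable in $Q_2$. They are then incomparable in $S$, so $p_S(x) < p_S(x')$. Applying the order preserving map $f_1$ gives $p_Q(x) \le p_Q(x')$ elementwise. Strictness follows because the two intervals are disjoint: a common element $y$ would make $x$ and $x'$ comparable under $\pless_f = \pless_Q$. This yields $p_Q(x) < p_Q(x')$. Connectedness holds since $S_0 = T_0$ is a point, and prunedness follows from surjectivity of $f_1$ and prunedness of $S$ at level $0$.

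\textbf{Checking that $g$ and $h$ are maps.} I would verify the hypotheses of lemma \ref{lemma_easy_w} for both.

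For order preservation, $g$ is fine on level $2$ because $\pless_S = \pless_Q$, and on level $1$ because $g_1 = f_1$. For $h$, order preservation holds since $h_2 = f_2$ preserves $\pless$ via $\pless_Q = \pless_S$ and $f$.

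The core bijection condition holds because $g_2$ is the identity, and $h_2 = f_2$ is bijective on cores since $f$ is a weak equivalence.

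For the parent condition of lemma \ref{lemma_easy_w} (c):
\begin{itemize}
\item for $g$: $g^{-1}p_Q g(x) = f_1^{-1}p_T f(x) = p_S(x)$;
\item for $h$: $h^{-1}p_T h(x) = p_T(f(x)) = p_Q(x)$, because $h_1$ is the identity.
\end{itemize}
Therefore both maps are weak equivalences. Then $g$ is a vertex collapse by definition \ref{def_vc}. For $h$, being bijective on level $1$ and bijective on cores means $c(Q) \to c(T)$ is an isomorphism, and $h(p_Q(x)) = p_T(h(x))$ holds, so $h$ is a tassel collapse by definition \ref{def_tc}. Since $hg = f$ levelwise, this gives the required factorization.

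The main obstacle I expect is the prenet axiom for $Q$, specifically the strict inequality $p_Q(x) < p_Q(x')$ for incomparable vertices. The identification $\pless_Q = \pless_f = \pless_S$ via lemma \ref{lemma_p_order} is what makes this work, since collapsing level $1$ could a priori merge the parent intervals of incomparable vertices.
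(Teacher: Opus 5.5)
Your proposal is correct and is essentially the paper's proof. You use the same intermediate prenet ($P_1 = T_1$, $P_2 = S_2$, $t(P) = t(S)$, $p_P = p_T h$), the same identification $\pless_P = \pless_f = \pless_S$ via lemma \ref{lemma_p_order}, and the same check through lemma \ref{lemma_easy_w}. Your extra remarks on disjointness of the parent intervals and on prunedness fill in details the paper leaves implicit; note that the prunedness step uses that $S$ is pruned at level $1$, not level $0$.
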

\proof
We begin by constructing a level-wise factorization
$$
\begin{tikzcd}
f\from S \ar[r, "g"] & P \ar[r, "h"] & T,
\end{tikzcd}
$$
by putting $P_1 = T_1$, $P_2 = S_2$ with $t(P) = t(S)$. And for the maps we put $g_1 = f_1$, $g_2 = \id_{S_2}$, $h_1 = \id_{T_1}$ and $h_2 = f_2$. We define parents of a vertex $x \in P_2 = S_2$ as $p_P(x) = p_T(h(x))$.
Since $f$ is a weak equivalence, by \ref{def_weak_equiv} (b) this is equivalent to $p_P(x) = g(p_S(x))$.

First we check that $P$ is a prenet. It is clear from the definition of $p_P$ that the partial order $\pless_P$ on $P_2 = S_2$ coincides with $\pless_f$ defined in lemma \ref{lemma_p_order}.
Therefore, the two partial orders $\pless_S$ and $\pless_P$ coincide. Let $x$ and $x'$ be two incomparable vertices in $P_2$, such that $x < x'$. Then arguing as in lemma \ref{lemma_vc_factor}
they are also incomparable as vertices in $S$, and therefore we have $p_S(x) < p_S(x')$ which in turn implies $p_P(x) < p_P(x')$.

Next, let us check that $h$ is a tassel collapse. Since $h_1 = \id_{T_1}$ and $h_2 = f_2$ induces a bijection between $c(S_2) = c(P_2)$ and $c(T_2)$ we see that $h$ induces an isomorphism between cores $c(P)$ and $c(T)$.
Since $\pless_S = \pless_P$ and $f$ preserves partial orders we see that $h$ also preserves partial orders, thus satisfying \ref{lemma_easy_w} (a). Condition (c) follows immediately from the definition of $p_P$.

Finally, let us check that $g$ is a vertex collapse. By construction $g_2$ is a bijection preserving decomposition into core and tassels, in particular it satisfied condition (b) of lemma \ref{lemma_easy_w}.
Since $g_1 = f_1$, it preserves the partial orders at level $1$, and since $\pless_S$ coincides with $\pless_P$ we see that $f_2 = \id_{S_2}$ also preserves the partial order, thus establishing condition (a).
Moreover, using definition of $p_P$ and the fact that $f$ is a weak equivalence we have
$$
p_S(x) = f^{-1} p_T f(x) = g^{-1} h^{-1} p_T h g(x) = g^{-1} p_P g(x),
$$
where the last identity follows from the fact that $h$ is a weak equivalence.

\qed

The following lifting property will be of a particular importance in the construction of the localized category.

\begin{proposition}
\label{prop_lift}
Let $f\from S \to T$ be a map of prenets and $g\from T' \to T$ a weak equivalence. Then there exists a commutative square in $2\PreNet$
$$
\begin{tikzcd}[sep=3em]
S' \ar[r, "f'"] \ar[d, "g'"'] & T' \ar[d, "g", ] \\
S \ar[r, "f"'] & T
\end{tikzcd}
$$
such that $g'$ is also a weak equivalence.
\end{proposition}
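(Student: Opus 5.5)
By lemma \ref{lemma_w_factor_tv} (or \ref{lemma_w_factor_vt}) together with lemmas \ref{lemma_tc_factor} and \ref{lemma_vc_factor}, $g$ factors as a chain of simple tassel collapses and simple vertex collapses. Squares with weak-equivalence verticals paste vertically, and weak equivalences are closed under composition (lemma \ref{lemma_w_23}). It therefore suffices to prove the proposition when $g$ is either a simple vertex collapse or a simple tassel collapse, and then stack the squares.

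\emph{Case of a vertex collapse.} Let $g$ be a bijection at level $2$ that collapses an interval $J = g_1^{-1}(y) \subset T'_1$ to a single vertex $y \in T_1$. Set $S'_2 = S_2$ with the same tassels, and let $S'_1$ be the fiber product of $S_1$ and $T'_1$ over $T_1$ in totally ordered sets. Concretely, each $s \in f_1^{-1}(y)$ is replaced by a copy of $J$, ordered lexicographically; the preimage $f_1^{-1}(y)$ is an interval because $f_1$ is order preserving and surjective. For $x \in S'_2$ define $p_{S'}(x)$ to be the pairs $(s, t')$ with $s \in p_S(x)$ and $t' \in p_{T'}(g_2^{-1} f_2(x))$ over a common image. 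The maps $g'$ and $f'$ are the projections, with $g'_2 = \id$ and $f'_2 = g_2^{-1} f_2$.

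We then verify the following.
\begin{enumerate}[label=\roman*)]
\item $p_{S'}(x)$ is an interval.
\item $S'$ satisfies the condition of definition \ref{def_prenet}; here $\pless_{S'}=\pless_S$ as in lemma \ref{lemma_vc_factor}.
\item $g'$ is a vertex collapse.
\item $f'$ satisfies conditions (b) and (c) of definition \ref{def_prenet_map}.
\end{enumerate}
Condition (b) follows from the fiber-product formula together with $p_{T'}=g^{-1}p_Tg$. Condition (c) for $f'$ is inherited from $f$, because the covers $A$ and $B$ at level $2$ are unchanged; one only has to see that blowing up $y$ preserves disjointness.

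\emph{Case of a tassel collapse.} Let $g$ merge a set $X' \subset T'_2$ of vertices, all with the same parents (possibly including one core vertex), into a single vertex $z \in T_2$, with $g_1$ a bijection. Set $S'_1 = S_1$. For level $2$, replace each $x \in f_2^{-1}(z)$ by the ordered copies $\{x\} \times X'$. The core is kept as in $S$: the copy $(x,x'_0)$ is a core vertex exactly when $x$ is a core vertex and $x'_0$ is the core vertex of $X'$ (if there is one), and all other copies are tassels. Each copy gets parents $p_{S'}(x,x') = p_S(x)$. Then $g'$ collapses all copies back to $x$ and is a tassel collapse. The map $f'$ sends $(x,x') \mapsto x'$ and is $f$ elsewhere, so $f'(c(S')) \subset c(T')$ by the choice of core copies.

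The step I expect to be the main obstacle is condition (c) of definition \ref{def_prenet_map} for $f'$ in this tassel case. The naive product ordering of the copies $(x, x')$ may break the requirement that $A(f'^{-1}(x'))$ and $B(f'^{-1}(x'))$ are disjoint covers, and it may also break compatibility of $\pless$ with the total order (remark \ref{rem_total_order}). I would first try ordering the copies with $X'$ as the outer index, so that $f'^{-1}(x') \cong f^{-1}(z)$ with the same parents for each $x'$. Then $A$ and $B$ are literally copies of those for $f$, and the ordering must be checked against definition \ref{def_prenet} using $p_{T'}(x')=p_{T'}(x'')$ for $x',x''\in X'$. If that ordering fails, I would instead interleave the copies within each maximal $\pless$-connected block of $f^{-1}(z)$.
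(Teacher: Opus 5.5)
The reduction to simple vertex and simple tassel collapses is fine and matches the paper. The gap is that in both cases you take $S'$ to be the \emph{full} fiber product, and that set cannot carry a total order for which both projections preserve $\pless$.

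In the vertex case, take $s_1<s_2$ in $f_1^{-1}(y)$ and $t'_1<t'_2$ in $J$. Lexicographically $(s_1,t'_2)<(s_2,t'_1)$, but $f'_1$ sends them to $t'_2>t'_1$. Since $\pless$ on level $1$ is the total order, $f'$ violates condition (a) of definition \ref{def_prenet_map}, and putting $(s_2,t'_1)$ first instead breaks $g'_1$.

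In the tassel case the same obstruction is fatal, and it lies in condition (a), not in (c); your check of (c) for the product is actually fine. Suppose $x_1<x_2$ in $f_2^{-1}(z)$ share a parent $s$, and $a>b$ in $X'$. Then $(x_1,a)$ and $(x_2,b)$ are both children of $s$ in $S'$, hence $\pless$-comparable. If $(x_1,a)<(x_2,b)$, then $f'$ forces $a\le b$; if $(x_2,b)<(x_1,a)$, then $g'$ forces $x_2\le x_1$. So neither the $X'$-outer order nor any interleaving can work. A concrete counterexample: $S$ with one level-$1$ vertex and two core children, $f\from S\to I$, and $g$ collapsing a core vertex and a tassel with the same parent onto $I$.

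What is missing is that $S'$ must be a monotone staircase inside the fiber product, not all of it; this is the path picture following remark \ref{rem_lift_w}.

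For vertex collapses the fix is harmless. The paper blows up only the last vertex of $f_1^{-1}(y)$ into a copy of $g_1^{-1}(y)$ and sends the remaining vertices of $f_1^{-1}(y)$ to the first vertex of $g_1^{-1}(y)$. Condition (b) still holds because every vertex of $g_1^{-1}(y)$ has the same children as $y$.

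For tassel collapses you need a genuine new idea:
\begin{enumerate}[label=\roman*)]
\item Choose a distinguished $v\in g^{-1}(z)$: the core vertex if $z$ is core, otherwise the first vertex.
\item Send all of $f^{-1}(z)$ to $v$.
\item Duplicate only $A=A(f^{-1}(z))$ over the tassels $V_1$ of $g^{-1}(z)$ below $v$, and only $B=B(f^{-1}(z))$ over the tassels $V_2$ above $v$.
\end{enumerate}
That is, $S'_2=A\times V_1\sqcup S_2\times\{v\}\sqcup B\times V_2$, ordered lexicographically. Each $a\in A$ is the first fiber child of each of its parents, so $(a,u)$ sits just before $(a,v)$ compatibly with both projections, and similarly on the $B$ side. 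The new fibers $f'^{-1}(u)=A\times\{u\}$ satisfy (b) and (c) precisely because $A$ is a disjoint cover of $f^{-1}p_T(z)$. This is where hypothesis (c) of definition \ref{def_prenet_map} on $f$ is used essentially, rather than merely inherited as in your argument.
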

\proof
In virtue of the previous lemmas it is enough only to consider two cases, when $g$ is either a simple vertex collapse or a simple tassel collapse.

{\bf Simple vertex collapse.} Let $y \in T_1$ be the target of the vertex collapse $g$. Since $T'_2 = T_2$ we can put $S'_2 = S_2$ and $f'_2 = f_2$. Now, by definition
of vertex collapse the set of children of any vertex in $g^{-1}(y)$ is the same as the set of children of $y$ itself. Therefore, we can send any vertex from $f^{-1}(y)$
to any vertex in $g^{-1}(y)$ while satisfying \ref{lemma_prenet_map} (b'). Clearly, we can also do it while preserving the order on the first level. To address the issue of
surjectivity, if the size $|f^{-1}(y)|$ is less than $|g^{-1}(y)|$ then we can duplicate any vertex in $f^{-1}(y)$ until we have enough vertices to cover
$g^{-1}(y)$.

More formally, let $z$ be the last vertex of $f^{-1}(y)$ (with respect to the total order on $S_1$), and denote by $Z = g^{-1}(y)$, then we can put $S'_1 = (S_1 - \{z\}) \sqcup Z$.
We define $g'$ by putting it to be identity on $S_1 - \{z\}$ and sending $g^{-1}(y)$ to $z$. The total order on $S'_1$ is unambiguously determined by requiring
that $g'$ preserves the order and that on $Z$ it coincides with the total order on $g^{-1}(y) \subset T'_1$. We define $p_{S'}(x) = g'^{-1}(p_S(x))$ for $x \in S'_2$, which clearly makes
$g'$ a simple vertex collapse (with target $z$), hence a weak equivalence. Finally, we define $f'_1\from S'_1 \to T'_1$ as follows. Let $Y = f^{-1}(y) - \{z\} \subset S_1$,
then for $x \in S'_1 - Y - Z$ we put $f'(x) = g^{-1}fg'(x)$, which is well defined, since $g$ is a bijection outside of $y$. Any $x \in Y$ we send to the first vertex in $g^{-1}(y)$ with
respect to the total order on $T'_1$. And finally a vertex $x \in Z = g^{-1}(y)$ we send to the corresponding element in $g^{-1}(y) \subset T'_1$. It is straightforward to check
that $f'$ is a map of prenets.

{\bf Simple tassel collapse.} Let $y \in T_2$ be the target of the tassel collapse $g$. Since $T'_1 = T_1$ we can put $S'_1 = S_1$ and $f'_1 = f_1$.
Recall that as we saw in the proof of lemma \ref{lemma_tc_factor} $g^{-1}(y)$ is an interval in $T'_2$. Now, we have two possibilities,
if $y$ is a tassel then the preimage $g^{-1}(y)$ consists entirely of tassels, in which case we put $v$ to be the first vertex of $g^{-1}(y)$ with respect to the total order in $T'_2$.
On the other hand, if $y$ is a core vertex then there is exactly one core vertex in $g^{-1}(y)$ and in this case we put $v$ to be this unique core vertex. In either case we
denote by $V_1$ the set of tassels $\{x \in g^{-1}(y) \mid x < v\}$ and by $V_2$ the set of tassels $\{x \in g^{-1}(y) \mid x > v\}$.

Let $A = A(f^{-1}(y))$ and $B = B(f^{-1}(y))$ (in the notation of paragraph \ref{par_AB_notation}). We define
$$
S'_2 = A \times V_1 \sqcup S_2 \times \{v\} \sqcup B \times V_2, \quad t(S'_2) = A \times V_1 \sqcup t(S_2) \times \{v\} \sqcup B \times V_2,
$$
and order $S'_2$ lexicographically using total orders in $S_2$ and $T'_2$. Furthermore, we define the parent map by putting $p_{S'}(x, u) = p_S(x)$ for any vertex $(x, u) \in S'_2$.
It is easy to check that it satisfies the condition of definition \ref{def_prenet}. Indeed, if $(x, u)$ and $(x', u')$ are incomparable with respect to the partial order $\pless$ on $S'_2$
and $(x, u) < (x', u')$, then $x$ and $x'$ are incomparable with respect to $\pless$ on $S_2$ and $x < x'$ in $S_2$, which implies that $p(x) < p(x')$.

We define $g'_2(x, u) = x$ for any vertex $(x, u) \in S'_2$ and put
$$
f'_2(x, u) = \begin{cases}
u,&\text{if $u \neq v$},\\
v,&\text{if $u = v$ and $f(x) = y$},\\
f(x),&\text{if $u = v$ and $f(x) \neq y$}.
\end{cases}
$$

Let us first check that $g'$ is a tassel collapse. By construction it is clear that $g'_2$ preserves the partial orders $\pless$ on $S'_2$ and $S_2$, and by definition of tassels in $S'$
it induces bijection between $c(S'_2)$ and $c(S_2)$. Moreover, by definition of $p_{S'}$ we have $g'p_{S'} = p_S g'$, and since $g'_1$ is the identity we obtain property \ref{lemma_easy_w} (c).
Hence $g'$ is a weak equivalence, and since it is identity at level $1$ it is a tassel collapse.

It remains to check that $f'$ is a map of prenets. First, we notice that if $v$ is a tassel, then $y$ is a tassel and the entire preimage $f^{-1}(y)$ consists of tassels. Therefore $f'$
sends core vertices to core vertices. Next we check that it preserves the partial orders at level $2$. Consider two vertices $(x, u), (z, w) \in S'_2$. Since $g'$ is
a map $(x, u) \pless_{S'} (z, w)$ implies that $x \pless_S z$ and since $f$ is a map we have $f(x) \pless_T f(z)$. By construction $gf' = fg'$, therefore we have
$f'(x, u) \pless_g f'(z, w)$, and since $g$ is a weak equivalence, by lemma \ref{lemma_p_order} we conclude that $f'(x, u) \pless_{T'} f'(z, w)$.

Consider a vertex $t \in T'_2$, then since $g$ is a tassel collapse we have $p_{T'}(t) = p_T(g(t))$. If $t \not\in g^{-1}(y)$ then the preimage $f'^{-1}(t) = f^{-1}(g(t)) \times \{v\} \subset S'_2$.
And for the parents we have
$$
p_{S'}f'^{-1}(t) = p_S f^{-1} g(t) = f^{-1} p_T g(t) = f'^{-1} p_{T'}(t).
$$
If $t = v$ then $f'^{-1}(t) = f^{-1}(y) \times \{v\}$ and the same argument applies. If $t \in V_1$ then $f'^{-1}(t) = A \times \{t\}$. Since $g$ is vertex collapse, the parents $p_{T'}(t) = p_T(y)$.
And using the fact that by definition $A$ forms a disjoint cover of $f^{-1} p_T(y)$ we have
$$
p_{S'} f'^{-1}(t) = p_S(A) = f^{-1} p_T(y) = f'^{-1} p_{T'}(t).
$$
The case $t \in V_2$ is handled similarly, thus we have established property \ref{def_prenet_map} (b).

To check property \ref{def_prenet_map} (c) let us look at each of the aboves cases once again. If $t \not\in g^{-1}(y)$ or $t = v$ then $f'^{-1}(t) = f^{-1}(g(t)) \times \{v\}$, the subsets
$A(f'^{-1}(t)) = A(f^{-1} g(t)) \times \{v\}$ and $B(f'^{-1}(t)) = B(f^{-1} g(t)) \times \{v\}$ and we conclude by property (c) for map $f$. It $t \in V_1$ then using the fact that
$A$ is a disjoint cover of $f^{-1} p_T(y)$ we see that
$$
A(f'^{-1}(t)) = B(f'^{-1}(t)) = A \times \{t\}.
$$
Hence they are both disjoint covers of $f^{-1} p_T(y)$. The case $t \in V_2$ is handled similarly.
\qed

\begin{remark}
\label{rem_lift_w}
It is clear that constructions in both cases follow a similar pattern. To highlight it let us consider the case when both $f$ and $g$ are weak equivalences. In this situation the construction for the tassel
collapse significantly simplifies, since both subsets $A$ and $B$ consist of a single vertex, and we can combine two cases into one.

If $f$ and $g$ are weak equivalences, then both of them preserve the total orders on all levels. So for every $i$ we obtain a subdivision of $S_i$ into intervals $f_i^{-1}(y)$
indexed by vertices $y \in T_i$, and similarly $T'_i$ is subdivided into intervals $g_i^{-1}(y)$. For every $y$ we pick a vertex $v_y \in g_i^{-1}(y)$. When $y$ is a core vertex
on level $2$ we have to take $v_y$ to be the unique core vertex of $g_i^{-1}(y)$ in order for maps to be well defined, in all other cases $v_y$ can be chosen arbitrary, but
to be specific we take $v_y$ to be the minimal vertex in $g_i^{-1}(y)$. Denote by $g_i^{-1}(y)_{< v_y}$ the subset of $g_i^{-1}(y)$ consisting of elements less than $v_y$,
and similarly for $g_i^{-1}(y)_{> v_y}$. We then define merge of the two preimages at $v_y$ to be the totally ordered set
$$
\mu(y, v_y) := (g_i^{-1}(y)_{< v_y}, f_i^{-1}(y), g_i^{-1}(y)_{> v_y}).
$$
Then we can put
$$
S'_i := \bigsqcup_{y \in T_i} \mu(y, v_y)
$$
with the total order induced by the total order on $T_i$ and those of $\mu(y, v_y)$. We define map $f'_i$ on each merge $\mu(y, v_y)$ by putting it to identity on $g_i^{-1}(y)_{< v_y}$
and $g_i^{-1}(y)_{> v_y}$, and sending $f_i^{-1}(y)$ to $v_y$. Similarly, we define $g'_i$ by putting it to identity on $f_i^{-1}(y)$ and by sending $g_i^{-1}(y)_{< v_y}$
and $g_i^{-1}(y)_{> v_y}$ to the minimal and maximal vertices of $f_i^{-1}(y)$ respectively. Finally, we define the parent map on $S'$ as the composition
$$
p_{S'} = f'^{-1} g^{-1} p_T g f'.
$$
\end{remark}

\begin{nparagraph}
It may be helpful to present the construction of proposition \ref{prop_lift} visually. To simplify the picture we will again consider the case when both $f$ and $g$ are weak
equivalences and look at the construction on the second level. Denote
$$
R = S_2 \times_{T_2} T'_2 \subset S_2 \times T'_2.
$$
Since $f_2$ and $g_2$ preserve the total order, the subset $R$ is a union of rectangles
$$
R = \bigcup_{y \in T_2} f_2^{-1}(y) \times g_2^{-1}(y).
$$
The set $S'_2$ constructed in the proposition \ref{prop_lift} is a path inside $R$ passing through pairs $(w_y, v_y)$, where $w_y$ and $v_y$ are the core vertices in the preimages
$f_2^{-1}(y)$ and $g_2^{-1}(y)$ respectively, if they exist. In the picture below the shaded rectangles form the subset $R$, the bold dots are the pairs of core vertices and the thick blue line depicts
the path $S'_2$.
\vskip 1em
$$
\begin{tikzpicture}
\def\u{2em}
\def\dx{7*\u}
\def\dy{7*\u}

\draw [-,color=blue!50,line width=3pt] (0, 0) \foreach \x/\y in {0/0.5, 2/0.5, 2/1, 3/1, 3/5, 4/5, 4/6.5, 7/6.5, 7/7} { -- (\x * \u, \y * \u) };
\draw (0, 0) rectangle (\dx, \dy);
\draw (\dx, 0) node[below right] {$S_2$};
\draw (0, \dy) node[above left] {$T'_2$};
\foreach \x/\y in {1/0.5, 5.5/6.5} \fill (\x * \u, \y * \u) circle [radius=0.2em];

\foreach \x in {2, 3, 4} \draw (\x * \u, -0.2em) -- (\x * \u, 0.2em);
\foreach \y in {1, 5, 6} \draw (-0.2em, \y * \u) -- (0.2em, \y * \u);
\foreach \x/\l in {1/1, 2.5/2, 3.5/3, 5.5/4} \draw (\x * \u, 0) node[below, scale=0.6] {$f^{-1}(y_\l)$};
\foreach \y/\l in {0.5/1, 3/2, 5.5/3, 6.5/4} \draw (0, \y * \u) node[left, scale=0.6] {$g^{-1}(y_\l)$};

\draw[pattern=dots] (0, 0) \foreach \x/\y in {2/1, 3/5, 4/6, 7/7} { rectangle (\x*\u, \y*\u) };
\end{tikzpicture}
$$
\vskip 1em
The total order on each of the segments of the path is induced from the total orders on $S_2$ and $T'_2$ and we order the segments as prescribed by the path. The two maps $f'$ and $g'$
are projections on $T'_2$ and $S_2$ respectively.

Clearly, such a path is not unique and the one we chose is not even minimal. In principle we could have taken any path inside $R$ passing through the core vertices.

\end{nparagraph}

\begin{corollary}
\label{cor_w_lift}
Every cospan $\begin{tikzcd}[cramped,sep=small]T \ar[r, "g'"] & Q & P \ar[l, "f'"'] \end{tikzcd}$ of weak equivalences
in $2\PreNet$ can be completed to a commutative square, such that all maps are weak equivalences.
$$
\begin{tikzcd}
& S \ar[ld, "f"'] \ar[rd, "g"] & \\
T \ar[rd, "g'"'] & & P \ar[ld, "f'"] \\
& Q &
\end{tikzcd}
$$
\end{corollary}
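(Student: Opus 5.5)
The plan is to apply Proposition \ref{prop_lift} directly to the cospan, and then use the 2-out-of-3 property of Lemma \ref{lemma_w_23} to upgrade the remaining map of the square to a weak equivalence.

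\textbf{Step 1: the lift.} View $f'\from P \to Q$ as a map of prenets and $g'\from T \to Q$ as the weak equivalence in the hypothesis of Proposition \ref{prop_lift}. This produces a prenet $S$, a map $f\from S \to T$ and a weak equivalence $g\from S \to P$ such that $g' f = f' g$. Since $g$ is a weak equivalence, only $f$ remains to be checked.

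\textbf{Step 2: upgrading $f$.} The composite $f' g$ is a composition of two weak equivalences, so by Lemma \ref{lemma_w_23} (case a) it is a weak equivalence. By commutativity, $g' f = f' g$ is then a weak equivalence. Since $g'$ and $g' f$ are both weak equivalences, case (c) of the same lemma shows that $f$ is a weak equivalence. Case (c) is the "first map and composite" case, so the roles must be read correctly: the composite is $S \to T \to Q$, and the known weak equivalences are the second map $g'$ and the composite $g' f$. This is exactly case (b) of the proof of Lemma \ref{lemma_w_23} (second map and composite are weak equivalences, hence so is the first), so invoking the 2-out-of-3 statement in full generality suffices.

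\textbf{Main obstacle.} The only point needing care is that Proposition \ref{prop_lift} is stated in $2\PreNet$, with connected pruned objects. We must make sure the $S$ it produces lies in the same category and that commutativity holds on the nose, not merely level-wise on one level. The proposition asserts a commutative square in $2\PreNet$, so both conditions are part of its conclusion.

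Once $S$ lies in $2\PreNet$ and the square commutes strictly, the argument is purely formal. In the special case where $f'$ is itself a weak equivalence, Remark \ref{rem_lift_w} gives an explicit symmetric construction of $S$, but it is not needed for the proof.
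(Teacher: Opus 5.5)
Your argument is correct and is the paper's own proof: apply Proposition~\ref{prop_lift} to $f'$ along the weak equivalence $g'$, then use the 2-out-of-3 property of Lemma~\ref{lemma_w_23} to see that $f$ is a weak equivalence. The one slip is the initial mislabeling of the case: you need the second map $g'$ and the composite $g'f = f'g$ to be weak equivalences, which is case (b) of that proof, not case (c), as you eventually note yourself.
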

\proof
The statement follows from combination of proposition \ref{prop_lift} and 2-out-of-3 property of weak equivalences \ref{lemma_w_23}.
\qed

\begin{lemma}
\label{lemma_ww_pushout}
Every span $\begin{tikzcd}[cramped,sep=small]T & S \ar[l, "f"'] \ar[r, "g"] & P\end{tikzcd}$ of weak equivalences in $2\PreNet$ can be completed to a commutative square.
$$
\begin{tikzcd}
& S \ar[dd, "h"] \ar[ld, "f"'] \ar[rd, "g"] & \\
T \ar[rd, "g'"'] & & P \ar[ld, "f'"] \\
& Q &
\end{tikzcd}
$$
Moreover, if for every core vertex $x \in c(Q_2)$ the preimage $h^{-1}(x)$ contains exactly one core vertex, then both $g'$ and $f'$ are also weak equivalences.
\end{lemma}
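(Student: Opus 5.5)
The plan is to build $Q$ as a level-wise pushout of the span and check it is a prenet. Since $f$ and $g$ are weak equivalences, they preserve the total orders on every level; in the connected case all components are also surjective by lemma \ref{lemma_prenet_map} (a'). Hence each fiber of $f_i$ and of $g_i$ is an interval in $S_i$.

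\textbf{Construction of $Q$.}
\begin{enumerate}[label=\arabic*)]
\item On each level $i$, define $Q_i$ as the quotient of $S_i$ by the equivalence relation generated by $x \sim x'$ whenever $f(x) = f(x')$ or $g(x) = g(x')$. Because both partitions are into intervals, the generated partition is again into intervals. So $Q_i$ inherits a total order, and the quotient maps $f'\from T_i \to Q_i$, $g'\from P_i \to Q_i$ are well defined and order preserving, with $f'f = g'g = h$.
\item Declare tassels $t(Q) := Q_2 - h(c(S_2))$, so that $h$, $f'$, $g'$ send cores to cores. (The core of $T$ and $P$ is the bijective image of $c(S_2)$.)
\item Define parents by $p_Q(q) := h(p_S(x))$ for any $x \in h^{-1}(q)$.
\end{enumerate}

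\textbf{Key step: $p_Q$ is well defined.} Using condition (b) of definition \ref{def_weak_equiv} for $f$, two vertices with the same image under $f$ have the same parents, $p_S(x) = f^{-1}p_T f(x)$, and likewise for $g$. So $p_S$ is constant along generating relations at level $2$. Moreover $p_S(x)$ is saturated with respect to both $f_1$-fibers and $g_1$-fibers, hence with respect to the generated relation at level $1$. This gives
$$
p_S(x) = h^{-1}(p_Q(h(x))).
$$
Since the image of an interval under an order-preserving surjection is an interval, $p_Q(q)$ is an interval.

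\textbf{Main obstacle: the prenet condition of definition \ref{def_prenet} for $Q$.} Take $q < q'$ incomparable with respect to $\pless_Q$, with preimages $x$, $x'$. I would argue as in lemma \ref{lemma_vc_factor}: the partial order on $S_2$ generated via $h$ (as in lemma \ref{lemma_p_order}) coincides with $\pless_S$, because parents are saturated. So $x$ and $x'$ are incomparable in $S$, which gives $p_S(x) < p_S(x')$ and therefore $p_Q(q) < p_Q(q')$. The same saturation shows that $\pless$ is preserved by $h$, $f'$ and $g'$.

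\textbf{Maps of prenets.} For $f'$ and $g'$, conditions (b) and (c) of definition \ref{def_prenet_map} are checked via $h$ and surjectivity. Since $f$ is surjective, $p_T(y) = f(p_S(x))$, and then
$$
f'^{-1}p_Q f'(y) = p_T(y).
$$
Lemma \ref{lemma_easy_w} then handles (c) in the weak-equivalence situation. In general, (c) follows because all vertices in a fiber of $f'_2$ have identical parent sets, so $A$ and $B$ are singletons, exactly as in the proof of lemma \ref{lemma_easy_w}. This gives the commutative square.

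\textbf{Moreover part.} Under the hypothesis that each $h^{-1}(x)$, $x \in c(Q_2)$, contains exactly one core vertex, $h$ induces a bijection $c(S_2) \to c(Q_2)$, and $h$ is a weak equivalence by lemma \ref{lemma_easy_w}. Then $g'$ and $f'$ are weak equivalences by the 2-out-of-3 property, lemma \ref{lemma_w_23} (case c).
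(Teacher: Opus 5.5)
Your proposal is correct and follows essentially the paper's route. Both proofs use the level-wise pushout (your quotient by the join of the two interval partitions), descend the total order and the parent map using that parent sets are saturated for both $f_1$- and $g_1$-fibers, check the prenet condition via lifts, and declare a vertex of $Q_2$ core exactly when it is the image of a core vertex. The only difference is bookkeeping: the paper first passes to $S^\circ$, $T^\circ$, $P^\circ$ so that $f'$ and $g'$ are automatically weak equivalences and then reinstates the cores, whereas you keep the cores throughout and obtain the ``moreover'' part from $h$ being a weak equivalence (lemma \ref{lemma_easy_w}) together with the 2-out-of-3 property, which works equally well.
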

\proof
We begin the proof by replacing prenets $S$, $T$, $P$ with $S^\circ$, $T^\circ$ and $P^\circ$ respectively.
We define $Q$ and maps $f'$ and $g'$ level-wise via pushout squares.
$$
\begin{tikzcd}
& S_i \ar[ld, "f_i"'] \ar[rd, "g_i"] \ar[dd, dashed, "h_i"]& \\
T_i \ar[rd, "g'_i"'] & & P_i \ar[ld, "f'_i"] \\
& Q_i = T_i \bigsqcup_{S_1} P_i &
\end{tikzcd}
$$

Observe that for any $y \in T_i$ and $z \in P_i$ the preimages $f_i^{-1}(y)$ and $g_i^{-1}(z)$ are intervals in $S_i$. For $i = 1$ it follows from the fact that maps $f_1$ and $g_1$ preserve the
total orders, and for $i = 2$ it follows by a combination of arguments in the proof of lemma \ref{lemma_tc_factor} and lemma \ref{lemma_w_factor_tv}. Therefore for any vertex $x \in Q_i$
the preimage $h_i^{-1}(x)$ is also an interval in $S_i$, and so the total order on $S_i$ descends to the total order on $Q_i$.

Since at level $1$ all four arrows are surjective maps of totally ordered sets, they map intervals to intervals. Consider two compositions
$$
\begin{tikzcd}[row sep=0.5em]
q_T\from T_2 \ar[r, "p_T"] & \Int(T_1) \ar[r] & \Int(Q_1),\\
q_P\from P_2 \ar[r, "p_P"] & \Int(P_1) \ar[r] & \Int(Q_1).
\end{tikzcd}
$$
Since $f$ and $g$ are weak equivalences we have $q_T f_2 = q_P g_2$, and therefore by the universal property of the coproduct we obtain the parent map
$p_Q\from Q_2 \to \Int(Q_1)$. Let $x$ and $x'$ be two vertices in $Q_2$ which are incomparable with respect to the partial order $\pless$ on $Q$ and such that $x < x'$. Then for any lifts $\tilde x$ and $\tilde x'$ to $T_2$
of $x$ and $x'$ respectively we see that $\tilde x$ and $\tilde x'$ are also incomparable in $T$, therefore $p_T(\tilde x) < p_T(\tilde x')$ and hence $p_Q(x) < p_Q(x')$. This shows that $Q$ is a prenet,
and we put $t(Q) = Q_2$.

Let us show that $f'$ and $g'$ are weak equivalences. Since the picture is symmetric it is enough to focus just on one of these maps, say $g'$. As was already discussed $g'$ preserves the partial
orders $\pless$ on $T$ and $Q$ (in fact they even preserve the total order). Since the sets of core vertices at level $2$ in all four prenets are empty, $g'$ obviously induces bijection $c(T_2) \isom c(Q_2)$.
It remains to show that \ref{lemma_easy_w} (c) holds. Take $y \in T_2$, and a vertex $\tilde y \in f^{-1}(y)$ then it is enough to check that
$$
p_S(\tilde y) = f^{-1} p_T(y) = f^{-1} g'^{-1} p_Q g'(y) = h^{-1} p_Q h(\tilde y) = h^{-1} h p_S(\tilde y).
$$
Assume that $p_S(\tilde y) \subsetneq h^{-1}h p_S(\tilde y)$, then there is a vertex $t \in S_1$ and $t' \in p_S(\tilde y)$ such that either $f(t) = f(t')$ or $g(t) = g(t')$.
Both cases are treated similarly, so to be specific, assume that latter is the case, but then $g(t)$ is a parent of $g(\tilde y)$ in $P$, and since $g$ is a weak equivalence
we conclude that $t \in p_S(\tilde y)$, which contradicts the assumption. This proves that $g'$ is a weak equivalence.

Now let us recall the core vertices in the original prenets $S$, $T$, $P$, and define core vertices in $Q_2$ by saying that $x \in c(Q_2)$ if its preimage $h^{-1}(x)$ contains a core vertex. Clearly,
with this definition $g'$ and $f'$ remain maps of prenets, however, they may no longer be weak equivalences, as condition \ref{def_weak_equiv} (a) may no longer be satisfied. However,
under assumption that the preimage $h^{-1}(x)$ contains exactly one core vertex, combined with the fact that both $f$ and $g$ induce bijection between core vertices at level $2$, we see
that both $f'$ and $g'$ also induce bijections between core vertices at level $2$, hence are weak equivalences.
\qed

\begin{corollary}
\label{cor_vcw_pushout}
Every span $\begin{tikzcd}[cramped,sep=small]T & S \ar[l, "f"'] \ar[r, "g"] & P\end{tikzcd}$, where $f$ is a vertex collapse and $g$ is a weak equivalences in $2\PreNet$, can be completed to a commutative square
as in lemma \ref{lemma_ww_pushout}, where $f'$ is a vertex collapse and $g'$ is a weak equivalence.
\end{corollary}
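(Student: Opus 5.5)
The natural route is to apply Lemma \ref{lemma_ww_pushout} to the span $(f, g)$; a vertex collapse is a weak equivalence, so the lemma applies. This produces the pushout $Q$ with $Q_i = T_i \sqcup_{S_i} P_i$, maps $g'\from T \to Q$ and $f'\from P \to Q$, and core vertices defined by declaring $x \in c(Q_2)$ whenever $h^{-1}(x)$ contains a core vertex. The proof then has two parts: check the hypothesis of the ``moreover'' clause of Lemma \ref{lemma_ww_pushout}, so that $f'$ and $g'$ are weak equivalences, and then check that $f'$ is in fact a vertex collapse.

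First I compute the pushout at level $2$. Since $f$ is a vertex collapse, $f_2\from S_2 \to T_2$ is a bijection preserving the decomposition into core vertices and tassels. Pushouts of sets along a bijection are trivial, so $Q_2 = T_2 \sqcup_{S_2} P_2 \isom P_2$. Under this identification $f'_2$ is the identity of $P_2$, $h_2 = g_2$, and $g'_2 = g_2 f_2^{-1}$.

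Next I verify the hypothesis on core vertices. For $x \in Q_2 = P_2$ the preimage is $h_2^{-1}(x) = g_2^{-1}(x)$. Because $g$ is a weak equivalence, it induces a bijection $c(S_2) \isom c(P_2)$. It also sends tassels to tassels or to core vertices, never core vertices to tassels, since maps of prenets with tassels preserve cores. Hence $g_2^{-1}(x)$ contains exactly one core vertex when $x \in c(P_2)$ and none when $x$ is a tassel. Two consequences follow.
\begin{enumerate}[label=\alph*)]
\item The core set of $Q_2$ defined in the lemma coincides with $c(P_2)$ under the identification $Q_2 = P_2$.
\item Every core vertex of $Q_2$ has exactly one core vertex in its $h$-preimage.
\end{enumerate}
By b), the moreover clause of Lemma \ref{lemma_ww_pushout} gives that both $f'$ and $g'$ are weak equivalences.

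Finally, $f'$ is a vertex collapse. Its level-$2$ component $f'_2$ is the identity of $P_2$, and by a) it preserves the decomposition into core vertices and tassels. Definition \ref{def_vc} requires nothing more, given that $f'$ is already a map of prenets with tassels by the lemma.

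The step needing most care is the identification $Q_2 \isom P_2$ together with the core-vertex bookkeeping. The lemma's proof temporarily replaces $S$, $T$, $P$ with $S^\circ$, $T^\circ$, $P^\circ$, in which every level-$2$ vertex is a tassel, and only afterwards reinstates the core vertices. So I must check that reinstating them yields exactly the core set $c(P_2)$ on $Q_2$, not something larger. This is precisely the observation in the previous paragraph that $g$ never maps a core vertex to a tassel.
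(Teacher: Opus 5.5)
Your proposal is correct and follows the paper's argument: since $f_2$ is a bijection, the level-$2$ pushout is $Q_2 \isom P_2$ with $h_2 = g_2$, so the bijection $c(S_2) \isom c(P_2)$ induced by $g$ yields the hypothesis of the ``moreover'' clause of Lemma \ref{lemma_ww_pushout}. You also check explicitly that the reinstated core set of $Q_2$ is exactly $c(P_2)$ and that $f'$ is therefore a vertex collapse; the paper leaves both details implicit.
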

\proof
Since $f$ is a vertex collapse, $f_2$ is a bijection, hence $f'_2$ is also a bijection. Therefore, combined with the fact that $g$ induces bijection $c(S_2) \isom c(P_2)$ we conclude
that for every vertex $x \in Q_2$ the preimage $h^{-1}(x)$ contains exactly one core vertex.
\qed

The following proposition will allow us to refer to a weak equivalence class of prenets unambiguously by its minimal representative, as well as facilitate  description
of maps in the localized category. We will denote by $\cW(S)$ the connected component of $\cW$ containing prenet $S$.

\begin{definition}
A prenet $S$ is called {\em minimal} if the only weak equivalence map $f\from S \to T$ is the identity.
\end{definition}

\begin{proposition}
\label{prop_min_prenet}
Every connected component $\cW(S)$ contains a unique minimal prenet $m(S)$. Moreover, there exists, not necessarily unique, weak equivalence $S \to m(S)$.
\end{proposition}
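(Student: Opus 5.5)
Existence comes first, via a size argument. Every weak equivalence $f\from S \to T$ has all components $f_i$ surjective (lemma \ref{lemma_prenet_map} (a')). So $|T_1| + |T_2| \le |S_1| + |S_2|$, with equality exactly when $f_1, f_2$ are bijections. In that case $f$ is an isomorphism: condition \ref{def_weak_equiv} (b) gives $p_T = f p_S f^{-1}$, $f$ preserves core vertices, and lemma \ref{lemma_p_order} shows the partial orders agree, hence by remark \ref{rem_total_order} so do the total orders. Starting from $S$, I repeatedly pick a weak equivalence that is not an isomorphism, as long as one exists. The total size strictly decreases, so the process stops at some $S \to M$ such that every weak equivalence out of $M$ is an isomorphism. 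To say $M$ is minimal in the sense of the definition, I interpret ``the only map is the identity'' up to isomorphism; I would state this convention explicitly, or fix a skeleton of $2\PreNet$.

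For uniqueness, let $M$ and $M'$ be minimal objects in the same component $\cW(S)$. They are joined by a zig-zag of weak equivalences. Using corollary \ref{cor_w_lift}, I replace each cospan in the zig-zag by a span, and compose (lemma \ref{lemma_w_23}), to get a single span $M \leftarrow R \rightarrow M'$ of weak equivalences. It then suffices to produce weak equivalences $M \to Q$ and $M' \to Q$, since minimality forces both to be isomorphisms, so $M \cong M'$. For this I would use lemma \ref{lemma_ww_pushout}. It always completes the span to a commutative square $M \to Q \leftarrow M'$, and the two maps are weak equivalences provided each core vertex of $Q_2$ has exactly one core vertex in its preimage in $R_2$.

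Checking that hypothesis is the main obstacle. The pushout at level $2$ may identify two distinct core vertices of $R$: they can sit in the same fiber of $R \to M$ through a chain of tassel identifications, or alternate between fibers of $R \to M$ and $R \to M'$. My plan is to first reduce the span so this cannot happen. I factor $R \to M$ and $R \to M'$ as a vertex collapse followed by a tassel collapse (lemma \ref{lemma_w_factor_vt}). Each tassel-collapse fiber is an interval containing at most one core vertex (proof of lemma \ref{lemma_tc_factor}), and both maps are bijections on cores. In the level-$2$ pushout, an equivalence class is then a union of overlapping intervals, each with at most one core vertex. If a class contained two core vertices $c < c'$, the two maps would have to collapse the tassels between them in an interlocking way. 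I would then argue that in the minimal $M$ such a configuration lets one merge a tassel with an adjacent core vertex having the same parents via a further weak equivalence, contradicting minimality.

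If that argument fails, the fallback is to bypass the pushout entirely and characterize minimal prenets intrinsically. A prenet is minimal if and only if two conditions hold. First, no two adjacent level-$1$ vertices have identical sets of children, since otherwise a simple vertex collapse exists. Second, no two adjacent level-$2$ vertices, at most one of them core, have equal parents, since otherwise a simple tassel collapse exists. Here lemmas \ref{lemma_tc_factor} and \ref{lemma_vc_factor} reduce everything to simple collapses. Given this characterization, $m(S)$ is obtained by collapsing all maximal runs of such adjacent vertices. I would then check that this quotient depends only on the component, using that a weak equivalence preserves parent sets in the sense of \ref{def_weak_equiv} (b), and so preserves the equivalence relations being collapsed.
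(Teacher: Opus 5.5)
Your existence argument is the paper's size count. Your caveat that ``identity'' must be read up to the unique order-preserving isomorphism is sensible, and once uniqueness is known the argument gives the weak equivalence $S \to m(S)$ directly. Collapsing the zig-zag to a single span $M \leftarrow R \to M'$ via Corollary~\ref{cor_w_lift} and Lemma~\ref{lemma_w_23} is also fine.

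The gap is in the step you flag yourself. A level-$2$ pushout that identifies two core vertices does \emph{not} contradict minimality, so checking the hypothesis of Lemma~\ref{lemma_ww_pushout} cannot succeed. Here is a counterexample. Take $R_1=\{r\}$ and $R_2=\{c<t<c'\}$, with $c,c'$ core, $t$ a tassel, and all three children of $r$. Let $f$ merge $c,t$ and let $g$ merge $t,c'$. Both are tassel collapses onto the prenet $M=M'$ with one level-$1$ vertex and two core children. This $M$ is minimal: it has a single level-$1$ vertex and no tassels at all. The pushout has a single level-$2$ vertex with two core preimages, so neither $M\to Q$ nor $M'\to Q$ is a weak equivalence. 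And $M$ has no tassel to merge with anything, so no contradiction arises.

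The missing idea, which is what the paper does in its second case, is to stop asking the pushout maps to be weak equivalences and use them only as bookkeeping.
\begin{itemize}
\item The maps $u\from M\to Q$ and $v\from M'\to Q$ always satisfy condition (b) of Definition~\ref{def_weak_equiv}, since they are weak equivalences $M^\circ\to Q^\circ$ and $M'^\circ\to Q^\circ$. So their fibers are intervals of vertices with equal parents at level $2$, and with equal children at level $1$.
\item Minimality of $M$ and $M'$ then forces each level-$1$ fiber to be a point. It also forces each level-$2$ fiber to be either a single tassel or to consist only of core vertices.
\item Since $f$ and $g$ are bijective on cores, $u^{-1}(x)$ and $v^{-1}(x)$ contain the same number of cores.
\item Hence fiberwise order-preserving bijections assemble into an isomorphism $M\cong M'$.
\end{itemize}
The paper runs this inside its induction on the zig-zag. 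It first disposes of level $1$ by pushing out along the vertex-collapse part, using Corollary~\ref{cor_vcw_pushout}. It then builds a tassel collapse $P_2\to S$ fiber by fiber.

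Your fallback normal-form route is a genuinely different and workable idea, but it is not yet a proof. As written, ``collapse all maximal runs'' breaks on the same run $c<t<c'$, which no weak equivalence can collapse. You must instead delete the run's tassels and keep its cores, and that result is canonical only up to isomorphism. Two further steps are asserted rather than proved:
\begin{itemize}
\item the ``if'' direction of your characterization, i.e.\ that such an adjacent pair really admits a collapse that is a map of prenets;
\item the invariance of the normal form along $\cW$.
\end{itemize}
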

\proof
First we establish existence of a minimal prenet. Consider an object $S \in \cW$, if it is minimal then we are done. If it is not minimal, then there exists
a weak equivalence $f\from S \to T$ which is not identity. Since, weak equivalences preserve total orders on each level, $f$ can not be an isomorphism, therefore the total
size of $T$, i.e. $\sum_i |T_i|$ has to be strictly less than that of $S$. And because prenets are finite, we conclude that this process has to terminate at a minimal object.

Now, let us show the uniqueness. Let $S$ and $T$ be two minimal objects in the same connected component of $\cW$. Then they can be connected by a chain of objects $\{P_1, \ldots, P_n\}$ and
a zig-zag of weak equivalences.
$$
\begin{tikzcd}
S & P_1 \ar[l, "f"'] \ar[r, "g"] & P_2 & \cdots \ar[l] \ar[r] & P_{n-1} & P_n \ar[l] \ar[r] & T
\end{tikzcd}
$$
By lemma \ref{lemma_w_factor_vt} we factorize $g = g_t g_v$, where $g_v$ is a vertex collapse and $g_t$ is a tassel collapse. In view of corollary \ref{cor_vcw_pushout} we can form a pushout $Q$,
and since by assumption $S$ was minimal we have $Q = S$.
$$
\begin{tikzcd}[sep=3em]
P_1 \ar[d, "f"'] \ar[r, "g_v"] & P'_1 \ar[d, "f'"'] \ar[r, "g_t"] & P_2 \ar[dashed, dl, "w"'] \ar[d, "u"] \\
S \ar[r, "="] & Q \ar[r, "v"] & R
\end{tikzcd}
$$
Now, applying lemma \ref{lemma_ww_pushout} we can construct pushout $R$. If for every core vertex $x \in c(R_2)$ preimages $u^{-1}(x)$ and $v^{-1}(x)$ contain exactly one core vertex, then
both $u$ and $v$ are weak equivalences and again by minimality of $S$ we conclude that $R = S$, thus $u$ gives us a map $P_2 \to S$.

Let us consider a core vertex $x \in c(R_2)$ such that
preimages $u^{-1}(x)$ and $v^{-1}(x)$ contain more than one vertex. Observe, that we necessarily have $|u^{-1}(x)| = |v^{-1}(x)|$. Moreover, the subsets $u^{-1}(x)$ and $v^{-1}(x)$ are
intervals in $(P_2)_2$ and $Q_2$ respectively, therefore by minimality of $Q = S$ we conclude that $v^{-1}(x)$ has only core vertices. Hence we can construct a map $u^{-1}(x) \to v^{-1}(x)$
bijective on the core vertices, which preserves the order.

Now, if $x \in t(R_2)$ is a tassel, then $v^{-1}(x)$ consists of a single tassel, and again we can construct a map $u^{-1}(x) \to v^{-1}(x)$ in an obvious manner. Taking coproduct
over all vertices $x \in R_2$ we obtain a map $w_2\from (P_2)_2 \to Q_2$. Furthermore, since by construction of the pushout $R$ we have $R_1 = Q_1$ we can put $w_1 = u_1$. It is
clear that the map $w$ obtained this way is a tassel collapse map, hence a weak equivalence.

In effect we have connected prenets $S$ and $T$ by a shorter zig-zag of weak equivalences, and proceeding inductively on $n$ we conclude that $S = T$.

Finally, let $T = m(S)$ be the minimal prenet, then using the same inductive argument on the length of the zig-zag connecting $S$ and $T$ we construct a weak equivalence $S \to T$.
\qed

\begin{lemma}
\label{lemma_term_prenet}
Let $\cW'(S) \into \cW(S)$ be the subcategory consisting of weak equivalences $f\from S \to T$, such that for every vertex $x \in c(T_2)$ the preimage
$f^{-1}(x)$ consists of a single vertex, which is necessarily a core vertex of $S_2$. Then the category $\cW'(S)$ has a terminal object.
\end{lemma}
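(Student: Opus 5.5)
The category $\cW'(S)$ has as objects weak equivalences $f\from S \to T$ under which every core vertex of $T_2$ has a singleton preimage, and as morphisms commutative triangles under $S$. Such maps are automatically weak equivalences by the 2-out-of-3 property of lemma \ref{lemma_w_23}. I would construct the terminal object as a single pushout of all such maps out of $S$, using lemma \ref{lemma_ww_pushout} as the basic tool and finiteness to make the construction terminate.

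First I would note that $\cW'(S)$ has only finitely many isomorphism classes of objects. Weak equivalences preserve the total orders on every level (as used in the proof of proposition \ref{prop_min_prenet}). So $T$ is determined up to unique isomorphism by a partition of $S_1$ and $S_2$ into consecutive intervals, the parent maps being forced by condition \ref{def_weak_equiv} (b). For the same reason there is at most one morphism between any two objects, namely the induced map on quotients, so $\cW'(S)$ is equivalent to a finite poset. It therefore suffices to show that any two objects admit a common upper bound; the maximal element is then terminal.

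Given $f\from S \to T$ and $g\from S \to P$ in $\cW'(S)$, I would form the pushout $Q$ of lemma \ref{lemma_ww_pushout}. The hypothesis of that lemma must be checked: every core vertex of $Q_2$ should have exactly one core vertex in its preimage under $h$. The preimage $h^{-1}(x)$ is the equivalence class of $S_2$ generated by the fibers of $f_2$ and $g_2$. Each fiber over a core vertex is a single core vertex, so core vertices are never glued to anything. A fiber over a tassel consists only of tassels, because a map of prenets with tassels sends core to core. Hence every class is either a singleton core vertex or consists entirely of tassels, and defining $c(Q_2)$ as in the lemma gives exactly one core vertex in each core preimage. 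The lemma then makes $f', g'$ weak equivalences, so $h$ is one as well.

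The same computation shows that $h$ has singleton fibers over core vertices, so $h \in \cW'(S)$ and $Q$ is an upper bound of $T$ and $P$. The main obstacle is the bookkeeping inside lemma \ref{lemma_ww_pushout}, which first replaces prenets by their $(-)^\circ$ versions and only afterwards reinstates the core. One must confirm that the reinstated core of $Q$ makes $g'$ and $f'$ maps of prenets with tassels and preserves condition \ref{def_weak_equiv} (b); I would verify this directly from the explicit description of the classes above. Finally, I take the maximal element $\hat S$, obtained by iterating pushouts over the finitely many isomorphism classes. For every object $T$, the pushout of $T$ and $\hat S$ is isomorphic to $\hat S$ by maximality, so every object maps to $\hat S$, uniquely by the poset property; $\hat S$ is the terminal object.
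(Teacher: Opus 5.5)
\textbf{There is a genuine gap: your proof establishes a terminal object in the wrong category.} You read $\cW'(S)$ as the coslice of $\cW'$-maps under $S$. In the lemma, however, $\cW'(S)$ is a subcategory of the connected component $\cW(S)$. Its objects are prenets, including ones that reach $S$ only through zig-zags, e.g.\ prenets mapping \emph{to} $S$. Its morphisms are weak equivalences under which every core vertex has a singleton preimage. Terminality of $m'(S)$ means every object $P$ admits \emph{exactly one} such map $P \to m'(S)$.

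In the coslice, your claim that there is ``at most one morphism between two objects'' follows automatically from surjectivity of $f$, so it does no work. In $\cW'(S)$ the claim is false. Take $S_1=T_1$ a single vertex, $S_2$ three tassels and $T_2$ two tassels. Then there are two different $\cW'$-maps $S\to T$, since the middle tassel can go to either target.

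The real content of the lemma is that such parallel maps become equal after composing with the map to $m'(S)$, and this is exactly how it is used later:
\begin{itemize}
\item $z_{\wtilde T}\tilde f = z_{\wtilde T}\tilde g$ in lemma \ref{lemma_vc_tc_pair}~(B);
\item $zf^\circ = zg^\circ$ in lemma \ref{lemma_w_pair}, via corollary \ref{cor_term_prenet}.
\end{itemize}
Your coslice statement only gives maps $u_1,u_2\from \wtilde T\to \hat S$ with $u_1\tilde f = z = u_2\tilde g$. Nothing in your argument forces $u_1=u_2$.

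\textbf{What is missing is the paper's uniqueness step.} Given two $\cW'$-maps $f,g\from P\to m'(S)$, form the square of lemma \ref{lemma_ww_pushout}. Your core-vertex analysis shows that all four maps are in $\cW'$. So $f',g'$ are $\cW'$-maps out of a minimal prenet, hence identities, and $g'f=f'g$ gives $f=g$. The paper obtains a single minimal target for all objects of the component by shortening zig-zags as in proposition \ref{prop_min_prenet}, again using that $\cW'$ is stable under these pushouts.

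\textbf{Your coslice result can be upgraded to the actual statement.}
\begin{itemize}
\item $\hat S$ is minimal. If $w$ is a $\cW'$-map out of $\hat S$, terminality gives $u$ with $uwz=z$. Since $z$ is surjective, $uw=\id$, so $w$ is an isomorphism.
\item Maps from $S$ into $\hat S$ are unique. For any $\cW'$-map $g\from S\to\hat S$, terminality gives a surjective order-preserving $u\from\hat S\to\hat S$ with $ug=z$. Such a $u$ is the identity on each finite totally ordered level, so $g=z$.
\item The construction is independent of the base. Run the above for every object $P$, and check that $\hat P\cong\hat{P'}$ whenever there is a $\cW'$-map $P\to P'$.
\end{itemize}
Together these give the lemma as stated, but as written your proposal stops short of it.
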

\proof
The argument goes the same as in the proof of proposition \ref{prop_min_prenet}. Except when we deal with the zig-zag connecting $S$ and $T$ we no longer need to factorize
map $g$ into a vertex collapse and a tassel collapse. Instead we apply lemma \ref{lemma_ww_pushout} directly to pair $(f, g)$ to form the pushout $Q$.
Because of the additional condition that maps of $\cW'(S)$ satisfy we see that all four maps of the pushout square are weak equivalences, and in fact belong to $\cW'(S)$.
Hence we conclude the uniqueness of the minimal prenet, that we denote by $m'(S)$ and existence of a map $S \to m'(S)$ in the component $\cW'(S)$.

Now let $T = m'(S)$ be the minimal prenet in $\cW'(S)$ and consider two weak equivalences $f,g\from S \to T$. Then again by lemma \ref{lemma_ww_pushout} we can construct a commutative square
$$
\begin{tikzcd}
& S \ar[dl, "f"'] \ar[dr, "g"] & \\
T \ar[dr, "g'"'] && T, \ar[dl, "f'"] \\
& Q &
\end{tikzcd}
$$
where all four maps are weak equivalences in $\cW'(S)$. Furthermore, as $T$ is minimal, maps $f'$ and $g'$ are identities, and therefore $f = g$.
\qed

\begin{corollary}
\label{cor_term_prenet}
If $S$ is a prenet such that $t(S) = S_2$, then the minimal prenet $m(S)$ is the terminal object of $\cW(S)$.
\end{corollary}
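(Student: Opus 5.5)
The plan is to reduce the corollary to lemma \ref{lemma_term_prenet} by showing that, when $t(S) = S_2$, the subcategory $\cW'(S)$ is all of $\cW(S)$. Then the terminal object of $\cW'(S)$ constructed there, namely $m'(S)$, is terminal in $\cW(S)$. It remains to identify $m'(S)$ with $m(S)$.

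First, for every $T$ in the component $\cW(S)$ we have $c(T_2) = \emptyset$. Indeed, a weak equivalence $f\from P \to Q$ induces a bijection $c(P_2) \isom c(Q_2)$ by definition \ref{def_weak_equiv} (a). So the cardinality of the set of core vertices at level $2$ is constant along any zig-zag of weak equivalences, and it equals $|c(S_2)| = 0$. Consequently every weak equivalence $f\from P \to T$ between objects of $\cW(S)$ satisfies the extra condition defining $\cW'$ vacuously, because there are no core vertices $x \in c(T_2)$ to test.

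Next I check how lemma \ref{lemma_term_prenet} applies when the source is an arbitrary object rather than $S$. The component $\cW'(S)$ consists of the same objects as $\cW(S)$: connectivity in $\cW'$ is defined through the same zig-zags, and each step satisfies the $\cW'$ condition by the previous paragraph. The morphisms also coincide, since every weak equivalence in the component satisfies the condition. Hence $\cW'(S) = \cW(S)$ as categories, and the lemma gives a terminal object $m'(S)$ of $\cW(S)$.

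Finally, I show $m'(S) = m(S)$. Let $f\from m'(S) \to T$ be any weak equivalence. Composing with the unique map $T \to m'(S)$ gives an endomorphism of the terminal object, which is therefore the identity. So $f$ is a weak equivalence that is split mono. A weak equivalence preserves total orders and is surjective on every level by lemma \ref{lemma_prenet_map} (a'), so $f$ is levelwise bijective and hence an isomorphism. As in proposition \ref{prop_min_prenet}, such a map is the identity, since order-preserving bijections of finite totally ordered sets are canonical. Thus $m'(S)$ is minimal, and by the uniqueness part of proposition \ref{prop_min_prenet} it equals $m(S)$.

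The only point needing some care is the identification $\cW'(S) = \cW(S)$, specifically whether $\cW'$ in lemma \ref{lemma_term_prenet} means all weak equivalences in the component satisfying the condition. The vacuous-condition argument handles this in either reading.
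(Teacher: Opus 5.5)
Your proposal is correct and is essentially the paper's proof, which consists of the single observation that $\cW'(S) = \cW(S)$ when $t(S) = S_2$, followed by an appeal to lemma \ref{lemma_term_prenet}. Your vacuity argument (the number of level-$2$ core vertices is invariant along zig-zags of weak equivalences) and your check that $m'(S) = m(S)$ make explicit what the paper leaves implicit. The latter check could be shortened: once $\cW'(S) = \cW(S)$, minimality in the two categories is literally the same condition.
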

\proof
Follows immediately from the previous lemma, as in this case we have $\cW'(S) = \cW(S)$.

\qed

We will also make use of the following partial minimality properties.

\begin{definition}
\label{def_12_min}
A prenet $S$ is called {\em $1$-minimal} if the only vertex collapse map $f\from S \to T$ is the identity. And $S$ is called {\em $2$-minimal} if the only tassel collapse map $f\from S \to T$ is the identity.
\end{definition}

Let us denote by $\mathcal{VC}(S)$ and $\mathcal{TC}(S)$ the connected components of $S$ in the subcategories of vertex collapses and tassel collapses respectively. Then using the same argument as in the proof
of proposition \ref{prop_min_prenet} we conclude that $\mathcal{VC}(S)$ has a unique $1$-minimal prenet, denoted by $m_1(S)$ and similarly, $\mathcal{TC}(S)$ also has a unique $2$-minimal prenet,
denoted by $m_2(S)$.

\begin{corollary}
\label{cor_vc_term_prenet}
The $1$-minimal prenet $m_1(S)$ is a terminal object of the connected component $\mathcal{VC}(S)$.
\end{corollary}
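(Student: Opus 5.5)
We follow the pattern of lemma \ref{lemma_term_prenet}: first show that any two vertex collapses out of a common prenet can be completed by vertex collapses to a commutative square, then use minimality to get uniqueness of the map to $m_1(S)$.

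\textbf{Pushout lemma.} Given vertex collapses $f\from S \to T$ and $g\from S \to P$, form the level-wise pushout $Q$ exactly as in lemma \ref{lemma_ww_pushout}. At level $2$, $f_2$ and $g_2$ are bijections preserving the core/tassel decomposition. We therefore identify $S_2 = T_2 = P_2 = Q_2$ and keep the tassels of $S$; there is no need to pass to $S^\circ$. At level $1$ the preimages of $f_1$ and $g_1$ are intervals, so $Q_1 = T_1 \sqcup_{S_1} P_1$ is totally ordered, and we define $p_Q$ by the universal property as in lemma \ref{lemma_ww_pushout}. The argument there shows that $Q$ is a prenet and that $f', g'$ satisfy \ref{lemma_easy_w} (c). Since $f'_2, g'_2$ are bijections preserving cores, \ref{lemma_easy_w} (b) holds, so $f'$ and $g'$ are weak equivalences, hence vertex collapses. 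Equivalently, this is corollary \ref{cor_vcw_pushout} in the special case where $g$ is also a vertex collapse. Here $f'_2$ and $g'_2$ are bijections, so the remark in that corollary gives the conclusion for both maps.

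\textbf{Existence of a map to $m_1(S)$.} Let $T\in\mathcal{VC}(S)$. We show there is a vertex collapse $T \to m_1(S)$ by induction on the length of a zig-zag of vertex collapses joining $T$ to $m_1(S)$. At each step, a span $X \leftarrow Y \to Z$ with $X$ $1$-minimal is completed using the pushout lemma. The map $X \to Q$ is a vertex collapse out of a $1$-minimal prenet, so it is the identity. This produces a vertex collapse $Z \to X$ and shortens the zig-zag. A cospan $X \to Z \leftarrow W$ with $X$ $1$-minimal forces $X = Z$, again by minimality. The construction is exactly the one in proposition \ref{prop_min_prenet}, but simpler, because no tassel collapses occur. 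This step is essentially the paper's claim that $m_1(S)$ exists and is unique, made explicit.

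\textbf{Uniqueness of the map.} Let $f, g\from T \to m_1(S)$ be two vertex collapses. Apply the pushout lemma to the span $m_1(S) \xleftarrow{f} T \xrightarrow{g} m_1(S)$ to get $f', g'\from m_1(S) \to Q$. By $1$-minimality both $f'$ and $g'$ are identities, so $g'f = f'g$ gives $f = g$.

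\textbf{Main obstacle.} The delicate point is verifying that the pushout $Q$ is a prenet, in particular the interval and incomparability condition of definition \ref{def_prenet}. Because level $2$ is unchanged, we would first confirm that the partial orders $\pless$ on $T_2$, $P_2$ and $Q_2$ all coincide with $\pless_S$. This follows as in lemma \ref{lemma_vc_factor} via lemma \ref{lemma_p_order}: $\pless_Q$ is squeezed between $\pless_S$ and $\pless_h$, and these agree once $h$ is known to satisfy condition (c).
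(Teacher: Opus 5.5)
Your proposal is correct and is essentially the paper's proof. The paper simply reruns the argument of lemma~\ref{lemma_term_prenet} with corollary~\ref{cor_vcw_pushout} in place of lemma~\ref{lemma_ww_pushout}: pushouts of spans of vertex collapses, collapsing zig-zags onto the $1$-minimal prenet for existence of the map, and the cospan $m_1(S) \to Q \leftarrow m_1(S)$ with both legs forced to be identities for uniqueness. (Your closing ``squeeze'' remark is not needed as stated: under the bijection $h_2$ the order $\pless_Q$ is exactly $\pless_h$, and its agreement with $\pless_S$ is just the argument of lemma~\ref{lemma_p_order}, which only uses $p_S(x) = h^{-1}p_Q h(x)$ and surjectivity of $h_1$.)
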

\proof
We use the same argument as in the proof of corollary \ref{lemma_term_prenet}, substituting \ref{cor_vcw_pushout} in place of lemma \ref{lemma_ww_pushout}.
\qed

We will denote by $\cW(S)_i$ the full subcategory of $\cW(S)$ consisting of $i$-minimal prenets, for $i = 1, 2$.

\begin{remark}
If in the setup of lemma \ref{lemma_ww_pushout} we do not require that both maps $f$ and $g$ are weak equivalences, then in general we can not construct a pushout prenet $Q$
together with maps $f'$ and $g'$ (for an example we refer to \ref{exa_no_pushout}). However, we have the following version of the pushout lemma.
\end{remark}

\begin{lemma}
\label{lemma_vc_pushout}
Every span $\begin{tikzcd}[cramped,sep=small]T & S \ar[l, "f"'] \ar[r, "g"] & P\end{tikzcd}$, where $f$ is a vertex collapse and $g$ any map in $2\PreNet$, can be completed to a commutative square
$$
\begin{tikzcd}
& S \ar[ld, "f"'] \ar[rd, "g"] & \\
T \ar[rd, "g'"'] & & P \ar[ld, "f'"] \\
& Q &
\end{tikzcd}
$$
such that $f'$ is a vertex collapse.
\end{lemma}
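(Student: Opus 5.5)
The construction is a level-wise pushout, as in lemma \ref{lemma_ww_pushout}, except that at level $2$ nothing is glued because $f_2$ is a bijection. Put $Q_0$ the point, and
$$
Q_1 = T_1 \bigsqcup_{S_1} P_1, \qquad Q_2 = P_2 .
$$
The maps are $f'_2 = \id_{P_2}$ and $g'_2 = g_2 f_2^{-1}$, with $f'_1, g'_1$ the canonical maps into the pushout. Tassels are $t(Q) = t(P)$. Since $f$ is a vertex collapse, $f_2$ preserves core and tassels, so $g'_2$ sends core to core. For the parent map we set
$$
p_Q(x) = f'_1\bigl(p_P(x)\bigr).
$$
Commutativity is immediate at level $2$, and at level $1$ it holds by construction.

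\textbf{Step 1: $Q_1$ is totally ordered and $p_Q(x)$ is an interval.} The fibers $f_1^{-1}(y)$ are intervals of $S_1$ because $f_1$ is an order preserving surjection. The $g_1$-fibers are intervals too. So the equivalence relation on $S_1$ generated by the two fiberings has intervals as classes, and $Q_1$ is the quotient of $P_1$ by merging consecutive blocks $g_1(f_1^{-1}(y))$. Hence $Q_1$ inherits a total order, $f'_1$ is monotone, and the images of intervals are intervals.

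\textbf{Step 2: $f'$ is a vertex collapse.} This is where I expect the real work. By lemma \ref{lemma_easy_w}, since $f'_2$ is a bijection preserving core and tassels, it suffices to show $p_P(x) = f_1'^{-1} p_Q(x)$. In other words, $p_P(x)$ must be saturated under the merging. Take $t, t' \in P_1$ identified in $Q_1$ with $t \in p_P(x)$. Reduce to one elementary step: $t = g(s)$, $t' = g(s')$ with $f(s) = f(s')$. Choose $\tilde x \in g^{-1}(x)$ with $s'' \in p_S(\tilde x)$ and $g(s'') = t$. Here I will use property \ref{def_prenet_map} (b) for $g$ together with the fact that $g_1$ is order preserving, to move from $s''$ to $s$ inside the fiber structure. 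The vertex collapse condition $p_S(\tilde x) = f^{-1} p_T f(\tilde x)$ then forces $s' \in p_S(\tilde x)$, hence $t' = g(s') \in p_P(x)$.

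The delicate point is that $s$ itself need not be a parent of any preimage of $x$. I plan to handle it with property (c) of definition \ref{def_prenet_map}: $A(g^{-1}(x))$ and $B(g^{-1}(x))$ are disjoint covers of $g^{-1}p_P(x)$, so every $s$ with $g(s) \in p_P(x)$ lies in $p_S$ of some element of $g^{-1}(x)$. Then induct along chains of elementary identifications. Once saturation holds, the prenet condition of definition \ref{def_prenet} for $Q$ follows as in lemma \ref{lemma_vc_factor}, and $\pless_Q = \pless_P$ by lemma \ref{lemma_p_order}.

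\textbf{Step 3: $g'$ is a map of prenets.} Preservation of $\pless$ follows from $\pless_Q = \pless_P$ and $g$ being a map. Condition (b) can be checked in the form \ref{lemma_prenet_map} (c'),(d'). Parenthood is preserved since it is preserved by $g$ and by $f'$. Lifting of parents holds because every vertex of $Q_1$ lifts to $T_1$ and then through $f$ to $S_1$, where (d') for $g$ applies. For condition (c), $g'^{-1}(x) = f_2(g^{-1}(x))$ and $f$ is a bijection at level $2$. The sets $A$ and $B$ are therefore transported from those of $g$, and the disjoint covers push forward along $f_1$, using that $f^{-1}p_T = p_S f^{-1}$.
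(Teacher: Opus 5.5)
Your proposal is correct and follows the paper's proof essentially verbatim. The construction is the same ($Q_1 = T_1 \sqcup_{S_1} P_1$, $Q_2 = P_2$, $t(Q) = t(P)$, $p_Q = f'_1 p_P$, $g'_2 = g_2 f_2^{-1}$), and so is the key step, namely that $g^{-1}p_P(x)$ is $f$-saturated. The ``delicate point'' in Step 2 is not actually delicate: property (b) of definition \ref{def_prenet_map} for $g$ gives directly $g^{-1}p_P(x) = p_S(g^{-1}(x))$, a union of the sets $p_S(\tilde x) = f^{-1}p_T f(\tilde x)$, and this is exactly how the paper concludes $f'^{-1}p_Q(x) = p_P(x)$.
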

\proof
Since at first level both maps $f_1$ and $g_1$ preserve total orders, the construction of $Q_1$ is done the same way as in the proof of lemma \ref{lemma_ww_pushout}.
Moreover, as $f_2$ is an isomorphism, we put $Q_2 = P_2$ and $t(Q) = t(P)$ and for the maps $g'_2 = g_2 f_2^{-1}$ and $f'_2 = \id_{P_2}$.
For any $x \in Q_2 = P_2$ define the parents $p_Q(x) = f'_1(p_P(x))$. Clearly this satisfies the
requirement of definition \ref{def_prenet}, indeed if $x < x'$ in $Q_2$ such that $x$ and $x'$ are incomparable with respect to $\pless$, then they are also incomparable with respect to $\pless$ in $P$,
therefore $p_P(x) < p_P(x')$ and since $f'_1$ preserves the total order we have $p_Q(x) < p_Q(x')$.

The constructed maps $f'_i$ obviously satisfy conditions (a) and (b) of lemma \ref{lemma_easy_w}, so it remains to check condition (c). Take a vertex $y \in P_2$
and consider the subset $V := p_S(g^{-1}(y)) = g^{-1}p_P(y)$. Since $f$ is a vertex collapse, we have $f^{-1}f(V) = V$, and therefore $f'^{-1} p_Q(y) = f'^{-1} f' p_P(y) = p_P(y)$.
Hence, $f'$ is a weak equivalence and a vertex collapse.

It remains to show that $g'$ is a map of prenets. By construction it satisfies condition (a) of \ref{def_prenet_map}. Let us take a vertex $x \in Q_2$ and show that
$g'^{-1}(p_Q(x)) = p_T(g'^{-1}(x))$. It is enough to check that
$$
f^{-1} g'^{-1}(p_Q(x)) = g^{-1} f'^{-1}(p_Q(x)) = f^{-1} p_T g'^{-1}(x) = p_S f^{-1} g'^{-1}(x),
$$
where in the last equality we used the property \ref{def_prenet_map} (b) for $f$. Using what we established for $p_Q$ we find
$$
g^{-1} f'^{-1}(p_Q(x)) = g^{-1} p_P(x) = p_S g^{-1}(x),
$$
which coincides with the right hand side of the previous equation. For the last property \ref{def_prenet_map} (c) the cases of subsets $A$ and $B$ are handled similarly.
We observe that in virtue of $f$ being a vertex collapse $f_2$ induces a bijection between subsets $A(f^{-1}g'^{-1}(x))$ and $A(g'^{-1}(x))$. And since the composition
$g' f = f' g$ is a map of prenets the former is a disjoint cover of $p_S(f^{-1} g'^{-1}(x))$ and hence the latter is also a disjoint cover of $p_T g'^{-1}(x)$.
\qed

\vskip 5em


\section{2-Nets with tassels}
\label{sec_nets}

We begin this section with the definition of the category of $2$-nets and a description of the spaces of maps between two nets.

\begin{definition}
The category of $2$-nets is the localization of the category of connected, pruned $2$-prenets with tassels with respect to the subcategory of weak equivalences.
$$
2\Net = 2\PreNet[\cW^{-1}].
$$
\end{definition}
We denote the localization functor by $L\from 2\PreNet \to 2\Net$.
We can also consider a full subcategory of $2\Net$ consisting of special prenets that will be denoted by $2\Net^s$. There is a natural comparison functor
$$
2\PreNet^s[\cW^{-1}] \to 2\Net^s,
$$
and although we do not know whether it is an equivalence, we would like to point out here a related result of corollary \ref{cor_comp_special}.

In general it can be difficult to describe effectively the spaces of maps in the localized category. In what follows we will provide a practical way to compute them,
in a spirit reminiscent of ideas from both Ore and Quillen localizations.

We begin with the following simple lemma regarding maps in the localized category.

\begin{lemma}
\label{lemma_vc_tc_pair}
Consider a pair of maps of prenets $\begin{tikzcd}[cramped,sep=small]S \ar[shift left, r, "f"] \ar[shift right, r, "g"'] & T\end{tikzcd}$.
If $f$ and $g$ are either both vertex collapse maps or both tassel collapse maps then their images in the category of $2$-nets coincide.
\end{lemma}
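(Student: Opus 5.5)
The plan is to exhibit a weak equivalence $w\from T \to R$ with $wf = wg$. Since $L(w)$ is invertible in $2\Net$, cancelling it gives $L(f) = L(g)$. Moreover, $wf$ is a composite of two weak equivalences, so by lemma \ref{lemma_w_23} it is again a weak equivalence (vertex collapses and tassel collapses are weak equivalences). It therefore suffices to find a common target $R$ in $\cW(T)$ to which both composites agree.

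\textbf{Vertex collapse case.} Take $R = m_1(T)$ and let $w\from T \to m_1(T)$ be the unique vertex collapse given by corollary \ref{cor_vc_term_prenet}. Vertex collapses are closed under composition, since bijectivity of $f_2$ and preservation of the core/tassel decomposition compose. Hence $wf$ and $wg$ are both vertex collapses $S \to m_1(T)$. Since $\mathcal{VC}(S) = \mathcal{VC}(T)$ has terminal object $m_1(S) = m_1(T)$, these two maps coincide, and so $L(f) = L(w)^{-1}L(wf) = L(w)^{-1}L(wg) = L(g)$.

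\textbf{Tassel collapse case.} The analogous terminality for $\mathcal{TC}$ is not stated earlier, so I would argue directly. A tassel collapse is the identity at level $1$ (under the identification $S_1 \isom T_1$), and $f_2$ and $g_2$ agree on core vertices, because both induce the same isomorphism of cores. Both maps preserve the total order on level $2$ and send tassels to tassels within the intervals between consecutive core images. So $f$ and $g$ can only differ in how tassels of $S$ lying between two consecutive core vertices are distributed among the tassels of $T$ in the corresponding gap.

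I would then define $w\from T \to R$ by collapsing, for each gap between consecutive core vertices of $T_2$ (including the two ends), all tassels in that gap whose parents agree, i.e.\ lie in the same fiber of $p_T$, into one tassel. This is a pushout as in the proof of lemma \ref{lemma_tc_factor}, carried out vertex by vertex. If the gap contains tassels with different parent intervals, the order constraint from definition \ref{def_prenet} together with the condition $f(p_S(x)) = p_T(f(x))$ forces $f$ and $g$ to send a given tassel $x$ to a tassel with parent set $p_S(x)$ in the same gap. After collapsing by parent set, these images coincide.

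The main obstacle is checking that this collapsed $R$ is a genuine prenet and that $w$ is a tassel collapse. The key point is that tassels in a gap with equal parent sets form an interval, which is the same argument as in lemma \ref{lemma_tc_factor}. If that is delicate, the fallback is to use corollary \ref{cor_w_lift} to reduce to the case $t(S) = S_2$ after applying $(-)^\circ$, and then invoke corollary \ref{cor_term_prenet}, whose terminality of $m(S)$ would force $wf = wg$ directly.
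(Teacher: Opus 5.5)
Your vertex-collapse case is correct and is the paper's argument: compose with the unique vertex collapse to the terminal object $m_1(T)$ of $\mathcal{VC}(S)$ (corollary \ref{cor_vc_term_prenet}) and cancel it in $2\Net$.

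The tassel-collapse case, however, has a genuine gap. Your claim that $f$ and $g$ send tassels to tassels is false. A tassel collapse only has to be bijective on core vertices, and the fiber over a core vertex $y \in c(T_2)$ may contain tassels of $S$ besides its unique core vertex (compare $g^{-1}(y)$ in the proof of proposition \ref{prop_lift}). This is exactly where $f$ and $g$ can differ, and then no weak equivalence out of $T$ equalizes them. Here is a concrete case:
\begin{itemize}
\item take $S_1 = T_1 = \{t\}$, $S_2 = \{a < b < c\}$ with $b$ the only tassel, and $T_2 = \{a' < c'\}$ with no tassels, every level-$2$ vertex having parent $t$;
\item let $f, g$ send $a \mapsto a'$ and $c \mapsto c'$, with $f(b) = a'$ and $g(b) = c'$.
\end{itemize}
Both maps are tassel collapses. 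Since $T_2$ consists of core vertices only, any weak equivalence $w\from T \to R$ is injective at level $2$, so $wf \neq wg$. Your construction simply returns $R = T$. There is also no weak equivalence $v$ into $S$ with $fv = gv$, since $v$ must hit $b$, so no one-sided equalizing argument can work.

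Your fallback does not help either. The equality $L(f^\circ) = L(g^\circ)$ lives in $\Hom_{2\Net}(S^\circ, T^\circ)$. The only comparison map is the identity $S^\circ \to S$, which is not a weak equivalence when $S$ has core vertices, so it cannot be cancelled.

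The missing idea is to enlarge $T$ rather than collapse it. The paper adds tassels $x^-, x^+$ around every core vertex $x \in c(T_2)$, which gives a tassel collapse $w\from \wtilde T \to T$. It then lifts $f, g$ to $\tilde f, \tilde g$ with $w\tilde f = f$ and $w\tilde g = g$. Each tassel that lands on a core vertex $x$ is redirected to $x^-$ or $x^+$, according to its position relative to the core vertex of $f^{-1}(x)$. The lifts then send no tassel to a core vertex, so they lie in the subcategory $\cW'$ of lemma \ref{lemma_term_prenet}. Its terminal object equalizes them, giving $L(\tilde f) = L(\tilde g)$ and hence $L(f) = L(g)$.

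As literally written, the lifts need not be surjective: in the example above $\tilde f$ misses $(a')^{-}$, $(c')^{-}$ and $(c')^{+}$. This is fixed as follows:
\begin{itemize}
\item also add tassels $y^\pm$ around each core vertex $y$ of $S$, giving a tassel collapse $s\from \wtilde S \to S$;
\item lift $fs$ and $gs$ instead, with $\tilde f(y^\pm) = f(y)^\pm$;
\item cancel $L(s)$ at the end.
\end{itemize}
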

\proof
We consider each case separately.

{\bf A.} Let $f$ and $g$ be vertex collapses, then the component $\mathcal{VC}(S)$ contains both prenets $S$ and $T$. As was stated in corollary \ref{cor_vc_term_prenet}
the component $\mathcal{VC}(S)$ has a terminal object $m_1(S)$, therefore in the diagram
$$
\begin{tikzcd}
S \ar[shift left, r, "f"] \ar[shift right, r, "g"'] & T \ar[r, "z"] & m_1(S),
\end{tikzcd}
$$
where $z$ denotes the unique vertex collapse to the terminal object $m_1(S)$ we have $zf = zg$. Since $z$ is an isomorphism in the localized category, we conclude that
images of $f$ and $g$ coincide.

{\bf B.} Assume that $f$ and $g$ are tassel collapses. We construct a prenet $\wtilde T$ by putting $\wtilde T_0 = T_0$, $\wtilde T_1 = T_1$ and on the second level
for each core vertex $x \in c(T_2)$ we add two new tassels $x^-$ and $x^+$ immediately before and after $x$ respectively. In other words, we define the total order
on $\wtilde T_2$ by $y < x^- < x$ for all $y < x$ and $x < x^+ < z$ for all $x < z$. We define parents $p_{\wtilde T}(x^-) = p_{\wtilde T}(x^+) = p_T(x)$ and
for the rest of the vertices parents are the same as in $T$. Clearly the map that sends $x^-$ and $x^+$ to $x$ and is identity of the rest of the vertices is
a tassel collapse $w\from \wtilde T \to T$.

Next, we lift maps $f$ and $g$ to $\tilde f, \tilde g \from S \to \wtilde T$. For a vertex $y \in c(T_2)$ we write $\tilde y \in c(S_2)$ for the unique core vertex in $f^{-1}(y)$.
Then for any vertex $x \in S_2$ we put
$$
\tilde f(x) = \begin{cases}
f(x),&\text{if $x \in c(S_2)$ or $f(x) \in t(T_2)$},\\
f(x)^-,&\text{if $x < \wtilde{f(x)}$},\\
f(x)^+,&\text{if $\wtilde{f(x)} < x$}.
\end{cases}
$$
Similarly we define $\tilde g$. It is straightforward to check that both $\tilde f$ and $\tilde g$ are tassel collapses and that $f = w\tilde f$ and $g = w \tilde g$.
$$
\begin{tikzcd}[sep=3em]
S \ar[dr, "z_S"'] \ar[shift left, r, "\tilde f"] \ar[shift right, r, "\tilde g"'] & \wtilde T \ar[d, "z_{\wtilde T}"] \ar[r, "w"] & T.\\
& m'(S) &
\end{tikzcd}
$$

The constructed maps $\tilde f$ and $\tilde g$ in fact lie in the subcategory $\cW'(S)$ from lemma \ref{lemma_term_prenet}. Denote by $m'(S)$ the terminal object of
$\cW'(S)$ and by $z_S$ and $z_{\wtilde T}$ the corresponding unique maps. Since $z_{\wtilde T}$ is an isomorphism in the localized category we see that the images of $\tilde f$
and $\tilde g$ coincide in the category $2\Net$. And therefore images of $f$ and $g$ also coincide.
\qed

In what follows, an important role will be played by the notion of a fiber of a map of prenets $f\from S \to T$ over a vertex $x \in T_2$, which can be either a core vertex
or a tassel. First, we define a prenet $\<x\> \subset T$ generated by the vertex $x$. Here the inclusion is purely set theoretical, and does not correspond to a map
in the category $2\PreNet$. Level-wise we put $\<x\>_2 = \{x\}$, $\<x\>_1 = p_T(x)$ and $\<x\>_0 = T_0$ with the parent map obviously induced by $p_T$.

\begin{definition}
\label{def_fiber}
The fiber of $f\from S \to T$ over a vertex $x \in T_2$ is a prenet $F$ with levels
$$
F_2 = f_2^{-1}(x) \subset S_2,\quad\quad F_1 = f_1^{-1}(p_T(x)) = p_S(F_2),
$$
and the parent map induced by $p_S$.
\end{definition}
We will often write $F = f^{-1}(x)$, as usually there is no confusion whether we mean the set-theoretic preimage of the vertex $x$ or the prenet-theoretic fiber.
It is immediate to check that $F$ is indeed a prenet, moreover it comes with a map of prenets
$$
F = f^{-1}(x) \to \<x\>.
$$

\begin{lemma}
\label{lemma_fib_w}
Let $f\from S \to T$ be a weak equivalence, then the fiber $f^{-1}(x)$ is weakly equivalent to $I$ if $x \in c(T_2)$ and weakly equivalent to $I^\circ$ is $x \in t(T_2)$.
\end{lemma}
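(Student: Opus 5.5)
The plan is to write down the fiber explicitly and exhibit a direct weak equivalence from it to $I$ or $I^\circ$. Set $F = f^{-1}(x)$, so $F_2 = f_2^{-1}(x)$ and $F_1 = f_1^{-1}(p_T(x))$. Since $f$ is a weak equivalence, condition (b) of definition \ref{def_weak_equiv} gives $p_S(y) = f^{-1}p_T f(y) = f_1^{-1}(p_T(x)) = F_1$ for every $y \in F_2$. Thus in $F$ every vertex of level $2$ has the whole of $F_1$ as its set of parents. In particular $F$ is connected (we take $F_0 = S_0$), pruned, and any two vertices of $F_2$ share a common parent. It follows that $\pless$ on $F_2$ is the total order and $F$ is special.

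Next I would define $c\from F \to I$ (or $I^\circ$) as the unique level-wise map to the one-point sets, and verify it using lemma \ref{lemma_easy_w}.
\begin{enumerate}[label=\alph*)]
\item Preservation of the partial orders is automatic, since the targets are single points.
\item For the condition on core vertices, recall that $f$ induces a bijection $c(S_2) \isom c(T_2)$. If $x \in c(T_2)$, then $F_2$ contains exactly one core vertex, which matches the single core vertex of $I$. If $x \in t(T_2)$, then $F_2$ contains no core vertex, since $f$ sends core to core; this matches $c(I^\circ_2) = \emptyset$.
\item For all $y \in F_2$ we have $c^{-1}p_I c(y) = F_1 = p_F(y)$ by the computation above.
\end{enumerate}
Hence $c$ is a weak equivalence, and indeed lemma \ref{lemma_easy_w} also takes care of condition (c) of definition \ref{def_prenet_map}.

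The only points needing care concern the tassel structure and the choice of $F_0$. We must check that the fiber inherits tassels as $t(F) = F_2 \cap t(S)$. We must also confirm that $F$ lies in $2\PreNet$: it is connected with $F_0 = S_0$, pruned because $F_1 = p_S(F_2)$, and nonempty because $f$ is surjective by lemma \ref{lemma_prenet_map} (a'). The main obstacle I anticipate is ensuring that the intended notion of "weakly equivalent" allows a direct map rather than a zig-zag; here a direct map $F \to I$ (resp. $I^\circ$) in $\cW$ suffices.
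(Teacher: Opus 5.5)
Your proposal is correct and follows essentially the same route as the paper. In both, the unique level-wise map $F \to I$ (or $F \to I^\circ$ when $F_2$ has no core vertices) is checked to be a weak equivalence via lemma \ref{lemma_easy_w}. Condition (b) comes from the core-vertex bijection induced by $f$, and condition (c) from $p_S(y) = f_1^{-1}p_T(x) = F_1$ for all $y \in F_2$. Your additional checks that $F$ is a connected, pruned (even special) prenet are harmless; the paper leaves them implicit.
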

\proof
Let $F = f^{-1}(x)$, then level-wise there is only one map $z\from F \to I$, which can be factored through $z^\circ \from F \to I^\circ$ if there are no core vertices in $F_2$.
We will use lemma \ref{lemma_easy_w} to show that they are weak equivalences. Indeed, property (a) is trivially satisfied. If $x$ is a core vertex in $T_2$ then $F_2$ contains
exactly one core vertex and map $z\from F \to I$ satisfies property (b). Similarly, if $x$ is a tassel in $T_2$ then there are no core vertices in $F_2$ and $z^\circ\from F \to I^\circ$
satisfies (b). Finally, since $f$ is a weak equivalence, for every vertex $y \in F_2$ we have $p_S(y) = f_1^{-1}p_T(x) = F_1$, hence property (c) is also fulfilled.
\qed

Next we describe an analog of the homotopy equivalence between maps.

\begin{definition}
\label{def_equiv}
Denote by $\sim$ an equivalence relation on the sets of maps $\Hom_{2\PreNet}(S, T)$ generated by pairs
$\begin{tikzcd}[cramped,sep=small]S \ar[shift left, r, "f"] \ar[shift right, r, "g"'] & T\end{tikzcd}$ such that the following conditions hold.
\begin{enumerate}[label=\alph*)]
\item Factorizations of $f$ and $g$ via a vertex collapse constructed in lemma \ref{lemma_map_factor} pass through the same prenet $T'$.
$$
\begin{tikzcd}
S \ar[shift left, r, "f'"] \ar[shift right, r, "g'"'] & T' \ar[shift left, r, "f_v"] \ar[shift right, r, "g_v"'] & T.
\end{tikzcd}
$$
\item The two maps $f'$ and $g'$ can be factorized via $S'$
$$
\begin{tikzcd}
S \ar[r, "h"] & S' \ar[shift left, r, "f_w"] \ar[shift right, r, "g_w"'] & T' \ar[shift left, r, "f_v"] \ar[shift right, r, "g_v"'] & T,
\end{tikzcd}
$$
such that the fibers of $f_w$ and $g_w$ are pairwise weakly equivalent.
\end{enumerate}
\end{definition}

The advantage of this notion over the general equivalence of maps in the localized category is that while we
require existence of some prenet $S'$ in the factorization, it is squeezed between $S$ and $T'$, and so there is only a finite number of prenets that need to be considered. Moreover,
the verification of the pairwise equivalence of fibers of $f_w$ and $g_w$ is also a finite process due to the existence of minimal prenets.

We will denote the set of equivalence classes of maps between prenets $S$ and $T$ by
$$
\pi\Hom(S, T) := \Hom_{2\PreNet}(S, T) / \sim.
$$
It is easy to see that the sets $\pi\Hom$ satisfy the necessary functoriality. Let $l\from P \to S$ be a map of prenets the we can construct the following diagram.
\begin{equation}
\label{equ_piHom_pull}
\begin{tikzcd}[sep=3em]
& P \ar[dl, "\tilde l"'] \ar[d] \ar[shift left, dr] \ar[shift right, dr] && \\
\wtilde S \ar[d, "q"'] \ar[r, "\tilde h"] & \wtilde S' \ar[d, "q'"'] \ar[shift left, r, "\tilde f_w"] \ar[shift right, r, "\tilde g_w"'] & \wtilde T' \ar[d, "r"] & \\
S \ar[r, "h"] & S' \ar[shift left, r, "f_w"] \ar[shift right, r, "g_w"'] & T' \ar[shift left, r, "f_v"] \ar[shift right, r, "g_v"'] & T
\end{tikzcd}
\end{equation}
First we construct $\wtilde T'$ be factorizing maps $f'l$ and $g'l$ via vertex collapse using lemma \ref{lemma_map_factor}. They can be factored via the same map $r$ since
$f'_1 = g'_1 = \id_{S_1}$ and so $r_1 = l_1$. Apply the lift construction of proposition \ref{prop_lift} to two pairs of maps $(f_w, r)$ and $(g_w, r)$. Since $r$ is a vertex
collapse and both maps $f_w$ and $g_w$ are identities at level one, the construction of lift is particularly simple. In fact one can easily see that the prenet $\wtilde S'$
completing the spans to commutative squares in both cases coincide with the prenet obtained by factorizing composition $hl$ via a vertex collapse $q'$.
The fibers of lifts $\tilde f_w$ and $\tilde g_w$ can be obtained as lifts of the corresponding fibers of $f_w$ and $g_w$ respectively along vertex collapse $r$, hence they
are also pairwise weakly equivalent.

Similarly, one constructs $\wtilde S$ both as a lift of $h$ along $q'$ and as a factorization of $l$ via a vertex collapse $q$. Thus we obtain factorization of the pair of maps $(fl, gl)$ into
$$
\begin{tikzcd}
P \ar[r, "\tilde h \tilde l"] & \wtilde S' \ar[shift left, r, "\tilde f_w"] \ar[shift right, r, "\tilde g_w"'] & \wtilde T' \ar[shift left, r, "f_v r"] \ar[shift right, r, "g_v r"'] & T,
\end{tikzcd}
$$
establishing equivalence $fl \sim gl$.

Let $m\from T \to Q$ be another map of prenets, and consider the following diagram.
\begin{equation}
\label{equ_piHom_push}
\begin{tikzcd}[sep=3em]
S \ar[r, "h"] & S' \ar[shift left, r, "f_w"] \ar[shift right, r, "g_w"'] & T' \ar[d, "s"'] \ar[shift left, r, "f_v"] \ar[shift right, r, "g_v"'] & T \ar[d, "\tilde m"] \\
& & \wtilde T' \ar[shift left, r, "\tilde f_v"] \ar[shift right, r, "\tilde g_v"'] & \wtilde T \ar[d, "\tilde m'"] \\
& & & Q
\end{tikzcd}
\end{equation}
Here we first construct $\wtilde T$ by factorizing map $m$ via a vertex collapse $\tilde m'$. Then we factorize compositions $\tilde m f_v$ and $\tilde m g_v$ via vertex
collapses $\tilde f_v$ and $\tilde g_v$ respectively. As before, it is not hard to see by looking at the preimages of parents of vertices in $\wtilde T_2 = Q_2$
that they can be factorized via the same prenet $\wtilde T'$.

Consider the fibers of the composition $s f_w$. On the second level we have
$$
(sf_w)^{-1}(x) = \bigsqcup_{y \in s^{-1}(x)} f_w^{-1}(y),
$$
while their parent are induced by the parents in $S'$. Thus by taking the coproduct of maps realizing weak equivalences between fibers $f_w^{-1}(y)$ and $g_w^{-1}(y)$ for all $y \in s^{-1}(x)$
we conclude that fibers $(sf_w)^{-1}(x)$ and $(sg_w)^{-1}(x)$ are weakly equivalent. In effect, we obtain decomposition
$$
\begin{tikzcd}
S \ar[r, "h"] & S' \ar[shift left, r, "sf_w"] \ar[shift right, r, "sg_w"'] & \wtilde T' \ar[shift left, r, "\tilde m'\tilde f_v"] \ar[shift right, r, "\tilde m' \tilde g_v"'] & Q,
\end{tikzcd}
$$
establishing equivalence $mf \sim mg$.

Putting it all together, for $l\from P \to S$ and $m\from T \to Q$ we obtain maps
\begin{equation}
\label{equ_piHom_cat}
l^*\from \pi\Hom(S, T) \to \pi\Hom(P, T), \quad \text{and} \quad m_*\from \pi\Hom(S, T) \to \pi\Hom(S, Q).
\end{equation}

\begin{lemma}
\label{lemma_loc_pi}
The localization map $L$ factors through the equivalence classes of maps.
$$
\begin{tikzcd}[row sep=3em]
\Hom_{2\PreNet}(S, T) \ar[dr] \ar[rr, "L"] && \Hom_{2\Net}(S, T) \\
& \pi\Hom(S, T) \ar[ur]
\end{tikzcd}
$$
\end{lemma}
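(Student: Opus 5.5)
It suffices to show that $L(f) = L(g)$ for every generating pair $f, g\from S \to T$ of the relation $\sim$, i.e. a pair satisfying (a) and (b) of definition \ref{def_equiv}. We write $f = f_v f_w h$ and $g = g_v g_w h$ as in (b). Since $L$ is a functor, we will show separately that $L(f_v) = L(g_v)$ and $L(f_w) = L(g_w)$.

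The first equality is immediate. By (a), $f_v$ and $g_v$ are both vertex collapses $T' \to T$ (factorizations from lemma \ref{lemma_map_factor}). Hence by lemma \ref{lemma_vc_tc_pair} (case A) their images in $2\Net$ coincide.

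The main step is to show $L(f_w) = L(g_w)$ for the two maps $S' \to T'$ whose fibers are pairwise weakly equivalent. Note that $f_w$ and $g_w$ are the identity on level $1$, so they differ only in how $S'_2$ is distributed over $T'_2$.

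The plan is to construct a common prenet $\wtilde S'$ with a weak equivalence $q\from \wtilde S' \to S'$ and maps $\tilde f, \tilde g\from \wtilde S' \to T'$. These should satisfy two properties. First, $f_w q$ and $\tilde f$ (respectively $g_w q$ and $\tilde g$) should agree in $2\Net$. Second, $\tilde f$ and $\tilde g$ should differ only by a tassel collapse or vertex collapse ambiguity.

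Concretely, for each $y \in T'_2$ we choose zig-zags of weak equivalences connecting $f_w^{-1}(y)$ and $g_w^{-1}(y)$. We replace each by its minimal representative, using proposition \ref{prop_min_prenet}: both fibers admit weak equivalences to the same minimal prenet $m_y$, and these equivalences are identities on level $1$ data over $p(y)$. We then glue the $m_y$ over $y \in T'_2$, using the lifting construction of proposition \ref{prop_lift} and its merged form in remark \ref{rem_lift_w}. This produces a prenet $M$ together with weak equivalences $a\from S' \to M$ and $b\from S' \to M$, and a single map $k\from M \to T'$ with $k a = f_w$ and $k b = g_w$.

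Then $a$ and $b$ are two weak equivalences between the same prenets. We use lemma \ref{lemma_w_factor_tv} to split them into tassel and vertex collapses, and apply lemma \ref{lemma_vc_tc_pair} to each part, or lemma \ref{lemma_term_prenet} and corollary \ref{cor_term_prenet} when the fibers have no core vertices. This should give $L(a) = L(b)$, and hence $L(f_w) = L(g_w)$.

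We expect the main obstacle to be the gluing step. The fiber-wise weak equivalences have to assemble into global maps $a, b$ that are maps of prenets. In particular they must preserve $\pless$ and satisfy condition (c) of definition \ref{def_prenet_map} across different fibers. They must also be weak equivalences, i.e. satisfy $p = a^{-1} p\, a$.

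The first thing to try is to work fiber by fiber on level $2$ only, keeping level $1$ fixed; this is possible since $f_w, g_w$ are identities on level $1$. In this form a weak equivalence of fibers is essentially a tassel collapse within the interval $f_w^{-1}(y)$. Condition (b) of weak equivalences then holds locally, and the order condition reduces to the interval argument of lemma \ref{lemma_tc_factor}.

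If $a$ and $b$ fail to be comparable directly, the fallback is to pass to $S'^\circ$-type enlargements, as in the proof of lemma \ref{lemma_vc_tc_pair} (B), adding tassels $x^\pm$. This places both maps in $\cW'$, where lemma \ref{lemma_term_prenet} supplies a terminal object forcing equality in $2\Net$.
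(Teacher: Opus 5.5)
Your route is correct in outline but dual to the paper's.

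\textbf{The two routes.} The paper pulls back. For each $x \in T'_2$ it lifts $F_x \to m(F_x) \leftarrow G_x$ (Proposition \ref{prop_lift}) to tassel collapses $F_x \leftarrow H_x \to G_x$. It then glues the $H_x$ into a common cover $\wtilde S'$ with two tassel collapses $f_t, g_t \from \wtilde S' \to S'$ satisfying $f_w f_t = g_w g_t$, and concludes by Lemma \ref{lemma_vc_tc_pair}(B). You push forward instead: you glue the common targets $m_y$ themselves into $M$, with $ka = f_w$ and $kb = g_w$. This is slightly more economical, since no fibrewise lifting is needed.

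\textbf{The gluing tool.} The right tool is Proposition \ref{prop_assembly}, not Proposition \ref{prop_lift} or Remark \ref{rem_lift_w}. Those lift a map along a weak equivalence; they do not glue fibres over different vertices. The assembly handles exactly the obstacle you list:
\begin{itemize}
\item the order across fibres, via Remark \ref{rem_total_order} (note that $f_w^{-1}(y)$ need not be an interval of $S'_2$);
\item condition (c) of Definition \ref{def_prenet_map};
\item the weak-equivalence property.
\end{itemize}
Its proof does not use the present lemma. Apply it once to $f_w$ with $F_y \to m_y$ and once to $g_w$ with $G_y \to m_y$. Both applications give the same $M$ and the same $k$, because the assembled prenet and the map to $T'$ depend only on the targets and on $T'$.

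\textbf{The last step.} No splitting is needed. $a$ and $b$ are weak equivalences that are the identity on level $1$, so they are parallel tassel collapses $S' \to M$. Lemma \ref{lemma_vc_tc_pair}(B) then gives $L(a) = L(b)$ at once. Splitting via Lemma \ref{lemma_w_factor_tv} would not help if $a_1 \neq b_1$, because the two factorizations would pass through different intermediate prenets. Lemma \ref{lemma_w_pair} cannot be used, since its proof relies on the present lemma.

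\textbf{A caveat the paper shares.} If $m_y$ is literally the minimal prenet, the maps $F_y \to m_y$ need not be the identity on level $1$. For example, a fibre consisting of a single core vertex with two parents has $m_y = I$. In that case the assembly hypothesis $(l_y)_1 = \id$ fails. What both arguments really use is a common target reached from $F_y$ and $G_y$ by tassel collapses, for instance a common $2$-minimal representative. The paper takes this for granted when it asserts that all four maps in its lift are tassel collapses.
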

\proof
Consider two maps $f$ and $g$ between prenets $S$ and $T$, such that $f \sim g$. Then by definition we have factorization
$$
\begin{tikzcd}
S \ar[r, "h"] & S' \ar[shift left, r, "f_w"] \ar[shift right, r, "g_w"'] & T' \ar[shift left, r, "f_v"] \ar[shift right, r, "g_v"'] & T.
\end{tikzcd}
$$
Due to lemma \ref{lemma_vc_tc_pair} the images of $f_v$ and $g_v$ in the localized category coincide. In order to show that the images
of $f_w$ and $g_w$ also coincide it is enough to construct a lift
$$
\begin{tikzcd}[sep=3em]
\wtilde S' \ar[shift left, d, "f_t"] \ar[shift right, d, "g_t"'] \ar[dr] & \\
S' \ar[shift left, r, "f_w"] \ar[shift right, r, "g_w"'] & T',
\end{tikzcd}
$$
such that both $f_t$ and $g_t$ are tassel collapses.

For every vertex $x \in T'_2$ we denote by $F_x$ and $G_x$ the fiber prenets $f_w^{-1}(x)$ and $g_w^{-1}(x)$ respectively. Since they are weakly equivalent
we have $m(F_x) = m(G_x)$, and using proposition \ref{prop_lift} we can form a lift $H_x$
$$
\begin{tikzcd}[sep=3em]
H_x \ar[r, "(g_t)_x"] \ar[d, "(f_t)_x"'] & G_x \ar[d] \\
F_x \ar[r] & m(F_x),
\end{tikzcd}
$$
such that all four maps are tassel collapses. Then we combine the prenets $H_x$ into a single prenet $\wtilde S'$, by putting
$$
\wtilde S'_2 = \bigsqcup_{x \in T'_2} (H_x)_2, \quad\quad f_t = \bigsqcup_{x \in T'_2} (f_t)_x, \quad\quad g_t = \bigsqcup_{x \in T'_2} (g_t)_x.
$$
The partial order $\pless = \pless_{\wtilde S'_2}$ is generated by the partial orders on $H_x$ and relations $y \pless y'$ if $y \in H_x$ and $y' \in H_{x'}$ share a parent and
$x < x'$. Moreover, if $y$ and $y'$ are incomparable with respect to this order then their images under $f_t$ and $g_t$ are also incomparable (since $(f_t)_x$ and $(g_t)_x$ are tassel collapses)
and we can put $y < y'$ if $p(y) < p(y')$. Due to remark \ref{rem_total_order} this determines the total order on $\wtilde S'_2$.

It is straightforward to check that maps $f_t$ and $g_t$ are indeed tassel collapses, and hence compositions $f_w f_t$ and $g_w g_t$ are also maps of prenets, which coincide by
construction.
\qed

\begin{remark}
\label{rem_equiv_tc_vc}
Using the assembly process from proposition \ref{prop_assembly} in the next section we can reformulate the notion of equivalence as follows. For two maps
$\begin{tikzcd}[cramped,sep=small]S \ar[shift left, r, "f"] \ar[shift right, r, "g"'] & T\end{tikzcd}$ we have $f \sim g$ if they admit factorization
$$
\begin{tikzcd}
S \ar[r, "h"] & P \ar[shift left, r, "f_t"] \ar[shift right, r, "g_t"'] & Q \ar[r, "l"] &  R \ar[shift left, r, "f_v"] \ar[shift right, r, "g_v"'] & T,
\end{tikzcd}
$$
where $f_t$ and $g_t$ are tassel collapses and $f_v$ and $g_v$ are vertex collapses. In general we can not combine maps $h$ and $l$ in this factorization,
however, we refer to the next section and in particular to remark \ref{rem_equivo_tc_vc} for further discussion on this matter.
\end{remark}

\begin{lemma}
\label{lemma_w_pair}
Let $f$ and $g$ be two weak equivalences of prenets $\begin{tikzcd}[cramped,sep=small]S \ar[shift left, r, "f"] \ar[shift right, r, "g"'] & T\end{tikzcd}$, then their classes
in $\pi\Hom(S, T)$ and their images in the category of $2$-nets coincide.
\end{lemma}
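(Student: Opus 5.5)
The plan is to show $f \sim g$ directly from definition \ref{def_equiv}. Once that holds, equality of the images in $2\Net$ follows from lemma \ref{lemma_loc_pi}.

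\emph{Step 1: a common vertex collapse factorization.} Apply the factorization of lemma \ref{lemma_map_factor} to both $f$ and $g$. The intermediate prenet $T'$ has $T'_1 = S_1$, $T'_2 = T_2$, $t(T') = t(T)$, and parents $p_{T'}(x) = p_S(f^{-1}(x))$; for $g$ the parents are $p_S(g^{-1}(x))$. Because $f$ is a weak equivalence, definition \ref{def_weak_equiv}(b) gives $p_S(f^{-1}(x)) = f_1^{-1}p_T(x)$, and likewise $p_S(g^{-1}(x)) = g_1^{-1}p_T(x)$.

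The difficulty is that $f_1$ and $g_1$ may differ, so a priori these are two different prenets $T'$ and $T''$. I would handle this as follows.
\begin{itemize}
\item First observe that both $f_1$ and $g_1$ are surjective and order preserving on the totally ordered sets $S_1 \to T_1$.
\item Hence they are determined by subdivisions of $S_1$ into consecutive intervals, and the two parent maps above need not coincide.
\item To repair this, first post-compose with the terminal vertex collapse $z\colon T \to m_1(T)$ of corollary \ref{cor_vc_term_prenet}. This does not change classes in $2\Net$, though it does change classes in $\pi\Hom$.
\item So for the $\pi\Hom$ statement the honest route is instead to pass through the vertex-collapse-then-tassel-collapse factorization of lemma \ref{lemma_w_factor_vt}, namely $f = f_t f_v$ and $g = g_t g_v$.
\end{itemize}

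I expect the main obstacle is reconciling $f_1 \neq g_1$ within the rigid definition \ref{def_equiv}. My first attempt would be to show it cannot happen in an essential way. For a core vertex $x \in S_2$, the weak equivalence condition forces $f_1^{-1}p_T f(x) = p_S(x) = g_1^{-1} p_T g(x)$, and $f_2$, $g_2$ both biject the cores. I would try to combine this with pruning and connectedness to show that $f_1$ and $g_1$ induce the same partition of $S_1$. If this fails, I would fall back on the reformulation in remark \ref{rem_equiv_tc_vc}. That allows an intermediate map $l$ between the tassel collapse and vertex collapse parts, which gives enough room to absorb the difference between $f_1$ and $g_1$.

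\emph{Step 2: the tassel part.} With a common $T'$, take $h = \id_S$, $S' = S$, and let $f_w, g_w\colon S \to T'$ be the first factors. By the 2-out-of-3 property (lemma \ref{lemma_w_23}) these are weak equivalences. Lemma \ref{lemma_fib_w} then shows that every fiber $f_w^{-1}(x)$ and $g_w^{-1}(x)$ is weakly equivalent to $I$ when $x$ is a core vertex, and to $I^\circ$ when $x$ is a tassel; the type depends only on $x \in T'_2$. So the fibers are pairwise weakly equivalent, condition (b) of definition \ref{def_equiv} holds, and $f \sim g$.

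\emph{Step 3: the images in $2\Net$.} This can also be seen directly. Lemma \ref{lemma_w_factor_vt} factors $f$ and $g$ into vertex collapses followed by tassel collapses, and lemma \ref{lemma_vc_tc_pair} handles pairs of the same type. Alternatively, after composing with the isomorphism $T \to m(T)$, both maps become weak equivalences $S \to m(S)$ lying in $\cW'$-type situations, where lemma \ref{lemma_term_prenet} gives uniqueness.
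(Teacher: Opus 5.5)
There is a genuine gap in Step 1, and it is the heart of the lemma. Condition (a) of definition \ref{def_equiv} says exactly that $f_1^{-1}p_T(x)=g_1^{-1}p_T(x)$ for all $x\in T_2$. Since $\sim$ is generated by such pairs, this equality is a \emph{necessary} consequence of $f\sim g$. So your claim that ``the two parent maps above need not coincide'' would, if true, refute the $\pi\Hom$ part of the statement.

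No reformulation can absorb this. In a factorization $S\to P\rightrightarrows Q\to R\rightrightarrows T$ as in remark \ref{rem_equiv_tc_vc}, the two tassel collapses are order isomorphisms $P_1\to Q_1$ at level $1$, hence equal there. The two vertex collapses give $(f_v)_1^{-1}p_T(x)=p_R(x)=(g_v)_1^{-1}p_T(x)$ after identifying $R_2$ with $T_2$. Together these force the same equality.

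Your proposed direct route also fails, because $f_1$ and $g_1$ need not induce the same partition of $S_1$. Take $S_1=\{1<2<3\}$ and $S_2=\{x\}$ a core vertex with $p_S(x)=S_1$; take $T_1=\{a<b\}$ and $T_2=\{y\}$ a core vertex with $p_T(y)=T_1$. Then $f_1\colon 1,2\mapsto a,\ 3\mapsto b$ and $g_1\colon 1\mapsto a,\ 2,3\mapsto b$ both define weak equivalences with different partitions, yet $f_1^{-1}p_T(y)=g_1^{-1}p_T(y)=S_1$. What has to be compared is the preimages of the intervals $p_T(x)$. Your observation about core vertices says nothing about tassels of $T_2$, whose preimages under $f_2$ and $g_2$ can be disjoint.

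Step 3 has the same defect. Lemma \ref{lemma_vc_tc_pair} needs pairs with common source and target. That means the intermediate prenets of lemma \ref{lemma_w_factor_vt} for $f$ and $g$ (parents $p_T(f(x))$ versus $p_T(g(x))$ on $S_2$) must coincide, which is again equivalent to the missing equality. Your alternative also fails: a weak equivalence $S\to m(S)$ need not lie in $\cW'(S)$, since preimages of core vertices may contain tassels. So lemma \ref{lemma_term_prenet} gives no uniqueness; the paper stresses that $m(T)$ is not terminal in $\cW(T)$.

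The missing idea is to forget the core/tassel distinction, which plays no role in the level-$1$ equality. The maps $f^\circ,g^\circ\colon S^\circ\to T^\circ$ are still weak equivalences: the core condition becomes vacuous and the parent condition is unchanged. By corollary \ref{cor_term_prenet}, the minimal prenet $m(S^\circ)$ is terminal in $\cW(S^\circ)$. Hence for a weak equivalence $z\colon T^\circ\to m(S^\circ)$ you get $zf^\circ=zg^\circ$, and since $p_T(x)=z_1^{-1}p_{m(S^\circ)}(z(x))$,
$$
f_1^{-1}p_T(x)=(z_1f_1)^{-1}p_{m(S^\circ)}(z(x))=(z_1g_1)^{-1}p_{m(S^\circ)}(z(x))=g_1^{-1}p_T(x).
$$
Once this common $T'$ is available, your Step 2 ($h=\id_S$, 2-out-of-3, lemma \ref{lemma_fib_w}) followed by lemma \ref{lemma_loc_pi} is exactly the paper's argument.
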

\proof
First we factorize both maps via a vertex collapses.
$$
\begin{tikzcd}
S \ar[shift left, r, "f_t"] \ar[shift right, r, "g_t"'] & T' \ar[shift left, r, "f_v"] \ar[shift right, r, "g_v"'] & T.
\end{tikzcd}
$$
To show that they factor through the same prenet $T'$ we need to check that for every vertex $x \in T_2$ the preimages $f^{-1}p_T(x)$ and $g^{-1}p_T(x)$ coincide.
Consider the induced pair of maps $f^\circ$ and $g^\circ$
$$
\begin{tikzcd}
S^\circ \ar[shift left, r, "f^\circ"] \ar[shift right, r, "g^\circ"'] & T^\circ \ar[r, "z"] & m(S^\circ),
\end{tikzcd}
$$
which are also weak equivalences, and let $z$ be the unique map to the terminal object $m(S^\circ)$ (corollary \ref{cor_term_prenet}). Since $z$ is a weak equivalence
we have $p_T(x) = z^{-1}p_{m(S^\circ)}(z(x))$ and therefore since compositions $zf^\circ = zg^\circ$ we conclude that
$$
f^{-1}p_T(x) = (zf^\circ)^{-1} p_{m(S^\circ)}(z(x)) = g^{-1}p_T(x).
$$

Now, since both maps $f_t$ and $g_t$ are weak equivalences, we see by lemma \ref{lemma_fib_w} that their fibers are pairwise weakly equivalent. Therefore $f \sim g$,
and their classes coincide in $\pi\Hom(S, T)$. Finally, using lemma \ref{lemma_loc_pi} we conclude that images of $f$ and $g$ in the localized category also coincide.
\qed

We observe that the existence of minimal representatives in each weak equivalence class (proposition \ref{prop_min_prenet}) alongside lemma \ref{lemma_w_pair}
immediately tells us that the spaces of isomorphisms in the category of $2$-nets contain at most one element.
\begin{equation}
\label{equ_net_isom}
\Iso_{2\Net}(S, T) = \begin{cases}
1,&\text{if $\cW(S) = \cW(T)$},\\
\emptyset,&\text{if $\cW(S) \neq \cW(T)$}.
\end{cases}
\end{equation}

The following lemma is a part of the proof of theorem \ref{thm_net_homs}, but we will state it separately in order to make the exposition less cumbersome.

\begin{lemma}
\label{lemma_for_thm_loc}
Assume $f\from S \to T$ and $g\from S \to T$ are two maps of prenets, such that $f_1 = g_1 = \id_{S_1}$, and $h\from T \to T'$ is a tassel collapse, such that $hf = hg$.
Then there exists a tassel collapse $t\from\wtilde S \to S$, such that $ft \sim gt$.
$$
\begin{tikzcd}
\wtilde S \ar[r, "t"] & S \ar[shift left, r, "f"] \ar[shift right, r, "g"'] & T \ar[r, "h"] & T'
\end{tikzcd}
$$
\end{lemma}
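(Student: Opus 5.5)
The plan is to verify the definition of $\sim$ (definition \ref{def_equiv}) directly for the pair $(ft, gt)$, with $\wtilde S$ built fiberwise over $T'$. Since $f_1 = g_1 = \id_{S_1}$ and $h$ is a tassel collapse, $h_1$ is a bijection, so all maps are identities at level $1$ up to this identification. The factorization via a vertex collapse of lemma \ref{lemma_map_factor} is then trivial for $f$, $g$, $ft$ and $gt$: the middle prenet is $T$ itself, with $f_v = g_v = \id_T$. Condition (a) of definition \ref{def_equiv} therefore holds automatically. It remains to produce $\wtilde S$ and a factorization $\wtilde S \xrightarrow{k} S' \rightrightarrows T$ whose two maps have pairwise weakly equivalent fibers.

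The natural candidate is to take $S' = \wtilde S$ and $k = \id$, and to choose $t$ so that for every $x \in T_2$ the fibers $(ft)^{-1}(x)$ and $(gt)^{-1}(x)$ are weakly equivalent. Fix $y \in T'_2$. Because $hf = hg$, the sets $f^{-1}(h^{-1}(y))$ and $g^{-1}(h^{-1}(y))$ coincide; call this interval $U_y \subset S_2$. The maps $f$ and $g$ restricted to $U_y$ are two order-preserving maps onto the interval $h^{-1}(y) \subset T_2$. This interval contains at most one core vertex, and all its vertices have the same parents $p_{T'}(y)$. The mismatch between $f$ and $g$ is therefore purely a matter of how $U_y$ is cut into consecutive blocks. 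Over the core vertex $x_y$ (if present), the fibers $f^{-1}(x_y)$ and $g^{-1}(x_y)$ are intervals containing the unique core vertex of $U_y$ mapping to $x_y$ (by map condition $f(c(S)) \subset c(T)$ and $f$ being identity on level $1$, core vertices of $U_y$ can only go to core vertices). The tassel fibers may be distributed differently between $f$ and $g$.

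To fix this, I would construct $\wtilde S$ by inserting extra tassels in each $U_y$, in the spirit of the $x^\pm$ construction in lemma \ref{lemma_vc_tc_pair} B and the tassel-collapse lift of proposition \ref{prop_lift}. For each tassel $x \in h^{-1}(y)$ with $f^{-1}(x) = \emptyset$ or $g^{-1}(x) = \emptyset$, fibers would otherwise differ between empty and nonempty. So I add new tassel copies of $A(U_y)$ or $B(U_y)$, whose parents form disjoint covers of $p_S(U_y)$ by map condition (c). These copies are placed at the corresponding position so that $ft$ and $gt$ hit every vertex of $h^{-1}(y)$. The map $t$ sends the added tassels back to their originals, so it is a tassel collapse. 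After this padding, the fiber of $ft$ (resp. $gt$) over a tassel $x$ is a nonempty prenet consisting only of tassels, over the level-$1$ set $p_T(x)$ mapped by identity. This requires checking that the fiber over a tassel with parents $P$ and only tassels at level $2$ is weakly equivalent to $I^\circ$ in the fiber sense, i.e. collapses to the minimal prenet of its class. The fiber over a core vertex contains exactly one core vertex and should be equivalent to $I$, as in lemma \ref{lemma_fib_w}.

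The main obstacle is this last point. A fiber of a non-weak-equivalence map need not have all vertices with the same parents, so $F \to I$ or $I^\circ$ need not be a weak equivalence. What actually has to be shown is that $f^{-1}(x)$ and $g^{-1}(x)$ are weakly equivalent to each other, not to $I$. I would first try to compare both against the common fiber $(hf)^{-1}(y) = U_y$ over $h^{-1}(y)$. The padding would be chosen, via the path construction pictured after remark \ref{rem_lift_w}, so that the fiber pieces of $ft$ and $gt$ over each $x$ are equal-length segments along a common path with identical parent sets. This would make them isomorphic rather than merely weakly equivalent.
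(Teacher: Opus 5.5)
\textbf{There is a genuine gap: the choice $S' = \wtilde S$, $k = \id$ cannot work.} Your first paragraph is fine: with $f_1 = g_1 = \id_{S_1}$ the vertex-collapse factorization of lemma \ref{lemma_map_factor} is trivial, so only condition (b) of definition \ref{def_equiv} matters.

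By lemma \ref{lemma_fib_map}, for any weak equivalence $t\from \wtilde S \to S$ the induced maps $(ft)^{-1}(x) \to f^{-1}(x)$ and $(gt)^{-1}(x) \to g^{-1}(x)$ are weak equivalences. Hence $ft$ and $gt$ have pairwise weakly equivalent fibers if and only if $f$ and $g$ already do. Padding $S$ with tassels never changes the class of a fiber, so your construction only covers the case where $t = \id$ already works. Two smaller points: you cannot choose where the new tassels go, since $ft$ on a new tassel is $f$ of its image under $t$; and $f^{-1}(x)$ is never empty, by lemma \ref{lemma_prenet_map} (a').

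In general $f$ and $g$ do not have weakly equivalent fibers. Here is a counterexample.
\begin{itemize}
\item Let $S_1 = T_1 = \{s_1 < s_2 < s_3\}$ and $T_2 = \{x < y\}$ with $p(x) = p(y) = S_1$, where $x$ is a core vertex and $y$ a tassel. Let $h$ collapse $T_2$ to a single core vertex.
\item Let $S_2$ consist of a core vertex $a$ over $S_1$, tassels $a_{12}, b_{12}$ over $\{s_1, s_2\}$, $u_i$ over $\{s_i\}$ for $i = 1, 2$, and $a_3, v, b_3$ over $\{s_3\}$. Order the children of $s_i$ as $a < a_{12} < u_i < b_{12}$ for $i = 1, 2$, and those of $s_3$ as $a < a_3 < v < b_3$.
\item Let $f$ send $a, a_{12}, u_1, u_2, a_3$ to $x$, let $g$ send $a, a_{12}, a_3, v$ to $x$, and let both send all other vertices to $y$.
\end{itemize}
One checks that $f$ and $g$ are maps of prenets with $hf = hg$. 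For instance, $B(f^{-1}(x)) = \{u_1, u_2, a_3\}$, $B(g^{-1}(x)) = \{a_{12}, v\}$, $A(f^{-1}(y)) = \{b_{12}, v\}$ and $A(g^{-1}(y)) = \{u_1, u_2, b_3\}$ are disjoint covers.

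The fiber $f^{-1}(x)$ is minimal. Its five level-$2$ vertices have pairwise distinct parent sets, so no tassel collapse applies, and the singleton parent sets $\{s_1\}, \{s_2\}, \{s_3\}$ rule out any vertex collapse. By contrast, $g^{-1}(x)$ has only four level-$2$ vertices. Weak equivalences are surjective, so there is no weak equivalence $g^{-1}(x) \to f^{-1}(x)$. By proposition \ref{prop_min_prenet} the two fibers are therefore not weakly equivalent, and no choice of $t$ can fix this.

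\textbf{The missing idea is that the middle map of definition \ref{def_equiv} (b) must lose information; it cannot be a weak equivalence.}
\begin{itemize}
\item The paper writes $S_{xy} = f^{-1}(x) \cap g^{-1}(y)$ and $P_{xy} = p_S(S_{xy})$. Step A proves that $A(S_{xy})$ and $B(S_{xy})$ are disjoint covers of $P_{xy}$, and that the partition of parents induced by $B(S_{xx})$ is compatible with $P_{xy} \sqcup P_{yx}$.
\item The tassel collapse $t$ is only auxiliary: it duplicates vertices lying in both $A(S_{xy})$ and $B(S_{xy})$.
\item A map $l\from \wtilde S \to S'$ then crushes each off-diagonal block $S_{xy}$ to one tassel per maximal interval of $P_{xy}$. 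It also sends the boundary layers $B(S_{xx})$ and $A(S_{yy})$ onto copies of these tassels.
\item Step A is exactly what makes $l$ a map of prenets. Afterwards the fibers of $f_w$ and $g_w$ differ only by tassels that collapse onto their neighbours.
\end{itemize}
In the example above, $l$ sends $u_1, u_2$ to a single tassel over $\{s_1, s_2\}$, which removes the obstruction.

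Finally, when more than two vertices of $T_2$ are mixed, the paper does not do this in one step. It inducts through intermediate maps with $f \sim f'$, $f' \sim g'$ and $g' \sim g$, using that $\sim$ is only the equivalence relation generated by the basic pairs. Your plan has no counterpart to this step.
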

\proof
We begin by introducing some notation. Let $x$ and $y$ be two vertices in $T_2$, then
$$
S_{xy} = f^{-1}(x) \cap g^{-1}(y), \quad\quad A_{xy} = A(S_{xy}), \quad\quad B_{xy} = B(S_{xy}),
$$
$$
P_{xy} = p_S(S_{xy}), \quad\quad P_x = p_T(x), \quad\quad P_y = p_T(y).
$$
Since all constructions in the proof can be performed independently for each vertex of $T'_2$, we may as well assume that $T'_2$ consists of a single point.
Thus we assume that $P_x = P_y = T_1 = S_1$.

{\bf A.} Let us show that subsets $A_{xy}$ and $B_{xy}$ are disjoint covers of $P_{xy}$. Due to the symmetry of the situation it is enough to consider only the case of $A_{xy}$.
Fix $x \in T_2$ and consider subset $Y_x = \{y \in T_2 \mid S_{xy} \neq \emptyset \}$. Let $y_0$ be the minimal element of $Y_x$, then since maps $f$ and $g$ preserve partial orders,
$A_{xy_0}$ is a subset of $A(f^{-1}(x))$. Property \ref{def_prenet_map} (c) guarantees that the latter is a disjoint cover of $P_x$, and therefore $A_{xy_0}$ has to be a disjoint
cover of $P_{xy_0}$.

Assume that $y \in Y_x$ is not minimal. Then the case when $x$ is the minimal element of subset $X_y = \{x \in T_2 \mid S_{xy} \neq \emptyset\}$ is handled similarly. So we may
assume that $x$ is not the minimal element of $X_y$.

Consider a maximal subset $A' \subset A_{xy}$, such that $A'$ is disjoint. By definition $A_{xy}$ is a cover of $P_{xy}$, and if it is not disjoint, then $A'$ is a proper subset
and thus there exists a parent $z \in P_{xy}$ which is not covered by $A'$. Now, consider the subset
$$
p_S^{-1}(z) \cap \bigcup_{y' < y} S_{xy'}.
$$
We claim that it is non-empty, and thus we can select a vertex $v$ in it. Indeed, if it is empty, then the element $a_z \in p_S^{-1}(z) \cap A_{xy}$ would also be an element of $A(f^{-1}(x))$,
but that would contradict the fact that $A(f^{-1}(x))$ is a disjoint cover. Similarly, we can select a vertex
$$
w \in p_S^{-1}(z) \cap \bigcup_{x' < x} S_{x'y}.
$$
However this leads to a contradiction, as both $v$ and $w$ share a common parent, namely $z$, but we also have
$$
f(w) = x' < x = f(v) \quad\text{and}\quad g(v) = y' < y = g(w).
$$
Therefore, we conclude that $A_{xy}$ was a disjoint cover from the beginning.

In fact the same argument can be used to show that for any two intervals $I, J \subset T_2$ the corresponding subsets $A_{IJ}$ and $B_{IJ}$ are disjoint covers of $P_{IJ}$.

{\bf B.} Assume that for some pair of vertices $x, y \in T_2$ the subsets $A_{xy}$ and $B_{xy}$ are not disjoint (from each other, and not in the sense that they do not form disjoint covers).
Take a common vertex $z$ and duplicate it, in the sense that we add a new tassel $z'$ to $S_2$ adjacent to $z$ in the total order and put $p(z') = p(z)$. Repeating this process we construct
a new prenet $\wtilde S$ together with a tassel collapse $t\from \wtilde S \to S$ and such that for $\wtilde S$ the corresponding subsets $A_{xy}$ and $B_{xy}$ are disjoint for all $x, y \in T_2$.

{\bf C.}
For convenience of notation we rename the prenet $\wtilde S$ constructed in step (B) into $S$ and replace $f$ and $g$ with the respective compositions with $t$.
We need to construct a factorization 
$$
\begin{tikzcd}
S \ar[r, "l"] & S' \ar[shift left, r, "f_w"] \ar[shift right, r, "g_w"'] & T,
\end{tikzcd}
$$
such that $f_w$ and $g_w$ have weakly equivalent fibers. If among the sets $S_{uv}$ the only non-empty sets are the diagonal, i.e. when $u = v$, then $f = g$ and there is nothing left to prove.

Assume first that the collection $S_{uv}$ is diagonal except for two vertices $x$ and $y$ and to be specific assume $x < y$. In other words the only potentially non-empty subsets
$S_{uv}$ not on the diagonal are $S_{xy}$ and $S_{yx}$.
We allow the possibility that some of these subsets, including the diagonal ones, are in fact empty.

To construct $S'$ we put $S'_{uv}$ for $u \neq v$ to be the set of maximal intervals in $P_{uv}$, and declare all elements of $S'_{uv}$ to be tassels. We also define $S'_{uu} = S_{uu}$ if
$u$ is neither $x$ nor $y$.

Using the analysis performed in step (A) we see that
the partition of the set of parents $S_1 = T_1$ induced by the disjoint cover $B_{xx}$ is compatible with the partition into $P_{xy}$ and $P_{yx}$, therefore we define
$S'_{xx} = (S_{xx} - B_{xx}) \sqcup S'_{xy} \sqcup S'_{yx}$ and the map $S_{xx} \to S'_{xx}$ is identity on $S_{xx} - B_{xx}$ and sends a vertex of $B_{xx}$ to the maximal interval containing
its parents. Similarly, using subset $A_{yy}$ in place of $B_{xx}$ we construct $S_{yy} \to S'_{yy}$ and put
$$
S'_2 = S'_{xy} \sqcup S'_{yx} \sqcup \bigsqcup_{u \in T_2} S'_{uu}.
$$
Finally, we put $S'_1 = S_1$, and the parent map $p_{S'}$ as well as map $l$ is clear from the construction above. This is a variation on the assembly construction of
proposition \ref{prop_assembly} and for the details we refer the reader to its proof.
This way we obtain factorizations $f = f_w l$ and $g = g_w l$. The fibers $f_w^{-1}(x)$ and $g_w^{-1}(x)$ are weakly equivalent, as they are
connected by tassel collapses of $S'_{xy}$ and $S'_{yx}$ to $S'_{xx}$, and similarly for $f_w^{-1}(y)$ and $g_w^{-1}(y)$.

{\bf D.} The general case is treated inductively. Denote by $X = X(f, g)$ the set of vertices $x \in T_2$, such that there exists a non-empty set among $S_{xy}$ or $S_{yx}$ with $y \neq x$.
In the previous steps we have established the case of $|X| = 2$. Assume now that $|X| > 2$.
We first replace maps $f$ and $g$ with modified maps $f'$ and $g'$. Denote temporarily by $x_1$ and $x_2$ the first two vertices of $X$, and define $f'$ by
$$
f'(z) = \begin{cases}
f(z),&\text{if $f(z) > x_1$},\\
x_1,&\text{if $f(z) = g(z) = x_1$},\\
x_2,&\text{if $f(z) = x_1$ and $g(z) > x_1$}.
\end{cases}
$$
Analysis in step (A) shows that $f'$ is indeed a map of prenets. Considering the pair of maps $(f, f')$ we see that we can apply the argument of step (C) since it has only one non-empty
non-diagonal subset, namely $f^{-1}(x_1) \cap f'^{-1}(x_2)$. Therefore, we conclude that $f \sim f'$. Defining $g'$ in a similar fashion we also conclude that $g \sim g'$.

Observe that for the pair $(f', g')$ the block of the vertices $X' = X(f', g') = X - \{x_1\}$ is smaller than $X$, thus by induction we have $f' \sim g'$.
\qed

Now we are ready to give a description of maps in the category of $2$-nets.

\begin{theorem}
\label{thm_net_homs}
For any two prenets $S, T$ we have
$$
\Hom_{2\Net}(S, T) \ \isom\ \colim_{\wtilde S \in \cW(S)} \pi\Hom(\wtilde S, m(T)).
$$
\end{theorem}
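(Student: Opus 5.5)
Because weak equivalences are right-cancellable up to lifting (proposition \ref{prop_lift}) and $m(T)$ is reached from $T$ by a weak equivalence (proposition \ref{prop_min_prenet}), I will treat the right-hand side as a calculus of left fractions $S \xleftarrow{w} \wtilde S \xrightarrow{f} m(T)$, where $w$ is a weak equivalence and $f$ is taken modulo $\sim$. I will first replace $T$ by $m(T)$: a weak equivalence $T \to m(T)$ becomes an isomorphism in $2\Net$, so it suffices to prove the statement for $T = m(T)$ minimal. The colimit is taken over the category $\cW(S)$ restricted to objects equipped with a weak equivalence to $S$ (any zig-zag can be straightened via proposition \ref{prop_min_prenet} and corollary \ref{cor_w_lift}). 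The transition maps are the pullbacks $l^*$ of \eqref{equ_piHom_cat}. By lemma \ref{lemma_w_pair}, parallel weak equivalences induce the same $l^*$ on $\pi\Hom$, so the indexing category behaves like a filtered preorder. It is cofiltered because of corollary \ref{cor_w_lift} and the span completion of lemma \ref{lemma_ww_pushout} (after passing to $S^\circ$-type replacements if needed). Hence the colimit is a filtered colimit of sets, and its elements are classes of pairs $(w, [f])$.

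Next I will construct the comparison map $\Phi$ by sending $(w, [f])$ to $L(f)L(w)^{-1}$. This is well defined on $\pi\Hom$ by lemma \ref{lemma_loc_pi}, and compatible with transition maps because $L(fl)L(wl)^{-1} = L(f)L(w)^{-1}$. For the inverse, I will define the right-hand side as a category, show it receives a functor from $2\PreNet$ inverting $\cW$, and then invoke the universal property of the localization. Composition of $(w,f)\colon S \to T$ with $(w',f')\colon T \to R$ is defined by lifting $f$ along $w'$ using proposition \ref{prop_lift}. This produces $\wtilde S' \to \wtilde T$ and a weak equivalence $\wtilde S' \to \wtilde S$, giving the pair $(w\circ g', f'\circ f'')$. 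The resulting maps land in $m(R)$ rather than $R$, which I fix by postcomposing with $R \to m(R)$. Independence of the chosen lift follows from two facts. First, two lifts can be dominated by a common weak equivalence via corollary \ref{cor_w_lift}. Second, the two composites then agree after further precomposition, because the lifts differ by maps that are equal modulo $\sim$ once both are pushed into the minimal prenet. For this I will use the functoriality of $\sim$ established in diagrams \eqref{equ_piHom_pull} and \eqref{equ_piHom_push}. A weak equivalence $w$ is sent to $(\id, w)$, and its inverse is $(w, \id)$. These compose to the identity by lemma \ref{lemma_w_pair} applied to the two resulting weak equivalences into a minimal prenet. The universal property then gives $\Psi\colon \Hom_{2\Net} \to \colim$, and both $\Phi\Psi$ and $\Psi\Phi$ are the identity on generators.

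The main obstacle is the injectivity half, namely that $\sim$ plus precomposition by weak equivalences captures every relation imposed in the localization. Concretely, if $L(f) = L(g)$ then I must find a weak equivalence $t$ with $ft \sim gt$. With the category structure above, this reduces to checking that the composition is well defined, and the only delicate case is the one where two representatives become equal only after postcomposition with a weak equivalence $h\colon T \to T'$. I will reduce this case as follows. Since $T$ is minimal, factor $h$ into a vertex collapse and a tassel collapse (lemmas \ref{lemma_w_factor_tv} and \ref{lemma_w_factor_vt}). Vertex collapses are handled by factoring $f$ and $g$ through lemma \ref{lemma_map_factor}, using that vertex collapses out of the common factor are jointly determined by the parent preimages; this brings us to the situation $f_1 = g_1 = \id$. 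The remaining tassel-collapse case is exactly lemma \ref{lemma_for_thm_loc}, which supplies the tassel collapse $t\colon \wtilde S \to S$ with $ft \sim gt$. I expect that this bookkeeping, namely arranging $f_1 = g_1 = \id_{S_1}$ before invoking lemma \ref{lemma_for_thm_loc}, is where most of the care is needed.
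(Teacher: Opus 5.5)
Your outer framework is a legitimate repackaging of the paper's proof. Your $\Phi$ is the paper's $\alpha$. Building a category of fractions $S \leftarrow \wtilde S \to m(T)$ and invoking the universal property of $2\PreNet[\cW^{-1}]$ replaces the paper's direct definition of $\beta$ on zig-zags followed by a check of the four generating relations. This costs you two things you do not address. First, associativity and unitality of the fraction category. Second, independence of the weak equivalence $\wtilde T \to m(T)$ you lift along: as written your representative lands in $m(T)$ while $w'$ lands in $T$, and in any case the colimit identifies representatives that differ only in their structure map. Two such choices give maps equalized by \emph{different} weak equivalences, so this is not directly an instance of lemma \ref{lemma_for_thm_loc}, although it can be reduced to the fixed-target case with corollary \ref{cor_w_lift} and lemma \ref{lemma_w_pair}.

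The genuine gap is in the step that carries the content, independence of the lift. Your second ``fact'' is not available. Dominating two lifts by a common $Q$ only gives $f, g\from Q \to P_3$ with $hf = hg$, where $h\from P_3 \to P_2$ is the weak equivalence being lifted along. It does not give $f \sim g$, so the functoriality of $\sim$ from (\ref{equ_piHom_pull}) and (\ref{equ_piHom_push}) has nothing to transport. Creating that relation is precisely the job of lemma \ref{lemma_for_thm_loc}.

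Your reduction to that lemma is also set up incorrectly. The equalizing weak equivalence $h$ starts at a non-minimal prenet. The minimal prenet is the target of a \emph{different}, arbitrary map $z\from P_3 \to T \to m(T)$ that you postcompose with. If the source of $h$ were minimal, as you write, $h$ would be the identity and there would be nothing to prove. What is missing is the following chain, which is the heart of the paper's proof:
\begin{enumerate}
\item Factor $h = h_t h_v$ with the vertex collapse first (lemma \ref{lemma_w_factor_vt}).
\item Push $z$ out along $h_v$ using lemma \ref{lemma_vc_pushout}. Because $m(T)$ is minimal, this yields $l$ with $z = l h_v$. This is where minimality is used, and it is what lets a conclusion about $h_v f$, $h_v g$ reach $zf$, $zg$.
\item Factor $h_v f$ and $h_v g$ through a common $R'$ by lemma \ref{lemma_map_factor}. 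Bijectivity of $(h_t)_1$ forces the two vertex collapses $R' \to R$ to coincide.
\item Refactor $h_t f_v = v\, h'_t$ to obtain a tassel collapse $h'_t$ out of $R'$ equalizing the two level-one-identity maps $f', g'$. This is exactly the hypothesis of lemma \ref{lemma_for_thm_loc}.
\end{enumerate}
You could instead factor $f$, $g$ themselves through a common prenet, which works because $h$ is a weak equivalence. In that case the two resulting vertex collapses differ in general. You must then separately check that postcomposing with them gives $\sim$-equivalent maps, and you still have to produce the equalizing tassel collapse by refactoring $h f_v$ via lemma \ref{lemma_w_factor_tv}. None of these steps is in your sketch, and calling them bookkeeping understates them: they are where the theorem is actually proved.
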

\proof
To establish the isomorphism we construct two mutually inverse maps $\alpha$ and $\beta$.

\begin{equation}
\label{equ_alpha_beta}
\begin{tikzcd}[column sep=4em]
\Hom_{2\Net}(S, T) \ar[start anchor=east, end anchor=west, bend left, shift left=0.5em, r, "\beta"]& \colim\limits_{\wtilde S \in \cW(S)} \pi\Hom(\wtilde S, m(T)). \ar[start anchor=west, end anchor=east, bend left, shift left=0.5em, l, "\alpha"]
\end{tikzcd}
\end{equation}

{\bf Construction of $\alpha$.} Consider a map $f\from \wtilde S \to m(T)$, and define $\alpha(f)$ by the following zig-zag of maps
$$
\alpha(f) = \left(\begin{tikzcd}[cramped,sep=small]
S \ar[r] & m(S) & \wtilde S \ar[l] \ar[r, "f"] & m(T) & T \ar[l]
\end{tikzcd}\right),
$$
where the maps to $m(S)$ as well as the last map to $m(T)$ can be chosen to be any weak equivalences to the respective minimal object.
In virtue of lemma \ref{lemma_w_pair} the resulting map in the category $2\Net$ doesn't depend on these choices.

We need to check that this is well defined. Assume we have a weak equivalence $c\from\wtilde S' \to \wtilde S$ and consider the commutative diagram.
$$
\begin{tikzcd}
& \wtilde S' \ar[d, "c"] \ar[dl, "a"'] \ar[dr, "f'"] & \\
m(S) & \wtilde S \ar[l, "b"] \ar[r, "f"']  & \wtilde T
\end{tikzcd}
$$
Then in the category of nets we have $f' a^{-1} = f c c^{-1} b^{-1} = f b^{-1}$, hence $\alpha(f) = \alpha(f')$. The fact that it doesn't depend on the choice of a representative $f$
of the equivalence class $\sim$ follows immediately from lemma \ref{lemma_loc_pi}.

{\bf Construction of $\beta$.} A map from $S$ to $T$ in the category of nets is a chain of objects $\{S = P_0, P_1, \ldots, P_n, P_{n+1} = T\}$ connected by a zig-zag of maps
$$
\begin{tikzcd}
\phi\from S & P_1 \ar[dashed, l] \ar[r] & P_2 & \cdots \ar[dashed, l] \ar[r] & P_{n-1} & P_n \ar[dashed, l] \ar[r] & T
\end{tikzcd}
$$
where dashed lines are weak equivalences. It is enough to consider the case $n = 3$, as the general situation is reduced to this by induction on the length of the zig-zag.
By proposition \ref{prop_lift} we can construct a lift $\wtilde P$ fitting into a commutative diagram
$$
\begin{tikzcd}
&& \wtilde P \ar[dashed, dl] \ar[dr] && \\
& P_1 \ar[dashed, dl] \ar[dr] && P_3 \ar[dashed, dl] \ar[dr] & \\
S && P_2 && T
\end{tikzcd}
$$
and we put $\beta(\phi)$ to be represented by an element $\wtilde P \to P_3 \to T \to m(T)$ in the colimit on the right hand side of \ref{equ_alpha_beta}.
We need to show that $\beta$ is well defined. First we observe that it doesn't depend on the choice of the last map $T \to m(T)$, because according to
lemma \ref{lemma_w_pair} these choices produce the same element in $\pi\Hom(T, m(T))$ and therefore compose into the same element in $\pi\Hom(\wtilde P, m(T))$.

{\bf Independence of the lift.} Next, we check that $\beta(\phi)$ doesn't depend on the choice of the lift $\wtilde P$. If we have another object $\wtilde P'$ that
fits into the same diagram, then using corollary \ref{cor_w_lift} we can form a lift over $P_1$ producing a new prenet $Q$ with a map to $P_1$ and two maps to $P_3$.

Consider the following diagram.
$$
\begin{tikzcd}[sep=3em]
Q \ar[shift left, d, "f'"] \ar[shift right, d, "g'"'] \ar[shift left, r, "f"] \ar[shift right, r, "g"'] & P_3 \ar[d, "h_v"] \ar[r] & T \ar[r] & m(T) \ar[equal, d] \\
R'\ar[d, "h'_t"'] \ar[shift left, r, "f_v"] \ar[shift right, r, "g_v"'] & R \ar[d, "h_t"] \ar[rr, "l"'] && m(T) \\
P'_2 \ar[r, "v"'] & P_2 &&
\end{tikzcd}
$$
We construct $R$ as the factorization of $h\from P_3 \to P_2$ into a vertex collapse $h_v$ and a tassel collapse $h_t$ (lemma \ref{lemma_w_factor_vt}). Using pushout
of the map $P_3 \to m(T)$ along the vertex collapse $h_v$ as in lemma \ref{lemma_vc_pushout} we obtain the map $l\from R \to m(T)$, since $m(T)$ is a minimal prenet.
We need to show that the classes of $lh_v f$ and $lh_v g$ coincide in the colimit in \ref{equ_alpha_beta}, in fact it is enough to check that
the classes of $h_v f$ and $h_v g$ coincide.

Let us factor $h_v f$ and $h_v g$ via vertex collapse maps $f_v$ and $g_v$. To see that these two factorizations pass through the same prenet $R'$ consider a vertex
$x \in R'_2 = R_2$, then its parents in $R'$ should be on one hand $f^{-1} h_v^{-1} p_R(x)$ and on the other hand $g^{-1} h_v^{-1} p_R(x)$. However, since
$h_t$ is a tassel collapse we have $p_R(x) = p_{P_2}(h_t(x))$ and since $h_t h_v f = h_t h_v g$ the two sets of parents coincide, thus giving rise to a well defined
prenet $R'$. Moreover, we have $f_v = g_v$, since by construction they are identity at level $2$ and at level $1$ we have $(h_t)_1 (f_v)_1 = (h_t)_1 (g_v)_1$ where
$(h_t)_1$ is a bijection.

Now we construct $P'_2$ by factorizing the composition $h_t f_v = h_t g_v$ via a vertex collapse $v$ and a tassel collapse $h'_t$. We see that by construction
the first column in the diagram above satisfies conditions of lemma \ref{lemma_for_thm_loc}, thus we conclude that classes of $f'$ and $g'$ coincide in the colimit
in \ref{equ_alpha_beta} which demonstrates independence of the construction of $\beta$ of the choice of a lift.

{\bf Compatibility with relations.} Let us show that $\beta$ is compatible with relations defining the localization of the category of prenets. First, consider
a factorization $f = gh$ in prenets and a zig-zag $\phi$ representing a map in the category of nets.
$$
\begin{tikzcd}
\phi\from S \ar[r, "h"] & P & P \ar[l, "="'] \ar[r, "g"] & T.
\end{tikzcd}
$$
By definition of $\beta$ the class $\beta(\phi)$ involves the lift of $h$ along the identity, which can be chosen to be $h$ itself. Hence we see that $\beta(\phi)$ is
represented by the composition $gh = f$ as required.
A similar argument also applies when we have a factorization of a weak equivalence $w = uv$ into two weak equivalences and zig-zag
$$
\begin{tikzcd}
\psi\from S & P \ar[l, "v"'] \ar[r, "="] & P & T \ar[l, "u"'].
\end{tikzcd}
$$

Now let $w$ be a weak equivalence and consider a zig-zag $\begin{tikzcd}[cramped,sep=small] \phi\from S \ar[r, "w"] & T & S \ar[l, "w"'] \end{tikzcd}$.
The class $\beta(\phi)$ can be obtained using any lift of $w$ along $w$, for example we can take it to be $S$ itself.
$$
\begin{tikzcd}
S \ar[d, "="'] \ar[r, "="] & S \ar[d, "w"] \\
S \ar[r, "w"'] & T
\end{tikzcd}
$$
Therefore, $\beta(\phi) = \beta(\id_S)$ as required. Similarly, for a zig-zag $\begin{tikzcd}[cramped,sep=small] \psi\from T & \ar[l, "w"'] S \ar[r, "w"] & T \end{tikzcd}$
we see that this cospan represents a lift of identity on $T$.
$$
\begin{tikzcd}
S \ar[d, "w"'] \ar[r, "w"] & T \ar[d, "="] \\
T \ar[r, "="'] & T.
\end{tikzcd}
$$
Since $\beta$ doesn't depend on the choice of such a lift we conclude that $\beta(\psi) = \beta(\id_T)$ as required.

{\bf $\alpha$ and $\beta$ are mutually inverse.} By definitions of $\alpha$ and $\beta$ the class $\beta(\alpha(f))$ is represented by the composition in the top row of the following diagram.
$$
\begin{tikzcd}
\wtilde S_3 \ar[d, "l_1"'] \ar[r] & \wtilde S_2 \ar[d] \ar[r] & T \ar[d, "z"] \ar[r, "z"] & m(T) \\
\wtilde S_1 \ar[d] \ar[r, "l_2"'] & \wtilde S \ar[d] \ar[r, "f"'] & m(T) & \\
S \ar[r] & m(S) & &
\end{tikzcd}
$$
Since in constructions of both $\alpha$ and $\beta$ we could take arbitrary weak equivalence map $z\from T \to m(T)$ we might as well take the same map $z$ in both cases. Therefore, the class $\beta(\alpha(f))$ is also represented by
the composition $f l_2 l_1$, and as both $l_1$ and $l_2$ are weak equivalences it coincides with the class of $f$ in the colimit on right hand side of \ref{equ_alpha_beta}.

Now, let $\begin{tikzcd}[cramped,sep=small] \phi\from S \ar[squiggly, r] & T \end{tikzcd}$ be a zig-zag corresponding to a map in the category of nets. Then the composition
$\alpha(\beta(\phi))$ is represented by the zig-zag $\begin{tikzcd}[cramped,sep=small] S \ar[r] & m(S) & \wtilde S \ar[l] \ar[r, "tf"] & m(T) & T \ar[l]\end{tikzcd}$ which is
a part of the following diagram.
$$
\begin{tikzcd}
& \wtilde S \ar[dl, "w"'] \ar[dd, near end, "\tilde s"] \ar[dr, "f"] & & & \\
S \ar[dr, "s"'] \ar[squiggly, rr, near end, "\phi"'] & & T \ar[dr, "t"] & & T \ar[dl, "t"'] \\
& m(S) & & m(T) & 
\end{tikzcd}
$$
Here $\wtilde S$ is obtained from $\phi$ by the repeated application of the lift construction, map $s$ is a weak equivalence to the minimal object $m(S)$, $\tilde s = sw$ and
$t$ is a weak equivalence to the minimal object $m(T)$. Using identities for maps in the localized category we find that $\phi = f w^{-1}$, $w^{-1} = \tilde s^{-1} s$, and therefore
$\alpha(\beta(\phi)) = \phi$.
\qed

\begin{corollary}
\label{cor_net_homs}
In the colimit on the right hand side of the statement of theorem \ref{thm_net_homs} it is enough to restrict to the subcategory of $1$-minimal prenets in $\cW(S)$.
$$
\Hom_{2\Net}(S, T) \ \isom\ \colim_{\wtilde S \in \cW(S)_1} \pi\Hom(\wtilde S, m(T)).
$$
\end{corollary}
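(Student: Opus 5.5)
The plan is a cofinality argument. The colimit in theorem \ref{thm_net_homs} is indexed by the category $\cW(S)$, with transition maps given by pullback $l^*$ along weak equivalences (\ref{equ_piHom_cat}). It therefore suffices to show two things about the full subcategory $\cW(S)_1$ of $1$-minimal prenets. First, every $\wtilde S \in \cW(S)$ admits a weak equivalence $\wtilde S' \to \wtilde S$ with $\wtilde S'$ $1$-minimal. Second, any two such maps into $\wtilde S$ can be connected within $\cW(S)_1$ over $\wtilde S$. Together these give an isomorphism of the two colimits. Since all transition maps are induced by weak equivalences and each $\pi\Hom(-, m(T))$ is functorial, it is in fact enough to show that the comparison map between the two colimits is both surjective and injective.

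\textbf{Surjectivity.} Take a class represented by $f\from \wtilde S \to m(T)$. Let $z\from \wtilde S \to m_1(\wtilde S)$ be the vertex collapse to the terminal object of $\mathcal{VC}(\wtilde S)$ given by corollary \ref{cor_vc_term_prenet}. Apply lemma \ref{lemma_vc_pushout} to the span $(z, f)$. This yields a vertex collapse $f'\from m(T) \to Q$ and a map $g'\from m_1(\wtilde S) \to Q$ with $g' z = f' f$. Because $m(T)$ is minimal, $f'$ is the identity, so $f$ factors as $g' z$. Since $z$ is a weak equivalence, the class of $f$ equals the class of $g'$ in the colimit, and $g'$ is a map out of the $1$-minimal prenet $m_1(\wtilde S)$. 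Going through this pushout route, rather than lifting backwards, is what makes every element come from $\cW(S)_1$.

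\textbf{Injectivity.} Suppose $g_1\from P_1 \to m(T)$ and $g_2\from P_2 \to m(T)$, with $P_1, P_2$ $1$-minimal, become equal in the full colimit. Then they are connected by a zig-zag in $\cW(S)$ along which the pulled-back classes agree. I would push the whole zig-zag forward into $\cW(S)_1$ using the functor $\wtilde S \mapsto m_1(\wtilde S)$. A weak equivalence $w\from \wtilde S \to \wtilde S'$ should induce a weak equivalence $m_1(\wtilde S) \to m_1(\wtilde S')$ compatible with the collapses $z$. To construct it, factor $w$ as a vertex collapse followed by a tassel collapse (lemma \ref{lemma_w_factor_vt}) and take the pushout of the tassel collapse along $z$ (corollary \ref{cor_vcw_pushout}), then use terminality of $m_1$. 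Each intermediate equality of classes then transports along the surjectivity construction above. Uniqueness of maps into terminal objects, together with lemma \ref{lemma_w_pair}, shows that the choices made do not change the classes.

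The main obstacle is the functoriality of $m_1$ on weak equivalences, specifically checking that the pushout of a tassel collapse along a vertex collapse is again $1$-minimal, or can be collapsed further compatibly. My first attempt would be to observe that a vertex collapse out of $m_1(\wtilde S)$ changes only level $1$, while tassel collapses are bijective on level $1$, so the two kinds of collapse commute. Then I would invoke the uniqueness of the $1$-minimal object in $\mathcal{VC}$.
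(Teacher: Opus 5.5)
Your approach is the paper's: its proof defines the inverse $\delta$ of the comparison map exactly as in your surjectivity step, by pushing $f$ out along the collapse $z\from \wtilde S \to m_1(\wtilde S)$ with lemma \ref{lemma_vc_pushout}, where minimality of $m(T)$ forces the new vertex collapse to be the identity; it then calls the mutual-inverse property immediate, and your injectivity paragraph spells that out.

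Two corrections, neither of which affects the argument you actually run:

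\begin{itemize}
\item The cofinality condition you state first is false. For a weak equivalence $w\from P \to \wtilde S$ one has $p_P = w^{-1}p_{\wtilde S}w$, so an interval of level-$1$ vertices of $\wtilde S$ with equal children pulls back to such an interval in $P$. Hence $P$ is $1$-minimal only if $\wtilde S$ is, which is precisely why only the forward collapse $z$ works.
\item Your ``main obstacle'' is not one. Let $z'\from \wtilde S' \to m_1(\wtilde S')$ be the terminal collapse and apply lemma \ref{lemma_vc_pushout} to $z$ and $z'w$. This produces a vertex collapse out of the $1$-minimal $m_1(\wtilde S')$, which must be the identity. You therefore get $m_1(w)$ with $m_1(w)z = z'w$; it is a weak equivalence by lemma \ref{lemma_w_23}, and it is unique because $z$ is surjective on each level.
\end{itemize}
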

\proof
Let $\gamma$ be the natural map of colimits induced by the inclusion $\cW(S)_1 \into \cW(S)$.
$$
\begin{tikzcd}
\colim\limits_{\wtilde S \in \cW(S)_1} \pi\Hom(\wtilde S, m(T)) \ar[r, "\gamma"] & \colim\limits_{\wtilde S \in \cW(S)} \pi\Hom(\wtilde S, m(T)).
\end{tikzcd}
$$
In order to construct its inverse map $\delta$, let us take $f \from \wtilde S \to m(T)$ representing an element on the right hand side. Then using pushout lemma \ref{lemma_vc_pushout}
and minimality of $m(T)$ we obtain
$$
\begin{tikzcd}
\wtilde S \ar[d] \ar[r, "f"] & m(T) \ar[equals, d] \\
m_1(\wtilde S) \ar[r, "f'"'] & m(T),
\end{tikzcd}
$$
and define $\delta(f) = f'$. It is immediate to see that the two maps $\gamma$ and $\delta$ are mutually inverse.

\qed

\begin{proposition}
\label{prop_loc_vc}
The localization of $2\PreNet$ with respect to class of vertex collapses has the following sets of maps.
$$
\Hom_{2\PreNet[\mathcal{VC}^{-1}]}(S, T) \ \isom\ \Hom_{2\PreNet}(m_1(S), m_1(T)).
$$
\end{proposition}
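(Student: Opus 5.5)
The plan is to realize the localization directly as the category whose objects are those of $2\PreNet$ and whose morphisms $S \to T$ are maps $m_1(S) \to m_1(T)$ of prenets, and then to verify the universal property. Define a functor $M\from 2\PreNet \to \mathcal{C}$, where $\mathcal{C}$ has $\Hom_{\mathcal{C}}(S,T) = \Hom_{2\PreNet}(m_1(S), m_1(T))$ with composition inherited from $2\PreNet$. On a map $g\from S \to T$, first use corollary \ref{cor_vc_term_prenet} to get the unique vertex collapses $z_S\from S \to m_1(S)$ and $z_T\from T \to m_1(T)$. Then apply lemma \ref{lemma_vc_pushout} to the span $(z_S, z_T g)$. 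This yields a vertex collapse $z'\from m_1(T) \to Q$ and a map $g'\from m_1(S) \to Q$. Since $m_1(T)$ is $1$-minimal, $z'$ is the identity, so $g'\from m_1(S) \to m_1(T)$ satisfies $g' z_S = z_T g$.

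The map $g'$ is unique with this property, because $z_S$ is level-wise surjective (lemma \ref{lemma_prenet_map} (a')). Uniqueness immediately gives functoriality of $M$. It also shows that $M$ sends vertex collapses to isomorphisms. Indeed, if $g$ is a vertex collapse, then $z_T g$ is a vertex collapse from $S$ into the terminal object $m_1(T)$ of $\mathcal{VC}(S) = \mathcal{VC}(T)$. By the uniqueness in corollary \ref{cor_vc_term_prenet}, this gives $m_1(S) = m_1(T)$ and $z_T g = z_S$, so $g' = \id$.

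For the universal property, let $F\from 2\PreNet \to \mathcal{D}$ invert vertex collapses. Define $\bar F$ on $\phi\from m_1(S) \to m_1(T)$ by $\bar F(\phi) = F(z_T)^{-1} F(\phi) F(z_S)$. This is compatible with composition by a direct check, and $\bar F M = F$ follows from $g' z_S = z_T g$. Uniqueness of $\bar F$ holds because every morphism $\phi$ of $\mathcal{C}$ equals $M(z_T)^{-1} M(\phi) M(z_S)$, where $\phi$ is viewed as a map of prenets $m_1(S) \to m_1(T)$ and $M(z_{m_1(S)}) = \id$. Hence $\mathcal{C} \simeq 2\PreNet[\mathcal{VC}^{-1}]$, which gives the stated isomorphism of Hom-sets.

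I expect the main obstacle to be the middle step. There one needs the composite $z_T g$ to factor uniquely through $z_S$ as a map of prenets. This means checking that the pushout of lemma \ref{lemma_vc_pushout} really collapses onto $m_1(T)$. It uses that $Q$ lies in $\mathcal{VC}(T)$ and that $m_1(T)$ is $1$-minimal, so the vertex collapse $m_1(T) \to Q$ must be the identity. A smaller but related point is that $m_1(S)$ is intrinsic, so that $M$ is well defined on objects independently of choices; this follows from the uniqueness and terminality statements already established.
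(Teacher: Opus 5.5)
Your proof is correct, but it is organized differently from the paper's. The paper only sketches its proof: it says to rerun the proof of theorem \ref{thm_net_homs}, meaning the explicit maps $\alpha$ and $\beta$ between zig-zags and a colimit over the component. That involves lifts along vertex collapses, independence of choices, compatibility with the defining relations of the localization, and mutual inverseness. The colimit then collapses because $m_1(S)$ is terminal in $\mathcal{VC}(S)$ (corollary \ref{cor_vc_term_prenet}) and pushouts along vertex collapses exist (lemma \ref{lemma_vc_pushout}).

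You use exactly these two inputs, but you do not manipulate zig-zags. You show that $z_S\colon S \to m_1(S)$ acts as a reflection onto the $1$-minimal prenets: every map $S \to m_1(T)$ factors through $z_S$, and uniquely so because $z_S$ is level-wise surjective. You then verify the universal property of the localization directly.

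Your route is shorter and cleaner. It never needs proposition \ref{prop_lift} or a case-by-case check against the relations of the zig-zag presentation. It also shows that composition in $2\PreNet[\mathcal{VC}^{-1}]$ is ordinary composition of maps between $1$-minimal prenets.

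The paper's route is the template that still works for general weak equivalences in theorem \ref{thm_net_homs}. There $m(S)$ is not terminal in $\cW(S)$, so the canonical unit your argument depends on is unavailable.

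One small wording point: you conclude $\mathcal{C} \simeq 2\PreNet[\mathcal{VC}^{-1}]$. Both categories are identity on objects, and the comparison functors commute with the functors out of $2\PreNet$. So the universal property actually gives an isomorphism of categories, not just an equivalence, and that is what yields the Hom-set bijection for each fixed pair $(S,T)$.
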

\proof
The demonstration is essentially reiteration of the proof of theorem \ref{thm_net_homs} simplified by the following two facts. First, the minimal
prenet $m_1(S)$ is the terminal object of the component $\mathcal{VC}(S)$ (corollary \ref{cor_vc_term_prenet}), and second, we can always form pushout along vertex collapses (lemma \ref{lemma_vc_pushout}).
\qed

\begin{nparagraph}
\label{par_J}
The rest of this section will be dedicated to construction of a fully faithful functor from the category of $2$-trees and maps between them which are surjective at level $1$ to the
category of $2$-nets. The reason why we have to restrict to the subcategory $(2\Tree, \Epi_1)$ is because in our definition of prenets here we did not allow tassels at level $1$.
We believe that a proper definition of nets with tassel at all levels, which would be necessary to define $n$-nets for an arbitrary $n$ regardless, would avoid this restriction.

We begin the construction of functor $J\from (2\Tree, \Epi_1) \to 2\Net$ by defining it on objects. Let $T$ be a 2-tree, then we put $J(T)_0 = T_0$, consisting of a single vertex,
$J(T)_1 = T_1$, and on the second level we put
$$
c(J(T)_2) = T_2, \quad\quad t(J(T)_2) = T_1 - p(T_2), \quad\quad J(T)_2 = c(J(T)_2) \sqcup t(J(T)_2).
$$
The parent map is defined as
$$
p_{J(T)}(x) = \begin{cases}
p_T(x),&\text{if $x \in c(J(T)_2)$},\\
x,&\text{if $x \in t(J(T)_2)$}.
\end{cases}
$$
Since the sets of children of vertices in $J(T)_1$ are disjoint their total orders combine into a well-defined partial order $\pless$ on $J(T)_2$, and as was pointed out in \ref{rem_total_order}
it can be extended to a total order on $J(T)_2$.

We would like to point out here that the constructed prenet $J(T)$ is minimal. Indeed, first observe that $J(T)$ is a tree itself, in the sense that intervals $p(x)$ consist of a single vertex of $J(T)_1$
for all vertices  $x \in J(T)_2$. This implies that $J(T)$ is $1$-minimal. Moreover, for every vertex $y \in J(T)_1$ the set of children $p^{-1}(y)$ consists either entirely of core vertices, or contains
a single tassel and no other vertices. In either case no non-trivial tassel collapse is possible and we conclude that $J(T)$ is also $2$-minimal. Therefore $J(T)$ is a minimal prenet.
\end{nparagraph}

\begin{nparagraph}
\label{par_J_map}
Let $f\from S \to T$ be a map of $2$-trees, surjective at level $1$, we will construct a prenet $\wtilde S = \wtilde S(f)$ weakly equivalent to $J(S)$ and a map of prenets
$J(f)\from \wtilde S \to J(T)$. We put $\wtilde S_0 = S_0$, $\wtilde S_1 = S_1$ and $J(f)_1 = f_1$. For every vertex $x \in S_1$ we define
$$
(\wtilde S_2)_x = \begin{cases}
p_S^{-1}(x) \sqcup (p_T^{-1}(f(x)) - f(p_S^{-1}(x))),&\text{if $x \in p(S_2) \subset S_1$}, \\
p_T^{-1}(f(x)),&\text{if $x \in S_1 - p(S_2)$},
\end{cases}
$$
and put
$$
\wtilde S_2 = \bigsqcup_{x \in S_1} (\wtilde S_2)_x.
$$
On the component $(\wtilde S_2)_x$ we define the parent map $p_{\wtilde S}$ to be $x \in S_1$, and define map $J(f)_2$ by
$$
J(f)_2(y) = \begin{cases}
f_2(y),&\text{if $y \in p_S^{-1}(x)$},\\
y,&\text{otherwise}.
\end{cases}
$$
The total order on subsets $(\wtilde S_2)_x$ is determined on each component of the disjoint union by the total order on $S_2$ and $T_2$ respectively,
and if $y \in p_S^{-1}(x)$ and $z \in p_T^{-1}(f(x)) - f(p_S^{-1}(x))$ then we declare $y < z$ if $f(y) <_T z$, and $z < y$ if $z <_T f(y)$. The total order on $\wtilde S_2$ is
induced by the order on each of the subsets $(\wtilde S_2)_x$ and by putting $(\wtilde S_2)_x < (\wtilde S_2)_{x'}$ whenever $x < x'$ in $S_1$. Finally, the core vertices

$$
c(\wtilde S_2) = \bigsqcup_{x \in S_1} c(p_S^{-1}(x)).
$$

It is clear by construction that $\wtilde S$ is a prenet (in fact a tree) weakly equivalent to $J(S)$. The weak equivalence $u\from \wtilde S \to J(S)$ is not unique, in fact we
can take any map which for each $(\wtilde S_2)_x$ with $x \in p(S_2) \subset S_1$ is identity on $p_S^{-1}(x)$ and sends the second component $p_T^{-1}(f(x)) - f(p_S^{-1}(x))$ to $p_S^{-1}(x)$ while preserving the total order on
$(\wtilde S_2)_x$, and for $x \in S_1 - p(S_2)$ map $u$ sends entire set of tassels $p_T^{-1}(f(x))$ to the unique tassel in $J(S)$ with parent $x$.
We will show that $J(f)$ is a map of prenets. Indeed, by definition of subsets
$(\wtilde S_2)_x$ we immediately see that condition (b) of definition \ref{def_prenet_map} holds. Also, by definition of the orders
on each $(\wtilde S_2)_x$ we see that $J(f)$ preserves the partial orders $\pless_{\wtilde S}$ and $\pless_T$, thus satisfying condition (a). Finally, since $\wtilde S$ is a tree
the condition (c) is automatically satisfied.

For an illustration of this construction we refer to example \ref{exa_J}.
\end{nparagraph}

\begin{theorem}
The above construction of $J$ provides a fully faithful functor
$$
J\from (2\Tree, \Epi_1) \into 2\Net.
$$
\end{theorem}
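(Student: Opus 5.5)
The plan has three parts: make $J$ a well-defined functor, prove faithfulness, and prove fullness. For the first, I would define $J(f)$ as the class in $\Hom_{2\Net}(J(S), J(T))$ of the zig-zag
$$
\begin{tikzcd}[cramped,sep=small] J(S) & \wtilde S(f) \ar[l, "u"'] \ar[r, "J(f)"] & J(T).\end{tikzcd}
$$
By lemma \ref{lemma_w_pair} this class does not depend on the choice of the weak equivalence $u$. Since $J(T)$ is minimal (paragraph \ref{par_J}), we have $m(J(T)) = J(T)$. Theorem \ref{thm_net_homs} then identifies the target hom-set with $\colim_{\wtilde S \in \cW(J(S))} \pi\Hom(\wtilde S, J(T))$, and the class of $J(f)$ is simply the element represented by $J(f)\from \wtilde S(f) \to J(T)$. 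For identities, $\wtilde S(\id)$ is $J(S)$ itself. For functoriality, given $S \xrightarrow{f} T \xrightarrow{g} R$, I would build an explicit weak equivalence $\wtilde S(gf) \to \wtilde S(f)$, or a common refinement mapping to both $\wtilde S(gf)$ and $\wtilde S(f)$, by lifting $J(g)\circ(\text{lift of }\wtilde S(f)\to J(T)\text{ along }\wtilde T(g)\to J(T))$ via proposition \ref{prop_lift}. I would then check level-wise that the composite agrees with $J(gf)$. Everything involved is a tree, so parents are single vertices and the comparison reduces to comparing the maps $f_2$ and $(gf)_2$ on children of each $x \in S_1$, up to where the added tassels go. Any discrepancy in tassel placement is a pair of tassel collapses, so lemma \ref{lemma_vc_tc_pair} or lemma \ref{lemma_for_thm_loc} absorbs it.

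For faithfulness, suppose $J(f) = J(g)$ in $2\Net$. By theorem \ref{thm_net_homs} and corollary \ref{cor_net_homs}, there is a common weakly equivalent refinement over which the two representatives become $\sim$-equivalent. Unwinding definition \ref{def_equiv}, the vertex-collapse factorizations must pass through the same prenet, and the fibers over each vertex of $J(T)_2$ must be pairwise weakly equivalent. Over a core vertex $z \in T_2$, lemma \ref{lemma_fib_w} and the fiber invariants then force the core preimages to coincide, so $f_2^{-1}(z) = g_2^{-1}(z)$ on $S_2$. Parents of vertices of $J(T)_2$ are single points, and weak equivalences are surjective on level $1$ and preserve parent preimages. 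Hence the level-$1$ maps agree: $f_1 = g_1$. Together these give $f = g$ as tree maps.

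Fullness is the main obstacle. Take an element of $\Hom_{2\Net}(J(S), J(T))$ and represent it, using corollary \ref{cor_net_homs}, by a map $h\from P \to J(T)$ with $P$ a $1$-minimal prenet weakly equivalent to $J(S)$. The goal is to show that $h$ is $\sim$-equivalent to some $J(f)$. I would define $f_1$ by composing a section of $P_1 \to J(S)_1$ with $h_1$. Well-definedness needs that vertices of $P_1$ identified in $J(S)_1$ go to the same vertex; this should follow from $J(T)$ being a tree together with condition (b) of definition \ref{def_prenet_map}, since $h$ preserves parent preimages. I would define $f_2$ on a core vertex of $J(S)_2$ via its unique core lift in $P_2$, which is core in $J(T)$ by definition \ref{def_prenet_map_tassels}. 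The tree conditions for $f$ (commuting squares, and being a map of $1$- and $2$-ordinals) come from the order preservation in definition \ref{def_prenet_map} (a). Surjectivity of $f_1$ follows from lemma \ref{lemma_prenet_map} (a').

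It remains to compare $h$ with $J(f)\from \wtilde S(f) \to J(T)$. Both are maps out of prenets in $\cW(J(S))$ that agree on cores and on level $1$ after passing to a common lift (proposition \ref{prop_lift} and corollary \ref{cor_w_lift}). They can differ only in how tassels of $P$ are sent among children of a single vertex of $J(T)_1$, with all such children sharing one parent. I would then apply lemma \ref{lemma_for_thm_loc}, taking the tassel collapse to be the map onto a $2$-minimal quotient in which those tassels are merged, to conclude that the two maps are $\sim$-equivalent. The hardest point to verify is the hypothesis of that lemma, namely producing a tassel collapse $h'$ with $h'h = h'J(f)$, and checking that $f_2$ respects the ordinal structure when $P$ has intervals $p(x)$ of length greater than one.
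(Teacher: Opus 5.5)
Your three-part architecture matches the paper's: a functoriality check, faithfulness by restricting a representative to core vertices and level $1$, and fullness by comparing a representative $h\from P\to J(T)$ with $J(J^{-1}(h))$. The gap is in how you propose to identify two representatives that agree on level $1$ and on cores.

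\textbf{Why lemma~\ref{lemma_for_thm_loc} does not apply.} The lemma needs both maps to be the identity on level $1$, whereas yours are $f_1\from S_1\to T_1$. More seriously, it needs a tassel collapse $h'$ on the target with $h'h=h'J(f)$. Such an $h'$ need not exist. A tassel collapse induces an isomorphism of cores, but after a common lift your two maps can send the same tassel to two \emph{different} core vertices.

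\textbf{Counterexample.} Let $f$ be the isomorphism of corollas $a<b\mapsto z_1<z_2$, so that $\wtilde S(f)=J(S)$ and $u=\id$. Let $P$ have children $a<\tau<b$ with $\tau$ a tassel, let $t\from P\to J(S)$ send $\tau\mapsto a$, and let $h$ send $a\mapsto z_1$ and $\tau,b\mapsto z_2$. The common lift of proposition~\ref{prop_lift} is just $P$. Then $h$ and $J(f)\circ t$ send $\tau$ to the distinct cores $z_2$ and $z_1$, so no tassel collapse on the target equalizes them. Lemma~\ref{lemma_vc_tc_pair} does not help either, since neither map is a tassel collapse. So the step you flagged as hardest fails as stated.

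\textbf{The same problem in your functoriality sketch.} It relies on the same ``absorption'' of discrepancies. There is also a direction problem: the weak equivalence $\wtilde S(gf)\to\wtilde S(f)$ you mention generally does not exist, because $\wtilde S(f)$ can have more children over a given $x$.

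\textbf{The missing observation.} Every object of $\cW(J(S))_1$ is a tree with level $1$ equal to $S_1$, and every map between such objects is a tassel collapse. This follows from lemma~\ref{lemma_w_factor_vt} and $1$-minimality, together with the existence of a weak equivalence onto the minimal tree $J(S)$ (proposition~\ref{prop_min_prenet}). This removes your worries about well-definedness of $f_1$ and about intervals $p(x)$ of length greater than one.

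\textbf{How the paper uses it.} For fullness, the paper constructs a tassel collapse $w\from P\to\wtilde S(J^{-1}(h))$ directly. It is the identity on cores over each $x\in S_1$, and it sends a tassel $y$ over $x$ to $h(y)$ in the second summand, or to the neighbouring core with the same image. Then $J(J^{-1}(h))\circ w=h$ holds on the nose. In the example, $w$ sends $\tau\mapsto b$, and $w$ and $t$ agree in $2\Net$ by lemma~\ref{lemma_vc_tc_pair}. For functoriality, the paper uses the explicit tassel collapse $w\from\wtilde S'\to\wtilde S(gf)$. It is the identity on the first and third summands of $(\wtilde S'_2)_x$ and is induced by $g$ on the middle one, which gives $J(g)J(f)'=J(gf)w$.

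\textbf{Alternative repair of your route.} You could instead verify definition~\ref{def_equiv} directly, with no need for lemma~\ref{lemma_for_thm_loc}. Take two maps out of a tree that agree on level $1$ and on cores. The fiber over each $z$ is determined up to weak equivalence by how many core children of each level-$1$ vertex go to $z$, or by a single tassel if there are none. Hence the fibers agree. This fact, not lemma~\ref{lemma_fib_w}, is also what your faithfulness step actually needs.
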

\proof
The demonstration of the statement will be performed in three steps.

{\bf $J$ is a functor.} Consider a pair of maps of trees
$$
\begin{tikzcd}
S \ar[r, "f"] & T \ar[r, "g"] & P.
\end{tikzcd}
$$
The composition of $J(f)$ and $J(g)$ in the category of $2$-nets can be represented by the composition in the middle row of the diagram
$$
\begin{tikzcd}[sep=3em]
\wtilde S(gf) \ar[rrd, "J(gf)"] & & \\
\wtilde S' \ar[d] \ar[u, "w"] \ar[r, "J(f)'"'] & \wtilde T(g) \ar[d, "u"] \ar[r, "J(g)"'] & J(P), \\
\wtilde S(f) \ar[r, "J(f)"'] & J(T) &
\end{tikzcd}
$$
where $\wtilde S'$ is obtained by using proposition \ref{prop_lift}. In order to show that $J$ is a functor we will construct a weak equivalence $w\from \wtilde S' \to \wtilde S = \wtilde S(gf)$,
such that $J(g) J(f)' = J(gf) w$. Since at level $1$ we have $\wtilde S'_1 = S_1 = \wtilde S(gf)_1$ we put $w_1 = \id_{S_1}$. Moreover, as we are working with trees, we can consider children of each vertex $x \in S_1$ independently. To be specific we assume that vertices $x \in S_1$ and $f(x) \in T_1$
have children, as the other cases are treated similarly. By definition of $J$ we have
\begin{align*}
(\wtilde S(f)_2)_{x} &= p_S^{-1}(x) \sqcup (p_T^{-1} f(x) - f p_S^{-1}(x)), \\
(\wtilde T(g)_2)_{f(x)} &= p_T^{-1}f(x) \sqcup (p_P^{-1} gf(x) - g p_T^{-1}f(x)), \\
(\wtilde S(gf)_2)_{x} &= p_S^{-1}(x) \sqcup (p_P^{-1} gf(x) - gfp_S^{-1}(x)).
\end{align*}
Using construction of proposition \ref{prop_lift} we find that regardless of the choice of weak equivalence $u$, the set of children of $x$ in $\wtilde S'$ is
$$
(\wtilde S'_2)_x = p_S^{-1}(x) \sqcup (p_T^{-1} f(x) - f p_S^{-1}(x)) \sqcup (p_P^{-1} gf(x) - g p_T^{-1}f(x)).
$$
We define a map $(w_2)_x \from (\wtilde S'_2)_x \to (\wtilde S(gf)_2)_x$ by putting it to be identity on the first and third components of $(\wtilde S'_2)_x$ and induced by $g$ on the second component.
Clearly, for each $x$ this is a tassel collapse, hence the map $w_2$ obtained as the coproduct of all $(w_2)_x$ gives rise to a weak equivalence $w$. The equality $J(g) J(f)' = J(gf) w$ is clear from the
definition of $w$.

{\bf $J$ is faithful.} Let $S$ and $T$ be two $2$-trees, then in order to show the faithfulness of $J$ we will construct the inverse map
\begin{equation}
\label{equ_tree_J}
\begin{tikzcd}[sep=3em]
\colim\limits_{\wtilde S \in \cW(J(S))_1} \pi\Hom(\wtilde S, J(T)) \ar[r, "J^{-1}"] & \Hom_{2\Tree}(S, T).
\end{tikzcd}
\end{equation}
Notice, that by construction the core vertices of $J(S)$ are in bijection with the vertices of the tree $S$. Moreover, since every map in the category $\cW(J(S))_1$ is
a tassel collapse, it induces bijection on the set of core vertices $c(\wtilde S) \isom c(J(S)) \isom S$. Since a map of prenets sends core vertices to core vertices we can define
for a map $f\from \wtilde S \to J(T)$
$$
J^{-1}(f) := f|_{c(\wtilde S)} \from S \to T.
$$
From the discussion above it is clear that $J^{-1}(f)$ doesn't depend on the representative of the colimit. Moreover, if we have two maps $f, g\from \wtilde S \to J(T)$, such that $f \sim g$,
then again by definition of the equivalence relation restrictions of $f$ and $g$ to the core vertices coincide. Hence $J^{-1}(f)$ is well defined and by definition of $J$ we see that
for any map of $2$-trees $g\from S \to T$ we have $J^{-1}J(g) = g$.

{\bf $J$ is full.} We need to show that for any map $f\from \wtilde S \to J(T)$ the map $JJ^{-1}(f)$ coincides with $f$ in the colimit in (\ref{equ_tree_J}). In order to do so we will
construct a tassel collapse $w\from \wtilde S \to \wtilde S(J^{-1}(f))$, such that the following diagram commutes.
$$
\begin{tikzcd}[sep=3em]
& \wtilde S \ar[dl] \ar[d, "w"] \ar[dr, "f"] & \\
J(S) & \wtilde S(J^{-1}(f)) \ar[l] \ar[r, "JJ^{-1}(f)"'] & J(T).
\end{tikzcd}
$$
We put $w_1 = \id_{S_1}$.
Since $w$ is a map between trees it is enough to consider children of each vertex $x \in S_1$ separately. To be specific, let us take vertex $x \in p(S_2) \subset S_1$, as the other case
is treated similarly. To simplify notation we put $g = J^{-1}(f)$, then we have
$$
(\wtilde S(g)_2)_x = p_S^{-1}(x) \sqcup \left(p_T^{-1}g(x) - g p_S^{-1}(x) \right).
$$
To define $(w_2)_x$ we observe that the core vertices $c(\wtilde S_2)_x := c(\wtilde S_2) \cap p_{\wtilde S}^{-1}(x) \isom p_S^{-1}(x)$ and so we can send them identically to the first component of $(\wtilde S_2)_x$.
And for a tassel $y \in t(\wtilde S) \cap p_{\wtilde S}^{-1}(x)$ we send it to the second component of $(\wtilde S_2)_x$ by putting $w_2(y) = f(y)$. Since $f_2$ preserved the order on the children of $x$
then so does $(w_2)_x$. Finally, we define $w_2$ as the coproduct of $(w_2)_x$ over all vertices $x \in S_1$.

It is clear from the construction that $w$ is a tassel collapse and that the diagram commutes.

\qed

\vskip 5em

\section{Operadic category of locally constant $2$-nets}
\label{sec_lc_nets}

Our goal here is to construct an operadic category that extends the category of $2$-trees. Unfortunately the category of $2$-nets defined in section \ref{sec_nets} is not operadic,
as it doesn't possess a well defined fiber functor. Let us clarify what the issue here is. Consider a map of $2$-nets $f\from S \to T$, and let $\tilde f\from \wtilde S \to T$ be
its representative map between $2$-prenets. It seems natural to attempt to define the fiber functor $\Fib_x \from 2\Net / T \to 2\Net$ for a vertex $x \in T_2$ by taking
the fiber prenet $\tilde f^{-1}(x)$ as in definition \ref{def_fiber}. As we will see below this construction sends weak equivalences to weak equivalences, so it induces
a well defined functor between localizations.

However, this functor also depends on the representative $T$ of its equivalence class. One might attempt to resolve this ambiguity by replacing $T$ with the minimal prenet $m(T)$,
but since the minimal prenet is not the terminal object of $\cW(T)$ we don't have a canonical map $T \to m(T)$, and for different choices of such a map the fiber prenets 
for the composition $S \to T \to m(T)$ are generally speaking not weakly equivalent. For an illustration we refer the reader to example \ref{exa_net_not_operadic}.

The notion of a locally constant map defined in this section was designed to address this ambiguity, thus giving rise to well defined fiber functors.

\begin{nparagraph}
First we restrict to the category $2\PreNet / I$ of $2$-prenets with a map into $I$. Clearly, if such a map exists then it is necessarily unique, however there are
prenets that do not have a map to $I$, which are excluded here from consideration. It is important to point out that this restriction is well behaved with respect to localization
procedure of section \ref{sec_nets}, since the subcategory $2\PreNet/I$ is closed under weak equivalences. Indeed, if $f\from S \to T$ is a weak equivalence, and $T$ is a prenet over $I$
then obviously, $S$ is also prenet over $I$. So let us assume instead that $S$ is a prenet over $I$, and check that $T$ is also prenet over $I$. The only thing that needs to be checked here
is that the only level-wise map $g\from T \to I$ satisfies property (c) of \ref{def_prenet_map}. As we saw in the proof of lemma \ref{lemma_map_composition}
$$
A((gf)^{-1}(x)) = A(f^{-1} A(g^{-1}(x))),
$$
where $x$ is the only point in $I_2$. Since $f$ is a weak equivalence for any vertex $y \in A(g^{-1}(x))$ the subset $A(f^{-1}(y))$ consists of a single vertex, which covers the entire
$f^{-1}(p_T(y))$. As, by assumption prenet $S$ is over $I$, the subset $A((gf)^{-1}(x))$ forms a disjoint cover of $S_1$, and therefore, $A(g^{-1}(x))$ is also a disjoint cover of $T_1$.
Using the same argument for $B$ we conclude that $g\from T \to I$ is a map of prenets.
\end{nparagraph}

\begin{lemma}
\label{lemma_fib_map}
Let $f$ and $g$ be two composable maps $\begin{tikzcd}[cramped,sep=small] S \ar[r, "f"] & T \ar[r, "g"] & P\end{tikzcd}$.
Then for any vertex $x \in P_2$, map $f$ induces a map between the fiber prenets
$$
f_x\from (gf)^{-1}(x) \to g^{-1}(x).
$$
Moreover, if $f$ is a weak equivalence, then $f_x$ is also a weak equivalence.
\end{lemma}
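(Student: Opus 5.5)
The plan is to construct $f_x$ level-wise as the restriction of $f$, check the three conditions of definition \ref{def_prenet_map} directly, and then use lemma \ref{lemma_easy_w} for the second claim.

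\textbf{Construction of $f_x$.} Write $F = (gf)^{-1}(x)$ and $G = g^{-1}(x)$, so that $F_2 = f^{-1}g^{-1}(x)$, $F_1 = f^{-1}g^{-1}p_P(x)$, $G_2 = g^{-1}(x)$ and $G_1 = g^{-1}p_P(x)$. Put $(f_x)_i = f_i|_{F_i}$. It lands in $G_i$ because $f_2(F_2) \subset g^{-1}(x)$ and $f_1(F_1) \subset g^{-1}p_P(x)$. The same description shows $F_i = f_i^{-1}(G_i)$. Core vertices go to core vertices because $f$ is a map of prenets with tassels, and level $0$ is trivial.

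\textbf{Verifying that $f_x$ is a map.} I will check the conditions in the following order.
\begin{enumerate}[label=(\roman*)]
\item Condition (b) holds because for $y \in G_2$ the preimage $f_x^{-1}(y) = f^{-1}(y)$, and $f_x^{-1}(p_G(y)) = f^{-1}(p_T(y))$ since $p_T(y) \subset G_1$. Hence the identity is just (b) for $f$.
\item Condition (c) follows in the same way. The sets $A(f^{-1}(y))$ and $B(f^{-1}(y))$ are computed entirely inside $F$, since all the parents and children involved lie in $F$. They are therefore literally the same sets as for $f$, and they are disjoint covers.
\item For condition (a), the order $\pless_F$ is generated by children of vertices of $F_1$ inside $F_2$. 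Such a relation is in particular a relation $\pless_S$, and $f$ sends it to a relation $\pless_T$ with a common parent $f(t) \in G_1$ between vertices of $G_2$. I must check that this is a generating relation for $\pless_G$, which is immediate since common parents are preserved by lemma \ref{lemma_prenet_map} (c'). So I will argue on generators, not on arbitrary chains, and (a) follows.
\end{enumerate}
A small point remains: the fibers must be connected pruned objects of $2\PreNet$, or else the statement is read in the category of prenets as in definition \ref{def_fiber}. I will note that $F$ is pruned because $F_1 = p_S(F_2)$, and then proceed.

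\textbf{The weak equivalence claim.} If $f$ is a weak equivalence, I will apply lemma \ref{lemma_easy_w} to $f_x$.
\begin{enumerate}[label=(\roman*)]
\item Condition (a) was shown above.
\item Condition (b) holds because $c(F_2) = c(S_2) \cap f^{-1}(G_2)$. The bijection $c(S_2) \to c(T_2)$ restricts to a bijection onto $c(T_2) \cap G_2 = c(G_2)$, since preimages of $G_2$ are exactly $F_2$.
\item Condition (c) follows from $p_S(y) = f^{-1}p_Tf(y)$ for $y \in F_2$. As $p_T f(y) \subset G_1$, its preimage in $F_1$ equals its preimage in $S_1$, giving $p_F(y) = f_x^{-1}p_G f_x(y)$.
\end{enumerate}

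\textbf{Main obstacle.} I expect the only delicate step to be the order-preservation in (a) for the non-weak-equivalence case. The issue is that a chain witnessing $u \pless_F v$ must stay inside $F$, and it must be mapped to a chain inside $G$. This is handled by working with generators as described, since every intermediate vertex of a generating chain in $F$ lies in $F_2$ and maps to $G_2$.
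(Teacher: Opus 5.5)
Your proposal is correct and follows essentially the same route as the paper: you restrict $f$ level-wise, use $F_i = f_i^{-1}(G_i)$ so that preimages, parents and the sets $A$, $B$ coincide with those of $f$, and then check the core bijection and the identity $p_S(y) = f^{-1}p_Tf(y)$ inside the fiber. The only difference is that you verify order preservation on the generating relations of $\pless_F$, whereas the paper simply appeals to restricting the order-preserving map $f$.
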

\proof
Since map $f$ preserved the partial orders $\pless$ on $S$ and $T$, it is clear that its restriction to a subset of vertices will also preserve the partial order, hence
condition (a) of \ref{def_prenet_map} is satisfied. Condition (b) is also clearly satisfied, since it is satisfied by map $f$ and the fiber prenet $(gf)^{-1}(x)$ contains
all vertices in the preimage $f^{-1}(y)$ for any $y \in g^{-1}(x)$ as well as all their parents. Finally, condition (c) is also fulfilled, since it holds for $f$
and as before the preimage $f^{-1}(y)$ is contained entirely in the fiber and has the same parents as in $S$.

Moreover, if $f$ is a weak equivalence, then it clearly restricts to a bijection between core vertices in the fibers and by the same argument as before condition (b) of \ref{def_weak_equiv}
is satisfied.
\qed

\begin{proposition}[Assembly]
\label{prop_assembly}
Let $f\from S \to T$ be a map of prenets, such that $S_1 = T_1$ and $f_1 = \id_{S_1}$. For every vertex $x \in T_2$ denote by $S_x$ the fiber prenet $f^{-1}(x)$, and consider
a collection of maps $l_x\from S_x \to P_x$, such that $P_x$ is a prenet over $I$, and at level $1$ we have $(S_x)_1 = (P_x)_1$ with $(l_x)_1$ being the identity map.
Then there exists a pair of maps of prenets $g$ and $h$ fitting into a diagram
$$
\begin{tikzcd}[sep=3em]
S \ar[dr, "f"'] \ar[rr, "h"] && P \ar[dl, "g"] \\
& T &
\end{tikzcd}
$$
such that $P_x$ is the fiber prenet $g^{-1}(x)$ and the induced maps between fibers $h_x$ coincide with $l_x$. Moreover, if each $l_x$ is a weak equivalence, then $h$ is also a weak equivalence.
\end{proposition}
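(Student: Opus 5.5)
We construct $P$ level by level. Put $P_0 = S_0$ and $P_1 = S_1 = T_1$, with $h_1 = \id_{S_1}$ and $g_1 = \id_{T_1}$. On the second level set
$$
P_2 = \bigsqcup_{x \in T_2} (P_x)_2, \qquad t(P) = \bigsqcup_{x \in T_2} t(P_x),
$$
and define $h_2 = \bigsqcup_x (l_x)_2$ and $g_2$ as the map collapsing $(P_x)_2$ to $x$. The parent map $p_P$ on $(P_x)_2$ is $p_{P_x}$, viewed as taking values in $(P_x)_1 = (S_x)_1 = p_T(x) \subset T_1$. These values are intervals of $P_1$, since $p_T(x)$ is an interval and $(P_x)_1 \subset P_1$ is an interval inclusion.

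The order on $P_2$ is defined the same way as in the proof of lemma \ref{lemma_loc_pi}. The partial order $\pless_P$ is generated by the partial orders of the $P_x$ together with the relations $u \pless v$ whenever $u \in (P_x)_2$ and $v \in (P_{x'})_2$ share a parent and $x < x'$ in $T_2$. Incomparable pairs are then ordered by their parents, and remark \ref{rem_total_order} makes this a total order. To justify this, we check the following.

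First, $\pless_P$ is antisymmetric. This should follow because $g_2$ sends generating relations to relations in $T$ and each $P_x$ is already a poset.

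Second, incomparable vertices have parent sets that satisfy $p(u) < p(v)$ or the reverse. This is the prenet axiom of definition \ref{def_prenet}. For $u, v$ in different fibres, incomparability in $P$ forces $g(u), g(v)$ to be incomparable in $T$ or to share no parent, and the axiom for $T$ together with $p_P(u) \subset p_T(g(u))$ gives the claim. For $u, v$ in the same fibre it is the axiom for $P_x$.

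Next we verify that $g$ and $h$ are maps of prenets.

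For $g$, condition (a) holds by construction of $\pless_P$. Condition (b) requires $p_P(g^{-1}(x)) = p_T(x)$, which follows from $(P_x)_1 = p_T(x)$ and prunedness of the $P_x$ (their union of parents is all of $(P_x)_1$). Condition (c) is where the hypothesis that $P_x$ lies over $I$ enters: the unique map $P_x \to I$ satisfies (c), so $A((P_x)_2)$ and $B((P_x)_2)$ are disjoint covers of $(P_x)_1 = p_T(x)$. This is exactly (c) for $g$ at $x$.

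For $h$, conditions (b) and (c) are local to the fibres. Since $h_1 = \id$, they reduce to the corresponding properties of each $l_x$, because $h^{-1}(y) = l_x^{-1}(y)$ for $y \in (P_x)_2$. The parents are unchanged, since $p_S$ restricted to $S_x$ is $p_{S_x}$. Core preservation is clear.

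Condition (a) for $h$ is the main obstacle, and I would handle it with a case split. A generating relation $u \pless_S v$ with $u, v$ in the same fibre maps to a relation in $P_x$ because $l_x$ preserves order. If $u, v$ lie in different fibres with $f(u) = x < x' = f(v)$, they share a parent $t$. By lemma \ref{lemma_prenet_map} (c'), $t$ is also a parent of $l_x(u)$ and $l_{x'}(v)$, since $h_1 = \id$. Hence $h(u) \pless_P h(v)$ by the added cross relations. The commutativity $gh = f$ holds by construction. By construction the fiber $g^{-1}(x)$ is $P_x$, with the same parents, and $h_x = l_x$.

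Finally, suppose each $l_x$ is a weak equivalence. Then $h_2$ is a bijection on core vertices, because it is a disjoint union of such bijections. For the condition $p_S(y) = h^{-1}p_P h(y)$ of definition \ref{def_weak_equiv}, we use $h_1 = \id$ and $p_P h(y) = p_{P_x}(l_x(y)) = p_{S_x}(y) = p_S(y)$. So $h$ is a weak equivalence, using lemma \ref{lemma_easy_w} together with condition (a) established above.
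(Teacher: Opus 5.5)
Your proposal follows the paper's proof essentially step for step. It uses the same $P_2=\bigsqcup_x (P_x)_2$ and $t(P)$, the same order (generated by the orders of the $P_x$ plus cross relations between fibres sharing a parent), the same appeal to remark \ref{rem_total_order}, and the same fibrewise checks of (a)--(c) for $g$ and $h$ and of the weak-equivalence conditions.

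The one place where your added detail goes wrong is the ``Second'' check. It is not true that incomparability of $u\in(P_x)_2$ and $v\in(P_{x'})_2$ in $P$ forces $x,x'$ to be incomparable or parent-disjoint in $T$. Take $T_1=\{1<2\}$ and $T_2=\{x<x'\}$ with $p_T(x)=p_T(x')=\{1,2\}$. Take $S_2=\{a_1<b_1<a_2<b_2\}$ with $p(a_i)=p(b_i)=\{i\}$, $f(a_i)=x$, $f(b_i)=x'$, and all $l_x$ identities. Then $a_1$ and $b_2$ are incomparable in $P$, while $x$ and $x'$ are comparable and share both parents. So the axiom for $T$ says nothing about $p(a_1)$ versus $p(b_2)$.

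The correct argument is simpler, and it is the paper's. By construction, two vertices of $P_2$ with a common parent are always comparable: inside one $P_x$ by its own order, and across fibres by the cross relations. Hence incomparable vertices have disjoint parent sets, and two disjoint intervals of $P_1$ are ordered one before the other. With that repair your argument matches the paper's.

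One further point, which the paper also leaves unaddressed: you check core preservation only for $h$, and for $g$ it is not automatic. If $x\in t(T_2)$, then $S_x$ has no core vertices, but nothing in the hypotheses prevents $P_x$ from having some, since $l_x$ only has to send cores to cores. In that case $g$ would send core vertices to the tassel $x$. So for $g$ to be a map of prenets with tassels one needs $c(P_x)=\emptyset$ whenever $x$ is a tassel. This holds automatically in the ``moreover'' situation, since a weak equivalence $l_x$ is bijective on core vertices.
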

\proof
The construction of $P$ is performed using the same assembly process as in the proof of lemma \ref{lemma_loc_pi}. First we put $P_1 = S_1 = T_1$ and define all the maps
at level $1$ to be identities. Then at level two we combine all $(P_x)_2$ together and put
$$
P_2 = \bigsqcup_{x \in T_2} (P_x)_2,\quad\quad t(P) = \bigsqcup_{x \in T_2} t(P_x).
$$
The partial order $\pless$ on $P_2$ is generated by the partial orders on subsets $P_x$ and relations $y \pless y'$ for $y \in P_x$ and $y' \in P_{x'}$ such that
$y$ and $y'$ share a parent and $x < x'$ in $T_2$. Furthermore, if $y$ and $y'$ are incomparable with respect to $\pless$ then the sets of their parents are disjoint
and we put $y < y'$ if $p(y) < p(y')$. In view of remark \ref{rem_total_order} this determines the total order on $P_2$.

The map $g$ is defined by sending vertices from $(P_x)_2 \subset P_2$ to $x \in T_2$. By definition of the partial order on $P$ map $g$ preserves it. The parents $p_T(x)$
by assumptions of the proposition coincide with the subset $(S_x)_1 = (P_x)_1$, which in turn is the image $p((P_x)_2)$, hence the property (b) of \ref{def_prenet_map} is
also satisfied. Finally, since by assumption each $P_x$ was a prenet over $I$, the property (c) is also fulfilled. Hence $g$ is a map of prenets.

Now, we define $h$ at level $2$ as the coproduct of maps
$$
h_2 = \bigsqcup_{x \in T_2} l_x,
$$
and check that $h$ is also a map of prenets. Indeed, let $z$ and $z'$ be two vertices in $S_2$ sharing a parent, and such that $z < z'$. Then either they are in the same
fiber $S_x$, or they are in different fibers $S_x$ and $S_{x'}$ respectively, such that $x < x'$. In either case by definition of the partial order on $P$ we have
$h(z) \pless_{P} h(z')$, hence $h$ preserves the partial orders on $S$ and $P$. Since $l_x$ is a map of prenets we immediately see that both properties (b) and (c) of \ref{def_prenet_map}
is also fulfilled. Therefore, $h$ is a map of prenets. Finally, since properties (a) and (b) of \ref{def_weak_equiv} are also verified independently in each fiber, we conclude that
if each $l_x$ is a weak equivalence, then $h$ is also a weak equivalence.

\qed

\begin{definition}
\label{def_loc_const}
A map of prenets $f\from S  \to T$ is called {\em locally constant} if for every tassel collapse $g\from T \to T'$ the following two properties are satisfied.
\begin{enumerate}[label=\alph*)]
\item Let $x \in c(T'_2)$ is a core vertex and denote by $y \in g^{-1}(x)$ the only core vertex in the preimage of $x$, then there is a weak equivalence map between fibers
$$
w_{x\to y}\from (gf)^{-1}(x) \to f^{-1}(y).
$$
\item Let $x \in t(T'_2)$ be a tassel then for every tassel $y \in g^{-1}(x)$ we again have a weak equivalence map between fibers
$$
w_{x\to y}\from (gf)^{-1}(x) \to f^{-1}(y).
$$
\end{enumerate}
\end{definition}

\begin{remark}
The motivation behind the term ``locally constant'' map is of topological nature. Imagine prenets $S$ and $T$ as topological spaces, and think of vertices in $T_2$ as
points in this space. Assume that $x \in T_2$ is a core vertex, then subsets of tassels in $T$ that can be collapsed to $x$ are analogs of contractible neighborhoods of $x$.
Let $U$ be one such neighborhood, then the local constancy of $f$ can be interpreted by saying that the preimage $f^{-1}(U)$ can be homotopically retracted to the fiber
of $f$ over $x$. In other words the preimages $f^{-1}(U)$ for various contractible neighborhoods $U$ of $x$ coincide up to homotopy with $f^{-1}(x)$, and so the
presheaf $U \mapsto f^{-1}(U)$ is locally constant.

\end{remark}


\begin{lemma}
Composition of locally constant maps is again locally constant.
\end{lemma}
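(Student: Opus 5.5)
Let $f\from S \to T$ and $g\from T \to P$ be locally constant, and let $h\from P \to P'$ be a tassel collapse. We must produce, for each $x \in P'_2$ and each admissible $y \in h^{-1}(x)$ (the unique core vertex when $x$ is core, any tassel when $x$ is a tassel), a weak equivalence
$$
(hgf)^{-1}(x) \to (gf)^{-1}(y).
$$
The plan is to compose two weak equivalences: one coming from local constancy of $g$ and a ``fiberwise'' one coming from local constancy of $f$. By local constancy of $g$ applied to $h$, there is a weak equivalence
$$
w\from (hg)^{-1}(x) \to g^{-1}(y).
$$
The fiber $(hgf)^{-1}(x)$ is the fiber of the restricted map $f_x\from (hgf)^{-1}(x) \to (hg)^{-1}(x)$ (lemma \ref{lemma_fib_map}), and it decomposes at level $2$ as a disjoint union of the fibers $f^{-1}(z)$ over $z \in (hg)^{-1}(x)_2$. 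Similarly, $(gf)^{-1}(y)$ decomposes over $z' \in g^{-1}(y)_2$.

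\textbf{Step 1: compare fibers over $z$ and over $w(z)$.} The key observation is that $w$ itself can be factored as a vertex collapse followed by a tassel collapse (lemma \ref{lemma_w_factor_vt}). We treat each part separately.
\begin{itemize}
\item[(i)] \emph{Vertex collapse part.} Since it is bijective at level $2$, the fibers of $f$ are only re-indexed on level $1$. We plan to handle this by factoring via lemma \ref{lemma_map_factor} and lifting via proposition \ref{prop_lift}.
\item[(ii)] \emph{Tassel collapse part.} For a tassel collapse $t$, local constancy of $f$ applied to $t$ (more precisely to a tassel collapse of $T$ extending $t$, obtained by pushing out as in lemma \ref{lemma_ww_pushout}) gives weak equivalences $f^{-1}(z') \leftarrow\!\!\to f^{-1}(z)$ for the relevant $z$. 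Here the admissible choice of representative matters: core goes to core and tassel to tassel, matching conditions (a) and (b) of definition \ref{def_loc_const}.
\end{itemize}

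\textbf{Step 2: assemble.} We then glue the fiberwise weak equivalences $(hgf)^{-1}(x) \supset f^{-1}(z) \to f^{-1}(w(z))$ using the assembly proposition \ref{prop_assembly}, after first replacing $f$ by its vertex-collapse factorization so that the level-$1$ identity hypothesis holds. The result is a weak equivalence from $(hgf)^{-1}(x)$ to a prenet fibered over $g^{-1}(y)$ with the same fibers as $(gf)^{-1}(y)$. Finally, we identify that prenet with $(gf)^{-1}(y)$ itself, using that both are obtained by the same assembly with the same parents induced from $S$; 2-out-of-3 (lemma \ref{lemma_w_23}) closes the loop.

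\textbf{Main obstacle.} The hard part is Step 1(ii) together with the gluing. Local constancy of $f$ is only stated for tassel collapses out of $T$, whereas $w$ lives on a fiber of $g$, and it also involves several vertices $z$ collapsing onto one $z'$, not just a single vertex. We will try to extend $w$'s tassel-collapse part to a global tassel collapse $T \to T''$ that is the identity outside $(hg)^{-1}(x)$; this should be possible because tassel collapses are determined level-$2$-wise (lemma \ref{lemma_tc_factor}). Failing that, we would reduce to simple tassel collapses and argue one vertex at a time.

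We also expect to need the directions of the weak equivalences to be compatible when assembling; the decomposition of the fiber of a composite as a disjoint union, as used around diagram (\ref{equ_piHom_push}), should handle this.
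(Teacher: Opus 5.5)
Your strategy is the same as the paper's. You take the weak equivalence $w\from (hg)^{-1}(x)\to g^{-1}(y)$ given by local constancy of $g$. You promote it to a tassel collapse $v\from T\to T'$ that is the identity away from $(hg)^{-1}(x)$ and satisfies $hg = g'v$ with $g'^{-1}(x) = g^{-1}(y)$. You then apply local constancy of $f$ to $v$ and reassemble.

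However, the step you flag as the main obstacle is left open, and the tools you point to do not produce it. Lemma \ref{lemma_ww_pushout} pushes out spans of weak equivalences in $2\PreNet$, but the inclusion of a fiber into $T$ is only set-theoretic and is not a map of prenets. Lemma \ref{lemma_tc_factor} only factors a tassel collapse you already have.

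The paper obtains $v$ in one line from the assembly proposition \ref{prop_assembly}. It applies it to $hg$ (made the identity on level $1$ via lemma \ref{lemma_map_factor}), with $l_x = w$ and identities on all other fibers; $v$ is a tassel collapse because $w$ is. Moreover, $w$ is automatically a tassel collapse. Both fibers have the same level-$1$ set $(hg)_1^{-1}(p_{P'}(x)) = g_1^{-1}(p_P(y))$, since $h_1$ is a bijection with $p_P(y) = h_1^{-1}(p_{P'}(x))$. A surjective order-preserving self-map of a finite chain is the identity, so your Step 1(i) is vacuous.

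The second point is the indexing of the gluing in Step 2. It must run over $z' \in g^{-1}(y)_2$, not over $z \in (hg)^{-1}(x)_2$. Local constancy of $f$ with respect to $v$ gives, for each $z'$, a weak equivalence $(vf)^{-1}(z') \to f^{-1}(z')$. Its source is the union of the fibers $f^{-1}(z)$ over all $z$ with $w(z)=z'$, with parents induced from $S$. Here, as the paper implicitly does, one takes $w$ to fix the vertices of $g^{-1}(y)\subset (hg)^{-1}(x)$, so that $z'$ itself is an admissible vertex of $v^{-1}(z')$.

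A single piece $f^{-1}(z)\to f^{-1}(w(z))$ is in general not a weak equivalence. For example, when $z$ is a tassel, $f^{-1}(z)$ has no core vertices while $f^{-1}(w(z))$ may have some. So the per-$z$ maps you propose to glue are not the right input to \ref{prop_assembly}.

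With these two corrections your argument is exactly the paper's. The maps $(vf)^{-1}(z')\to f^{-1}(z')$ assemble over $g'^{-1}(x) = g^{-1}(y)$ into the required weak equivalence $(hgf)^{-1}(x) = (g'vf)^{-1}(x) \to (gf)^{-1}(y)$.
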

\proof
Consider a pair of locally constant maps $f$ and $g$, and let $u\from P \to P'$ be a tassel collapse.
$$
\begin{tikzcd}[sep=3em]
S \ar[r, "f"]  & T \ar[d, "v"'] \ar[r, "g"] & P \ar[d, "u"] \\
& T' \ar[r, "g'"'] & P'.
\end{tikzcd}
$$
Let us pick a vertex $x \in P'_2$, and to be specific assume that it is a core vertex, as the case of a tassel is handled in a similar fashion. Denote by $y \in P_2$ the only core
vertex in the preimage $u^{-1}(x)$. Since by assumption $g$ is a locally constant map we have a weak equivalence
$$
w_{x \to y}\from (ug)^{-1}(x) \to g^{-1}(y).
$$
Applying the assembly proposition above to the map $ug \from T \to P'$ we can use map $w_{x \to y}$ to construct a prenet $T'$ with fiber over $x$ replaced with $g^{-1}(y)$, and all
other fibers left intact. Since $w_{x \to y}$ is a tassel collapse the induced map $v$ is also a tassel collapse.

We need to construct a weak equivalence $(ugf)^{-1}(x) \to (gf)^{-1}(y)$. Observe that by construction of $T'$ the left side term $(ugf)^{-1}(x) = (g'vf)^{-1}(x) = (vf)^{-1}(g'^{-1}(x))$.
Let $z \in g'^{-1}(x)_2$ be a either a core vertex or a tassel, and choose a vertex $\tilde z \in v^{-1}(z)$. In fact, using the identification $g'^{-1}(x) = g^{-1}(y)$, we can take
$\tilde z \in g^{-1}(y)_2$ corresponding to $z$, which will be a core vertex whenever $z$ is a core vertex. Since $f$ is locally constant we have collection of weak equivalences
$$
w'_{z \to \tilde z}\from (fv)^{-1}(z) \to f^{-1}(\tilde z).
$$
As $z$ ranges over all vertices in $g'^{-1}(x)$ we obtain a family of weak equivalences between fibers in the diagram below.
$$
\begin{tikzcd}[sep=3em]
(g'vf)^{-1}(x) \ar[dr] \ar[rr] && (gf)^{-1}(y) \ar[dl] \\
& g'^{-1}(x) &
\end{tikzcd}
$$
Therefore the top horizontal arrow is a weak equivalence.
\qed

It follows trivially from the definition that any map $f\from S \to T$ to a minimal prenet $T$ is a locally constant map. The following lemma provides another large class of locally constant maps.

\begin{lemma}
\label{lemma_w_lc}
Any weak equivalence $f\from S \to T$ is a locally constant map.
\end{lemma}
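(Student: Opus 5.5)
Fix a weak equivalence $f\from S \to T$ and a tassel collapse $g\from T \to T'$. By lemma \ref{lemma_w_23} the composite $gf$ is again a weak equivalence. The plan is to compare the two fibers in definition \ref{def_loc_const} through their canonical weak equivalences to $I$ or $I^\circ$, and then to write down an explicit weak equivalence $w_{x\to y}$.

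\textbf{Identifying the fibers.} Apply lemma \ref{lemma_fib_w} to $gf$ and to $f$:
\begin{itemize}
\item if $x\in c(T'_2)$, the fiber $(gf)^{-1}(x)$ is weakly equivalent to $I$;
\item if $x$ is a tassel, that fiber is weakly equivalent to $I^\circ$.
\end{itemize}
In case (a) the vertex $y$ is the unique core vertex of $g^{-1}(x)$, so $f^{-1}(y)$ is also weakly equivalent to $I$. In case (b) each $y\in g^{-1}(x)$ is a tassel, so $f^{-1}(y)$ is weakly equivalent to $I^\circ$. This already shows the two fibers lie in the same component of $\cW$, but the definition asks for an actual map $(gf)^{-1}(x)\to f^{-1}(y)$.

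\textbf{Constructing the map.} The key structural fact is that, since $gf$ and $f$ are weak equivalences:
\begin{itemize}
\item every vertex of $(gf)^{-1}(x)_2$ has the full set $(gf)_1^{-1}(p_{T'}(x))$ as its parents;
\item every vertex of $f^{-1}(y)_2$ has the full set $f_1^{-1}(p_T(y))$ as its parents;
\item since $g$ is a tassel collapse, $g_1$ is a bijection and $p_T(y)=g_1^{-1}p_{T'}(x)$, so these two level-one sets coincide.
\end{itemize}
So both fibers have the same level-one set $F_1=f_1^{-1}(p_T(y))$, with identical parent intervals for all level-two vertices. At level $1$ I take $w_1=\id_{F_1}$. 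At level $2$ I need an order-preserving surjection from $(gf)^{-1}(x)_2$ onto $f^{-1}(y)_2$ that is bijective on core vertices:
\begin{itemize}
\item in case (b) both sides consist only of tassels and any order-preserving surjection works;
\item in case (a) both sides contain exactly one core vertex, and I send the core vertex to the core vertex, then map the tassels before and after it order-preservingly onto those before and after in the target.
\end{itemize}

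\textbf{Verification and the expected obstacle.} I verify $w$ via lemma \ref{lemma_easy_w}. Condition (c) is immediate because all parent sets equal $F_1$. Condition (b) holds by construction. The main obstacle is surjectivity in case (a): one side of the core vertex in the source might be empty while the target has tassels there. Since $(gf)^{-1}(x)\supset f^{-1}(y)$ as sets (because $g(y)=x$), I expect to avoid the problem by taking $w$ to be a retraction onto the subset $f^{-1}(y)_2$. This retraction is the identity on $f^{-1}(y)_2$, and it sends every vertex of $(gf)^{-1}(x)_2$ lying in $f^{-1}(y')_2$, for the other $y'\in g^{-1}(x)$, to the first or last vertex of $f^{-1}(y)_2$ according as $y'<y$ or $y'>y$. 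Those other vertices are tassels in case (a), and $g^{-1}(x)$ is an interval on which $g$ preserves order, so the retraction is surjective and bijective on cores.

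For condition (a) of lemma \ref{lemma_easy_w}, all vertices in both fibers share the common parents $F_1$, so $\pless$ is the total order on each fiber. Hence the retraction preserves the partial orders, and lemma \ref{lemma_easy_w} gives that $w_{x\to y}$ is a weak equivalence. This establishes both properties of definition \ref{def_loc_const}, so $f$ is locally constant.
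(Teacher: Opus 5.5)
Your proposal is correct and is essentially the paper's proof. The paper also uses the fact that all vertices of $(gf)^{-1}(x)$ share the same parents, and it defines $w_{x\to y}$ as the retraction that is the identity on $f^{-1}(y)$ and sends vertices $z$ with $f(z)<y$ (resp.\ $f(z)>y$) to the minimal (resp.\ maximal) vertex of $f^{-1}(y)$. Your check via lemma \ref{lemma_easy_w} spells out what the paper leaves implicit; one point worth stating outright is that the retraction is monotone because $f$ preserves $\pless$ and $\pless$ is a total order on both fibers.
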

\proof
Let $g\from T \to T'$ be a tassel collapse, and take a vertex $x \in T'_2$, again to be specific we assume that it is a core vertex, as the case of a tassel is handled similarly.
Then every vertex $y \in g^{-1}(x)$ has the same set of parents $p_T(y) = g^{-1}(p_{T'}(x))$. We denote by $\tilde x$ the unique core vertex in the preimage of $x$. We already saw
in lemma \ref{lemma_fib_w} that the fiber prenet $f^{-1}(\tilde x)$ is weakly equivalent to $I$ and fiber prenets over a tassel $y$ is weakly equivalent to $I^\circ$.

To construct retraction $w_{x \to \tilde x}\from (gf)^{-1}(x) \to f^{-1}(\tilde x)$ we observe, that since both maps $f$ and $g$ are weak equivalences the preimage
$(gf)^{-1}(x)$ is an interval in $S_2$ and all vertices in it have the same set of parents. Therefore we define $w_{x \to \tilde x}$ by sending all vertices $z$ with $f(z) < \tilde x$
to the minimal vertex in $f^{-1}(\tilde x)$ and all vertices $z$ with $\tilde x < f(z)$ to the maximal vertex in $f^{-1}(\tilde x)$.
\qed

Let us explore behavior of locally constant maps with respect to the lifting construction of proposition \ref{prop_lift}.

\begin{lemma}
\label{lemma_lc_vc_lift}
Let $f$ be a locally constant maps and $g$ a vertex collapse, then any lift $f'$ is a locally constant map.
\end{lemma}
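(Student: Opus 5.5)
Throughout, the lift is the square of proposition \ref{prop_lift}: $f\from S \to T$, $g\from T' \to T$ a vertex collapse, $g'\from S' \to S$ a weak equivalence and $f'\from S' \to T'$. The plan is to exploit that a vertex collapse is a bijection at level $2$ compatible with cores and tassels, so tassel collapses out of $T'$ correspond to tassel collapses out of $T$, and fibers over $T'$ are lifts of fibers over $T$. Fix a tassel collapse $u\from T' \to U'$. I would push $u$ forward along $g$ as in corollary \ref{cor_vcw_pushout} (or lemma \ref{lemma_vc_pushout}), obtaining a square $T' \to U' \to U$, $T' \to T \to U$. Here $v\from U' \to U$ is a vertex collapse and $u_0\from T \to U$ is a weak equivalence. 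Since the level-$1$ parts of $u$ are bijections, $u_0$ should again be identity-like at level $1$, hence a tassel collapse; I would check this directly from the pushout construction.

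Take a vertex $x \in U'_2$, say a core vertex, with $y \in u^{-1}(x)$ its unique core preimage. Let $\bar x = v(x)$ and $\bar y = g(y)$; since $g_2$ is bijective and preserves cores, $\bar y$ is the unique core vertex of $u_0^{-1}(\bar x)$. Local constancy of $f$ then gives a weak equivalence $w\from (u_0 f)^{-1}(\bar x) \to f^{-1}(\bar y)$. The goal is to produce a weak equivalence $(uf')^{-1}(x) \to f'^{-1}(y)$. By commutativity, $g'$ restricts to maps of fibers $(uf')^{-1}(x) \to (u_0 f)^{-1}(\bar x)$ and $f'^{-1}(y) \to f^{-1}(\bar y)$. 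By lemma \ref{lemma_fib_map}, applied to the composable pairs $(g', u_0 f)$ and $(g', f)$, these are weak equivalences.

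This forms a cospan-plus-span configuration with fibers. The key observation is that a fiber of $f'$ over $y$ is canonically the lift of the fiber of $f$ over $\bar y$ along the restriction of $g$ to $\<y\> \to \<\bar y\>$, which is again a vertex collapse. At level $2$ both sides agree with the corresponding level-$2$ sets for $f$, because in the simple vertex collapse case $S'_2 = S_2$. I would therefore reduce first, via lemma \ref{lemma_vc_factor}, to a simple vertex collapse $g$, for which $f'$ is given explicitly: $S'_2 = S_2$ and $S'_1$ is obtained by inserting $g^{-1}$ of one vertex. With this explicit description I would define the required map by hand. At level $2$ it is $w$, transported through the identifications $(uf')^{-1}(x)_2 = (u_0f)^{-1}(\bar x)_2$ and $f'^{-1}(y)_2 = f^{-1}(\bar y)_2$. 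At level $1$ it is the identity on the inserted block $g^{-1}(\cdot)$ and $w_1$ elsewhere.

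I would then verify weak-equivalence condition (b) of definition \ref{def_weak_equiv}, using $p_{S'} = g'^{-1}p_S$ together with condition (b) for $w$. The tassel case is identical, with $y$ ranging over the tassels in $u^{-1}(x)$.

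The main obstacle is that local constancy is not obviously stable under composing a chain of simple vertex collapses, because the lift along a composite is built stepwise. I would handle this by proving the simple case as a standalone statement and then inducting along the chain, since each stage is again a lift of a locally constant map along a simple vertex collapse. A second delicate point is whether the level-$1$ map defined by hand preserves the interval and order conditions near the inserted block. I would check this directly using that all vertices of $g^{-1}(\cdot)$ have identical children.
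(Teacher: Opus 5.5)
Your first two paragraphs follow the paper's proof. You push the tassel collapse along the vertex collapse $g$ (lemma \ref{lemma_vc_pushout}) and apply local constancy of $f$ downstairs. You then note via lemma \ref{lemma_fib_map} that $g'$ induces weak equivalences $(uf')^{-1}(x)\to(u_0f)^{-1}(\bar x)$ and $f'^{-1}(y)\to f^{-1}(\bar y)$. The paper stops here and concludes by 2-out-of-3 (lemma \ref{lemma_w_23}).

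The gap is in what follows. You fix $f'$ to be the lift \emph{constructed} in proposition \ref{prop_lift}, reduce to simple vertex collapses, and rely on $S'_2=S_2$ and $g'_2=\mathrm{id}$. The lemma, however, concerns an arbitrary commutative square $gf'=fg'$ with $g'$ a weak equivalence. This is how ``lift'' is used in the definition of ULC maps, in the remark on nice lifts, and in example \ref{exa_ulc_lift_not_lc}, where $g'=\mathrm{id}$ is not the constructed lift. For such a square neither of your steps is available:
\begin{itemize}
\item a given square cannot be split along a factorization of $g$ from lemma \ref{lemma_vc_factor};
\item $g'_2$ need not be injective ($g'$ may, for instance, collapse tassels), so $w_2$ cannot be transported through level-$2$ identifications.
\end{itemize}
As written, you prove only a special case.

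What is missing is a comparison map $(uf')^{-1}(x)\to f'^{-1}(y)$ for arbitrary $g'$. The paper's appeal to 2-out-of-3 also presupposes such a map without constructing it. It needs no reduction.
\begin{itemize}
\item Since $u$ is a tassel collapse, $u^{-1}p_{U'}(x)=p_{T'}(y)$, so the two fibers have the \emph{same} level-$1$ set.
\item The same holds downstairs. Hence $w_1$ is an order-preserving surjection of a finite ordered set onto itself, so $w_1=\mathrm{id}$ and $p_S(w_2(a))=p_S(a)$. In particular, your level-$1$ recipe is just the identity.
\item The map $g'_2\colon f'^{-1}(y)_2\to f^{-1}(\bar y)_2$ is onto because $g_2$ is bijective, and its fibers are intervals. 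Choose an order-preserving section $s$ of it that picks the unique core preimage over each core vertex.
\item Define the comparison map as the identity at level $1$ and $s\circ w_2\circ g'_2$ at level $2$.
\end{itemize}
Order and parents are preserved because $p_{S'}=g'^{-1}p_S\,g'$ and $p_S w_2=p_S$. The bijection on cores comes from those of $g'$ and $w$. Lemma \ref{lemma_easy_w} then shows the map is a weak equivalence, and the tassel case is identical. This also makes your worry about induction along a chain of simple collapses unnecessary.
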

\proof
Consider a tassel collapse $h'\from T' \to P'$. Then by lemma \ref{lemma_vc_pushout} we can construct the pushout $P$ such that $g''$ is a vertex collapse and $h$ is a tassel collapse.
$$
\begin{tikzcd}[sep=3em]
S' \ar[d, "g'"'] \ar[r, "f'"] & T' \ar[d, "g"] \ar[r, "h'"] & P' \ar[d, "g''"] \\
S \ar[r, "f"'] & T \ar[r, "h"'] & P.
\end{tikzcd}
$$
As before we will only consider the case of a core vertex, as the tassel case is handled similarly. Let $x \in P'_2$ be a core vertex and $y \in T'_2$ be the only core vertex in $h'^{-1}(x)$.
Using local constancy of $f$ we obtain a weak equivalence
$$
w_{g''(x) \to g(y)}\from (hf)^{-1}(g''(x)) \to f^{-1}(g(y)).
$$
Now consider the diagram
$$
\begin{tikzcd}[column sep=5em,row sep=3em]
(h'f')^{-1}(x) \ar[d] \ar[r, "w_{x \to y}"] & f'^{-1}(y) \ar[d] \\
(hf)^{-1}(g''(x)) \ar[r, "w_{g''(x) \to g(y)}"] & f^{-1}(g(y)).
\end{tikzcd}
$$
Since $g'$ is a weak equivalence it induces weak equivalences between the fibers, which means that both vertical arrows are weak equivalences. Since the bottom arrow is also
a weak equivalence by the 2-out-of-3 property (\ref{lemma_w_23}) we conclude that the top arrow is a weak equivalence. Therefore, $f'$ is a locally constant map.
\qed

However, a lift with respect to a general weak equivalence doesn't preserve the property of being locally constant. Which leads us to the following definition.

\begin{definition}
A map of prenets $f\from S \to T$ is called {\em universally} locally constant (abbreviated as ULC) if for any weak equivalence $g\from T' \to T$ there exists a locally constant
lift of $f$ along $g$.
\end{definition}

From lemma \ref{lemma_w_lc} it immediately follows that weak equivalences are ULC maps. The following characterization provides a rather convenient way of determining
if a map is ULC.

\begin{definition}
Let $f\from S \to T$ be a map of prenets, and consider two vertices $x, y \in T_2$.
\begin{enumerate}[label=\alph*)]
\item We say that $x$ and $y$ are {\em adjacent} if $p_T(x) = p_T(y)$, and for each of their parents $t$, vertices $x$ and $y$ are adjacent in the total order on the children of $t$.
\item We say that two adjacent vertices $x$ and $y$ such that $x < y$ have {\em compatible interface} if the partition of the set of their parents induced by $B(f^{-1}(x))$ and $A(f^{-1}(y))$
coincide.
\end{enumerate}
\end{definition}

\begin{lemma}
\label{lemma_ulc_criterion}
Let $f\from S \to T$ be a map of prenets. Then $f$ is ULC if and only if $f$ is locally constant and every pair of adjacent core vertices in $T$ have compatible interface.
\end{lemma}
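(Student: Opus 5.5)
The plan is to treat the two implications separately, reducing throughout to elementary weak equivalences. By lemmas \ref{lemma_w_factor_tv}, \ref{lemma_tc_factor} and \ref{lemma_vc_factor}, every weak equivalence $g\from T' \to T$ is a chain of simple vertex collapses and simple tassel collapses. A lift along a composite can be built by lifting successively along each factor, as in proposition \ref{prop_lift}. Lemma \ref{lemma_lc_vc_lift} already says that lifting a locally constant map along a vertex collapse keeps it locally constant. So the essential content is a single local statement about lifts along a simple tassel collapse $g\from T' \to T$ with target $y \in T_2$.

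\textbf{Necessity.} Suppose $f$ is ULC. I will lift along $g = \id_T$ to get a locally constant $f'\from S' \to T$ with $f' = f g'$, where $g'$ is a weak equivalence. For every tassel collapse $h\from T \to T''$ and every $x$, lemma \ref{lemma_fib_map} gives weak equivalences of fibers $(hf')^{-1}(x) \to (hf)^{-1}(x)$ and $f'^{-1}(y) \to f^{-1}(y)$ induced by $g'$. Composing with $w_{x \to y}$ for $f'$ and using the 2-out-of-3 property (lemma \ref{lemma_w_23}), I expect to obtain the maps $w_{x\to y}$ required for $f$. One point needs checking: the direction of the maps, since 2-out-of-3 only yields a map when the square of fiber maps can be completed. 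If it cannot, I would instead invoke corollary \ref{cor_w_lift}, or pass to $m(-)$ of the fibers.

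For the interface condition, take adjacent core vertices $x<y$ in $T$. Build $T'$ by inserting a single tassel $z$ between $x$ and $y$ with $p(z) = p(x) = p(y)$, and let $g\from T' \to T$ be the simple tassel collapse sending $z$ to $x$. Let $f'$ be a locally constant lift, and apply local constancy of $f'$ to the tassel collapse $T' \to T$ that sends $z$ to $y$ instead. This forces the fiber $f'^{-1}(z)$ to be weakly equivalent to $I^\circ$, and it must be compatible with collapsing into both neighbours. Condition \ref{def_prenet_map} (c) for $f'$ at $z$ says that $A(f'^{-1}(z))$ and $B(f'^{-1}(z))$ are disjoint covers. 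Since the fiber is a tassel fiber weakly equivalent to $I^\circ$, I expect to show that these two covers induce the same partition of $p(x)$. I then aim to show that, because of the order constraints of $f'$ and the fact that $g'$ is a weak equivalence, this partition equals the one induced by $B(f^{-1}(x))$ and also the one induced by $A(f^{-1}(y))$. That gives compatible interface.

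\textbf{Sufficiency.} Assume $f$ is locally constant with compatible interfaces, and let $g$ be a simple tassel collapse with target $y$. I would use the explicit lift of proposition \ref{prop_lift}:
$$
S'_2 = A \times V_1 \sqcup S_2 \times \{v\} \sqcup B \times V_2 .
$$
Then I would verify directly that $f'$ is locally constant. Given a tassel collapse $h\from T' \to T''$, the fiber of $hf'$ over a vertex is assembled from fibers of $f$ together with copies of $A$ or $B$ indexed by the inserted tassels. The retraction $w$ should send each copy of $A$ to the minimal part and each copy of $B$ to the maximal part of the appropriate fiber of $f$, as in lemma \ref{lemma_w_lc}. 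It will then be assembled fiberwise via proposition \ref{prop_assembly}.

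The main obstacle is the case where $h$ merges tassels of $V_1$ or $V_2$ with core vertices of $T'$ other than $v$. Then a copy of $B(f^{-1}(x))$ must be retracted onto the $A$-side of the fiber over the adjacent core vertex. Compatible interface is exactly what makes this retraction respect parents, so this is where I would concentrate the verification. After that I must check that local constancy survives the successive lifts. I would first try to show that the pair of conditions (locally constant plus compatible interfaces) is itself preserved by both kinds of elementary lift, so that induction along the chain of elementary weak equivalences goes through.
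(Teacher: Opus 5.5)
Your plan has the same architecture as the paper's proof. You reduce to a simple tassel collapse, with lemma \ref{lemma_lc_vc_lift} handling vertex collapses. Your point that the hypotheses must survive every elementary lift is real, and the paper passes over it. Sufficiency is proved on the explicit lift of proposition \ref{prop_lift}, where compatible interface is used exactly when inserted tassels over a core $z$ are merged into an adjacent core $z'$. Necessity inserts a tassel between adjacent cores and compares the two collapses. One addition for sufficiency: when $z'$ is a tassel, the paper uses local constancy of $f$ for the collapse of $T$ merging $z$ and $z'$, not an interface condition, and your sketch should include that case.

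The genuine gap is the necessity step ``local constancy of $f'$ forces $f'^{-1}(z)\simeq I^\circ$''. This is false. Lemma \ref{lemma_fib_w} applies only to weak equivalences, and $f'$ is not one. In the lift of proposition \ref{prop_lift} along $z\mapsto x$, the fiber over $z$ is $B(f^{-1}(x))\times\{z\}$, a set of tassels with pairwise disjoint parent sets. Once $B(f^{-1}(x))$ has two elements, this fiber is not weakly equivalent to $I^\circ$, since its minimal model has one parent and one tassel per block. Yet this lift is locally constant when, for example, $T$ has no tassels and the two partitions agree. If the fiber were $\simeq I^\circ$, every vertex over $z$ would have all parents. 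Your identifications would then make $B(f^{-1}(x))$ and $A(f^{-1}(y))$ singletons, which is far more than compatible interface.

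What the two retractions actually provide is partition data. A weak equivalence between these fibers is the identity on level $1$. So over each parent $t$ it is a monotone, parent-preserving surjection between the chains of children of $t$. Two consequences follow:
\begin{itemize}
\item it sends $A$-covers to $A$-covers and $B$-covers to $B$-covers;
\item if the source has extra children of $t$ placed after (resp.\ before) those of the target, all of them carry the parent set of the last (resp.\ first) child of $t$ in the target.
\end{itemize}
Let $b_x(t)$ be the last child of $t$ in $f'^{-1}(x)$ and $a_y(t)$ the first child of $t$ in $f'^{-1}(y)$. For the collapse $z\mapsto x$, every vertex of $f'^{-1}(z)$ over $t$ has parent set $p(b_x(t))$. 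For $z\mapsto y$, that same set is $p(a_y(t))$. Hence $B(f'^{-1}(x))$ and $A(f'^{-1}(y))$ induce the same partition. By lemma \ref{lemma_fib_map}, $g'$ induces weak equivalences $(gf')^{-1}(x)\to f^{-1}(x)$ and $f'^{-1}(y)\to f^{-1}(y)$. These pull the partitions back along the surjection $g'_1$, so $f$ has compatible interface at $(x,y)$. This is what the paper's brief ``reverse the argument'' amounts to.

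The same observation fills the other hole you flagged, proving that $f$ itself is locally constant. Neither 2-out-of-3, nor corollary \ref{cor_w_lift}, nor minimal models produces a map \emph{into} $f^{-1}(y)$. Instead, read off the constraints from the retractions of $f'=fg'$ and transport them along $g'$. They let you write the retraction $(hf)^{-1}(x)\to f^{-1}(y)$ for $f$ directly: the identity on $f^{-1}(y)$, earlier fibers onto the $A$-side, later fibers onto the $B$-side.
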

\proof
First we prove the ``if'' part of the statement. Consider the diagram obtained by the lift in proposition \ref{prop_lift}
$$
\begin{tikzcd}[sep=3em]
S' \ar[d, "g'"'] \ar[r, "f'"] & T' \ar[d, "g"] \ar[r, "h"] & P \\
S \ar[r, "f"'] & T. &
\end{tikzcd}
$$
In virtue of lemma \ref{lemma_lc_vc_lift} it is enough to assume that $g$ is a tassel collapse, and moreover we may assume it is a simple tassel collapse with the target $z \in T_2$.
As before we assume that $z$ is a core vertex.
Let $h$ be another tassel collapse. If it doesn't involve vertices collapsed by $g$, in the sense that $h$ is injective on $g^{-1}(z)$, then by using the same argument as in the
proof of lemma \ref{lemma_lc_vc_lift} we obtain a retraction $w_{x \to y}$ for any appropriate pair of vertices $x \in P_2$ and $y \in h^{-1}(x)$. So we may assume, that $h$
is another simple tassel collapse that collapses a tassel $y \in g^{-1}(z)$ with some vertex $y' \in T'_2$, which is not in the preimage of $z$. Since by assumption $g$ is a simple
tassel collapse, vertex $y'$ maps injectively to some vertex $z' \in T_2$. To simplify the argument we may assume that $y$ is the only tassel in $g^{-1}(z)$, as the general case
can be treated by iterating the argument below.

To summarize the picture, we have a core vertex $z \in T_2$, another vertex $z' \in T_2$ and a tassel $y \in T'_2$. Let us denote by $\tilde z$ the only core vertex in the preimage of $z$
and by $\tilde z'$ the only vertex in the preimage of $z'$. To be specific, assume they are ordered as $\tilde z < y < \tilde z'$. Since both $\tilde z$ and $\tilde z'$ are candidates for
a tassel collapse with $y$ they must have the same set of parents in $T'$ and therefore $z$ and $z'$ have the same set of parents in $T$. Thus, vertices $z$ and $z'$ are adjacent in $T$.

Next we investigate the preimage $f'^{-1}(y)$. It consists entirely of tassels which by $g'$ collapse to the preimage $f^{-1}(z)$. More precisely, they have to collapse to the set of vertices
in $B(f^{-1}(z))$. Therefore, the preimage $f'^{-1}(y)$ maps surjectively to the set $B(f^{-1}(z))$ and for every vertex $t \in f'^{-1}(y)$ the set of its parents is the same as the set of
parents of its image in $B(f^{-1}(z))$. In fact this map gives rise to the retraction $w_{z \to \tilde z} \from (gf')^{-1}(z) \to f'^{-1}(\tilde z)$.

Now, we consider two cases. First, assume that $z'$ is a tassel, then we can consider a simple tassel collapse $l\from T \to R$ which collapses two vertices $z$ and $z'$ into a single
vertex $\bar z \in R_2$. Using local constancy of $f$ we obtain a retraction $w_{\bar z \to z}\from (lf)^{-1}(\bar z) \to f^{-1}(z)$. Existence of this map implies that
the preimage $f^{-1}(z')$ has a similar description as we had for $f'^{-1}(y)$, and as $g'$ is also a tassel collapse the same holds for the preimage $f'^{-1}(\tilde z')$.

Map $h$ collapses two tassels $y$ and $\tilde z'$, and as we just saw both preimages of $y$ and $\tilde z'$ have a similar description, therefore the same holds for the preimage of their union.
Hence we can construct retractions $w_{h(y) \to y}$ and $w_{h(y) \to \tilde z'}$ as required in \ref{def_loc_const} (b).

Now consider the second case, when $z'$ is a core vertex. Since $z$ and $z'$ are adjacent in $T$ then by assumption they have compatible interface, which identifies $B(f^{-1}(z))$ and $A(f^{-1}(z'))$.
Using this identification and the description of the preimage $f'^{-1}(y)$ we construct a retraction $w_{h(y) \to \tilde z'}$ as required in \ref{def_loc_const} (a).

The opposite implication in the statement of the lemma is done by reversing the argument above, as existence of retractions induced by tassel collapses $g$ and $h$ ensures identification
of sets $B(f^{-1}(z))$ and $A(f^{-1}(z')$ for all adjacent vertices.
\qed

\begin{remark}
In fact we can state a somewhat stronger property of ULC maps. Consider a lift of $f$ along a weak equivalence $g$.
$$
\begin{tikzcd}[sep=3em]
S' \ar[d, "g'"'] \ar[r, "f'"] & T' \ar[d, "g"] \\
S \ar[r, "f"'] & T.
\end{tikzcd}
$$
Let $z$ be a core vertex in $c(T_2)$ and let $\tilde z$ be the unique core vertex in the preimage $g^{-1}(z)$. We say that the lift is {\em nice} if for every core vertex $z$ map $g'$ induces a surjection
of $f'^{-1}(\tilde z)$ onto $f^{-1}(z)$. For example, the lift constructed in proposition \ref{prop_lift} is nice.
It is clear from the proof of the lemma that if $f$ is a ULC map, then every nice lift of $f$ is locally constant. For an example of a lift of a ULC map that is not locally constant we
refer to \ref{exa_ulc_lift_not_lc}.

It is clear that the composition of ULC maps is again a ULC map. We will denote the subcategory of $2\PreNet$ consisting of ULC maps by $2\PreNet^\ulc$.

\end{remark}

\begin{remark}
It is reasonable to consider a stronger notion. We say that a map $f\from S \to T$ is {\em strongly} locally constant (abbreviated SLC), if {\em any} lift with respect to any weak equivalence
is locally constant. Although, this notion appears to be too restrictive for our purposes, it may be interesting to highlight the difference between ULC and SLC maps in more elementary terms.
In addition to being locally constant and having compatible interfaces of the adjacent core vertices, being an SLC map also imposes a restriction on the fibers $f^{-1}(z)$ for core vertices
$z \in T_2$, that can be expressed in the same flavor as the property (c) of \ref{def_prenet_map}.

For every parent vertex $y \in S_1$ we denote by $a_t(y) \subset p_S^{-1}(y)$ the maximal initial interval of tassels among children of $y$ (that is $a_t(y)$ is an interval, consists entirely
of tassels that have $y$ as one of their parents, there is no vertex $z \in p_S^{-1}(y)$ such that $z < a_t(y)$, and $a_t(y)$ is maximal among all subsets satisfying these three properties).
Let us denote by $A_t = A_t(f^{-1}(z))$ the union
$$
A_t := \bigcup_{y \in f^{-1}(p_T(z))} a_t(y).
$$
Moreover, we define $a_c(y)$ to be the first core vertex in $f^{-1}(z) \cap p_S^{-1}(y)$ and define
$$
A_c := \bigcup_{y \in f^{-1}(p_T(z))} a_c(y).
$$
Then SLC condition implies that whenever $A_t$ covers the set of parents $f^{-1}(p_T(z))$ there is a tassel collapse retracting set of tassels $A_t$ to $A_c$.

Clearly, SLC maps form a subcategory of $2$-prenets that we will denote $2\PreNet^\slc$.

\end{remark}

\begin{nparagraph}[Locally constant nets.]
We define the category of (strongly) locally constant $2$-nets as the localization of the category $2\PreNet^\ulc$ (respectively $2\PreNet^\slc$) with respect to the class of weak equivalences.
\begin{align*}
2\Net^\ulc \ &:=\ 2\PreNet^\ulc[\cW^{-1}], \\
2\Net^\slc \ &:=\ 2\PreNet^\slc[\cW^{-1}].
\end{align*}
Because the class of ULC maps admits lifts along weak equivalences, we can give a description of maps in this localized category similar to theorem \ref{thm_net_homs}. In fact
the situation here is somewhat simplified. We begin by defining an analog of the equivalence relation \ref{def_equiv}.
\end{nparagraph}

\begin{definition}
Denote by $\simo$ a relation on the sets of maps $\Hom_{2\PreNet}(S, T)$ consisting of pairs
$\begin{tikzcd}[cramped,sep=small]S \ar[shift left, r, "f"] \ar[shift right, r, "g"'] & T\end{tikzcd}$ admitting a factorization
$$
\begin{tikzcd}
S \ar[shift left, r, "f_w"] \ar[shift right, r, "g_w"'] & T' \ar[shift left, r, "f_v"] \ar[shift right, r, "g_v"'] & T,
\end{tikzcd}
$$
where $f_v$ and $g_v$ are vertex collapses obtained by factorization in \ref{lemma_map_factor}, and the fibers of $f_w$ and $g_w$ are pairwise weakly equivalent.
\end{definition}
It is immediate to see that $\simo$ is already an equivalence relation, unlike the situation with its general counterpart $\sim$. We may consider sets of classes with
respect to this equivalence relation $\Hom_{2\PreNet}(S, T) / \simo$, however while they admit an analog of the push forward map $m_*$ in (\ref{equ_piHom_cat}) the pullback $l^*$ is in general not well defined.
Nevertheless, the following lemma shows that in the subcategory $2\PreNet^\ulc$ the equivalence relation $\simo$ has both functoriality properties.

\begin{lemma}
\label{lemma_lc_equiv}
Let $h\from S \to T$ be a locally constant map, and assume that a pair of maps $f_w, g_w\from T \to P$ have pairwise weakly equivalent fibers, and are such that $T_1 = P_1$ and $(f_w)_1 = (g_w)_1 = \id_{T_1}$.
Then the compositions $f_w h$ and $g_w h$ also have weakly equivalent fibers.
\end{lemma}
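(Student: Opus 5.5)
Fix a vertex $x \in P_2$ and write $F = f_w^{-1}(x)$, $G = g_w^{-1}(x)$; by assumption these are weakly equivalent prenets. Since $(f_w)_1 = (g_w)_1 = \id$, both have the same level $1$, namely $p_P(x) \subset T_1$. The goal is to show that $(f_w h)^{-1}(x)$ and $(g_w h)^{-1}(x)$ are weakly equivalent. On level $2$ we have
$$
(f_w h)^{-1}(x) = \bigsqcup_{y \in F_2} h^{-1}(y),
$$
with parents induced from $S$, and similarly for $g_w$. Because $F$ and $G$ are weakly equivalent, the plan is to reduce to the case where they are connected by a single weak equivalence. By corollary \ref{cor_w_lift} and proposition \ref{prop_min_prenet}, both map by weak equivalences to the common minimal prenet $m(F) = m(G)$. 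It therefore suffices to treat one weak equivalence $u \from F \to Q$ at a time and show that the preimage under $h$ of $F$ is weakly equivalent to a prenet depending only on $Q$. Since $(f_w)_1 = \id$, the vertex collapse part of $u$ is irrelevant, and by lemmas \ref{lemma_w_factor_tv} and \ref{lemma_tc_factor} we may take $u$ to be a simple tassel collapse collapsing an interval $Y \subset F_2$ to a single vertex $q$.

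The key step uses local constancy of $h$. Use the assembly proposition \ref{prop_assembly} applied to $f_w$, with $l_x = u$ and the other fibers left unchanged, to produce a tassel collapse $\bar u \from T \to \bar T$ which extends $u$ and is the identity outside $Y$. Definition \ref{def_loc_const} applied to $h$ and $\bar u$ then gives weak equivalences
$$
w \from (\bar u h)^{-1}(q) \to h^{-1}(y),
$$
where $y$ is the core vertex of $Y$ if $q$ is core, and any tassel of $Y$ otherwise. Gluing $w$ with the identity on the fibers over vertices outside $Y$ produces a weak equivalence
$$
(f_w h)^{-1}(x) \to (f'_w \bar h)^{-1}(x),
$$
where $f'_w$ and $\bar h$ are the maps obtained after this replacement. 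The fiber of $f_w h$ is thus weakly equivalent to a prenet built from $Q$ and the fibers $h^{-1}(y)$ of the surviving representatives. Iterating along the chain $F \to m(F) \leftarrow G$, and using the 2-out-of-3 property (lemma \ref{lemma_w_23}), yields the weak equivalence of the two fibers.

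I expect the main obstacle to be the gluing step. One must check that combining $w$ with identities over the other vertices of $F_2$ gives a genuine map of prenets, namely one that preserves $\pless$ and satisfies (c) of definition \ref{def_prenet_map} at the seams between $h^{-1}(Y)$ and the neighbouring preimages. For this I would imitate the total-order argument in the proof of proposition \ref{prop_assembly} together with remark \ref{rem_total_order}. I would also verify that $w$ fixes the boundary vertices $A$ and $B$, since these are precisely what the weak equivalence $w_{x\to y}$ constructed in lemma \ref{lemma_w_lc} does.
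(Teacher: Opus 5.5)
Your first step is the right one and is also the paper's: assemble the collapse of the fiber into a tassel collapse of $T$ (Proposition \ref{prop_assembly}) and apply Definition \ref{def_loc_const} to $h$. The gluing you single out as the main obstacle is routine. It is the coproduct argument already used after (\ref{equ_piHom_push}).

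The paper does this for all $x\in P_2$ at once, with the full collapses $u_x\from f_w^{-1}(x)\to M_x$ and $v_x\from g_w^{-1}(x)\to M_x$. This gives two tassel collapses $f_t,g_t\from T\to T'$ onto the \emph{same} prenet $T'$, with $f_w=lf_t$ and $g_w=lg_t$. It also avoids your iteration over simple collapses, which as written would need the modified map $\bar h$ to be locally constant again; you do not check this.

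The real gap is in your last step. The prenet you arrive at does not depend ``only on $Q$''. It is assembled from $Q$ and the fibers $h^{-1}(y_q)$, with representatives $y_q$ taken in $F_2=f_w^{-1}(x)$. On the other side the representatives $y'_q$ lie in $G_2=g_w^{-1}(x)$, a different subset of $T_2$. You still have to show that $h^{-1}(y_q)$ and $h^{-1}(y'_q)$ are weakly equivalent, and nothing in your argument does so. Local constancy was applied to the two collapses separately. The 2-out-of-3 property (Lemma \ref{lemma_w_23}) cannot relate two prenets between which you have no map.

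For a core vertex $q$ the fix is short but must be stated. The maps $f_t$ and $g_t$ preserve the total order on level $2$ (see the proof of Lemma \ref{lemma_tc_factor}) and are bijective on core vertices. Hence they induce the same bijection $c(T_2)\to c(T'_2)$, so $y_q=y'_q$.

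For a tassel $q$ there is no such rigidity. The tassels lying over $q$ via $f_t$ and via $g_t$ may be disjoint. What you need is a \emph{single} tassel collapse of $T$ that identifies a tassel from each side, so that Definition \ref{def_loc_const} (b) compares both fibers of $h$ with one and the same prenet. This is the missing idea.

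The paper obtains it in three moves. It pads the core vertices of $T'$ with tassels as in Lemma \ref{lemma_vc_tc_pair} (B). It lifts $f_t$ and $g_t$ to maps in $\cW'(T)$. It then composes with the map to the terminal object $m'(T)$ of Lemma \ref{lemma_term_prenet}. Terminality forces the two composites $T\to m'(T)$ to coincide, so all these tassels go to one tassel $\bar z$. Local constancy of $h$ then makes both $h^{-1}(y_q)$ and $h^{-1}(y'_q)$ weakly equivalent to the fiber of the composite over $\bar z$.
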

\proof
For every vertex $x \in P_2$ the fiber prenets $f_w^{-1}(x)$ and $g_w^{-1}(x)$ are weakly equivalent, and denote by $M_x$ their minimal representative. Choose weak equivalence maps (in fact tassel collapses)
$u_x\from f_w^{-1}(x) \to M_x$ and $v_x\from g_w^{-1}(x) \to M_x$. Then using the assembly process of \ref{prop_assembly} we construct a prenet $T'$ and two tassel collapses $f_t$ and $g_t$ as in the following
diagram.
$$
\begin{tikzcd}[sep=3em]
S \ar[r, "h"] & T \ar[shift left, dl, "\tilde f_t"] \ar[shift right, dl, "\tilde g_t"'] \ar[shift left, d, "f_t"] \ar[shift right, d, "g_t"'] \ar[shift left, r, "f_w"] \ar[shift right, r, "g_w"'] & P \\
\wtilde T' \ar[d, "q"'] \ar[r] & T' \ar[ur, "l"'] & \\
m'(T) & &
\end{tikzcd}
$$

Since the compositions $f_w h = l f_t h$ and $g_w h = l g_t h$ it is enough to show that $f_t h$ and $g_t h$ have pairwise weakly equivalent fibers. We consider two cases, first let $z \in T'_2$
be a core vertex and denote by $\tilde z$ the only core vertex in the preimages $f_t^{-1}(z)$ and $g_t^{-1}(z)$. Notice that this is the same vertex $\tilde z$ in both cases, as
$f_t$ and $g_t$ induce bijection between the core vertices. Then local constancy of $h$ gives us two weak equivalences
\begin{align*}
w^f_{z \to \tilde z}&\from (f_t h)^{-1}(z) \to h^{-1}(\tilde z), \\
w^g_{z \to \tilde z}&\from (g_t h)^{-1}(z) \to h^{-1}(\tilde z),
\end{align*}
which imply that the fibers of compositions $f_t h$ and $g_t h$ over $z$ are weakly equivalent.

Now, we assume that $z$ is a tassel. If the preimages $f_t^{-1}(z)$ and $g_t^{-1}(z)$ have at least one common tassel $\tilde z$, then we use the same argument as before to show that
fibers $(f_t h)^{-1}(z)$ and $(g_t h)^{-1}(z)$ are weakly equivalent to $h^{-1}(\tilde z)$ and therefore weakly equivalent to each other. If such common tassel $\tilde z$ doesn't exist,
then we utilize the construction in the proof of lemma \ref{lemma_vc_tc_pair} (B). We form the prenet $\wtilde T'$ by adding two tassels around each core vertex of $T'$ and lift
$f_t$ and $g_t$ to $\tilde f_t$ and $\tilde g_t$ respectively, with the latter two maps belonging to the class $\cW'$ (see lemma \ref{lemma_term_prenet}). Let $m'(T)$ be the terminal
object of $\cW'(T)$ and $q$ the unique tassel collapse in $\cW'$. Then under the composition $q \tilde f_t = q \tilde g_t$ all tassels in the preimages $f_t^{-1}(z)$ and $g_t^{-1}(z)$
go to the same tassel $\bar z$ in $m'(T)$. Since $f$ is locally constant we see that for any two tassels $\tilde z \in f_t^{-1}(z)$ and $\tilde z' \in g_t^{-1}(z)$ the fibers
$h^{-1}(\tilde z)$ and $h^{-1}(\tilde z')$ are weakly equivalent, and therefore the fibers $(f_t h)^{-1}(z)$ and $(g_t h)^{-1}(z)$ are also weakly equivalent.
\qed

\begin{remark}
\label{rem_equivo_tc_vc}
Let $f$ and $g$ be a pair of maps, then using the assembly process as in the proof of the previous lemma we see that $f \simo g$ if and only if they admit factorization
\begin{equation}
\label{equ_equivo_tc_vc}
\begin{tikzcd}
S \ar[shift left, r, "f_t"] \ar[shift right, r, "g_t"'] & Q \ar[r, "l"] &  R \ar[shift left, r, "f_v"] \ar[shift right, r, "g_v"'] & T,
\end{tikzcd}
\end{equation}
where $f_t$ and $g_t$ are tassel collapses and $f_v$ and $g_v$ are vertex collapses from factorization \ref{lemma_map_factor}.
Moreover, if both $f$ and $g$ are ULC maps, then map $l$ is also ULC, and if both $f$ and $g$ are SLC maps, then map $l$ is also SLC.
Comparing it with the factorization in remark \ref{rem_equiv_tc_vc}, we can highlight the purpose of locally constant maps.
While we can {\em slide down} a vertex collapse along any map of prenets thanks to factorization \ref{lemma_map_factor}, in general we can not
slide a tassel collapse either way. What local constancy allows us to do is to be able to {\em slide up} a tassel collapse along a locally constant map.
\end{remark}

In view of the discussion above, let us denote
$$
\pi^\circ\Hom(S, T) := \Hom_{2\PreNet^\ulc}(S, T) / \simo.
$$
From the previous remark it follows that the localization functor factors via these sets.
$$
\begin{tikzcd}
L\from \Hom_{2\PreNet^\ulc}(S, T) \ar[r] & \pi^\circ\Hom(S, T) \ar[r] & \Hom_{2\Net^\ulc}(S, T).
\end{tikzcd}
$$
We have the following description of maps in the category of locally constant $2$-nets.

\begin{theorem}
\label{thm_ulcnet_homs}
For any two prenets $S$, $T$ we have
$$
\Hom_{2\Net^\ulc}(S, T) \ \isom\ \colim_{\wtilde S \in \cW(S)} \pi^\circ\Hom(\wtilde S, m(T)) \ \isom\ \colim_{\wtilde S \in \cW(S)_1} \pi^\circ\Hom(\wtilde S, m(T)).
$$
\end{theorem}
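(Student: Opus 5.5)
The plan is to copy the proof of theorem \ref{thm_net_homs}, working in $2\PreNet^\ulc$ and replacing $\pi\Hom$ and $\sim$ by $\pi^\circ\Hom$ and $\simo$. I will construct mutually inverse maps $\alpha$ and $\beta$ between $\Hom_{2\Net^\ulc}(S,T)$ and $\colim_{\wtilde S \in \cW(S)} \pi^\circ\Hom(\wtilde S, m(T))$. Afterwards I will pass to the subcategory $\cW(S)_1$ exactly as in corollary \ref{cor_net_homs}.

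Before constructing $\alpha$ and $\beta$, I need the colimit to make sense. Pullback along a weak equivalence $l\from P \to S$ must be well defined on $\pi^\circ\Hom$. I will take the characterization of remark \ref{rem_equivo_tc_vc}, writing $f \simo g$ via tassel collapses $f_t, g_t$, a ULC map $l'$, and vertex collapses $f_v, g_v$. Lemma \ref{lemma_lc_equiv} then shows that precomposing with a locally constant map preserves pairwise weak equivalence of fibers. Weak equivalences are locally constant by lemma \ref{lemma_w_lc}, so this gives $l^*$. Pushforward along ULC maps is handled as in diagram (\ref{equ_piHom_push}).

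I also need an analogue of lemma \ref{lemma_w_pair} in the ULC setting. Its proof only used the factorization through the common prenet $T'$, together with lemma \ref{lemma_fib_w}. The factors are weak equivalences, hence ULC, so two weak equivalences $S \rightrightarrows T$ become $\simo$-equivalent. Any map into the minimal prenet $m(T)$ is locally constant. It is also ULC, because the compatible interface condition of lemma \ref{lemma_ulc_criterion} holds trivially: a minimal prenet admits no tassel collapses, and I expect it has no adjacent core vertices either. So the representatives $\wtilde S \to m(T)$ really live in $2\PreNet^\ulc$.

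The map $\alpha$ is the zig-zag $S \to m(S) \leftarrow \wtilde S \to m(T) \leftarrow T$. It is independent of the choice of representative, since the localization functor factors through $\pi^\circ\Hom$ (remark \ref{rem_equivo_tc_vc}). For $\beta$, I reduce a zig-zag to a span using the definition of ULC maps: each ULC map has a locally constant lift along a weak equivalence, and the nice lift of proposition \ref{prop_lift} is again ULC. The lift stays in $\cW$ by lemma \ref{lemma_w_23}.

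The main obstacle is showing that $\beta$ does not depend on the lift. In theorem \ref{thm_net_homs} this needed lemma \ref{lemma_for_thm_loc} and the general relation $\sim$ with the intermediate map $h$. Here I must show that two lifts $Q \rightrightarrows P_3$, followed by the map to $m(T)$, are $\simo$-equivalent, and not merely $\sim$-equivalent. I would first apply corollary \ref{cor_w_lift} and lemma \ref{lemma_vc_pushout} as before. Then I would rerun steps A through D of lemma \ref{lemma_for_thm_loc}, using local constancy to slide the tassel collapse up past $h$ and so absorb $h$, in the sense of remark \ref{rem_equivo_tc_vc}. The map $h$ here is a weak equivalence, hence locally constant, so the retractions of definition \ref{def_loc_const} let me replace its fibers by weakly equivalent ones.

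The remaining checks are compatibility with the localization relations and that $\alpha\beta$ and $\beta\alpha$ are the identity. Both are verbatim from the proof of theorem \ref{thm_net_homs}.
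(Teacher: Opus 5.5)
Your overall plan follows the paper: $\alpha$ and $\beta$ as in theorem \ref{thm_net_homs}, functoriality of $\pi^\circ\Hom$ from lemma \ref{lemma_lc_equiv}, then restriction to $\cW(S)_1$. The gap is in the step you correctly single out as the main obstacle, independence of $\beta$ from the lift.

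Rerunning steps A--D of lemma \ref{lemma_for_thm_loc} only gives you a chain of $\sim$-relations. Each link is witnessed by an intermediate map: the map $l$ of step C, and its analogues in the inductive step D. To absorb such a map with lemma \ref{lemma_lc_equiv}, it must be locally constant. Your reason, that it is a weak equivalence, is false. In step C the vertices of $S_{xy}$ (and of $B_{xx}$) are sent to tassels indexed by the maximal intervals of $P_{xy}$ and $P_{yx}$. So in general vertices with disjoint sets of parents are merged into one tassel, and condition (b) of definition \ref{def_weak_equiv} fails. Nothing else in your argument makes these maps locally constant. If your $h$ means instead the weak equivalence $P_3 \to P_2$ of the zig-zag, its local constancy plays no role in comparing the two lifts.

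The missing idea is that no analogue of steps A--D is needed. What matters is the local constancy of the two maps being compared, and you already arranged this by choosing ULC lifts. In the diagram of the proof of theorem \ref{thm_net_homs} you end with $f', g' \from Q \to R'$, where $f'_1 = g'_1 = \id_{Q_1}$, and a tassel collapse $h'_t \from R' \to P'_2$ with $h'_t f' = h'_t g'$. For a core vertex $x \in R'_2$, apply definition \ref{def_loc_const} to $f'$ and to $g'$ with respect to $h'_t$. This gives weak equivalences $(h'_t f')^{-1}(h'_t(x)) \to f'^{-1}(x)$ and $(h'_t g')^{-1}(h'_t(x)) \to g'^{-1}(x)$ out of one and the same fiber prenet; tassels are handled similarly. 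So the fibers are pairwise weakly equivalent and $f' \simo g'$ directly. This is exactly the statement the paper substitutes for lemma \ref{lemma_for_thm_loc}, and it is the real simplification in the ULC setting.

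Separately, drop the claim that every map into $m(T)$ is ULC. Minimal prenets can have adjacent core vertices, for example $J(T)$ for a tree with two leaves over one vertex of level $1$. Maps into them are locally constant but may have incompatible interfaces; this is precisely what the proof that $2\Net^\ulc \to 2\Net$ is surjective on maps has to repair. You do not need the claim, since $\pi^\circ\Hom$ consists of ULC maps by definition and your representatives are composites of ULC maps.
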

\proof
The proof goes the same way as in theorem \ref{thm_net_homs} and corollary \ref{cor_net_homs}, as all maps can be considered to have ULC property. The major change
and simplification, is the replacement of lemma \ref{lemma_for_thm_loc} with the following statement.

Let $f, g\from S \to T$ be a pair of locally constant maps, such that $f_1 = g_1 = \id_{S_1}$ and let $h\from T \to T'$ be a tassel collapse, such that $hf = hg$. Then we claim
that $f \simo g$.
$$
\begin{tikzcd}[sep=3em]
S \ar[shift left, r, "f"] \ar[shift right, r, "g"'] & T \ar[r, "h"] & T'.
\end{tikzcd}
$$
We need to check that the fibers of $f$ and $g$ are pairwise weakly equivalent. Take $x \in T_2$, and assume that it is a core vertex, as the case of a tassel is handled similarly.
Denote by $S_x$ the fiber prenet $S_x = (hf)^{-1}(h(x))$. Then the local constancy of $f$ and $g$ gives us a pair of weak equivalences
\begin{align*}
w^f_{h(x) \to x} &\from S_x \to f^{-1}(x), \\
w^g_{h(x) \to x} &\from S_x \to g^{-1}(x).
\end{align*}
This immediately shows that $f$ and $g$ have pairwise weakly equivalent fibers.

\qed

In view of this description of maps in $2\Net^\ulc$ we can identify the category $2\Net^\slc$ with the subcategory of $2\Net^\ulc$, consisting of maps $f\from S \to T$, admitting
an SLC representative $\tilde f\from \wtilde S \to T$.

Moreover, denote by $2\Net^{\ulc,s}$ the full subcategory of $2\Net^\ulc$ consisting of special prenets.

\begin{corollary}
\label{cor_comp_special}
The comparison functor
$$
K^s\from 2\PreNet^{\ulc, s}[\cW^{-1}] \to 2\Net^{\ulc,s}
$$
is an equivalence of categories.
\end{corollary}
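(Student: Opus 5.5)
We compare the two localizations by describing the Hom-sets on both sides in the same way and showing that $K^s$ induces the identity on these descriptions. On objects, $K^s$ is the identity, since both categories have the special prenets as objects. So we only need full faithfulness, i.e.\ bijections
$$
\Hom_{2\PreNet^{\ulc,s}[\cW^{-1}]}(S,T)\to\Hom_{2\Net^{\ulc}}(S,T)
$$
for special $S$ and $T$.

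The plan is to redo the proof of Theorem~\ref{thm_ulcnet_homs} inside the full subcategory $2\PreNet^{\ulc,s}$ of special prenets. This gives
$$
\Hom_{2\PreNet^{\ulc,s}[\cW^{-1}]}(S, T) \ \isom\ \colim_{\wtilde S \in \cW^s(S)} \pi^{\circ,s}\Hom(\wtilde S, m(T)),
$$
where $\cW^s(S)$ denotes the component of $S$ among special prenets, and $\pi^{\circ,s}$ is the quotient by $\simo$ using only factorizations through special prenets. Comparing with the right hand side of Theorem~\ref{thm_ulcnet_homs} then finishes the argument.

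\textbf{Step 1: all intermediate prenets are special.} The key input is Lemma~\ref{lemma_w_special}: any prenet weakly equivalent to a special prenet is special. Hence $\cW^s(S)=\cW(S)$ as categories, and $m(S)$, $m(T)$, $m_1(\wtilde S)$ are all special. I then go through every auxiliary prenet used in the proofs of Theorem~\ref{thm_net_homs} and Theorem~\ref{thm_ulcnet_homs} and check that each is special.
\begin{itemize}
\item The lifts of Proposition~\ref{prop_lift} and Corollary~\ref{cor_w_lift} map to their source by a weak equivalence, so they are special.
\item The pushouts of Lemma~\ref{lemma_vc_pushout} receive a vertex collapse, so they are special.
\item The prenet $R$ in the independence-of-lift argument is connected to $P_2$ by weak equivalences, so it is special.
\end{itemize}
Consequently the zig-zag calculus and the maps $\alpha$, $\beta$ of Theorem~\ref{thm_net_homs} stay inside special prenets.

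\textbf{Step 2: the relation $\simo$ stays among special prenets.} Here I expect the main obstacle. The relation uses the factorization $S\to T'\to T$ of Lemma~\ref{lemma_map_factor}, and $T'$ is weakly equivalent to the special target $T$ through the vertex collapse $T'\to T$. By Lemma~\ref{lemma_w_special}, $T'$ is therefore special. The comparison of fibers of $f_w$ and $g_w$ only involves the fibers and their weak equivalences, not the ambient category, so it is unchanged. The delicate point is the claim in the proof of Theorem~\ref{thm_ulcnet_homs}, together with Lemma~\ref{lemma_lc_equiv}. These use the prenet $\wtilde T'$ obtained by adding tassels around core vertices (Lemma~\ref{lemma_vc_tc_pair}~(B)), the terminal object $m'(T)$, and the assembly of Proposition~\ref{prop_assembly}. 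I must check that each is special:
\begin{itemize}
\item $\wtilde T'$ tassel-collapses onto $T'$, so it is special.
\item $m'(T)$ is weakly equivalent to $T$, so it is special.
\item The assembled prenet receives the weak equivalence $h$, by the second half of Proposition~\ref{prop_assembly}, so it is special.
\end{itemize}
This is exactly where the argument differs from the general $2\Net$ case. There, Lemma~\ref{lemma_for_thm_loc} uses the assembly step~(C) with a non-weak-equivalence $l$, whose target might fail to be special; that is presumably why only the ULC version is claimed.

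\textbf{Step 3: comparison.} With all constructions internal to special prenets, the bijection of Theorem~\ref{thm_ulcnet_homs} restricts to a bijection between the two colimit descriptions. Moreover, the map induced by $K^s$ corresponds to the identity of $\colim_{\wtilde S\in\cW(S)}\pi^\circ\Hom(\wtilde S,m(T))$: both $\alpha$ maps send a representative to the same zig-zag. Hence $K^s$ is fully faithful, and since it is bijective on objects it is an equivalence.
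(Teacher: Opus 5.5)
Your argument is correct and is essentially the paper's. Everything hinges on Lemma~\ref{lemma_w_special}: it keeps every prenet weakly equivalent to $S$ or $T$ special, so the representatives $\wtilde S \to m(T)$, the factorization (\ref{equ_equivo_tc_vc}), and the auxiliary prenets of Lemma~\ref{lemma_vc_tc_pair} all lie in $2\PreNet^{\ulc,s}$. The paper packages this more economically, as surjectivity plus injectivity of $K^s$ on hom-sets, and uses the colimit description only for the target. Your re-derivation of $\alpha$ and $\beta$ inside special prenets additionally makes explicit a fact the paper uses only implicitly: every morphism of $2\PreNet^{\ulc,s}[\cW^{-1}]$ is represented by such a span.
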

\proof
We need to check that $K$ is both injective and surjective on hom-sets. Consider a map $f\from S \to T$ in $2\Net^{\ulc,s}$ represented by a map of prenets $\tilde f\from \wtilde S \to T$.
Since $S$ is a special prenet, in view of lemma \ref{lemma_w_special} $\wtilde S$ is also a special prenet, therefore $\tilde f$ is a map in the category $2\PreNet^{\ulc,s}$. This implies surjectivity
of $K$ on hom-sets.

To see the injectivity of $K$ consider two maps $f, g \from \wtilde S \to T$ such that $f \simo g$. As we saw before $f$ and $g$ admit factorization as in (\ref{equ_equivo_tc_vc}). By assumption
both $S$ and $T$ are special prenets, therefore again using lemma \ref{lemma_w_special} we see that $Q$ and $R$ are also special prenets, hence the factorization lies entirely in the
subcategory $2\PreNet^{\ulc,s}$. Furthermore, lemma \ref{lemma_vc_tc_pair} also holds in the subcategory $2\PreNet^{\ulc,s}$ since as soon as any one of the prenets $S$ or $T$ is special all
prenets in the proof of the lemma are also special. Therefore in the localized category images of $f_t$ and $g_t$ coincide, and similarly images of $f_v$ and $g_v$ also coincide.
Hence $f$ and $g$ also coincide in $2\PreNet^{\ulc,s}[\cW^{-1}]$, which implies the injectivity of the comparison functor.

\qed

We would like to point out that this argument doesn't allow us to show that the comparison functor
$$
2\PreNet^s[\cW^{-1}] \to 2\Net^s
$$
is an equivalence. The main issue here lies in the difference between equivalence relations $\sim$ and $\simo$. Consider two maps $f, g \from S \to T$ between two special prenets $S$ and $T$,
such that $f \sim g$, then in the factorization in remark \ref{rem_equiv_tc_vc} we can only conclude that $R$ is a special prenet, while prenets $P$ and $Q$ do not have to be special.
Which means that we can not conclude that images of $f$ and $g$ coincide in the localized category $2\PreNet^s[\cW^{-1}]$.

\begin{proposition}
The inclusion $2\PreNet^\ulc \into 2\PreNet$ induces a comparison functor between localizations $K\from 2\Net^\ulc \to 2\Net$, such that for any two prenets $S$ and $T$ the map
$$
K\from \Hom_{2\Net^\ulc}(S, T) \to \Hom_{2\Net}(S, T)
$$
is surjective.
\end{proposition}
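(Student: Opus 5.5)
The plan has two parts: first show that $K$ exists, then prove surjectivity using the explicit descriptions of morphisms in theorem \ref{thm_ulcnet_homs} and theorem \ref{thm_net_homs}.

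\textbf{Existence of $K$.} Since every weak equivalence is ULC (lemma \ref{lemma_w_lc} and the remark following the definition of ULC maps), the inclusion $2\PreNet^\ulc \into 2\PreNet$ sends $\cW$ into $\cW$. The universal property of localization then gives $K$. Concretely, $K$ sends a zig-zag of ULC maps and weak equivalences to the same zig-zag viewed in $2\PreNet$.

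In terms of the colimit descriptions, $K$ sends the class of a ULC map $f\from \wtilde S \to m(T)$, with $\wtilde S \in \cW(S)$, in $\colim \pi^\circ\Hom(\wtilde S, m(T))$ to its class in $\colim \pi\Hom(\wtilde S, m(T))$. This is well defined because $f \simo g$ implies $f \sim g$: take $h = \id$ in definition \ref{def_equiv}. Compatibility with the maps $\alpha$ of both theorems is clear, since $\alpha$ is given by the same zig-zag in both settings.

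\textbf{Surjectivity.} By theorem \ref{thm_net_homs}, every element of $\Hom_{2\Net}(S, T)$ is represented by some map of prenets $f\from \wtilde S \to m(T)$ with $\wtilde S \in \cW(S)$. The key observation, noted right after the definition of locally constant maps, is that any map into a minimal prenet is locally constant. This holds because the only tassel collapse out of $m(T)$ is the identity, so conditions (a) and (b) of definition \ref{def_loc_const} are satisfied by the identity maps.

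It remains to upgrade $f$ to a ULC map. By lemma \ref{lemma_ulc_criterion}, we must check that adjacent core vertices of $m(T)$ have compatible interfaces. This step is the main obstacle, since a general map to $m(T)$ need not satisfy it.

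\textbf{Plan for the obstacle.} I would not try to prove compatibility for $f$ itself. Instead, I would modify the source within its weak equivalence class. Precompose $f$ with a tassel collapse $t\from \wtilde S' \to \wtilde S$, built in the spirit of step (B) of lemma \ref{lemma_for_thm_loc} and the lift of proposition \ref{prop_lift}. Concretely, for each pair of adjacent core vertices $x<y$ of $m(T)$, insert between $B(f^{-1}(x))$ and $A(f^{-1}(y))$ a layer of new tassels mapping to $x$ (or to $y$). The parents of these tassels are the maximal intervals of the common refinement of the two partitions of $p(x)=p(y)$.

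After this insertion, $B(f'^{-1}(x))$ and $A(f'^{-1}(y))$ for $f' = ft$ both induce the common refinement, so the interfaces become compatible. One then needs to check three things:
\begin{enumerate}[label=\alph*)]
\item $f'$ is still a map of prenets, in particular condition (c) of definition \ref{def_prenet_map}, using that the refinement is a disjoint cover;
\item $f'$ still maps into $m(T)$, hence is locally constant;
\item $t$ is a tassel collapse, so $\wtilde S' \in \cW(S)$.
\end{enumerate}

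Then $f'$ is ULC and represents the same colimit element as $f$, because $f' = ft$ with $t \in \cW$. Therefore $K$ maps the class of $f'$ in $\colim_{\cW(S)}\pi^\circ\Hom$ to the class of $f$, which proves surjectivity.

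If adjacency of core vertices in a minimal prenet turns out to be impossible, which might hold since such vertices cannot be tassel-collapsed, the interface condition is vacuous and the argument simplifies to the observation in the previous paragraph.
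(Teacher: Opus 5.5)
\textbf{Gap: precomposition cannot fix the interfaces.} Most of your setup is correct. $K$ exists because weak equivalences are ULC. Every element of $\Hom_{2\Net}(S,T)$ is represented by some $f\colon \wtilde S\to m(T)$, and such an $f$ is automatically locally constant. By lemma \ref{lemma_ulc_criterion}, the only obstacle is incompatible interfaces of adjacent core vertices. Such vertices do occur in minimal prenets, e.g.\ in $J(T)$ of example \ref{exa_J}, so your fallback does not apply.

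The problem is that no map $ft$ with $t\colon P\to\wtilde S$ a weak equivalence has better interfaces than $f$. Let $w$ be the maximal child of $s\in P_1$ in $(ft)^{-1}(x)$. Let $b$ be the maximal child of $t(s)$ in $f^{-1}(x)$, and take $w'\in t^{-1}(b)$. Then $p_P(w')=t^{-1}p(b)\ni s$, so $w'\le w$. Order preservation gives $t(w)=b$, hence $p_P(w)=t^{-1}p(b)$. So the partitions induced by $B((ft)^{-1}(x))$ and $A((ft)^{-1}(y))$ are the $t_1$-pullbacks of those for $f$, and an incompatible pair stays incompatible.

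Your particular $t$ is also not a tassel collapse. A tassel collapse is bijective at level $1$ and satisfies $t(p(w))=p(t(w))$. So every vertex of $\wtilde S'$ must have exactly the parents of some vertex of $\wtilde S$, and tassels whose parents are proper pieces of the common refinement have no such image. Also, if the new tassels all go to $x$, then $A(f'^{-1}(y))$ is unchanged anyway.

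\textbf{The missing idea.} The ULC representative must differ from $f$ by something other than precomposition. What you need is that localization identifies different tassel collapses with the same source and target (lemma \ref{lemma_vc_tc_pair}). The paper proceeds as follows.
\begin{enumerate}
\item Add a tassel $y$ to the target between the adjacent core vertices $z<z'$.
\item Take the two tassel collapses $u,v\colon T'\to T$ sending $y$ to $z$ and to $z'$ respectively.
\item Lift $f$ along $u$ by proposition \ref{prop_lift}, giving $f'\colon S'\to T'$ with a weak equivalence $u'\colon S'\to\wtilde S$.
\item Replace $f$ by $vf'$.
\end{enumerate}
The lift places tassel duplicates of $B(f^{-1}(z))$ over $y$. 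Then $v$ reassigns them to the fiber over $z'$, where they become its $A$-set, so the interface becomes compatible with no refinement needed. Meanwhile $L(vf')=L(v)L(f')=L(u)L(f')=L(uf')=L(fu')$, so $vf'$ represents the same element as $f$. Iterating over the incompatible pairs yields a ULC representative.

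Your instinct to insert tassels between the two fibers was right. But they must be duplicates of existing vertices, and they must be moved into the other fiber, which precomposition cannot do.
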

\proof
We need to check that every map of nets has a ULC representative. Consider $f\from \wtilde S \to T$ representing a map of $2$-nets from $S$ to $T$. We may assume that $T$ is minimal,
so that $f$ is already locally constant. We need to ensure that every pair of adjacent core vertices has compatible interface. Assume that $z$, $z'$ with $z < z'$ is an adjacent pair
of core vertices with incompatible interface. Consider another prenet $T'$ obtained from $T$ by adding a new tassel $y$ between $z$ and $z'$. We have a tassel collapse $u\from T' \to T$
that sends $y$ to $z$ and a tassel collapse $v\from T' \to T$ that sends $y$ to $z'$ and are identities on the other vertices.
$$
\begin{tikzcd}[sep=3em]
S' \ar[d, "u'"'] \ar[r, "f'"] & T' \ar[d, "u"] \ar[r, "v"] & T \\
S \ar[r, "f"'] & T &
\end{tikzcd}
$$
We form a lift $S'$ as in proposition \ref{prop_lift}. By construction vertices $z$ and $z'$ now have compatible interface with respect to map $vf'$, moreover in the category
of $2$-nets the image of map $vf'$ coincide with the image of $uf'$ which in turn coincides with the image of $f$. It is clear that this process doesn't change interfaces
between any other adjacent vertices, therefore iterating it we obtain a ULC map $\tilde f$ which represent the same map $f$ in $2$-nets.
\qed

\begin{nparagraph}
In the end of section \ref{sec_nets} we have constructed a fully faithful embedding $J$ of the category of $2$-trees with maps surjective at level $1$ into the category of $2$-nets.
In fact this inclusion factors through the category of locally constant nets, where it lands in the subcategory of strongly locally constant nets.
$$
\begin{tikzcd}[sep=3em]
(2\Tree, \Epi_1) \ar[into, r, "J"] & 2\Net^\slc / I \ar[into, r] & 2\Net^\ulc / I \ar[epi, r] & 2\Net / I.
\end{tikzcd}
$$
Abusing notation we will also denote by $J$ the inclusion of trees into (strongly) locally constant $2$-nets.
Indeed, first of all it is clear that every prenet $T$ which is a tree, in the sense that for every vertex $x \in T_2$
the set of parents $p_T(x)$ consists of a single vertex, admits a map to $I$. This is because the disjointedness property for subsets $A$ and $B$ from \ref{def_prenet_map} (c) is automatically
fulfilled, due to the fact that every vertex has only one parent.

It remains to see that for any map of $2$-trees $f\from S \to T$, the map of prenets $J(f)\from \tilde S \to J(T)$ constructed in paragraph \ref{par_J_map} is SLC.
We will need the following simple observation. Let $g\from P \to Q$ be a map of prenets, which are both trees, then $g$ is locally constant. To see that, consider two adjacent
vertices $x$ and $y$, in $Q_2$ which can be collapsed by some $u\from Q \to Q'$. To be specific, assume that $x$ is a core vertex and $x < y$.
The preimages $g^{-1}(x)$ and $g^{-1}(y)$ have the same set of parents, but since $P$ is a tree, for every parent $t$ the tassels in $f^{-1}(y) \cap p_P^{-1}(t)$ can be collapsed
to the last vertex in $f^{-1}(x) \cap p_P^{-1}(t)$, which gives rise to the retraction $w_{u(x) \to x}$.

Now observe, that if $P$ is a tree and $v\from P' \to P$ is a tassel collapse, then $P'$ is also necessarily a tree. This implies that any lift of a map between trees along a tassel
collapse is again a map between trees, which as we just saw is also locally constant. Therefore any map between trees is an SLC map.

\end{nparagraph}

\begin{nparagraph}[Cardinality and fiber functors.]
Now we are ready to describe the operadic structure on the category of locally constant $2$-nets over $I$.
We start by defining the cardinality functor
$$
\begin{tikzcd}
2\Net^\ulc / I \ar[r, "|-|"] & \FSets,
\end{tikzcd}
$$
by sending a $2$-net $S$ to the set of core vertices at level $2$, in other words we put
$$
|S| = c(S_2).
$$
Let $\tilde f\from \wtilde S \to T$ be a locally constant map representing a map $f\from S \to T$ of $2$-nets, in particular we assume that prenet $T$ is minimal. We define
$|f|$ to be the restriction of $\tilde f$ to the set of core vertices $c(\wtilde S_2)$. Since restriction of weak equivalences to the core vertices at level $2$ are identities,
the resulting map $|f|$ doesn't depend on the choice of the lift $\wtilde S$. Moreover, if we have two maps $\tilde f$ and $\tilde f'$ such that $\tilde f \simo \tilde f'$, then
for any vertex $x \in c(T_2)$ the fiber prenets $\tilde f^{-1}(x)$ and $\tilde f'^{-1}(x)$ are weakly equivalent, hence contain the same subset of core vertices of $\wtilde S_2$.
Therefore the restrictions of $\tilde f$ and $\tilde f'$ to $c(\wtilde S_2)$ coincide.

For an element $x \in |T| = c(T_2)$ we define
the fiber functor
$$
\Fib_x \from 2\Net^\ulc / T \to 2\Net^\ulc / I
$$
by putting
$$
\Fib_x(S) := \tilde f^{-1}(x),
$$
where the fiber prenet $\tilde f^{-1}(x)$ was defined in \ref{def_fiber}. Let us show that this notion of fiber is well-defined, in the sense that it doesn't
depend on the choice of the representing map $\tilde f$. First, consider another prenet $\wtilde S'$ with a weak equivalence $w\from \wtilde S' \to \wtilde S$.
$$
\begin{tikzcd}
\tilde f'\from \wtilde S' \ar[r, "w"] & \wtilde S \ar[r, "\tilde f"] & T.
\end{tikzcd}
$$
Then according to lemma \ref{lemma_fib_map} $w$ induces a map between fibers $w_x\from \tilde f'^{-1}(x) \to \tilde f^{-1}(x)$, which is also a weak equivalence.

Now, consider two maps $\tilde f, \tilde g \from \tilde S \to T$, such that $\tilde f \simo \tilde g$, thus representing the same map $f$ of $2$-nets.
By definition we have factorizations of $\tilde f$ and $\tilde g$ as
$$
\begin{tikzcd}
\tilde S \ar[shift left, r, "f_w"] \ar[shift right, r, "g_w"'] & T' \ar[shift left, r, "f_v"] \ar[shift right, r, "g_v"'] & T,
\end{tikzcd}
$$
Since maps $f_v$ and $g_v$ are identities on the second level, then by identifying $T'_2 = T_2$ we have equalities of the fiber prenets $\tilde f^{-1}(x) = f_w^{-1}(x)$
and $\tilde g^{-1}(x) = g_w^{-1}(x)$. Since $f_w$ and $g_w$ have pairwise weakly equivalent fibers we conclude that fibers of $\tilde f$ and $\tilde g$ over $x$ are also
weakly equivalent to each other.

\end{nparagraph}

\begin{nparagraph}
Consider a map $h\from P \to S$ in the category $2\Net^\ulc / T$. By choosing representatives for $P$ and $S$ and performing a lift we obtain a diagram in prenets.
$$
\begin{tikzcd}
\wtilde P \ar[dr, "\tilde g"'] \ar[rr, "\tilde h"] && \wtilde S \ar[dl, "\tilde f"] \\
& T &
\end{tikzcd}
$$
By lemma \ref{lemma_fib_map} we obtain a map between fibers
$$
\tilde h_x \from \tilde g^{-1}(x) \to \tilde f^{-1}(x),
$$
which represents a map in the category of $2$-nets between fibers $h_x\from \Fib_x(P) \to \Fib_x(S)$. Let us show that it is well defined. First, consider two weak equivalences
$w_P\from \wtilde P' \to \wtilde P$ and $w_S\from \wtilde S' \to \wtilde S$ and another representative $\tilde h'\from \wtilde P' \to \wtilde S'$ such that $\tilde h w_P = w_S \tilde h'$.
Then by restricting to the fibers we have $(w_P)_x$ and $(w_S)_x$ are still weak equivalences, and therefore $\tilde h_x$ and $\tilde h'_x$ represent the same map of $2$-nets.

Next, consider two maps $\tilde h$ and $\tilde h'$ such that $\tilde h \simo \tilde h'$. By definition they factorize through a pair of vertex collapses $h_v$ and $h'_v$ as well
as pair of maps $h_w$ and $h'_w$ with pairwise weakly equivalent fibers. Restricting to the fibers over $x \in T_2$, we can as before ignore the vertex collapses, as they don't affect
the resulting fiber prenets, and since $h_w$ and $h_w'$ had weakly equivalent fibers the same holds for their restrictions to the fiber over $x$.

Let $\tilde f \simo \tilde f'$ be two maps representing $f$, then $\tilde h$ induces two maps $\tilde h_x\from \tilde g^{-1}(x) \to \tilde f^{-1}(x)$ and
$\tilde h'_x\from \tilde g^{-1}(x) \to \tilde f'^{-1}(x)$, and we need to show that they represent the same map in $2$-nets. We know that the fibers $\tilde f^{-1}(x)$
and $\tilde f'^{-1}(x)$ are weakly equivalent, so we can denote by $M$ their common $2$-minimal representative. Choose two tassel collapses $u_x\from \tilde f^{-1}(x) \to M$
and $u'_x\from \tilde f'^{-1}(x) \to M$, and using the assembly (\ref{prop_assembly}) we construct a prenet $S'$ with a pair of tassel collapses $u$ and $u'$.
$$
\begin{tikzcd}
\wtilde P \ar[r, "\tilde h"] & \wtilde S \ar[shift left, r, "u"] \ar[shift right, r, "u'"'] & S'.
\end{tikzcd}
$$
Since $\tilde h$ is a locally constant map, by lemma \ref{lemma_lc_equiv} the compositions $u \tilde h$ and $u' \tilde h$ have pairwise weakly equivalent fibers. This property
is preserved when restricting to the fibers over $x$, therefore we conclude that $u_x \tilde h_x \simo u'_x \tilde h'_x$ and since $u_x$ and $u'_x$ are weak equivalences
maps $\tilde h_x$ and $\tilde h'_x$ represent the same map of $2$-nets. This establishes that $h_x$ is a well defined map of $2$-nets.


It is clear that this construction sends composition of maps in $2$-nets to composition of maps between fibers, therefore it gives rise to a functor $\Fib_x$
for every vertex $x \in T_2$ and in particular for every core vertex.

\end{nparagraph}

\begin{nparagraph}
Let $f\from S \to T$ be a map of locally constant $2$-nets. It is clear from the definition of the fiber functor $\Fib$ that for any core vertex $x \in T_2$
we have $|\Fib_x(S)| = |f|^{-1}(x)$.

The category $2\Net^\ulc / I$ obviously has the terminal object $I$. We assume from now on that we are working with a skeletal category of $2$-nets,
more specifically, the full subcategory of $2\Net^\ulc / I$ consisting of minimal prenets.

For any $2$-net $S$ the fiber prenet over any vertex $x$ of the identity map $\id_S$ is the prenet $\<x\>$ generated by vertex $x$, which as was seen in lemma \ref{lemma_fib_w}
is weakly equivalent to $I$ if $x$ is the core vertex and $I^\circ$ if $x$ is a tassel. Therefore, for every $x \in |S|$ the fiber $\Fib_x(\id_S)$ is the terminal object $I$.
Furthermore, for the unique map $z\from S \to I$, the fiber over the only vertex $x \in I_2$ is the entire prenet $S$, thus the resulting functor
$\Fib_x \from 2\Net^\ulc / I \to 2\Net^\ulc / I$ is the identity.

The remaining conditions of the operadic category are verified by first choosing appropriate representatives in $2$-prenets of all the maps in question and then checking the
required conditions in the category $2\PreNet$, which is straightforward.

\end{nparagraph}

\begin{remark}
We would like to point out here that the category $2\PreNet$ itself is not an operadic category. For example it does not satisfy condition that the fibers of the identity map
$\id_S$ are the terminal objects, as can be seen by considering the following prenet $S$.

$$
S \ =\  
\begin{tikzpicture}[inner sep=0pt,baseline=(a11.base)]
\def\u{2em}
\node (a21) at (0, 0) {$\bullet$};
\node (a11) at (-0.5*\u, -1*\u) {$\bullet$};
\node (a12) at (0.5*\u, -1*\u) {$\bullet$};
\node (a01) at (0, -2*\u) {$\bullet$};

\draw (a21) -- (a11) -- (a01);
\draw (a21) -- (a12) -- (a01);
\end{tikzpicture} 
$$
\end{remark}

\vskip 1em

We finish this section with the discussion about factorization of maps in the category of locally constant nets into a chain of elementary maps. While often such a
factorization can be obtained as a consequence of the (weak) blow-up axiom of an operadic category, in our case the weak blow-up axiom is not satisfied. This is
essentially due to the fact that the map constructed using assembly process of \ref{prop_assembly} in general is not a ULC map. Nevertheless, we can still use
the assembly process to establish the following theorem. For an illustration of the construction used in its proof we refer the reader to example \ref{exa_factor_elem}

\begin{theorem}
\label{thm_factor_elem}
Let $f\from S \to T$ be a map in the category of locally constant $2$-nets, then there exists a factorization of $f$
$$
\begin{tikzcd}
S \ar[r, "f^{(1)}"] & S^{(1)} \ar[r, "f^{(2)}"] & \cdots \ar[r, "f^{(n)}"] & S^{(n)} = T,
\end{tikzcd}
$$
such that each map $f^{(i)}$ is elementary.
\end{theorem}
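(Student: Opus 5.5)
Since ``elementary'' has not been formally defined in the displayed text, I take it in the sense of the introduction: a map $g\from P \to Q$ of locally constant $2$-nets is elementary if at most one fiber $\Fib_x(g)$, $x \in |Q|$, is non-trivial, i.e.\ not isomorphic to the terminal object $I$.

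\textbf{Reduction to a prenet representative.} By theorem \ref{thm_ulcnet_homs}, represent $f$ by a ULC map of prenets $\tilde f\from \wtilde S \to m(T)$, and assume $T = m(T)$ is minimal. Apply lemma \ref{lemma_map_factor} to write $\tilde f = f_v \circ f_w$, where:
\begin{itemize}
\item $f_v\from T' \to T$ is a vertex collapse, hence a weak equivalence and an isomorphism in $2\Net^\ulc$;
\item $f_w\from \wtilde S \to T'$ is the identity on level $1$.
\end{itemize}
Local constancy of $f_w$ still has to be checked, from $\tilde f$ being ULC and the pushout lemma \ref{lemma_vc_pushout} (compare lemma \ref{lemma_lc_vc_lift}). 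It therefore suffices to factor $f_w$. Since $f_w$ is the identity on level $1$, it is determined by its fibers $S_x = f_w^{-1}(x)$ for $x \in T'_2$, and any modification of these fibers can be reassembled using the assembly proposition \ref{prop_assembly}.

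\textbf{Inductive construction.} Enumerate the core vertices $x_1 < \cdots < x_k$ of $T'_2$. Define the prenet $S^{(i)}$ by assembly over $f_w$, replacing the fibers $S_{x_j}$ for $j \le i$ by their image $\<x_j\> \simeq I$ and keeping the fibers over $x_j$ with $j > i$ unchanged. Fibers over tassels are handled as follows: each tassel fiber is either collapsed together with an adjacent core fiber, or first replaced by its minimal representative, which is weakly equivalent to $I^\circ$ by the argument of lemma \ref{lemma_fib_w}. The maps $f^{(i)}\from S^{(i-1)} \to S^{(i)}$ are the identity on all fibers except the one over $x_i$, where they are the canonical map $S_{x_i} \to \<x_i\>$; this map exists because each fiber is over $I$. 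By construction, $\Fib_y(f^{(i)})$ is then trivial for every core $y$ except the single vertex coming from $x_i$. Composing, we get $S^{(k)} = T'$ and $f^{(k)}\cdots f^{(1)} = f_w$ levelwise.

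\textbf{Main obstacle.} The hardest step is showing that each $f^{(i)}$ is ULC, so that it is a morphism of $2\Net^\ulc$. As noted before the theorem, assembled maps are in general not ULC, so the order and manner of collapsing must be chosen carefully. I would check ULC through lemma \ref{lemma_ulc_criterion}:
\begin{itemize}
\item \emph{Local constancy.} Under a tassel collapse of $S^{(i)}$, the preimage of a neighborhood of a vertex must retract onto the fiber. I would get this from local constancy of $f_w$, restricting the retractions $w_{x\to y}$ fiber by fiber as in lemma \ref{lemma_fib_map}.
\item \emph{Compatible interfaces.} Adjacent core vertices must have compatible interfaces. If this fails, I would first insert tassels between the offending adjacent pair and lift, as in the proof that $K\from 2\Net^\ulc\to 2\Net$ is surjective on hom-sets. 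This does not change the class of the map in $2\Net^\ulc$, and it makes the interfaces compatible.
\end{itemize}
This fix is also why the non-trivial fibers of a factorization are only determined up to merging tassels, as noted in the introduction.
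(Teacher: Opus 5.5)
\textbf{There is a genuine gap.} Your overall skeleton matches the paper's: take a ULC representative $\tilde f\from\wtilde S\to T$ with $T$ minimal, split off the vertex collapse of lemma \ref{lemma_map_factor}, and collapse one core fiber at a time by the assembly of proposition \ref{prop_assembly}. But the step you flag as the main obstacle is where the whole proof lies, and your plan does not get through it.

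The naive maps are in fact not ULC. Once the fiber over $x_{i-1}$ has become a single vertex $\bar x_{i-1}$, the map $f^{(i)}$ has preimages $\{\bar x_{i-1}\}$ and $S_{x_i}$ over the adjacent pair $(\bar x_{i-1},\bar x_i)$. So $B$ gives one block, while $A(S_{x_i})$ may give several. A concrete instance:
\begin{itemize}
\item Let $\wtilde S$ be the $2$-tree with level-$1$ vertices $s_1<s_2$, children $a<d$ of $s_1$ and $b<e$ of $s_2$.
\item Let $T$ have one level-$1$ vertex and two core vertices $x_1<x_2$, and let $\tilde f$ send $a,b\mapsto x_1$ and $d,e\mapsto x_2$.
\item This $\tilde f$ is ULC, since both sides of the interface split $\{s_1,s_2\}$ into singletons.
\item Yet your $f^{(2)}\from S^{(1)}\to T'$ pairs $B(\{\bar x_1\})$, one block, with $A(\{d,e\})$, two blocks. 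So it violates lemma \ref{lemma_ulc_criterion}, and processing $x_2$ first fails symmetrically.
\end{itemize}
Your repair, ``insert tassels and lift as in the surjectivity of $K$'', does not close this. The map you want to repair is not a morphism of $2\Net^\ulc$, so there is no class in $2\Net^\ulc$ to preserve. The argument for $K$ only controls images in $2\Net$, where fibers are not invariant (example \ref{exa_net_not_operadic}). What you would actually have to show is twofold: the repaired chain composes to a map $\simo$-equivalent to $\tilde f$ precomposed with a weak equivalence, and the earlier maps, which must be re-lifted along the new tassel collapse, remain ULC and elementary. None of this is checked, and the hypothesis that makes it true, compatibility of interfaces for the original $\tilde f$, is never used in your argument.

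\textbf{The missing idea} is to build the repair into each step before collapsing. When the paper processes a core vertex $z$ with an unprocessed adjacent core vertex $z'>z$, it does the following:
\begin{itemize}
\item It inserts a tassel $y$ between $z$ and $z'$, and lifts $f'$ (your $f_w$) along the tassel collapse $u\from\wtilde T\to T'$ with $u(y)=z'$ (proposition \ref{prop_lift}).
\item It then assembles with $l_z$ collapsing $F_z$ to a single core vertex $\tilde z$.
\item It takes $P_{z'}$ to be $f'^{-1}(z')$ plus one new leading tassel $t$ with all parents, which absorbs $\tilde f'^{-1}(y)$.
\end{itemize}
This construction gives everything you were missing:
\begin{itemize}
\item $gh=f'u'$ on the nose, so the chain still represents $f$ in $2\Net^\ulc$.
\item The remaining map $g$ has compatible interface at $(z,z')$, because both sides are the single vertices $\tilde z$ and $t$. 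This is exactly what fails at the next step in your example.
\item $h$ is locally constant under the collapse of $\tilde z$ with $t$. The reason is that $\tilde f'^{-1}(y)=A(f'^{-1}(z'))\times\{y\}$ induces the same partition as $B(F_z)$, by compatibility of $\tilde f$ at $(z,z')$. This collapse merges vertices lying over two different core vertices of $T'$, so local constancy cannot be obtained fiber by fiber from that of $f_w$, as you proposed.
\item The only new non-trivial preimage of $h$ lies over the tassel $t$, so $h$ is elementary.
\end{itemize}

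\textbf{A separate error.} Your claim that fibers over tassels are weakly equivalent to $I^\circ$ ``by the argument of lemma \ref{lemma_fib_w}'' is false when the map is not a weak equivalence. Two tassels with parents $\{s_1\}$ and $\{s_2\}$ form a minimal prenet other than $I^\circ$. The claim is also unnecessary: tassel fibers do not enter elementarity. The paper leaves them untouched and stops once the remaining map has no non-trivial core fiber, rather than aiming for $S^{(k)}=T'$.
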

\proof
Let $\tilde f\from \wtilde S \to T$ be a ULC map representing $f$, assume that $T$ is minimal, and denote by $n$ the number of core vertices in $T_2$. We fix an order
on this set of core vertices, for instance we can take the order induced by the total order on $T_2$, and construct factorization inductively.

{\bf Assembly.}
First we use \ref{lemma_map_factor} to factorize $\tilde f$ into a map $f'\from \wtilde S \to T'$ and a vertex collapse $f_v \from T' \to T$.
Pick a core vertex $z$ in $T'_2$. If it has an adjacent vertex $z'$ then due to minimality of $T$ it has to be a core vertex.
To be specific, assume that $z < z'$. If this core vertex $z'$ has already been processed by the inductive construction, we don't need to perform any further operations,
and if it wasn't then we first construct a tassel collapse $u\from\wtilde T \to T'$ by adding a tassel $y$ between $z$ and $z'$ adjacent to both and putting $u(y) = z'$.
Then we construct a lift $\wtilde S'$ of $f'$ along the tassel collapse $u$ as in \ref{prop_lift}.
$$
\begin{tikzcd}[sep=3em]
\wtilde S' \ar[d, "u'"'] \ar[r, "\tilde f'"] & \wtilde T \ar[d, "u"] \\
\wtilde S \ar[r, "f'"'] & T'
\end{tikzcd}
$$
Consider the set of fiber prenets $F_x = (f'u')^{-1}(x)$ for all vertices (both core and tassels) in $T'_2$. For the vertex $z$ we consider the factorization of the unique map
to the terminal prenet $I$ via a vertex collapse
$$
\begin{tikzcd}
F_z \ar[r, "l_z"] & P_z \ar[r] & I,
\end{tikzcd}
$$
and take $l_z$ to be the first map in this decomposition.

For $z'$ we first construct a prenet $P_{z'}$ by taking the fiber prenet $\wtilde S_{z'} = f'^{-1}(z')$ and then putting $(P_{z'})_1 = (\wtilde S_{z'})_1$,
$(P_{z'})_2 = \{t\} \sqcup (\wtilde S_{z'})_2$, where the new tassel $t$ is put first in the total order, and then define parents $p(t) = (P_{z'})_1$ and for the
rest of the vertices parents are the same as in $\wtilde S_{z'}$. We take the map $l_{z'}\from F_{z'} \to P_{z'}$ that sends all vertices in $\tilde f'^{-1}(y)$ to $t$
and the rest bijectively to $(\wtilde S_{z'})_2$.

For all the other vertices we put $l_x = \id_{F_x}$. Now we use assembly process \ref{prop_assembly} to construct a prenet $S^{(1)}$.
$$
\begin{tikzcd}[sep=3em]
\wtilde S' \ar[dr, "f'u'"'] \ar[rr, "h"] && S^{(1)} \ar[dl, "g"] \\
& T' &
\end{tikzcd}
$$

{\bf Map $h$ is ULC.} Let us denote by $\tilde z$ the only vertex in the preimage $g^{-1}(z)$ and by $t$ the image of the new tassel $t$ in $P_{z'}$.
Consider a simple tassel collapse $v\from S^{(1)} \to R$. Since $h$ outside of fibers over $z$ and $z'$ is identity, if $v$ doesn't involve any vertices from these two
fibers we immediately have the existence of required retractions in definition \ref{def_loc_const}, as identity map is ULC. If $v$ involves only vertices in the fiber $P_{z'}$ except the tassel $t$,
then again restriction of $h$ to these vertices is identity and we have the required retractions.

It remains to consider two possibilities, first consider collapse of vertices $\tilde z$ and $t$. The preimages $h^{-1}(\tilde z) = F_z$ and $h^{-1}(t) = \tilde f'^{-1}(y)$ by the assembly construction.
Since $f$ is ULC, the two vertices $z$ and $z'$ have compatible interface, which implies that the subdivision of the set of parents induced by vertices in $h^{-1}(t)$ is the same as the
subdivision induced by $B(F_z)$, therefore we can construct a tassel collapse
$$
w_{v(\tilde z) \to \tilde z} \from (vh)^{-1}(h(\tilde z)) \to F_z.
$$

Now, if $t$ doesn't have an adjacent vertex in $P_{z'}$ then there are no more possibilities to consider. However, if it has an adjacent vertex, say $t'$, then by construction of $l_{z'}$ the
preimage $h^{-1}(t)$ consists of a single vertex, which implies that $h$ restricted to the fiber over $z'$ is identity and we conclude by the same argument as before.

Finally, we need to check that every pair of adjacent vertices in $S^{(1)}$ have compatible interface. Since $h$ is identity outside of vertices $\tilde z$ and $t$, the condition is trivially
satisfied. Since $t$ is a tassel, we don't need to check pairs involving $t$. Vertex $\tilde z$ has an adjacent vertex $t$ on one side, which we don't need to check, so assume
that it has another adjacent core vertex $s < \tilde z$. By inductive procedure, vertices $g(s)$ and $z$ have a compatible interface with respect to map $f'u'$ and therefore
$s$ and $\tilde z$ also have compatible interface with respect to $h$.

{\bf Map $g$ is ULC.} Since by assumption $T$ is minimal, map $g$ is locally constant. Moreover, as outside of vertices $z$ and $z'$ map $g$ coincides with $f'u'$, all pairs
of core vertices not involving either $z$ or $z'$ have compatible interface. It remains to consider three cases, first by construction the pair $(z, z')$ has compatible interface
(the corresponding partition of sets of their parents consists of a single interval). Let $s < z$ be a core vertex adjacent to $z$, then by inductive assumption $s$ and $z$
have a compatible interface with respect to $f'u'$ (again partition consists of a single interval) and therefore they are also compatible with respect to map $g$. Finally,
assuming that it exists, let $s'$ be a core vertex adjacent to $z'$ such that $z' < s'$, then $B((f'u')^{-1}(z'))$ induces the same partition of parents as $A((f'u')^{-1}(s'))$.
Moreover, we have $B(g^{-1}(z')) = B((f'u')^{-1}(z'))$, therefore $z'$ and $s'$ have compatible interface with respect to $g$.

{\bf Map $h$ is elementary.} Since outside of fibers over $z$ and $z'$ map $h$ is identity, the fibers over all core vertices there are trivial. For the fiber over $z'$ the only
vertex with non-trivial preimage is $t$, which is a tassel, hence again all core vertices have trivial fibers. Therefore, vertex $\tilde z$ is the only core vertex with non-trivial
fiber $F_z$.

\vskip 1em

We define $f^{(1)}$ as the map of $2\Net^\ulc$ represented by $h$. Since $g$ has fewer non-trivial fibers than the original map $f$, we can iterate this process and obtain the
required factorization.

\qed


\vskip 5em

\appendix
\section{2-PreNets with $q$-tassels}
\label{sec_q_tassels}
In this section we address the problem of non-functoriality of the lift of proposition \ref{prop_lift} by introducing a refined version of a prenet with tassels,
which specifies the {\em quality} of each tassel.

\begin{nparagraph}[$2$-prenets with $q$-tassels.]
A prenet with $q$-tassels is a prenet $S$ together with
the partition of $S_2$ into pairwise disjoint subsets $t_q(S) \subset S_2$, indexed by an integer $q \in \Z_{\ge 0}$. For $q = 0$ we will continue to refer to vertices in $t_0(S)$ are the
core vertices and write $c(S_2) = t_0(S)$, and for the rest we will refer to vertices in $t_q(S)$ as tassels of quality $q$.

A map of prenets with $q$-tassels $f\from S \to T$ is defined the same way as a map of prenets with tassels, except the condition of definition \ref{def_prenet_map_tassels} is replaced with
$$
f(t_q(S)) \subset \bigcup_{q' \le q} t_{q'}(T).
$$

Moreover, we say that a map $f\from S \to T$ is a weak equivalence if it satisfies condition \ref{def_weak_equiv} (b) and the following generalization of condition (a): for every vertex
$x \in t_q(T)$ the preimage $f^{-1}(x)$ contains exactly one vertex belonging to $t_q(S)$.

We denote the category of $2$-prenets with $q$-tassels by $2\PreNet_q$, and the class of weak equivalences in it by $\cW_q$.

There is a forgetful functor
$$
\begin{tikzcd}[sep=3em]
2\PreNet_q \ar[r, "F"] & 2\PreNet,
\end{tikzcd}
$$
that forgets the quality of tassels. More precisely, for prenet $S \in 2\PreNet_q$ we put the underlying prenet of $F(S)$ to be the same as underlying prenet of $S$ and define
the set of tassels as
$$
t(F(S)) := \bigcup_{q \ge 1} t_q(S).
$$
Clearly, functor $F$ sends weak equivalences $\cW_q$ to weak equivalences $\cW$.

\end{nparagraph}

\begin{nparagraph}
It is immediate to see that the factorization lemmas \ref{lemma_map_factor}, \ref{lemma_tc_factor}, \ref{lemma_vc_factor}, \ref{lemma_w_factor_tv} and \ref{lemma_w_factor_vt} all hold in
the category $2\PreNet_q$. Moreover, the pushout lemma \ref{lemma_vc_pushout} as well as the special case of lifting proposition \ref{prop_lift} for vertex collapses remain unchanged,
and therefore we have an analog of the proposition \ref{prop_loc_vc} describing the localization of $2\PreNet_q$ with respect to vertex collapses $\mathcal{VC}_q$.
$$
\Hom_{2\PreNet_q[\mathcal{VC}_q^{-1}]}(S, T) \ \isom\ \Hom_{2\PreNet_q}(m_1(S), m_1(T)).
$$

\end{nparagraph}

\begin{nparagraph}[Functoriality of the lift.]
Now we will slightly modify construction of the lift along a tassel collapse in proposition \ref{prop_lift} to incorporate the quality of tassels and to ensure certain functorial properties.
Retaining notation from the proof of \ref{prop_lift}, by definition of weak equivalence in $\cW_q$ for every $y \in T_2$ there exists unique vertex $v \in g^{-1}(y) \subset T'_2$ of
the same quality as $y$, which we denote by $q = q(v) = q(y)$. Then we assign quality to a vertex $(x, u)$ in $S'_2$ as
$$
q(x, u) = q(x) + q(u) - q.
$$
This guarantees that the map $g'$ is again a tassel collapse in $\cW_q$.

Since every vertex $y$ has a special vertex $v \in g^{-1}(y)$ we have eliminated some of the choices we had to make in the construction of the lift. In fact it is enough to ensure the
following functoriality properties. Let us denote by $L(f, g)$ the prenet $S'$ that we just constructed, and consider the following diagram.
$$
\begin{tikzcd}[sep=3em]
& L(f, gv) \ar[d, "v'"'] \ar[r, "f''"] & R \ar[d, "v"] \\
L(fu, g) \ar[d, "g''"'] \ar[r, "u'"] & L(f, g) \ar[d, "g'"'] \ar[r, "f'"] & Q \ar[d, "g"] \\
P \ar[r, "u"'] & S \ar[r, "f"'] & T
\end{tikzcd}
$$
Then we have
\begin{enumerate}[label=\alph*)]
\item $L(fu, g) = L(u, g')$ and the composition $f'u'$ coincides with the lift of $fu$ along $g$;
\item $L(f, gv) = L(f', v)$ and the composition $g'v'$ coincides with the lift of $gv$ along $f$.
\end{enumerate}
For $L(fu, g)$ this essentially follows from the observation that $A(u^{-1}A(f^{-1}(y))) = A((fu)^{-1}(y))$ (see the proof of lemma \ref{lemma_map_composition}). And for $L(f, gv)$
this follows directly by examining subsets $V_1$ and $V_2$ (in the notation of \ref{prop_lift}) for each of the maps $g$, $v$ and $gv$. In fact part (b) holds even in the original
category of prenets, without introduction of quality of tassels.

This functoriality allows us to define the category of {\em $2$-pseudonets}.
\end{nparagraph}

\begin{definition}
The category of $2$-pseudonets, denoted by $2\PsNet$ has the same objects as $2\PreNet_q$ and sets of maps defined by
$$
\Hom_{2\PsNet}(S, T) \ :=\  \colim_{\wtilde S \in \cW_q \downarrow S} \Hom_{2\PreNet_q[\mathcal{VC}_q^{-1}]}(\wtilde S, T) \ \isom\ \colim_{\wtilde S \in \mathcal{TC}_q \downarrow S} \Hom_{2\PreNet_q}(m_1(\wtilde S), m_1(T)).
$$
\end{definition}

We define composition of maps in $2\PsNet$ using the refined lifts $L(f, g)$ described above. Namely, let $f$ and $g$ be representative of two composable maps in $2$-pseudonets, then we define
their composition as the map represented by the composition $gf'$ in the following diagram.
$$
\begin{tikzcd}[sep=3em]
L(f, v) \ar[d, "v'"'] \ar[r, "f'"] & \wtilde T \ar[d, "v"] \ar[r, "g"] & P \\
\wtilde S \ar[d, "u"'] \ar[r, "f"'] & T & \\
S & & 
\end{tikzcd}
$$
Let $w\from \wtilde S' \to \wtilde S$ be a weak equivalence, so that composition $fw$ is another representative of the same map as $f$, then functoriality in (a) for $L(fw, v)$ ensures
that the composition doesn't depend on the representative of $f$. Similarly, for a weak equivalence $w'\from \wtilde T' \to \wtilde T$ the functoriality of $L(f, vw')$ ensures independence
of the representative $g$. Moreover, it is easy to see by utilizing both functoriality properties (a) and (b) that the composition is associative.

\begin{nparagraph}
It is clear from the definition of category of $2$-pseudonets that vertex collapses are invertible in $2\PsNet$, while the general weak equivalences in principle may not be invertible.
However, it is easy to see that they have right inverses. Indeed, let $f\from S \to T$ be a weak equivalence, then in the category of $2$-pseudonets we have the map $g \from T \to S$ represented by the span
$$
\begin{tikzcd}
T & S \ar[l, "f"'] \ar[r, "="] & S.
\end{tikzcd}
$$
By definition the composition $fg$ is represented by the span
$$
\begin{tikzcd}
T & S \ar[l, "f"'] \ar[r, "f"] & T,
\end{tikzcd}
$$
which coincides with the identity map in the colimit. Therefore, $g$ is a right inverse to $f$. It may be interesting to investigate the universality of this type of construction
among the categories that admit right inverses of weak equivalences.

\end{nparagraph}

\begin{nparagraph}

Observe that we have the following diagram of categories.
$$
\begin{tikzcd}[sep=4em]
2\PreNet_q \ar[d, "L'"'] \ar[rr, "F"]  & & 2\PreNet \ar[d, "L"] \\
2\PsNet \ar[r, "L''"] & 2\PreNet_q[\cW_q^{-1}] \ar[r, "F{[}\cW^{-1}{]}"] & 2\Net \\
& (2\Tree, \Epi_1) \ar[ul, "J_q"] \ar[ur, "J"'] & 
\end{tikzcd}
$$
Since $F$ preserves weak equivalences, it induces a functor $F[\cW^{-1}]$ between the localized categories. Here $J$ is the fully faithful inclusion defined in \ref{par_J}, and $J_q$ is
a lift of $J$ into the category of $2$-pseudonets. Such a lift is not canonical, as it depends on the choice of qualities for the added tassels.

For example for a $2$-tree $T$, keeping notation of \ref{par_J},
we can put $t_1(J_q(T)) = T_1 - p(T_2)$ and all other $t_q$ with $q > 1$ to be empty. And in the definition of $J_q(f)$ for a map $f\from S \to T$ we put
$t_1(\wtilde S)_x = p_T^{-1}(f(x)) - f(p_S^{-1}(x))$ if vertex $x$ has children in $S$ and if it doesn't then all but one (arbitrarily chosen) vertex in $p_T^{-1}(f(x))$ we put in $t_2(\wtilde S)$
while the chosen vertex is put in $t_1(\wtilde S)$.

Regardless of the choices made, since the composition $J = F[\cW^{-1}] L'' J_q$ is faithful, the functor $J_q$ is also faithful.

\end{nparagraph}

\vskip 5em

\section{Examples}
\label{sec_examples}

\begin{example}
\label{exa_map_c}
Consider the following level-wise map $f_i\from S_i \to I_i$ as depicted below.
$$
\begin{tikzcd}[sep=5em,cells={inner xsep=2em}]
S \quad= \quad
\begin{tikzpicture}[inner sep=0pt,baseline=(a11.base)]
\def\u{2em}
\node (a21) at (0, 0) {$\bullet$} node at ($(a21) + (0, 0.7em)$) {$x$};
\node (a22) at (1*\u, 0) {$\bullet$} node at ($(a22) + (0, 0.7em)$) {$y$};
\node (a11) at (0, -1*\u) {$\bullet$};
\node (a12) at (1*\u, -1*\u) {$\bullet$};
\node (a01) at (0.5*\u, -2*\u) {$\bullet$};

\draw (a21) -- (a11) -- (a01);
\draw (a21) -- (a12) -- (a01);
\draw (a22) -- (a12);
\end{tikzpicture} \ar[r, "f"] &
\begin{tikzpicture}[inner sep=0pt,baseline=(a11.base)]
\def\u{2em}
\node (a21) at (0, 0) {$\bullet$} node at ($(a21) + (0, 0.7em)$) {$z$};
\node (a11) at (0, -1*\u) {$\bullet$};
\node (a01) at (0, -2*\u) {$\bullet$};

\draw (a21) -- (a11) -- (a01);
\end{tikzpicture} \quad=\quad I
\end{tikzcd}
$$
In other words it send all vertices of $S_i$ to the unique vertex in $I_i$. The geometric realization of this map looks as follows.

$$
\begin{tikzcd}[sep=5em,cells={inner xsep=2em}]
\begin{tikzpicture}[baseline=-0.2em]
\def\u{4em}

\draw (0, 0) .. controls (0, 0.6*\u) and (2*\u, 0.6*\u) .. node (m11) [midway] {} (2*\u, 0);
\draw (0, 0) -- node (m12) [midway] {} (2*\u, 0);
\draw (1*\u, 0) -- node (m21) [midway] {} (2*\u, 0);
\draw (1*\u, 0) .. controls (1*\u, -0.6*\u) and (2*\u, -0.6*\u) .. node (m22) [midway] {} (2*\u, 0);

\draw [double distance=0.1em, arrows={->[width=0.9em]}] (m11) -- (m12);
\draw [double distance=0.1em, arrows={->[width=0.9em]}] (m21) -- (m22);

\fill (0, 0) circle [radius=0.2em] node [inner sep=0pt,below=0.9em,left] {$L_x$};
\fill (1*\u, 0) circle [radius=0.2em] node [inner sep=0pt,below=0.9em,left] {$L_y$};
\fill (2*\u, 0) circle [radius=0.2em] node [inner sep=0pt,below=0.9em,right] {$R_y$};;

\end{tikzpicture}
\ar[r, "f"] &
\begin{tikzpicture}[baseline=-0.2em]
\def\u{4em}

\draw (0, 0) .. controls (0, 0.6*\u) and (2*\u, 0.6*\u) .. node (m11) [midway] {} node [midway,above] {$s_z$} (2*\u, 0);
\draw (0, 0) .. controls (0, -0.6*\u) and (2*\u, -0.6*\u) .. node (m13) [midway] {} node [midway,below] {$t_z$} (2*\u, 0);

\draw [double distance=0.1em, arrows={->[width=0.9em]}] (m11) -- (m13);

\fill (0, 0) circle [radius=0.2em];
\fill (2*\u, 0) circle [radius=0.2em];
\end{tikzpicture}
\end{tikzcd}
$$

This is not a map of prenets, since it does not satisfy property (c) of \ref{def_prenet_map}. Indeed, let $z$ denote the only vertex in $I_2$, then the subset $B(f^{-1}(z))$ consists
of both vertices $x$ and $y$, but it is clear that $p_S(x)$ and $p_S(y)$ do not form a disjoint cover of $S_1$.

Let us also give an interpretation of the failure of $f$ to be a map of prenets in the language of paragraph \ref{par_cat_info}. In the geometric realization of prenet $S$
the cell $C_x$ determines the segment $t_x = [L_x, R_y]$ and cell $C_y$ determines $s_y = [L_y, R_y]$, while the segment $[L_x, L_y]$ is undetermined. However,
the segment $t_z$ on the right hand side is formed out of two pieces on the left hand side, namely $t_y$ and $[L_x, L_y]$. Since one of them is not determined, the left side
picture doesn't contain enough information to form the right side picture, and therefore $f$ is not a map of prenets.

\end{example}

\begin{example}
\label{exa_no_pushout}
Consider a span of prenets $\begin{tikzcd}[cramped,sep=small]T & S \ar[l, "f"'] \ar[r, "g"] & P\end{tikzcd}$
as depicted in the picture below.
$$
\begin{tikzcd}[sep=5em,cells={inner xsep=2em, inner ysep=1em}]
\begin{tikzpicture}[inner sep=0pt,baseline=(a11.base)]
\def\u{2em}
\node (a21) at (0, 0) {$\bullet$} node at ($(a21) + (0, 0.5em)$) {$a$};
\node (a22) at (1*\u, 0) {$\cross$} node at ($(a22) + (0, 0.5em)$) {$b$};
\node (a23) at (2*\u, 0) {$\bullet$} node at ($(a23) + (0, 0.5em)$) {$c$};
\node (a11) at (0.5*\u, -1*\u) {$\bullet$};
\node (a12) at (2*\u, -1*\u) {$\bullet$};
\node (a01) at (1*\u, -2*\u) {$\bullet$};

\draw (a21) -- (a11) -- (a22);
\draw (a23) -- (a12);
\draw (a11) -- (a01) -- (a12);
\end{tikzpicture} \ar[r, "g"] \ar[d, "f"'] &
\begin{tikzpicture}[inner sep=0pt,baseline=(a11.base)]
\def\u{2em}
\node (a21) at (0, 0) {$\bullet$} node at ($(a21) + (0, 0.5em)$) {$a$};
\node (a22) at (1.5*\u, 0) {$\bullet$} node at ($(a22) + (0, 0.5em)$) {$bc$};
\node (a11) at (0.5*\u, -1*\u) {$\bullet$};
\node (a12) at (2*\u, -1*\u) {$\bullet$};
\node (a01) at (1*\u, -2*\u) {$\bullet$};

\draw (a21) -- (a11) -- (a22);
\draw (a22) -- (a12) -- (a01);
\draw (a11) -- (a01);
\end{tikzpicture}\\
\begin{tikzpicture}[inner sep=0pt,baseline=(a11.base)]
\def\u{2em}
\node (a21) at (0.5*\u, 0) {$\bullet$} node at ($(a21) + (0, 0.5em)$) {$ab$};
\node (a22) at (2*\u, 0) {$\bullet$} node at ($(a22) + (0, 0.5em)$) {$c$};
\node (a11) at (0.5*\u, -1*\u) {$\bullet$};
\node (a12) at (2*\u, -1*\u) {$\bullet$};
\node (a01) at (1*\u, -2*\u) {$\bullet$};

\draw (a21) -- (a11);
\draw (a22) -- (a12);
\draw (a11) -- (a01) -- (a12);
\end{tikzpicture} \ar[r] &
\begin{tikzpicture}[inner sep=0pt,baseline=(a11.base)]
\def\u{2em}
\node (a21) at (0, 0) {$\bullet$} node at ($(a21) + (0, 0.5em)$) {$abc$};
\node (a11) at (-0.75*\u, -1*\u) {$\bullet$};
\node (a12) at (0.75*\u, -1*\u) {$\bullet$};
\node (a01) at (0, -2*\u) {$\bullet$};

\draw (a21) -- (a11) -- (a01) -- (a12) -- (a21);
\end{tikzpicture}
\end{tikzcd}
$$

In this diagram map $f$ is a weak equivalence, while $g$ is not. If we try to apply construction in the proof of lemma \ref{lemma_ww_pushout} then we would obtain prenet $Q$ in the bottom right corner, however,
an argument similar to that of example \ref{exa_map_c} shows that there is no map from $P$ to $Q$.

\end{example}

\begin{example}
\label{exa_J}
Consider the following two maps in the category of $2$-trees.
$$
\begin{tikzcd}[sep=5em,cells={inner xsep=2em, inner ysep=1em}]
\begin{tikzpicture}[inner sep=0pt,baseline=(a11.base)]
\def\u{2em}
\node (a21) at (0, 0) {$\bullet$} node at ($(a21) + (0, 0.5em)$) {$a$};
\node (a22) at (1*\u, 0) {$\bullet$} node at ($(a22) + (0, 0.5em)$) {$b$};
\node (a11) at (0.5*\u, -1*\u) {$\bullet$};
\node (a01) at (0.5*\u, -2*\u) {$\bullet$};

\draw (a21) -- (a11) -- (a22);
\draw (a11) -- (a01);
\end{tikzpicture} &
\begin{tikzpicture}[inner sep=0pt,baseline=(a11.base)]
\def\u{2em}
\node (a21) at (0, 0) {$\bullet$} node at ($(a21) + (0, 0.5em)$) {$a$};
\node (a22) at (1*\u, 0) {$\bullet$} node at ($(a22) + (0, 0.5em)$) {$b$};
\node (a11) at (0, -1*\u) {$\bullet$};
\node (a12) at (1*\u, -1*\u) {$\bullet$};
\node (a01) at (0.5*\u, -2*\u) {$\bullet$};

\draw (a21) -- (a11) -- (a01);
\draw (a22) -- (a12) -- (a01);
\end{tikzpicture} \ar[l, "f"'] \ar[r, "g"] &
\begin{tikzpicture}[inner sep=0pt,baseline=(a11.base)]
\def\u{2em}
\node (a21) at (0, 0) {$\bullet$} node at ($(a21) + (0, 0.5em)$) {$b$};
\node (a22) at (1*\u, 0) {$\bullet$} node at ($(a22) + (0, 0.5em)$) {$a$};
\node (a11) at (0.5*\u, -1*\u) {$\bullet$};
\node (a01) at (0.5*\u, -2*\u) {$\bullet$};

\draw (a21) -- (a11) -- (a22);
\draw (a11) -- (a01);
\end{tikzpicture}
\end{tikzcd}
$$

Since these trees are already pruned, applying functor $J$ described in \ref{par_J} doesn't change their shape. However, the maps $f$ and $g$ are not maps in the category of prenets.
We will provide an explicit description of maps $J(f)$ and $J(g)$ that were constructed in \ref{par_J_map}. The map $J(f)\from \wtilde S \to J(T)$ can be depicted as
$$
\begin{tikzcd}[sep=5em,cells={inner xsep=2em, inner ysep=1em}]
\begin{tikzpicture}[inner sep=0pt,baseline=(a11.base)]
\def\u{2em}
\node (a21) at (0, 0) {$\bullet$} node at ($(a21) + (0, 0.5em)$) {$a$};
\node (a22) at (1*\u, 0) {$\cross$} node at ($(a22) + (0, 0.5em)$) {$b'$};
\node (a23) at (2*\u, 0) {$\cross$} node at ($(a23) + (0, 0.5em)$) {$a'$};
\node (a24) at (3*\u, 0) {$\bullet$} node at ($(a24) + (0, 0.5em)$) {$b$};
\node (a11) at (0.5*\u, -1*\u) {$\bullet$};
\node (a12) at (2.5*\u, -1*\u) {$\bullet$};
\node (a01) at (1.5*\u, -2*\u) {$\bullet$};

\draw (a21) -- (a11) -- (a22);
\draw (a23) -- (a12) -- (a24);
\draw (a11) -- (a01) -- (a12);
\end{tikzpicture} \ar[r, "J(f)"] &
\begin{tikzpicture}[inner sep=0pt,baseline=(a11.base)]
\def\u{2em}
\node (a21) at (0, 0) {$\bullet$} node at ($(a21) + (0, 0.5em)$) {$aa'$};
\node (a22) at (1*\u, 0) {$\bullet$} node at ($(a22) + (0, 0.5em)$) {$b'b$};
\node (a11) at (0.5*\u, -1*\u) {$\bullet$};
\node (a01) at (0.5*\u, -2*\u) {$\bullet$};

\draw (a21) -- (a11) -- (a22);
\draw (a11) -- (a01);
\end{tikzpicture}
\end{tikzcd}
$$
And map $J(g)\from \wtilde S' \to J(T)$ can be depicted as
$$
\begin{tikzcd}[sep=5em,cells={inner xsep=2em, inner ysep=1em}]
\begin{tikzpicture}[inner sep=0pt,baseline=(a11.base)]
\def\u{2em}
\node (a21) at (0, 0) {$\cross$} node at ($(a21) + (0, 0.5em)$) {$b'$};
\node (a22) at (1*\u, 0) {$\bullet$} node at ($(a22) + (0, 0.5em)$) {$a$};
\node (a23) at (2*\u, 0) {$\bullet$} node at ($(a23) + (0, 0.5em)$) {$b$};
\node (a24) at (3*\u, 0) {$\cross$} node at ($(a24) + (0, 0.5em)$) {$a'$};
\node (a11) at (0.5*\u, -1*\u) {$\bullet$};
\node (a12) at (2.5*\u, -1*\u) {$\bullet$};
\node (a01) at (1.5*\u, -2*\u) {$\bullet$};

\draw (a21) -- (a11) -- (a22);
\draw (a23) -- (a12) -- (a24);
\draw (a11) -- (a01) -- (a12);
\end{tikzpicture} \ar[r, "J(g)"] &
\begin{tikzpicture}[inner sep=0pt,baseline=(a11.base)]
\def\u{2em}
\node (a21) at (0, 0) {$\bullet$} node at ($(a21) + (0, 0.5em)$) {$b'b$};
\node (a22) at (1*\u, 0) {$\bullet$} node at ($(a22) + (0, 0.5em)$) {$aa'$};
\node (a11) at (0.5*\u, -1*\u) {$\bullet$};
\node (a01) at (0.5*\u, -2*\u) {$\bullet$};

\draw (a21) -- (a11) -- (a22);
\draw (a11) -- (a01);
\end{tikzpicture}
\end{tikzcd}
$$

\end{example}

\begin{example}
\label{exa_ulc_lift_not_lc}
We give an example of a non-locally constant lift of a ULC map. Consider the diagram of prenets.
$$
\begin{tikzcd}[sep=4em,cells={inner xsep=2em}]
\begin{tikzpicture}[inner sep=0pt,baseline=(a11.base)]
\def\u{2em}
\node (a22) at (0, 0) {$\bullet$} node at ($(a22) + (0, 0.5em)$) {$a$};
\node (a23) at (1.5*\u, 0) {$\cross$} node at ($(a23) + (0, 0.5em)$) {$b$};
\node (a24) at (3*\u, 0) {$\cross$} node at ($(a24) + (0, 0.5em)$) {$c$};
\node (a11) at (0.75*\u, -1*\u) {$\bullet$};
\node (a12) at (2.25*\u, -1*\u) {$\bullet$};
\node (a01) at (1.5*\u, -2*\u) {$\bullet$};

\draw (a22) -- (a11);
\draw (a22) -- (a12);
\draw (a23) -- (a11);
\draw (a24) -- (a12);
\draw (a11) -- (a01) -- (a12);
\end{tikzpicture} \ar[d, "="'] \ar[r, "f'"] &
\begin{tikzpicture}[inner sep=0pt,baseline=(a11.base)]
\def\u{2em}
\node (a21) at (0, 0) {$\bullet$} node at ($(a21) + (0, 0.5em)$) {$a$};
\node (a22) at (1*\u, 0) {$\cross$} node at ($(a22) + (0, 0.5em)$) {$bc$};
\node (a11) at (0.5*\u, -1*\u) {$\bullet$};
\node (a01) at (0.5*\u, -2*\u) {$\bullet$};

\draw (a21) -- (a11) -- (a22);
\draw (a11) -- (a01);
\end{tikzpicture} \ar[d, "g"] \\
\begin{tikzpicture}[inner sep=0pt,baseline=(a11.base)]
\def\u{2em}
\node (a22) at (0, 0) {$\bullet$} node at ($(a22) + (0, 0.5em)$) {$a$};
\node (a23) at (1.5*\u, 0) {$\cross$} node at ($(a23) + (0, 0.5em)$) {$b$};
\node (a24) at (3*\u, 0) {$\cross$} node at ($(a24) + (0, 0.5em)$) {$c$};
\node (a11) at (0.75*\u, -1*\u) {$\bullet$};
\node (a12) at (2.25*\u, -1*\u) {$\bullet$};
\node (a01) at (1.5*\u, -2*\u) {$\bullet$};

\draw (a22) -- (a11);
\draw (a22) -- (a12);
\draw (a23) -- (a11);
\draw (a24) -- (a12);
\draw (a11) -- (a01) -- (a12);
\end{tikzpicture} \ar[r, "f"'] &
\begin{tikzpicture}[inner sep=0pt,baseline=(a11.base)]
\def\u{2em}
\node (a21) at (0, 0) {$\bullet$} node at ($(a21) + (0, 0.7em)$) {$abc$};
\node (a11) at (0, -1*\u) {$\bullet$};
\node (a01) at (0, -2*\u) {$\bullet$};

\draw (a21) -- (a11) -- (a01);
\end{tikzpicture}
\end{tikzcd}
$$
Here map $f$ is ULC, $g$ is a tassel collapse, but $f'$ is not a locally constant map, since the map
$$
w_{abc \to a} \from (gf')^{-1}(abc) \to f'^{-1}(a)
$$
is not a weak equivalence.

\end{example}

\begin{example}
\label{exa_factor_elem}
Let us construct a factorization of the map $J(f)$ from example \ref{exa_J} into a chain of elementary ULC maps, as in the proof of theorem \ref{thm_factor_elem}.
$$
\begin{tikzcd}[sep=4em,cells={inner xsep=2em}]
\begin{tikzpicture}[inner sep=0pt,baseline=(a11.base)]
\def\u{2em}
\node (a21) at (0, 0) {$\bullet$} node at ($(a21) + (0, 0.5em)$) {$a$};
\node (a215) at (0.5*\u, 0) {$\cross$} node at ($(a215) + (0, 0.5em)$) {$c$};
\node (a22) at (1*\u, 0) {$\cross$} node at ($(a22) + (0, 0.5em)$) {$b'$};
\node (a23) at (2*\u, 0) {$\cross$} node at ($(a23) + (0, 0.5em)$) {$a'$};
\node (a235) at (2.5*\u, 0) {$\cross$} node at ($(a235) + (0, 0.5em)$) {$c'$};
\node (a24) at (3*\u, 0) {$\bullet$} node at ($(a24) + (0, 0.5em)$) {$b$};
\node (a11) at (0.5*\u, -1*\u) {$\bullet$};
\node (a12) at (2.5*\u, -1*\u) {$\bullet$};
\node (a01) at (1.5*\u, -2*\u) {$\bullet$};

\draw (a21) -- (a11) -- (a22);
\draw (a23) -- (a12) -- (a24);
\draw (a215) -- (a11);
\draw (a235) -- (a12);
\draw (a11) -- (a01) -- (a12);
\end{tikzpicture} \ar[r] &
\begin{tikzpicture}[inner sep=0pt,baseline=(a11.base)]
\def\u{2em}
\node (a21) at (0, 0) {$\bullet$} node at ($(a21) + (0, 0.5em)$) {$aa'$};
\node (a22) at (1*\u, 0) {$\cross$} node at ($(a22) + (0, 0.5em)$) {$cc'$};
\node (a23) at (2*\u, 0) {$\cross$} node at ($(a23) + (0, 0.5em)$) {$b'$};
\node (a24) at (3*\u, 0) {$\bullet$} node at ($(a24) + (0, 0.5em)$) {$b$};
\node (a11) at (0.5*\u, -1*\u) {$\bullet$};
\node (a12) at (2.5*\u, -1*\u) {$\bullet$};
\node (a01) at (1.5*\u, -2*\u) {$\bullet$};

\draw (a21) -- (a11) -- (a01);
\draw (a21) -- (a12) -- (a01);
\draw (a22) -- (a11);
\draw (a22) -- (a12);
\draw (a23) -- (a11);
\draw (a24) -- (a12);
\draw (a11) -- (a01) -- (a12);
\end{tikzpicture} \ar[r] &
\begin{tikzpicture}[inner sep=0pt,baseline=(a11.base)]
\def\u{2em}
\node (a21) at (0.5*\u, 0) {$\bullet$} node at ($(a21) + (0, 0.5em)$) {$aa'$};
\node (a24) at (2.5*\u, 0) {$\bullet$} node at ($(a24) + (0, 0.5em)$) {$cc'b'b$};
\node (a11) at (0.5*\u, -1*\u) {$\bullet$};
\node (a12) at (2.5*\u, -1*\u) {$\bullet$};
\node (a01) at (1.5*\u, -2*\u) {$\bullet$};

\draw (a21) -- (a11) -- (a24);
\draw (a21) -- (a12);
\draw (a24) -- (a12);
\draw (a11) -- (a01) -- (a12);
\end{tikzpicture}
\end{tikzcd}
$$
Notice that the first and last prenet are respectively weakly equivalent to the source and target of $J(f)$. The two fibers over the core vertices of the first map are
$$
\begin{tikzcd}[sep=4em,cells={inner xsep=2em}]
\begin{tikzpicture}[inner sep=0pt,baseline=(a11.base)]
\def\u{2em}
\node (a21) at (0, 0) {$\bullet$} node at ($(a21) + (0, 0.5em)$) {$a$};
\node (a22) at (1*\u, 0) {$\cross$} node at ($(a22) + (0, 0.5em)$) {$a'$};
\node (a11) at (0, -1*\u) {$\bullet$};
\node (a12) at (1*\u, -1*\u) {$\bullet$};
\node (a01) at (0.5*\u, -2*\u) {$\bullet$};

\draw (a21) -- (a11) -- (a01);
\draw (a22) -- (a12) -- (a01);
\end{tikzpicture} \hskip3em\text{and}\hskip3em
\begin{tikzpicture}[inner sep=0pt,baseline=(a11.base)]
\def\u{2em}
\node (a21) at (0, 0) {$\bullet$} node at ($(a21) + (0, 0.7em)$) {$b$};
\node (a11) at (0, -1*\u) {$\bullet$};
\node (a01) at (0, -2*\u) {$\bullet$};

\draw (a21) -- (a11) -- (a01);
\end{tikzpicture}
\end{tikzcd}
$$
And for the second map the two fibers over the core vertices are
$$
\begin{tikzcd}[sep=4em,cells={inner xsep=2em}]
\begin{tikzpicture}[inner sep=0pt,baseline=(a11.base)]
\def\u{2em}
\node (a21) at (0, 0) {$\bullet$} node at ($(a21) + (0, 0.5em)$) {$aa'$};
\node (a11) at (-0.75*\u, -1*\u) {$\bullet$};
\node (a12) at (0.75*\u, -1*\u) {$\bullet$};
\node (a01) at (0, -2*\u) {$\bullet$};

\draw (a21) -- (a11) -- (a01) -- (a12) -- (a21);
\end{tikzpicture}\hskip3em\text{and}\hskip3em
\begin{tikzpicture}[inner sep=0pt,baseline=(a11.base)]
\def\u{2em}
\node (a22) at (0, 0) {$\cross$} node at ($(a22) + (0, 0.5em)$) {$cc'$};
\node (a23) at (1.5*\u, 0) {$\cross$} node at ($(a23) + (0, 0.5em)$) {$b'$};
\node (a24) at (3*\u, 0) {$\bullet$} node at ($(a24) + (0, 0.5em)$) {$b$};
\node (a11) at (0.5*\u, -1*\u) {$\bullet$};
\node (a12) at (2.5*\u, -1*\u) {$\bullet$};
\node (a01) at (1.5*\u, -2*\u) {$\bullet$};

\draw (a11) -- (a01);
\draw (a12) -- (a01);
\draw (a22) -- (a11);
\draw (a22) -- (a12);
\draw (a23) -- (a11);
\draw (a24) -- (a12);
\draw (a11) -- (a01) -- (a12);
\end{tikzpicture}
\end{tikzcd}
$$
Notice that the first fiber here is trivial, since it is weakly equivalent to $I$.

\end{example}

\begin{example}
\label{exa_net_not_operadic}
We give an example of two maps of $2$-prenets $f, g \from S \to T$, who's images in the category of $2$-nets coincide, but who's fibers are not pairwise weakly equivalent.
Consider the following diagram of $2$-prenets.
$$
\begin{tikzcd}[sep=4em,cells={inner xsep=2em}]
\begin{tikzpicture}[inner sep=0pt,baseline=(a11.base)]
\def\u{2em}
\node (a21) at (0, 0) {$\bullet$} node at ($(a21) + (0, 0.5em)$) {$a$};
\node (a22) at (1*\u, 0) {$\cross$} node at ($(a22) + (0, 0.5em)$) {$b$};
\node (a23) at (2*\u, 0) {$\cross$} node at ($(a23) + (0, 0.5em)$) {$c$};
\node (a24) at (3*\u, 0) {$\bullet$} node at ($(a24) + (0, 0.5em)$) {$d$};
\node (a11) at (0.5*\u, -1*\u) {$\bullet$};
\node (a12) at (2.5*\u, -1*\u) {$\bullet$};
\node (a01) at (1.5*\u, -2*\u) {$\bullet$};

\draw (a21) -- (a11) -- (a01);
\draw (a21) -- (a12) -- (a01);
\draw (a22) -- (a11);
\draw (a23) -- (a12);
\draw (a24) -- (a11) -- (a01);
\draw (a24) -- (a12) -- (a01);
\end{tikzpicture} \ar[r, "h"] &
\begin{tikzpicture}[inner sep=0pt,baseline=(a11.base)]
\def\u{2em}
\node (a21) at (0, 0) {$\bullet$} node at ($(a21) + (0, 0.5em)$) {$a$};
\node (a22) at (0.75*\u, 0) {$\cross$} node at ($(a22) + (0, 0.5em)$) {$bc$};
\node (a23) at (1.5*\u, 0) {$\bullet$} node at ($(a23) + (0, 0.5em)$) {$d$};
\node (a11) at (0.75*\u, -1*\u) {$\bullet$};
\node (a01) at (0.75*\u, -2*\u) {$\bullet$};

\draw (a21) -- (a11) -- (a22);
\draw (a23) -- (a11) -- (a01);
\end{tikzpicture} \ar[shift left=0.5em, r, "u"] \ar[shift right=0.5em, r, "v"'] &
\begin{tikzpicture}[inner sep=0pt,baseline=(a11.base)]
\def\u{2em}
\node (a21) at (0, 0) {$\bullet$} node at ($(a21) + (0, 0.5em)$) {$x$};
\node (a23) at (1.5*\u, 0) {$\bullet$} node at ($(a23) + (0, 0.5em)$) {$y$};
\node (a11) at (0.75*\u, -1*\u) {$\bullet$};
\node (a01) at (0.75*\u, -2*\u) {$\bullet$};

\draw (a21) -- (a11);
\draw (a23) -- (a11) -- (a01);
\end{tikzpicture}
\end{tikzcd}
$$
Here map $u$ sends vertices $a$ and $bc$ to $x$ and vertex $d$ to $y$, while map $v$ sends $a$ to $x$ and vertices $bc$ and $d$ to $y$. We put $f = uh$ and $g = vh$.

Since $u$ and $v$ are tassel collapses their images in the category $2\Net$ coincide, hence images of $f$ and $g$ also coincide. However, they have different fibers.
For map $f$ the fibers are
$$
\begin{tikzcd}[sep=4em,cells={inner xsep=2em}]
\begin{tikzpicture}[inner sep=0pt,baseline=(a11.base)]
\def\u{2em}
\node (a22) at (0, 0) {$\bullet$} node at ($(a22) + (0, 0.5em)$) {$a$};
\node (a23) at (1.5*\u, 0) {$\cross$} node at ($(a23) + (0, 0.5em)$) {$b$};
\node (a24) at (3*\u, 0) {$\cross$} node at ($(a24) + (0, 0.5em)$) {$c$};
\node (a11) at (0.75*\u, -1*\u) {$\bullet$};
\node (a12) at (2.25*\u, -1*\u) {$\bullet$};
\node (a01) at (1.5*\u, -2*\u) {$\bullet$};

\draw (a22) -- (a11);
\draw (a22) -- (a12);
\draw (a23) -- (a11);
\draw (a24) -- (a12);
\draw (a11) -- (a01) -- (a12);
\end{tikzpicture} \hskip3em\text{and}\hskip3em
\begin{tikzpicture}[inner sep=0pt,baseline=(a11.base)]
\def\u{2em}
\node (a21) at (0, 0) {$\bullet$} node at ($(a21) + (0, 0.5em)$) {$d$};
\node (a11) at (-0.75*\u, -1*\u) {$\bullet$};
\node (a12) at (0.75*\u, -1*\u) {$\bullet$};
\node (a01) at (0, -2*\u) {$\bullet$};

\draw (a21) -- (a11) -- (a01) -- (a12) -- (a21);
\end{tikzpicture}
\end{tikzcd}
$$
While for map $g$ the fibers are
$$
\begin{tikzcd}[sep=4em,cells={inner xsep=2em}]
\begin{tikzpicture}[inner sep=0pt,baseline=(a11.base)]
\def\u{2em}
\node (a21) at (0, 0) {$\bullet$} node at ($(a21) + (0, 0.5em)$) {$a$};
\node (a11) at (-0.75*\u, -1*\u) {$\bullet$};
\node (a12) at (0.75*\u, -1*\u) {$\bullet$};
\node (a01) at (0, -2*\u) {$\bullet$};

\draw (a21) -- (a11) -- (a01) -- (a12) -- (a21);
\end{tikzpicture} \hskip3em\text{and}\hskip3em
\begin{tikzpicture}[inner sep=0pt,baseline=(a11.base)]
\def\u{2em}
\node (a22) at (0, 0) {$\cross$} node at ($(a22) + (0, 0.5em)$) {$b$};
\node (a23) at (1.5*\u, 0) {$\cross$} node at ($(a23) + (0, 0.5em)$) {$c$};
\node (a24) at (3*\u, 0) {$\bullet$} node at ($(a24) + (0, 0.5em)$) {$d$};
\node (a11) at (0.75*\u, -1*\u) {$\bullet$};
\node (a12) at (2.25*\u, -1*\u) {$\bullet$};
\node (a01) at (1.5*\u, -2*\u) {$\bullet$};

\draw (a22) -- (a11);
\draw (a23) -- (a12);
\draw (a24) -- (a12);
\draw (a24) -- (a11);
\draw (a11) -- (a01) -- (a12);
\end{tikzpicture}
\end{tikzcd}
$$

\end{example}


\vfill\eject

\end{document}